\theoremstyle{plain}
\newtheorem{theorem}{Theorem}[section]
\newtheorem{lemma}[theorem]{Lemma}
\newtheorem{prop}[theorem]{Proposition}
\newtheorem{cor}[theorem]{Corollary}
\newtheorem{setting}[theorem]{Setting}
\theoremstyle{remark}
\theoremstyle{definition}
\newtheorem{definition}[theorem]{Definition}
\newcommand{\Borel}{\mathcal{B}}
\newcommand{\Borelmeasure}{\operatorname{Borel}}
\newcommand{\E}{\mathbb{E}}
\renewcommand{\P}{\mathbb{P}}
\newcommand{\Q}{\mathbb{Q}}
\newcommand{\R}{\mathbb{R}}
\newcommand{\N}{\mathbb{N}}
\newcommand{\Z}{\mathbb{Z}}
\newcommand{\textint}{\textstyle\int}
\newcommand{\Exp}[1]{ \E \! \left[ #1 \right]}
\newcommand{\EXP}[1]{ \E  [ #1 ]}
\newcommand{\EXPP}[1]{ \E \big[ #1 \big]}
\newcommand{\EXPPP}[1]{ \E \Big[ #1 \Big]}
\newcommand{\EXPPPP}[1]{ \E \bigg[ #1 \bigg]}
\newcommand{\norm}[1]{ \left\| #1 \right\| }
\newcommand{\Norm}[1]{ \| #1 \| }
\newcommand{\Normm}[1]{ \big\| #1 \big\| }
\newcommand{\normmm}[1]{{\left\vert\kern-0.25ex\left\vert\kern-0.25ex\left\vert #1 
    \right\vert\kern-0.25ex\right\vert\kern-0.25ex\right\vert}} 
\newcommand{\qand}{\qquad\text{and}}
\newcommand{\qandq}{\qquad\text{and}\qquad}
\newcommand{\andq}{\text{and}\qquad}
\newcommand{\diag}{\operatorname{diag}}
\newcommand{\var}{\operatorname{Var}}
\newcommand{\RN}{\mathcal{C}}
\newcommand{\sigmaAlgebra}{\mathfrak{S}}
\newcommand{\domain}{\operatorname{domain}}
\newcommand{\vast}{\bBigg@{3.5}}
\newcommand{\Vast}{\bBigg@{4}}
\begin{document}

\title{Overcoming the curse of dimensionality \\ in the approximative pricing of \\ financial  derivatives with default risks}

\author{Martin Hutzenthaler, 
Arnulf Jentzen, and Philippe von Wurstemberger}

\maketitle

\begin{abstract}
Parabolic partial differential equations (PDEs) are widely used in the mathematical modeling of natural phenomena and man made complex systems. 
In particular, parabolic PDEs are a fundamental tool to determine fair prices of financial derivatives in the financial industry. 
The PDEs appearing in financial engineering applications are often nonlinear (e.g.\ PDE models which take into account the possibility of a defaulting counterparty) and high dimensional since the dimension typically corresponds to the number of considered financial assets.
A major issue is that most approximation methods for nonlinear PDEs in the literature suffer under the so-called curse of dimensionality in the sense that the computational effort to compute an approximation with a prescribed accuracy 
grows exponentially in the dimension 
of the PDE or in the reciprocal 
of the prescribed approximation accuracy and nearly all approximation methods have not been shown not to suffer under the curse of dimensionality.
Recently, a new class of approximation schemes for semilinear parabolic PDEs, termed \textit{full history recursive multilevel Picard (MLP)} algorithms, were introduced and it was proven 
that MLP algorithms do overcome the curse of dimensionality for semilinear heat equations.
In this paper we extend those findings to a more general class of semilinear PDEs including as special cases semilinear Black-Scholes equations used for the pricing of financial derivatives with default risks.
More specifically, we introduce an MLP algorithm for the approximation of solutions of semilinear Black-Scholes equations and prove, under the assumption that the nonlinearity is globally Lipschitz continuous, that the computational effort of our method grows at most polynomially both in the dimension and the reciprocal of the prescribed approximation accuracy. 
This is, to the best of our knowledge, the first result showing that the approximation of solutions of semilinear Black-Scholes equations is a polynomially tractable approximation problem.
\end{abstract}

\tableofcontents

\section{Introduction}
\label{sect:intro}

Parabolic partial differential equations (PDEs) are widely used in the mathematical modeling of natural phenomena and man made complex systems. 
In particular, parabolic PDEs are a fundamental tool to determine fair prices of financial derivatives in the financial industry. 
The use of PDEs for option pricing originated in the work of Black, Scholes, \& Merton (see \cite{BlackScholes73, Merton73}) which suggested that the price of a financial derivative satisfies a linear parabolic PDE, nowadays known as Black-Scholes equation.
The derivation of their theory is based on several assumption which are not met in the financial practice and consequently various changes and extensions to the original pricing model have been developed.
One key modification of the initial Black-Scholes model is to include the possibility of a defaulting counterparty (cf., e.g., Burgard \& Kjaer \cite{BurgardKjaer11},  Crepey et al.\ \cite{CrepeyGerboudGrbacNgor13}, Duffie et al.\ \cite{duffie1996recursive}, and Henry-Labordere \cite{HenryLabordere12}). 
Such extended models suggest that the price process of a financial derivative satisfies a certain semilinear PDE (cf.\ \eqref{intro_thm:ass1} in Theorem~\ref{intro_thm} below and Subsections~\ref{sect:BS}--\ref{sect:pricing} below).
Typically, such PDEs can not be solved explicitly and it is therefore a very active topic of research to solve such PDEs approximatively;
cf., e.g., 
\cite{dehghan2009numerical, tadmor2012review, thomee1984galerkin, von2004numerical}
for deterministic approximation methods for PDEs,
cf., e.g., 
\cite{bally2003quantization, bender2007forward, Bender2017, bouchard2009discrete, Bouchard2004, Briand2014, Chassagneux2014, Chassagneux2014a, Chassagneux2015, Chassagneux2016,Crisan2010, Crisan2012, Crisan2014,Crisan2010a, Fu2017, Geiss2016, Delarue2006, Douglas1996, geiss2014decoupling, Gobet2010, Gobet2008, Gobet2005, Gobet2016, Gobet2016b, Gobet2016a, Huijskens2016, Labart2013, Lemor2006, Lionnet2015, Ma2002, Ma1994, Ma1999, Milstein2006, Milstein2007, pardoux1990adapted, pardoux1992backward, pardoux1999forward, Pham2015, Ruijter2015, Ruijter2016, ruszczynski2017dual,Turkedjiev2015,wu2014probabilistic, Zhang2013, Zhang2004}
for probabilistic approximation methods for PDEs using discretizations of the associated backward stochastic differential equations (BSDEs),
cf., e.g., 
\cite{Bouchard2009, cheridito2007second, Fahim2011, Guo2015, Kong2015, Zhao2017}
for probabilistic approximation methods for PDEs using temporal discretizations of the associated second-order BSDEs
%
%
cf., e.g., 
\cite{Chang2016, HenryLabordere12, henry2019branching, henry2014numerical, mckean1975application, Rasulov2010, Skorokhod1964, warin2017variations,  Watanabe1965}
for probabilistic approximation methods for PDEs using branching diffusions processes,
cf., e.g., 
\cite{Warin2018, warin2018nesting}
for probabilistic approximation methods for PDEs using nested Monte Carlo simulations,
%
%
cf., e.g., 
\cite{EHutzenthalerJentzenKruse16, E2019, OvercomingPaper, hutzenthaler2017multi}
for full history recursive multilevel Picard (MLP) approximation methods for PDEs,
and cf., e.g., 
\cite{BeckBeckerGrohsJaafariJentzen18,Becker2018, berg2018unified,chan2018machine,Weinan2017, han2018solving,he2018relu,henry2017deep,hure2019some,khoo2017solving, nabian2018deep,raissi2018forward,sirignano2018dgm}
for approximation methods for PDEs which are based on reformulations of PDEs as a deep learning problems.

The PDEs appearing in financial engineering applications are often high dimensional since the dimension corresponds to the number of financial assets (such as stocks, commodities, interest rates, or exchange rates) in the involved hedging portfolio.
A major issue is that most approximation methods suffer under the so-called curse of dimensionality (see Bellman \cite{Bellman66}) in the sense that the computational effort to compute an approximation with a prescribed accuracy $\varepsilon > 0$ grows exponentially in the dimension $d \in \N$ of the PDE or in the reciprocal $\nicefrac{1}{\varepsilon}$ of the prescribed approximation accuracy (cf., e.g., E et al.\ \cite[Section 4]{E2019} for a discussion of the curse of dimensionality in the PDE approximation literature) and nearly all approximation methods have not been shown not to suffer under the curse of dimensionality.
Recently, a new class of approximation schemes for semilinear parabolic PDEs, termed \textit{full history recursive multilevel Picard (MLP)} algorithms, were introduced in E et al.\ \cite{EHutzenthalerJentzenKruse16, E2019} and it was proven, under restrictive assumptions on the regularity of the solution of the PDE that they overcome the curse of dimensionality for semilinear heat equations.
Building on this work, \cite{OvercomingPaper} proposed for semilinear heat equations an adaption of the original MLP scheme in \cite{EHutzenthalerJentzenKruse16, E2019}. 
Under the assumption that the nonlinearity in the PDE is globally Lipschitz continuous \cite[Theorem 1.1]{OvercomingPaper} proves that the proposed scheme does indeed overcome the curse of dimensionality in the sense that the computational effort to compute an approximation with a prescribed accuracy $\varepsilon > 0$ grows at most polynomially in both the dimension $d \in \N$ of the PDE and the reciprocal $\nicefrac{1}{\varepsilon}$ of the prescribed approximation accuracy.

%

%
%

In this paper we generalize the MLP algorithm of \cite{OvercomingPaper} and the main result of this article, Theorem~\ref{general_thm} below, proves that the MLP algorithm proposed in this paper overcomes the curse of dimensionality for a more general class of semilinear PDEs which includes as special cases the important examples of semilinear Black-Scholes equations used for the pricing of financial derivatives with default risks.
In particular, we show for the first time that the solution of a semilinear Black-Scholes PDE with a globally Lipschitz continuous nonlinearity can be approximated 
with a computational effort which grows at most polynomially in both the dimension and the reciprocal of the prescribed approximation accuracy. 
Put differently, we show that the approximation of solutions of such semilinear Black-Scholes equations is a polynomially tractable approximation problem (cf., e.g., Novak \& Wozniakowski \cite{NovakWozniakowski08}).
To illustrate the main result of this paper, Theorem~\ref{general_thm} below, we present in the following theorem, Theorem~\ref{intro_thm} below, a special case of Theorem~\ref{general_thm}.
Theorem~\ref{intro_thm} demonstrates that the MLP algorithm proposed in this article overcomes the curse of dimensionality for the approximation of solutions of certain semilinear Black-Scholes equations.

\begin{theorem}
\label{intro_thm}
Let  
$T \in (0, \infty)$, $p, \mathfrak{P}, q \in [0,\infty)$, $ \alpha, \beta \in \R$,
$\Theta = \cup_{n = 1}^\infty \Z^n$,
let $f \colon  \R  \to \R$ be a Lipschitz continuous function,
let $\xi_d \in \R^d$, $d \in \N$, and $g_d \in C^2( \R^d , \R)$, $d \in \N$, satisfy that
$
     \sup_{d \in \N, x  \in \R^d} 
       \big(
         \tfrac{| g_d(x) |}{d^{\mathfrak{P}}(1 + \norm{x}_{\R^d}^p)} 
         +
         \frac{\norm{\xi_d}_{\R^d}}{d^q}
       \big)
< 
  \infty
$,
let  $u_d \in C^{1, 2}([0,T] \times \R^d, \R)$, $d \in \N$, be polynomially growing functions which satisfy
for all $d \in \N$, $t \in (0,T)$,  $x = (x_1, x_2, \ldots, x_d) \in \R^d$ that 
$
  u_d(T, x) = g_d(x)
$
and
\begin{equation} 
\label{intro_thm:ass1}
	\big(\tfrac{\partial u_d}{\partial t}\big)(t,x) 
	+
	\left[
  	\sum_{i = 1}^d 
	    \tfrac{ | \beta |^2 |x_i|^2}{2} 
  	    \big( \tfrac{\partial^2 u_d}{\partial (x_i)^2 } \big)(t,x)
    \right]
    +
	\left[
    \sum_{i = 1}^d 
	    \alpha
	     x_i
	    \big( \tfrac{\partial u_d}{\partial x_i } \big)(t,x)
	\right]
	+
  f( u_d(t, x) )
=
	0,
\end{equation}
let $ ( \Omega, \mathcal{F}, \P ) $ be a probability space, 
let 
$ \mathcal{R}^\theta \colon \Omega \to [0, 1]$, 
  $\theta \in \Theta$, 
be independent  $\mathcal{U}_{ [0, 1]}$-distributed random variables,
let 
$ R^\theta = (R^\theta_t)_{t \in [0,T]} \colon [0,T] \times \Omega \to [0, T]$, 
  $\theta \in \Theta$, 
be the stochastic processes which satisfy 
for all $t \in [0,T]$, $\theta \in \Theta$ that
$
  R^\theta_t = t + (T-t)\mathcal{R}^\theta
$,
let 
$W^{d, \theta} = (W^{d, \theta, i})_{i \in \{1, 2, \ldots, d \} } 
\colon [0, T] \times \Omega \to \R^d$, 
  $\theta \in \Theta$, $d \in \N$, 
be independent standard Brownian motions,
assume that 
$
  (
    W^{d, \theta}
  )_{d \in \N, \theta \in \Theta}
$
and
$
  (
    \mathcal{R}^\theta
  )_{\theta \in \Theta}
$
are independent, 
for every $d \in \N$, $\theta \in \Theta$, $t \in [0, T]$, $s \in [t, T]$, $x = (x_1, x_2, \ldots, x_d) \in \R^d$
let 
$ X^{d, \theta, x}_{t,s} = (X^{d, \theta, x, i}_{t,s})_{i \in \{1, 2, \ldots, d\}}  \colon  \Omega \to \R^d$
be the function which satisfies
for all 
$i \in \{1, 2, \ldots, d\}$ that
\begin{equation}
\label{intro_thm:ass7}
  X^{d, \theta, x, i}_{t,s}
=
  x_i 
  \exp \! \big(
    \big(\alpha - \tfrac{\beta^2}{2}\big)(s-t)
    +
    \beta 
    \big( W^{d, \theta, i}_s - W^{d, \theta, i}_t \big)
  \big),
\end{equation}
let 
$
  V^{d, \theta}_{M,n} \colon [0,T] \times \R^d \times \Omega \to \R
$, $M, n \in \Z$, $\theta \in \Theta$, $d \in \N$,
be functions which satisfy 
for all $d, M, n \in \N$, $\theta \in \Theta$, $t \in [0,T]$, $x \in \R^d$ that
$
  V^{d, \theta}_{M,-1} (t, x) = V^{d, \theta}_{M,0} (t, x) = 0
$
and 
\begin{equation}
\label{intro_thm:ass8}
\begin{split}
  V^{d, \theta}_{M,n}(t, x) 
&=
  \sum_{k = 0}^{n-1}
    \frac{(T-t)}{M^{n-k}} 
    \Bigg[
      \sum_{m = 1}^{M^{n-k}}
        f \Big( 
          V^{d, (\theta, k, m)}_{M, k} \big( R^{(\theta, k, m)}_t  ,  X^{d, (\theta, k, m),x}_{t, R^{(\theta, k, m)}_t} \big)
        \Big) \\
& \quad
        -
       \mathbbm{1}_{\N}(k)
       f \Big( 
         V^{d, (\theta, k, -m)}_{M, k-1} \big( R^{(\theta, k, m)}_t  ,  X^{d, (\theta, k, m), x}_{t, R^{(\theta, k, m)}_t} \big)
        \Big)
    \Bigg]
    +
  \Bigg[
    \sum_{m = 1}^{M^n}
      \frac{g_d ( X^{d, (\theta, n, -m), x}_{t, T} )}{M^n} 
  \Bigg],
\end{split}
\end{equation}
and for every $d, n, M \in \N$, $t \in [0, T]$, $x \in \R^d$ let $\RN_{d, M,n} \in  \N_0$ be the number of realizations of standard normal random variables which are used to compute one realization of  $V^{d, 0}_{M,n}(t, x)$ (see \eqref{thm_BS:ass9} below for a precise definition).
Then
there exist functions
$ 
  N 
  = (N_{d, \varepsilon})_{d \in \N, \varepsilon \in (0,1]} 
  \colon \N \times (0,1] \to \N
$ 
and 
$
  C 
= (C_\delta)_{\delta \in (0,\infty)} 
  \colon (0,\infty) \to (0,\infty)
$
such that
for all $d \in \N$, $\varepsilon \in (0,1]$, $\delta \in (0,\infty)$ it holds that
$  
  \RN_{d, N_{d, \varepsilon},N_{d, \varepsilon}}
\leq
  C_\delta \,
  d^{1 + (\mathfrak{P} + qp)(2 + \delta)}
  \varepsilon^{-(2 + \delta)}
$
and
\begin{equation}
  \big(
    \EXPP{| u_d(0, \xi_d) - V^{d, 0}_{N_{d, \varepsilon}, N_{d, \varepsilon}} (0, \xi_d) |^2 }
  \big)^{\nicefrac{1}{2}}
\leq
  \varepsilon.
\end{equation}
\end{theorem}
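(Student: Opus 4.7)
The plan is to derive Theorem~\ref{intro_thm} as a specialization of the main result Theorem~\ref{general_thm}. The drift and diffusion of the SDE underlying \eqref{intro_thm:ass1} are $\mu_d(x) = \alpha x$ and $\sigma_d(x) = \beta\diag(x_1,\ldots,x_d)$, both globally affine linear, and \eqref{intro_thm:ass7} is exactly the componentwise geometric Brownian motion solution of this SDE. The scheme \eqref{intro_thm:ass8} therefore coincides with the MLP discretization that Theorem~\ref{general_thm} analyses, and the proof reduces to verifying the structural hypotheses in this particular setting and tracking how the polynomial-in-$d$ growth constants of $g_d$ and $\xi_d$ propagate through the general complexity conclusion.

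First I would establish the nonlinear Feynman--Kac identity
\begin{equation*}
  u_d(t, x)
  =
  \E\!\left[ g_d(X^{d,0,x}_{t,T}) \right]
  +
  \int_t^T \E\!\left[ f(u_d(s, X^{d,0,x}_{t,s})) \right] ds
\end{equation*}
by applying Ito's formula to $s \mapsto u_d(s, X^{d,0,x}_{t,s})$ on $[t,T]$; the polynomial growth assumption on $u_d$ together with standard localization ensures that the local martingale part has vanishing expectation in the limit. Rewriting the time integral through the uniform variable $\mathcal{R}^\theta$ displays the exact fixed point $u_d(t,x) = \E[g_d(X^{d,0,x}_{t,T})] + (T-t)\E[f(u_d(R^0_t, X^{d,0,x}_{t, R^0_t}))]$ that \eqref{intro_thm:ass8} Monte Carlo discretizes by telescoping Picard iterates in the classical MLP fashion.

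Next I would derive the dimension-dependent $L^2$ moment bound. Each coordinate $X^{d,0,\xi_d,i}_{t,s}$ is a geometric Brownian motion for which $\E[|X^{d,0,\xi_d,i}_{t,s}|^{2p}] = |\xi_d^i|^{2p} e^{\kappa_p(s-t)}$ with $\kappa_p$ depending only on $p,\alpha,\beta$. Combining this with Minkowski's inequality in $L^p(\Omega)$ applied to $\Norm{X}_{\R^d}^2 = \sum_i |X_i|^2$ when $p \geq 1$ (or subadditivity of $x \mapsto x^p$ when $p \in [0,1)$) yields $(\E[\Norm{X^{d,0,\xi_d}_{t,s}}_{\R^d}^{2p}])^{\nicefrac{1}{p}} \leq e^{\kappa_p T/p}\Norm{\xi_d}_{\R^d}^2$, so the assumptions $|g_d(x)| \leq c d^{\mathfrak{P}}(1 + \Norm{x}_{\R^d}^p)$ and $\Norm{\xi_d}_{\R^d} \leq c d^q$ give $(\E[|g_d(X^{d,0,\xi_d}_{0,T})|^2])^{\nicefrac{1}{2}} \leq C d^{\mathfrak{P}+qp}$. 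A Gronwall argument using the Lipschitz constant of $f$ propagates this into the uniform bound $\sup_{t \in [0,T]} (\E[|u_d(t, X^{d,0,\xi_d}_{0,t})|^2])^{\nicefrac{1}{2}} \leq C' d^{\mathfrak{P}+qp}$ that enters the general theorem as its dimensional error prefactor.

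Finally, Theorem~\ref{general_thm} yields an MLP $L^2$-error bound of the form $C^N N^{-\nicefrac{N}{2}}$ multiplied by the above error constant, while one realization of $V^{d,0}_{N,N}$ requires at most $d \cdot (cN)^N$ standard normals (the factor $d$ from sampling $X^{d,\cdot,\cdot}_{\cdot,\cdot}$ at a single time, the $(cN)^N$ from the $N$ nested MLP levels each of arity $\mathcal{O}(N)$). Defining $N_{d,\varepsilon}$ as the smallest integer for which $C^N N^{-\nicefrac{N}{2}} d^{\mathfrak{P}+qp} \leq \varepsilon$ and absorbing the $C^N$ prefactor into the polynomial slack yields $\RN_{d,N_{d,\varepsilon},N_{d,\varepsilon}} \leq C_\delta \, d \cdot (d^{\mathfrak{P}+qp}/\varepsilon)^{2+\delta} = C_\delta \, d^{1+(\mathfrak{P}+qp)(2+\delta)} \varepsilon^{-(2+\delta)}$, which is the claimed bound. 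The main obstacle of the plan is exactly this book-keeping: verifying that the Minkowski-in-$L^p$ step for the GBM moments and the Gronwall step for the Lipschitz-$f$ propagation introduce no hidden polynomial-in-$d$ constants beyond the single factor $d^{\mathfrak{P}+qp}$ that governs the final exponent.
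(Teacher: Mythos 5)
Your proposal is correct and follows essentially the same route as the paper: Theorem~\ref{intro_thm} is obtained by specializing Theorem~\ref{general_thm} (via Theorems~\ref{thm_BS} and~\ref{thm_BS_simpler}) to $\mu_d(t,x)=\alpha x$, $\sigma_d(t,x)=\beta\operatorname{diag}(x)$, $C_1=0$, $C_2=\max\{|\alpha|,\beta^2\}$, and $P=0$, which collapses the exponent $1+(\mathfrak{P}+\max\{pq,\nicefrac{Pp}{2}\})(2+\delta)$ to $1+(\mathfrak{P}+pq)(2+\delta)$ as claimed. The one thing worth noting is that several steps you propose to carry out explicitly — the nonlinear Feynman--Kac representation, the moment bound on $g_d(X^{d,0,\xi_d}_{0,T})$, and the Gronwall propagation to $\sup_t(\E[|u_d(t,X^{d,0,\xi_d}_{0,t})|^2])^{1/2}$ — are already internal to the proof of Theorem~\ref{general_thm} (via \cref{general_d_fixed}, \cref{L2_estimate}, and \cref{bound_solution}) and need not be redone by the user of that theorem; once the Lipschitz and linear-growth hypotheses on $\mu_d,\sigma_d,f,g_d,\xi_d$ are verified, the $L^2$-error bound $C\,d^{\mathfrak{P}+pq}\,[\sqrt{e}(1+2LT)/\sqrt{N}]^N$ and the cost bound $\RN_{d,N,N}\le d(5N)^N$ come out of the black box. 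Your explicit geometric-Brownian-motion moment computation via componentwise Minkowski is a perfectly serviceable substitute for the paper's Lyapunov argument in \cref{moment_bound_SDE}; it exploits the special structure more directly, whereas the paper's route is dimension-robust for general affine-growth coefficients. The final bookkeeping of $N_{d,\varepsilon}$ matches \cref{comp_and_error}.
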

Theorem~\ref{intro_thm} is an immediate consequence of Theorem~\ref{thm_BS_simpler} below. Theorem~\ref{thm_BS_simpler} in turn is a consequence of Theorem~\ref{general_thm} below, the main result of this paper. 
We now provide some explanations for Theorem~\ref{intro_thm}.
In Theorem~\ref{intro_thm} we present a stochastic approximation scheme (cf.\ $(V^{d, 0}_{M, n})_{M,n,d \in \N}$ in Theorem~\ref{intro_thm} above) which is able to approximate in the strong $L^2$-sense the initial value $u_d(0, \xi_d)$ of the solution of an uncorrelated semilinear Black-Scholes equation (cf.\ \eqref{intro_thm:ass1} in Theorem~\ref{intro_thm} above) with a computational effort which grows at most polynomially in both the dimension $d \in \N$ and the reciprocal $\nicefrac{1}{\varepsilon}$ of the prescribed approximation accuracy $\varepsilon > 0$.
The time horizon $T \in (0,\infty)$, the drift parameter $\alpha \in \R$, the diffusion parameter $\beta \in \R$, as well as the Lipschitz continuous nonlinearity $f \colon \R \to \R$ of the semilinear Black-Scholes equations in Theorem~\ref{intro_thm} above (cf.\ \eqref{intro_thm:ass1} in Theorem~\ref{intro_thm} above) are fixed over all dimensions 
(cf.\ Theorem~\ref{thm_BS} for a more general result with dimension-dependent drift and diffusion coefficients and dimension-dependent nonlinearities which may additionally depend on the time and the space variable).
The approximation points $(\xi_d)_{d \in \N}$ and the terminal conditions $(g_d)_{d \in \N}$ of the PDE \eqref{intro_thm:ass1} in Theorem~\ref{intro_thm} above are both allowed to grow in a certain polynomial fashion determined by the constants $p, \mathfrak{P}, q \in [0,\infty)$.
The idea for the full history multilevel Picard scheme (cf.\ $(V^{d, \theta}_{M,n})_{M,d \in \N, n \in \N_0, \theta \in \Theta}$ in Theorem~\ref{intro_thm} above) is based on a reformulation of the semilinear PDE in \eqref{intro_thm:ass1} as a stochastic fixed point equation.
For this we consider the independent solution fields $(X^{d, \theta})_{d \in \N, \theta \in \Theta}$ of the stochastic differential equation (SDE) associated to the PDE in \eqref{intro_thm:ass1} and for every $t \in [0,T]$ we consider independent $\mathcal{U}_{[t, T]}$-distributed random variables $(R^{\theta}_t)_{\theta \in \Theta}$. 
As a consequence of the Feynman-Kac formula 
we obtain that $(u_d)_{d \in \N}$ are the unique at most polynomially growing functions which satisfy
for all $d \in \N$, $\theta \in \Theta$, $t \in [0, T]$, $x \in \R^d$ that
\begin{equation}
\label{intro:eq1}
  u_d(t,x)
=
  \Exp{
    g \big( X^{d, \theta, x}_{t, T} \big)
    +
    (T-t)
    f \big( u_d ( R^\theta_t, X^{d, \theta, x}_{t,R^\theta_t }) \big)
  }.
\end{equation}
Note that 
for all $d, M, n \in \N$, $\theta \in \Theta$, $t \in [0, T]$, $x \in \R^d$ it holds that
\begin{equation}
\label{intro:eq2}
\begin{split}
  \EXPP{V^{d, \theta}_{M,n}(t, x)}
&=
  \sum_{k = 0}^{n-1}
    (T-t)
    \EXPPP{
       f \big( 
          V^{d, (\theta, 1)}_{M, k} \big( R^{(\theta, 1)}_t  ,  X^{d, (\theta, 1),x}_{t, R^{(\theta, 1)}_t} \big)
        \big) \\
&\qquad
        -
       \mathbbm{1}_{\N}(k)
       f \big( 
         V^{d, (\theta, -1)}_{M, k-1} \big( R^{(\theta, 1)}_t  ,  X^{d, (\theta, 1), x}_{t, R^{(\theta, 1)}_t} \big)
        \big)
    }
    + 
    \EXPP{g_d ( X^{d, \theta, x}_{t, T} )}\\
&=
  \Exp{
    g_d ( X^{d, \theta, x}_{t, T} ) + (T-t)
      f \big( 
          V^{d, \theta}_{M, n-1} \big( R^{\theta}_t  ,  X^{d, \theta,x}_{t, R^{\theta}_t} \big)
        \big) 
  }.
\end{split}
\end{equation}
Thus for every $d,M \in \N$, $\theta \in \Theta$ the sequence of random fields $(V^{d, \theta}_{M,n})_{n \in \N_0}$ behave, in expectation, like Picard iterations for the stochastic fixed point equation in \eqref{intro:eq1} above. In each iteration in \eqref{intro_thm:ass8} the expectation of the Picard iteration for the stochastic fixed point equation in \eqref{intro:eq1} is approximated with a multilevel Monte Carlo approach on a telescope expansion over the full history of the previous iterations. 
According to the multilevel Monte Carlo paradigm the number of samples in each level is chosen such that computationally inexpensive summands (corresponding to small $k \in \{0,1,2, \ldots, n \}$ in \eqref{intro:eq2}) of the telescope expansion get sampled more often than computationally expensive ones (corresponding to large $k \in \{0,1,2, \ldots, n \}$ in \eqref{intro:eq2}). 
Roughly speaking, the conclusion of Theorem~\ref{intro_thm} above states (cf.\ Theorem~\ref{intro_thm} above for the precise formulation) that
for every $d \in \N$, $\varepsilon \in (0,1]$ there exists a natural number $N \in \N$ 
such that $V^{d, 0}_{N,N}(0, \xi_d)$ approximates $u_d(0,\xi_d)$ in the $L^2$-sense with accuracy $\varepsilon$ and such that the computational effort to compute $V^{d, 0}_{N,N}(0, \xi_d)$ is essentially of the order $d^{1 + 2(\mathfrak{P} + pq)} \varepsilon^{-2}$.
Remarkably this is exactly the computational complexity of the standard Monte Carlo approximation of the solution of the PDE \eqref{intro_thm:ass1} in the case that the nonlinearity $f$ vanishes (cf., e.g., Graham \& Talay \cite{GrahamTalay13}).

The remainder of this paper is structured as follows.
In Section~\ref{sect:flow} we prove a well-known distributional flow property for the composition of independent solutions fields of a stochastic differential equation (SDE) (see Lemma~\ref{flow_SDE} below), which will be a key assumption in the abstract treatment of stochastic fixed point equations in Section~\ref{sect:results}.
Several auxiliary results which are needed for the proof of the flow property (see Lemma~\ref{flow_SDE} below) in Section~\ref{sect:flow} will be used again in Section~\ref{sect:results}.
Section~\ref{sect:results} introduces the MLP algorithm, provides a complexity analysis in the setting of stochastic fixed point equations in Subsections~\ref{sect:setting}--\ref{sect:comp_effort}, and then carries over those results to semilinear Kolmogorov PDEs in Subsection~\ref{sect:kolmogorov} leading to Theorem~\ref{general_thm} below, the main result of this article.
In the last section, Section~\ref{sect:applications}, we apply the result for general semilinear Kolmogorov PDEs of Theorem~\ref{general_thm} to semilinear heat equations (see Subsection~\ref{sect:heat}) and semlinear Black-Scholes equations (see Subsection~\ref{sect:BS} and Subsection~\ref{sect:pricing}) which are notably used to compute prices for financial derivatives in the presence of counterparty credit risks (see Subsection~\ref{sect:pricing}).

\section{On a distributional flow property for stochastic differential equations (SDEs)}
\label{sect:flow}

In our analysis of the proposed MLP algorithm in Section~\ref{sect:results} below, we will make use of random fields which satisfy a certain flow-type condition (see \eqref{setting:eq2} in Setting~\ref{setting} below).
The main intent of this section is to establish that solution processes of SDEs enjoy, under suitable conditions (see Lemma~\ref{flow_SDE} below for details), this flow-type property.
To rigorously prove this result we need a series of elementary and well-known results, presented in Subsections~\ref{sect:gronwall}--\ref{sect:BM_and_filtrations} below, many of which will be reused in Section~\ref{sect:results}.


\subsection{Time-discrete Gronwall inequalities}
\label{sect:gronwall}
In this subsection we present elementary and well-known Gronwall inequalities (cf., e.g., Agarwal \cite{Agarwal00}).

\begin{lemma}
\label{gronwall_nonequidistant}
Let $N \in \N $, $\alpha \in [0,\infty)$, $(\beta_{n})_{n \in \{0, 1, 2, \ldots, N - 1 \} } \subseteq [0,\infty)$, $(\epsilon_{n})_{n \in \{0, 1, 2, \ldots, N \} } \subseteq [0,\infty]$ satisfy 
for all $n \in \{0, 1, 2, \ldots, N \}$ that
\begin{equation}
\label{gronwall_nonequidistant:ass1}
  \epsilon_{n}
\leq
  \alpha
  + 
  \left[
    \sum_{k = 0}^{n-1} \beta_k \epsilon_{k}
  \right].
\end{equation}
Then 
it holds 
for all $n \in \{0, 1, 2, \ldots, N \}$ that
\begin{equation}
\label{gronwall_nonequidistant:concl1}
  \epsilon_{n}
\leq
   \alpha 
   \left[
     \prod_{k = 0}^{n-1}
     (1 + \beta_k )
   \right]
\leq
  \alpha \exp \! \left(  \sum_{k = 0}^{n-1} \beta_k \right)
<
  \infty.
\end{equation}
\end{lemma}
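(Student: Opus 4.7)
The plan is to prove the bound $\epsilon_n \leq \alpha \prod_{k=0}^{n-1}(1+\beta_k)$ by induction on $n \in \{0,1,\ldots,N\}$ and then to derive the remaining two inequalities as elementary estimates. The proof is entirely elementary; there is essentially no obstacle beyond bookkeeping, the only subtle point being that $\epsilon_n$ is allowed to take the value $+\infty$, which forces a short sanity check that the inductive hypothesis actually yields finiteness.

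For the base case $n=0$, the assumption \eqref{gronwall_nonequidistant:ass1} reduces to $\epsilon_0 \leq \alpha$ since the sum over the empty set is zero, and the empty product equals one, so the bound holds with equality on the right-hand side. For the inductive step, assuming $\epsilon_k \leq \alpha \prod_{j=0}^{k-1}(1+\beta_j)$ for all $k \in \{0,1,\ldots,n-1\}$, I would insert this into \eqref{gronwall_nonequidistant:ass1} to obtain
\begin{equation*}
  \epsilon_n \leq \alpha + \sum_{k=0}^{n-1} \beta_k \alpha \prod_{j=0}^{k-1}(1+\beta_j).
\end{equation*}
The key algebraic identity is the telescoping relation
\begin{equation*}
  \prod_{k=0}^{n-1}(1+\beta_k) - 1 = \sum_{k=0}^{n-1}\left[\prod_{j=0}^{k}(1+\beta_j) - \prod_{j=0}^{k-1}(1+\beta_j)\right] = \sum_{k=0}^{n-1}\beta_k \prod_{j=0}^{k-1}(1+\beta_j),
\end{equation*}
which, multiplied by $\alpha$, exactly identifies the right-hand side of the previous display with $\alpha \prod_{k=0}^{n-1}(1+\beta_k)$, completing the induction.

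Finally, the second inequality in \eqref{gronwall_nonequidistant:concl1} follows from the pointwise estimate $1+x \leq e^{x}$ for $x \in [0,\infty)$ applied factor by factor, yielding $\prod_{k=0}^{n-1}(1+\beta_k) \leq \exp(\sum_{k=0}^{n-1}\beta_k)$; and the finiteness assertion is immediate since $\alpha \in [0,\infty)$ and the exponential of a finite nonnegative sum is finite. The only place where one must be careful is that the chain of inequalities simultaneously certifies $\epsilon_n < \infty$ from the assumption $\epsilon_n \in [0,\infty]$, so no separate finiteness argument is needed.
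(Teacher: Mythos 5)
Your proof is correct and follows essentially the same approach as the paper's: an induction establishing $\epsilon_n \leq \alpha\prod_{k=0}^{n-1}(1+\beta_k)$, followed by the factor-wise estimate $1+x \leq e^x$. The only cosmetic difference is that the paper introduces an auxiliary sequence $(u_n)$ satisfying the recursion with equality and shows $u_n = \alpha\prod_{k=0}^{n-1}(1+\beta_k)$ via the compact step $u_n = (1+\beta_{n-1})u_{n-1}$, whereas you carry out the induction directly on $\epsilon_n$ using the telescoping identity $\prod_{k=0}^{n-1}(1+\beta_k) - 1 = \sum_{k=0}^{n-1}\beta_k\prod_{j=0}^{k-1}(1+\beta_j)$; these are algebraically equivalent manipulations.
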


\begin{proof}[Proof of Lemma~\ref{gronwall_nonequidistant}]
Throughout this proof let $(u_{n})_{n \in \{0, 1, 2, \ldots, N \} } \subseteq [0,\infty]$ be the extended real numbers which satisfy
for all $n \in \{0, 1, 2, \ldots, N \}$ that
\begin{equation}
\label{gronwall_nonequidistant:setting1}
  u_{n}
=
  \alpha
  + 
  \left[
    \sum_{k = 0}^{n-1} \beta_k u_{k}
  \right].
\end{equation}
We claim that 
for all $n \in \{0, 1, 2, \ldots, N \}$ it holds that
\begin{equation}
\label{gronwall_nonequidistant:eq1}
  u_{n}
=
   \alpha 
   \left[
     \prod_{k = 0}^{n-1}
     (1 + \beta_k )
   \right].
\end{equation}
We now prove \eqref{gronwall_nonequidistant:eq1} by induction on $n \in \{0, 1, 2, \ldots, N \}$.
For the base case $n = 0$ observe that \eqref{gronwall_nonequidistant:setting1} ensures that
\begin{equation}
  u_0
=
  \alpha.
\end{equation}
This proves \eqref{gronwall_nonequidistant:eq1} in the base case $n = 0$.
For the induction step $\{0, 1, 2, \ldots, N-1 \} \ni (n-1) \to n \in \{1, 2, \ldots, N \}$ observe that \eqref{gronwall_nonequidistant:setting1} implies that
for all $n \in \{1, 2, \ldots, N \}$ with 
$
  u_{n-1}
=
   \alpha 
   \left[
     \prod_{k = 0}^{n-2}
     (1 + \beta_k )
   \right]
$
it holds that 
\begin{equation}
\begin{split}
  u_{n}
&=
  \alpha
  + 
  \left[
    \sum_{k = 0}^{n-1} \beta_k u_{k}
  \right]
=
  \alpha
  + 
  \left[
    \sum_{k = 0}^{n-2} \beta_k u_{k}
  \right]
  +
  \beta_{n-1} u_{n-1}
\\& =
  u_{n-1}
  +
  \beta_{n-1} u_{n-1}
=
  (1+\beta_{n-1})u_{n-1}
=
  \alpha 
   \left[
     \prod_{k = 0}^{n-1}
     (1 + \beta_k )
   \right].
\end{split}
\end{equation}
Induction thus establishes \eqref{gronwall_nonequidistant:eq1}.
Moreover, note that \eqref{gronwall_nonequidistant:ass1}, \eqref{gronwall_nonequidistant:setting1}, and induction prove that 
for all $n \in \{0, 1, 2, \ldots, N \}$ it holds that
\begin{equation}
  \epsilon_{n}
\leq
  u_n.
\end{equation}
This and \eqref{gronwall_nonequidistant:eq1} establish that 
for all $n \in \{0, 1, 2, \ldots, N \}$ it holds that
\begin{equation}
\label{gronwall_nonequidistant:eq2}
  \epsilon_{n}
\leq
  \alpha 
   \left[
     \prod_{k = 0}^{n-1}
     (1 + \beta_k )
   \right].
\end{equation}
The fact that 
for all $x \in \R$ it holds that
$(1+x) \leq \exp(x)$ therefore ensures that
for all $n \in \{0, 1, 2, \ldots, N \}$ it holds that
\begin{equation}
  \epsilon_{n}
\leq
  \alpha 
   \left[
     \prod_{k = 0}^{n-1}
     (1 + \beta_k )
   \right]
\leq
  \alpha 
   \left[
     \prod_{k = 0}^{n-1}
     \exp(\beta_k)
   \right]
=
  \alpha 
     \exp \left(\sum_{k = 0}^{n-1}\beta_k\right).
\end{equation}
The proof of Lemma~\ref{gronwall_nonequidistant} is thus completed.
\end{proof}

\begin{cor}
\label{gronwall_discrete}
Let $N \in \N \cup \{ \infty\}$, $\alpha, \beta \in [0,\infty)$, $(\epsilon_{n})_{n \in \N_0 \cap [0,N]} \subseteq [0,\infty]$ satisfy 
for all $n \in \N_0 \cap [0,N]$ that
\begin{equation}
\label{gronwall_discrete:ass1}
  \epsilon_{n}
\leq
  \alpha
  + 
  \beta
  \left[
    \sum_{k = 0}^{n-1} \epsilon_{k}
  \right].
\end{equation}
Then 
it holds 
for all $n \in \N_0 \cap [0,N]$ that
\begin{equation}
\label{gronwall_discrete:concl1}
  \epsilon_{n}
\leq
   {\alpha(1 + \beta)^n}
\leq
  \alpha \, e^{\beta n}
<
  \infty.
\end{equation}
\end{cor}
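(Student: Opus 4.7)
The plan is to derive \cref{gronwall_discrete} as an essentially immediate specialization of \cref{gronwall_nonequidistant} by choosing the coefficients $(\beta_k)_{k \in \N_0}$ to be the constant sequence equal to $\beta$. Since \cref{gronwall_nonequidistant} is stated only for finite $N \in \N$, I would handle the possibility $N = \infty$ in the corollary by arguing pointwise: for each fixed $n \in \N_0 \cap [0,N]$, I apply \cref{gronwall_nonequidistant} with the finite horizon $N$ replaced by $n$ (or any natural number at least $n$).

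More concretely, first I would fix an arbitrary $n \in \N_0 \cap [0,N]$ and set $\widetilde N := \max\{n,1\} \in \N$ together with $\beta_k := \beta$ for $k \in \{0,1,\ldots,\widetilde N - 1\}$. The hypothesis \eqref{gronwall_discrete:ass1} then immediately yields that for all $m \in \{0,1,\ldots,\widetilde N\}$ one has $\epsilon_m \leq \alpha + \sum_{k=0}^{m-1} \beta_k \epsilon_k$, so the assumption \eqref{gronwall_nonequidistant:ass1} of \cref{gronwall_nonequidistant} is fulfilled. Applying the conclusion \eqref{gronwall_nonequidistant:concl1} at index $n$ gives
\begin{equation*}
  \epsilon_n
\leq
  \alpha \left[ \prod_{k=0}^{n-1}(1+\beta_k) \right]
\leq
  \alpha \exp \! \left( \sum_{k=0}^{n-1} \beta_k \right)
<
  \infty.
\end{equation*}

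The last step is the trivial calculation that, since $\beta_k = \beta$ is constant, $\prod_{k=0}^{n-1}(1+\beta_k) = (1+\beta)^n$ and $\sum_{k=0}^{n-1}\beta_k = \beta n$, whence the desired bound $\epsilon_n \leq \alpha(1+\beta)^n \leq \alpha e^{\beta n} < \infty$ follows. As $n$ was arbitrary in $\N_0 \cap [0,N]$, this establishes \eqref{gronwall_discrete:concl1}. There is no real obstacle here; the only mild subtlety is the $N=\infty$ case, which is dispatched by noting that \cref{gronwall_nonequidistant} can be applied on an arbitrarily large but finite horizon containing the index of interest.
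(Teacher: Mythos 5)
Your proposal is correct and follows essentially the same route as the paper, which also derives Corollary~\ref{gronwall_discrete} directly from Lemma~\ref{gronwall_nonequidistant} by taking the constant sequence $\beta_k = \beta$. The only difference is that you spell out how to handle the $N = \infty$ case by truncating to a finite horizon, a detail the paper's one-line proof leaves implicit.
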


\begin{proof}[Proof of Corollary~\ref{gronwall_discrete}]
Note that Lemma~\ref{gronwall_nonequidistant} establishes Corollary~\ref{gronwall_discrete}.
The proof of Corollary~\ref{gronwall_discrete} is thus completed.
\end{proof}

\subsection{A priori moment bounds for solutions of SDEs}
\label{sect:moment_bound}
In this subsection we establish in the elementary result in Lemma~\ref{moment_bound_SDE} below for every $p \in [0,\infty)$ a bound on the $p$-th absolute moment of the solution of an SDE with deterministic initial value, a one-sided linear growth condition on the drift coefficient of the SDE, and a linear growth condition on the diffusion coefficient of the SDE (cf.\ \eqref{moment_bound_SDE:ass1} in Lemma~\ref{moment_bound_SDE} below).
Our proof of Lemma~\ref{moment_bound_SDE} employs standard Lyapunov-type techniques from the literature to establish the desired a priori moment bound
(cf., e.g., Cox et al.\ \cite[Section 2.2]{cox2013local}).

\begin{lemma}
\label{polynomials_lyapunov}
Let $d, m \in \N$, $T , C_1, C_2 \in [0,\infty)$, 
let $\left< \cdot, \cdot \right> \colon \R^d \times \R^d \to \R$ be the Euclidean scalar product on $\R^d$,
let $\norm{\cdot} \colon \R^d \to [0,\infty)$ be the Euclidean norm on $\R^d$,
let $\normmm{\cdot} \colon \R^{d \times m} \to [0,\infty)$ be the Frobenius norm on $\R^{d \times m}$,
and
let 
$\mu \colon [0, T] \times \R^d \to \R^d$, 
$\sigma \colon [0, T] \times \R^d \to \R^{d \times m}$, and 
$V_p \colon \R^d \to (0,\infty)$, $p \in [2,\infty)$,
be functions which satisfy 
for all $t \in [0, T]$, $x \in \R^d$, $p \in [2,\infty)$ that
\begin{equation}
\label{polynomials_lyapunov:ass1}
  \max \{ \left <x, \mu(t, x) \right>,  \normmm{\sigma(t, x)}^2  \}
\leq
  C_1 + C_2 \norm{x}^2
\qandq
  V_p(x) 
=
  (1 + \norm{x}^2)^{\nicefrac{p}{2}}.
\end{equation}
Then 
\begin{enumerate}[(i)]
\item \label{polynomials_lyapunov:item1}
it holds 
for all $p \in [2,\infty)$ that 
$V_p \in C^{\infty}(\R^d, (0,\infty))$ 
and

\item \label{polynomials_lyapunov:item2}
it holds 
for all $t \in [0, T]$, $x \in \R^d$, $p \in [2,\infty)$ that
\begin{equation}
\begin{split}
  &\tfrac{1}{2} 
	\operatorname{Trace}\! \big( 
		\sigma(t, x)[\sigma(t, x)]^{\ast}(\operatorname{Hess} V_p )(x)
	\big)
	+
	\langle 
	 \mu(t,x), 
	 (\nabla V_p)(x)
	\rangle \\
&\leq
	\tfrac{p(p+1)}{2} 
	\big(
	  \tfrac{p-2}{p} + C_2
	\big) 
	V_p(x) 
	+ 
	(p+1) |C_1|^{\nicefrac{p}{2}}.
\end{split}
\end{equation}
\end{enumerate}
\end{lemma}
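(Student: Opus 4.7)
The plan is to compute the gradient and Hessian of $V_p$ explicitly, substitute them into the generator expression on the left hand side, use the growth bounds \eqref{polynomials_lyapunov:ass1} to obtain a bound of the form $\text{(const)} \cdot h^{p/2-1}(C_1 + C_2\|x\|^2)$ where $h(x) = 1 + \norm{x}^2$, and finally apply Young's inequality to split the $C_1 h^{p/2-1}$ piece into a multiple of $V_p$ plus a multiple of $C_1^{p/2}$.

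Assertion \eqref{polynomials_lyapunov:item1} is immediate since $1 + \norm{x}^2 > 0$ and $y \mapsto y^{p/2}$ is $C^\infty$ on $(0,\infty)$. For \eqref{polynomials_lyapunov:item2}, writing $h(x) = 1 + \norm{x}^2$ so that $V_p = h^{p/2}$, a direct differentiation gives $\nabla V_p(x) = p\, h(x)^{p/2-1} x$ and $\operatorname{Hess} V_p(x) = p(p-2)\, h(x)^{p/2-2}\, x x^\ast + p\, h(x)^{p/2-1} I_d$. Substituting these expressions yields
\begin{equation*}
  \tfrac{1}{2}\operatorname{Trace}\!\big(\sigma\sigma^{\ast}\operatorname{Hess} V_p\big) + \langle\mu,\nabla V_p\rangle
  = \tfrac{p(p-2)}{2} h^{p/2-2}\Norm{\sigma^{\ast}x}^2 + \tfrac{p}{2} h^{p/2-1}\normmm{\sigma}^2 + p\, h^{p/2-1}\langle x,\mu\rangle.
\end{equation*}

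Since $p \geq 2$, the coefficient $p(p-2)/2$ is nonnegative, so I would estimate $\Norm{\sigma^{\ast}x}^2 \leq \normmm{\sigma}^2 \Norm{x}^2 \leq \normmm{\sigma}^2 h$, collapsing the first two terms into $\tfrac{p(p-1)}{2} h^{p/2-1}\normmm{\sigma}^2$. Applying \eqref{polynomials_lyapunov:ass1} to both $\normmm{\sigma}^2$ and $\langle x,\mu\rangle$ gives the bound $\tfrac{p(p+1)}{2}\, h^{p/2-1}(C_1 + C_2\Norm{x}^2)$, and one more application of $\Norm{x}^2 \leq h$ to the $C_2$ piece turns this into $\tfrac{p(p+1)}{2}\big(C_1 h^{p/2-1} + C_2 V_p\big)$.

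The one nonroutine step is absorbing $C_1 h^{p/2-1}$ into the claimed right hand side, and this is where Young's inequality with conjugate exponents $p/2$ and $p/(p-2)$ enters: it yields $C_1 h^{p/2-1} \leq \tfrac{2}{p} C_1^{p/2} + \tfrac{p-2}{p} h^{p/2}$, which after multiplication by $\tfrac{p(p+1)}{2}$ produces precisely the constants $(p+1)$ in front of $C_1^{p/2}$ and $\tfrac{p(p+1)}{2}\cdot\tfrac{p-2}{p}$ in front of $V_p$ appearing in the statement. The degenerate case $p = 2$ (where $h^{p/2-1} \equiv 1$ and $\tfrac{p-2}{p} = 0$) can be verified separately by a direct comparison. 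I do not anticipate any genuine obstacle; the only point requiring care is the exact choice of Young exponents, which is forced by the target constant $\tfrac{p-2}{p}$ on $V_p$.
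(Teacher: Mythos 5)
Your proposal is correct and follows essentially the same route as the paper's proof: explicit computation of $\nabla V_p$ and $\operatorname{Hess} V_p$, the Cauchy--Schwarz bound $\Norm{\sigma^\ast x}^2 \le \normmm{\sigma}^2\norm{x}^2 \le \normmm{\sigma}^2(1+\norm{x}^2)$ to collapse the two $\sigma$-terms, the growth hypothesis \eqref{polynomials_lyapunov:ass1}, and Young's inequality with conjugate exponents $\nicefrac{p}{2}$ and $\nicefrac{p}{(p-2)}$ to absorb the $C_1(1+\norm{x}^2)^{\nicefrac{p}{2}-1}$ term, with the case $p=2$ treated separately. The paper works in coordinates while you use the matrix form $\operatorname{Hess} V_p = p(p-2)h^{\nicefrac{p}{2}-2}xx^\ast + p\,h^{\nicefrac{p}{2}-1}I_d$, but the two computations are equivalent and the resulting constants match exactly.
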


\begin{proof}[Proof of Lemma~\ref{polynomials_lyapunov}]
 Throughout this proof let 
 	$\sigma_{i,j}\colon [0,T]\times\R^d \to \R$, 
	$i\in\{1,2,\ldots,d\}$, 
 	$j\in\{1,2,\ldots,m\}$, 
 be the functions which satisfy for all 
	$t\in [0,T]$, 
	$x\in \R^d$ 
 that
\begin{equation} 
\sigma(t,x) = \begin{pmatrix}
\sigma_{1,1}(t,x) 
& 
\sigma_{1,2}(t,x) 
& 
\ldots
& 
\sigma_{1,m}(t,x) \\
\sigma_{2,1}(t,x) 
&
\sigma_{2,2}(t,x)
&
\ldots 
&
\sigma_{2,m}(t,x) \\
\vdots 
& 
\vdots 
& 
\ddots 
& 
\vdots \\
\sigma_{d,1}(t,x) 
& 
\sigma_{d,2}(t,x) 
& 
\ldots 
& 
\sigma_{d,m}(t,x) 
\end{pmatrix} \in\R^{d\times m}.  
\end{equation}
 Note that the chain rule, 
 the fact that the function 
 $\R^d\ni x \mapsto 1 + \norm{x}^2 \in (0,\infty)$ is infinitely  often differentiable, 
 and   
 the fact that for every $p \in [2,\infty)$ the function 
 $ (0,\infty) \ni s \mapsto s^{\frac{p}{2}} \in (0,\infty)$ 
 is infinitely often differentiable
 establish item~\eqref{polynomials_lyapunov:item1}. 
 It thus remains to prove
 item~\eqref{polynomials_lyapunov:item2}. 
 For this, observe that the chain rule ensures that
 for all 
	$x = (x_1, \ldots, x_d) \in\R^d$, 
	$i,j\in \{1,2,\ldots,d\}$,
	$p \in [2,\infty)$
 it holds that 
 \begin{equation}
  (\nabla V_p)(x) 
  = 
  \tfrac{p}{2} 
  \left( 1 + \norm{x}^2 \right)^{\frac{p}{2}-1} \cdot (2x) 
  =   pV_p(x) \left[\tfrac{1}{1 + \norm{x}^2} \right] x
 \end{equation}
 and 
 \begin{equation}
  \begin{split}
  (\tfrac{\partial^2 V_p}{\partial x_i\partial x_j})(x) 
  & 
  =
  \tfrac{\partial}{\partial x_i}
  \left[ 
   p 
   \left( 1 + \norm{x}^2 \right)^{\frac{p}{2}-1} x_j
  \right]
  \\
  &
  =
  p  
  \left[
    \tfrac{\partial }{\partial x_i}\left( 1 + \norm{x}^2 \right)^{\frac{p}{2}-1}
  \right] 
   x_j
  + 
  p \left( 1 + \norm{x}^2 \right)^{\frac{p}{2}-1} 
  \left[\tfrac{\partial}{\partial x_i} x_j \right]
  \\
  & 
  =
  p (\tfrac{p}{2}-1)
    \left( 1 + \norm{x}^2 \right)^{\frac{p}{2}-2} \cdot
  (2x_i) x_j
  + 
  p \left( 1 + \norm{x}^2 \right)^{\frac{p}{2}-1} \mathbbm{1}_{ \{ i \}}(j)
  \\
  & 
  = 
  p(p-2) V_p(x) \tfrac{x_i x_j}{(1+\norm{x}^2)^2} 
  +  p V_p(x) \tfrac{\mathbbm{1}_{ \{ i \}}(j)}{1 + \norm{x}^2}  
  \\
  & 
  = 
  p V_p(x)
  \left[ 
   (p-2) 
    \tfrac{x_i x_j}
    {(1+\norm{x}^2)^2} 
  +   
    \tfrac{\mathbbm{1}_{ \{ i \}}(j)}
    {1 + \norm{x}^2}
  \right]. 
  \end{split}
 \end{equation}
 This implies that for all 
 	$t\in [0,T]$, 
 	$x = (x_1, \ldots, x_d)\in\R^d$,
 	$p \in [2,\infty)$
 it holds
 that 
 \begin{equation}
 \label{polynomials_lyapunov:eq0}
  \begin{split}
  & \tfrac12 
  \operatorname{Trace}\!
  \big( 
   \sigma(t,x)[\sigma(t,x)]^{*}(\operatorname{Hess} V_p)(x)
  \big) 
  + 
  \langle 
  \mu(t,x), 
  (\nabla V_p)(x) 
  \rangle
  \\
  & = 
  \tfrac12 \left[
  \sum_{k=1}^m\sum_{i,j=1}^d \sigma_{i,k}(t,x)\sigma_{j,k}(t,x) (\tfrac{\partial^2 V_p}{\partial x_i\partial x_j})(t,x) 
  \right]
  + 
  \left\langle 
   \mu(t,x), (\nabla V_p)(x) 
  \right\rangle
  \\
  & = 
  \tfrac{p V_p(x)}{2} 
  \left(
  \left[ \sum_{k=1}^m\sum_{i,j=1}^d 
  \sigma_{i,k}(t,x) \sigma_{j,k}(t,x) 
  \left(
  (p-2) \tfrac{x_i x_j}{( 1 + \norm{x}^2)^2 } 
  +
  \tfrac{\mathbbm{1}_{ \{ i \}}(j)}{1+\norm{x}^2}
  \right) 
  \right]
  + \tfrac{2\langle \mu(t,x),x\rangle
  }{1+\norm{x}^2}
  \right)
  \\
  & = 
  \tfrac{p V_p(x)}{2} 
  \left(
   \tfrac{(p-2)}{(1+\norm{x}^2)^2}  
   \left[
   \sum_{k=1}^m 
   \left[
   \sum_{i=1}^d \sigma_{i,k}(t,x)x_i
   \right]^2
   \right]
   + 
   \tfrac{\normmm{\sigma(t,x)}^2}{1+\norm{x}^2}
   + 
   \tfrac{2\langle\mu(t,x),x\rangle}{1+\norm{x}^2}
  \right) . 
  \end{split}
 \end{equation}
In addition, note that the Cauchy Schwarz inequality assures that  
for all 
 	$t\in [0,T]$, 
	$x = (x_1, \ldots , x_d)\in \R^d$ 
 it holds that 
 \begin{equation}
 \begin{split}
  \sum_{k=1}^m \left[\sum_{i=1}^d \sigma_{i,k}(t,x) x_i\right]^2 
  &\leq 
  \sum_{k=1}^m \left[\sum_{i=1}^d |\sigma_{i,k}(t,x)|^2 \right]  \left[\sum_{i=1}^d |x_i|^2 \right] \\
&=
  \normmm{\sigma(t,x)}^2
  \norm{x}^2
  \leq
  \normmm{\sigma(t,x)}^2
  (1 + \norm{x}^2).
  \end{split}
 \end{equation}
This, \eqref{polynomials_lyapunov:ass1}, and \eqref{polynomials_lyapunov:eq0}
demonstrate that
for all $t \in [0, T]$, $x \in \R^d$, $p \in [2,\infty)$ it holds that
\begin{equation}
\label{polynomials_lyapunov:eq1}
\begin{split}
  &\tfrac{1}{2} 
	\operatorname{Trace}\! \big( 
		\sigma(t, x)[\sigma(t, x)]^{\ast}(\operatorname{Hess} V_p )(x)
	\big)
	+
	\langle 
	 \mu(t,x), 
	 (\nabla V_p)(x)
	\rangle \\
&\leq
  \tfrac{p}{2} 
  \left[
    \tfrac{ ( p-2 ) \normmm{\sigma(t, x)}^2}{1 + \norm{x}^2}
    + 
    \tfrac{\normmm{\sigma(t, x)}^2}{1 + \norm{x}^2}
    + 
    \tfrac{2 
      \langle 
      \mu(t,x), 
      x
      \rangle
    }
	{1 + \norm{x}^2}
  \right]
  V_p(x)\\
&\leq
  \tfrac{p}{2} 
  (
    p - 2 +
    1 + 2
  )
  \tfrac{ 
      (C_1 + C_2\norm{x}^2)
    }
	{1 + \norm{x}^2}
  V_p(x) \\
&\leq
  \tfrac{p(p+1)}{2} 
  \left(
    C_1 \left[\tfrac{ V_p(x) }{1 + \norm{x}^2 }\right]
    +
    C_2 V_p(x)
  \right)
=
    \tfrac{p(p+1)}{2} 
  \left(
    C_1 (1 + \norm{x}^2 )^{\nicefrac{p}{2} - 1}
    +
    C_2 V_p(x)
  \right).
\end{split}
\end{equation}
Young's inequality (with $p = \nicefrac{p}{2}$, $q = \nicefrac{p}{(p-2)} = \frac{\nicefrac{p}{2}}{\nicefrac{p}{2}-1}$ for $p \in (2,\infty)$ in the usual notation of Young's inequality) hence proves that  
for all $t \in [0, T]$, $x \in \R^d$, $p \in (2,\infty)$ it holds that
\begin{equation}
\label{polynomials_lyapunov:eq2}
\begin{split}
  &\tfrac{1}{2} 
	\operatorname{Trace}\! \big( 
		\sigma(t, x)[\sigma(t, x)]^{\ast}(\operatorname{Hess} V_p )(x)
	\big)
	+
	\langle 
	 \mu(t,x), 
	 (\nabla V_p)(x)
	\rangle \\
&\leq
    \tfrac{p(p+1)}{2} 
  \left(
    \frac{|C_1|^{\nicefrac{p}{2}}}{\nicefrac{p}{2}} 
    + 
    \frac{ \left| (1 + \norm{x}^2 )^{\nicefrac{p}{2} - 1} \right|^{\nicefrac{p}{{(p-2)}}}} {\nicefrac{p}{{(p-2)}}}
    +
    C_2 V_p(x)
  \right) \\
&=
  (p+1) |C_1|^{\nicefrac{p}{2}}
  + 
  \left(
    \tfrac{p(p+1)}{2} 
    \big(
      \tfrac{p-2}{p}
      +
      C_2
    \big)
  \right) V_p(x).
\end{split}
\end{equation}
Moreover, note that \eqref{polynomials_lyapunov:eq1} ensures that
for all $t \in [0, T]$, $x \in \R^d$ it holds that
\begin{equation}
\label{polynomials_lyapunov:eq3}
\begin{split}
  &\tfrac{1}{2} 
	\operatorname{Trace}\! \big( 
		\sigma(t, x)[\sigma(t, x)]^{\ast}(\operatorname{Hess} V_2 )(x)
	\big)
	+
	\langle 
	 \mu(t,x), 
	 (\nabla V_2)(x)
	\rangle 
\leq
    3
  \left(
    C_1
    +
    C_2 V_2(x)
  \right).
\end{split}
\end{equation}
Combining this and \eqref{polynomials_lyapunov:eq2} establishes item~\eqref{polynomials_lyapunov:item2}.
The proof of Lemma~\ref{polynomials_lyapunov} is thus completed.
\end{proof}

\begin{lemma}
\label{moment_estimate_special_lyapunov_prelim}
Let  $d,m\in\N$, $T, \rho \in [0,\infty)$, $\xi \in \R^d$,
let $\left< \cdot, \cdot \right> \colon \R^d \times \R^d \to \R$ be the Euclidean scalar product on $\R^d$,
let 
$\mu\in C([0,T] \times \R^d,\R^d)$, 
$\sigma\in C([0,T] \times \R^d,\R^{d\times m})$, 
$V\in C^2(\R^d,(0,\infty))$ 
satisfy 
for all $t \in [0,T]$, $x \in \R^d$ that
 \begin{equation}
 \label{moment_estimate_special_lyapunov_prelim:ass1}
  \tfrac12 
  \operatorname{Trace}\!\big( 
   \sigma(t,x)[\sigma(t,x)]^{*} 
   (\operatorname{Hess} V)(x)
  \big) 
  + 
  \langle 
  \mu(t,x),(\nabla V)(x)
  \rangle
  \leq
  \rho,
 \end{equation}
let $(\Omega, \mathcal{F}, \P, (\mathbb{F}_t)_{t \in [0,T]})$ be a filtered probability space which satisfies the usual conditions,
let $W \colon [0, T] \times \Omega \to \R^m$ be a standard $(\Omega, \mathcal{F}, \P, (\mathbb{F}_{t \in [0,T]}))$-Brownian motion,
and let 
$X \colon [0, T]  \times \Omega \to \R^d$
be an $(\mathbb{F}_t)_{t \in [0,T]}$/$\Borel(\R^d)$-adapted stochastic process with continuous sample paths which satisfies 
that 
for all $t \in [0,T]$ it holds $\P$-a.s.\ that
\begin{equation}
  X_t
= 
  \xi 
  + 
  \int_0^t \mu(r, X_{r}) dr 
  +
  \int_0^t \sigma(r, X_{r}) dW_r.
\end{equation}
Then  
it holds for all $t \in [0, T]$ that
\begin{equation}
\begin{split}
  \Exp{V(X_t)}
\leq  
    V(\xi)
    +
    t \rho.
\end{split}
\end{equation}
\end{lemma}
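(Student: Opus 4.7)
The plan is to apply It\^o's formula to the process $V(X)$, invoke the generator bound \eqref{moment_estimate_special_lyapunov_prelim:ass1}, and pass to expectations via a standard localization. For each $n\in\N$ I would introduce the stopping time $\tau_n:=\inf\{t\in[0,T]\colon \norm{X_t}\geq n\}\wedge T$ (with $\inf\emptyset=T$). Since $X$ has continuous sample paths and is $(\mathbb{F}_t)_{t\in[0,T]}$-adapted and the filtration satisfies the usual conditions, each $\tau_n$ is an $(\mathbb{F}_t)_{t\in[0,T]}$-stopping time, and $\tau_n\uparrow T$ $\P$-a.s.\ as $n\to\infty$.

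Next I would apply It\^o's formula to the $C^2$ function $V$ and the It\^o semimartingale $X$ to obtain that $\P$-a.s., for all $t\in[0,T]$ and $n\in\N$,
\[
V(X_{t\wedge\tau_n})
= V(\xi)
+ \int_0^{t\wedge\tau_n}\!\!\big[\tfrac12\operatorname{Trace}\bigl(\sigma(r,X_r)[\sigma(r,X_r)]^{*}(\operatorname{Hess} V)(X_r)\bigr) + \langle\mu(r,X_r),(\nabla V)(X_r)\rangle\big]\,dr
+ M_{t\wedge\tau_n},
\]
where $M_t := \int_0^t\langle(\nabla V)(X_r),\sigma(r,X_r)\,dW_r\rangle$. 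Because $\nabla V$ and $\sigma$ are continuous and $\norm{X_{r\wedge\tau_n}}\leq n$ for all $r\in[0,T]$, the integrand defining $M_{\cdot\wedge\tau_n}$ is uniformly bounded on $[0,T]\times\Omega$ by a constant depending only on $n$, so $(M_{t\wedge\tau_n})_{t\in[0,T]}$ is a square-integrable martingale with $\Exp{M_{t\wedge\tau_n}}=0$. Bounding the drift integrand by $\rho$ via \eqref{moment_estimate_special_lyapunov_prelim:ass1} and taking expectations then yields
\[
\Exp{V(X_{t\wedge\tau_n})} \leq V(\xi) + \rho\,\Exp{t\wedge\tau_n} \leq V(\xi) + \rho\,t.
\]

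Finally, I would pass to the limit $n\to\infty$. Continuity of the sample paths of $X$ together with $\tau_n\uparrow T$ $\P$-a.s.\ gives $X_{t\wedge\tau_n}\to X_t$ $\P$-a.s., and continuity of $V$ then forces $V(X_{t\wedge\tau_n})\to V(X_t)$ $\P$-a.s. Since $V>0$, Fatou's lemma yields
\[
\Exp{V(X_t)}
= \Exp{\liminf_{n\to\infty} V(X_{t\wedge\tau_n})}
\leq \liminf_{n\to\infty}\Exp{V(X_{t\wedge\tau_n})}
\leq V(\xi) + \rho\,t,
\]
which is the claim. The main subtlety in this plan is precisely the localization step: without an a priori moment estimate one cannot directly integrate the It\^o formula, since $V(X)$ need not be integrable and $M$ is only a local martingale a priori; stopping at $\tau_n$ turns the martingale part into a genuine martingale with zero expectation, and invoking Fatou (rather than dominated convergence) in the limit avoids imposing any prior integrability on $V(X_t)$.
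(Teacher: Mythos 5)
Your proof is correct. Both you and the paper ultimately invoke It\^o's formula plus a localization-and-Fatou pattern, but the presentation differs in two respects worth noting. The paper introduces an auxiliary function $\mathbb{V}(t,x)=V(x)-t\rho+T\rho$ so that $(\partial_t+\mathcal{L})\mathbb{V}\leq 0$ (where $\mathcal{L}$ is the SDE generator), converting the problem into an exact local supermartingale inequality $\Exp{\mathbb{V}(t,X_t)}\leq\Exp{\mathbb{V}(0,X_0)}$ and then untangling this to get the claimed bound; you instead apply It\^o directly to $V(X)$, bound the drift integrand pointwise by $\rho$, and use $\Exp{t\wedge\tau_n}\leq t$. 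Your route is slightly more direct and avoids the bookkeeping of the auxiliary $\mathbb{V}$, at the cost of not packaging the inequality as a clean supermartingale statement. The second difference is one of rigor: the paper compresses the stopping-time localization and the passage to the limit into a single unexplained appeal to ``It\^o's formula,'' whereas you spell out the stopping times $\tau_n$, the fact that the stopped stochastic integral is a genuine martingale because $\nabla V$ and $\sigma$ are bounded on compacts, and the Fatou step needed because no a priori integrability of $V(X_t)$ is available. That explicitness is exactly the right thing to do here, and the use of Fatou (rather than dominated convergence) correctly exploits the hypothesis $V>0$.
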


\begin{proof}[Proof of \cref{moment_estimate_special_lyapunov_prelim}]
Throughout this proof assume w.l.o.g.\ that $T > 0$ and let 
 $
 \mathbb{V}
  \colon 
  [0,T]\times\R^d
  \to 
  (0,\infty)
 $
 be the function which satisfies for all 
	$t\in [0,T]$, 
	$x\in \R^d$
 that 
 \begin{equation}
  \mathbb{V}(t,x) 
  = 
  V(x) 
  -
  t \rho
  + 
  T \rho.
\end{equation}
 Note that  
 the fact that $V\in C^2(\R^d,(0,\infty))$
 ensures that for all 
    $t\in [0,T]$, 
    $x\in\R^d$ 
 it holds that
 \begin{enumerate}[(I)]
  \item
  \label{moment_estimate_special_lyapunov_prelim:item_V_twice_diff}
  $\mathbb{V}\in C^2([0,T]\times\R^d,(0,\infty))$,
  \item 
  \label{moment_estimate_special_lyapunov_prelim:item_V_partial_t}
  $(\tfrac{\partial \mathbb{V}}{\partial t})(t,x) 
  = 
  -\rho
  $, 
  \item
  \label{moment_estimate_special_lyapunov_prelim:item_V_nabla_x}
  $(\nabla_x \mathbb{V})(t,x) 
  = (\nabla V)(x)$, and
  \item 
  \label{moment_estimate_special_lyapunov_prelim:item_V_Hess_x}
  $(\operatorname{Hess}_x \mathbb{V})(t,x) 
  = (\operatorname{Hess} V)(x)$. 
 \end{enumerate}
Observe that
 items~\eqref{moment_estimate_special_lyapunov_prelim:item_V_partial_t}--\eqref{moment_estimate_special_lyapunov_prelim:item_V_Hess_x}
 and
  \eqref{moment_estimate_special_lyapunov_prelim:ass1}
 show that
 for all 
    $t\in [0,T]$, 
    $x\in\R^d$ 
 it holds 
 that 
 \begin{equation}
  \begin{split}
   & 
   (\tfrac{\partial \mathbb{V}}{\partial t})(t,x) 
   + 
   \tfrac12
   \operatorname{Trace}\!\big( 
   \sigma(t,x)[\sigma(t,x)]^{*}(\operatorname{Hess}_x \mathbb{V})(t,x)
   \big) 
   + 
   \langle 
   \mu(t,x),(\nabla_x \mathbb{V})(t,x) 
   \rangle
   \\
&=
  -\rho
   +
   \tfrac12
   \operatorname{Trace}\!\big( 
   \sigma(t,x)[\sigma(t,x)]^{*}(\operatorname{Hess} V)(x)
   \big) 
   + 
   \langle 
   \mu(t,x),(\nabla V)(x) 
   \rangle\\
&\leq
    -\rho
   +
   \rho
=
  0.
  \end{split}
 \end{equation} 
 Combining this with 
It\^o's formula
demonstrates that 
for all $t \in [0, T]$ it holds that
\begin{equation}
\begin{split}
  \EXP{\mathbb{V}(t, X_t)} 
\leq
  \EXP{  \mathbb{V}(0, X_0)} 
=
  V(\xi) + T \rho.
\end{split}
\end{equation}
Therefore, we obtain that 
for all $t \in [0, T]$ it holds that
\begin{equation}
\begin{split}
  \Exp{
    V(X_t)
  } 
&=
  \Exp{
    V(X_t) - t \rho + T \rho
  } 
 +
 t \rho
 - 
 T \rho
=
  \Exp{ \,
    \mathbb{V}(t, X_t)
  }
  +
  t \rho 
  -
  T \rho \\
&\leq
  V(\xi)
  + 
  T \rho
  +
  t \rho
  -
  T \rho
=
  V(\xi)
  +
  t \rho.
\end{split}
\end{equation}
 The proof of
 \cref{moment_estimate_special_lyapunov_prelim} is thus completed.
\end{proof}

\begin{lemma}
\label{moment_estimate_special_lyapunov}
Let  $d,m\in\N$, $T, \rho_1, \rho_2 \in [0,\infty)$, $\xi \in \R^d$,
let $\left< \cdot, \cdot \right> \colon \R^d \times \R^d \to \R$ be the Euclidean scalar product on $\R^d$,
let 
$\mu\in C([0,T] \times \R^d,\R^d)$, 
$\sigma\in C([0,T] \times \R^d,\R^{d\times m})$, 
$V\in C^2(\R^d,(0,\infty))$ 
satisfy 
for all $t \in [0,T]$, $x \in \R^d$ that
 \begin{equation}
 \label{moment_estimate_special_lyapunov:ass1}
  \tfrac12 
  \operatorname{Trace}\!\big( 
   \sigma(t,x)[\sigma(t,x)]^{*} 
   (\operatorname{Hess} V)(x)
  \big) 
  + 
  \langle 
  \mu(t,x),(\nabla V)(x)
  \rangle
  \leq \rho_1 V(x) + \rho_2,
 \end{equation}
let $(\Omega, \mathcal{F}, \P, (\mathbb{F}_t)_{t \in [0,T]})$ be a filtered probability space which satisfies the usual conditions,
let $W \colon [0, T] \times \Omega \to \R^m$ be a standard $(\Omega, \mathcal{F}, \P, (\mathbb{F}_{t \in [0,T]}))$-Brownian motion,
and let 
$X \colon [0, T]  \times \Omega \to \R^d$
be an $(\mathbb{F}_t)_{t \in [0,T]}$/$\Borel(\R^d)$-adapted stochastic process with continuous sample paths which satisfies 
that 
for all $t \in [0,T]$ it holds $\P$-a.s.\ that
\begin{equation}
  X_t
= 
  \xi 
  + 
  \int_0^t \mu(r, X_{r}) dr 
  +
  \int_0^t \sigma(r, X_{r}) dW_r.
\end{equation}
Then  
it holds for all $t \in [0, T]$ that
\begin{equation}
\begin{split}
  \Exp{V(X_t)}
\leq  
  e^{\rho_1 t}
  \left(
    V(\xi)
    +
    t \rho_2
  \right).
\end{split}
\end{equation}
\end{lemma}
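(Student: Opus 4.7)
The plan is to mirror the proof of \cref{moment_estimate_special_lyapunov_prelim} by constructing a time-dependent auxiliary function $\mathbb{V}\in C^{1,2}([0,T]\times\R^d,(0,\infty))$ on which the parabolic differential operator associated with the SDE acts non-positively, and then to exploit the It\^o--based supermartingale argument already used there. Concretely, assuming w.l.o.g.\ that $T>0$, I would take
\begin{equation}
  \mathbb{V}(t,x) = e^{-\rho_1 t}\, V(x) + \rho_2 \int_t^T e^{-\rho_1 s}\, ds,
\end{equation}
which is positive (since $V>0$) and $C^{1,2}$ (since $V\in C^2(\R^d,(0,\infty))$). This choice is engineered to absorb the new $\rho_1 V(x)$ term that distinguishes the hypothesis \eqref{moment_estimate_special_lyapunov:ass1} from the constant bound \eqref{moment_estimate_special_lyapunov_prelim:ass1}.

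Next I would compute the partial derivatives of $\mathbb{V}$, namely $(\tfrac{\partial\mathbb{V}}{\partial t})(t,x) = -\rho_1 e^{-\rho_1 t}V(x) - \rho_2 e^{-\rho_1 t}$, $(\nabla_x \mathbb{V})(t,x) = e^{-\rho_1 t}(\nabla V)(x)$, and $(\operatorname{Hess}_x\mathbb{V})(t,x) = e^{-\rho_1 t}(\operatorname{Hess} V)(x)$, and plug these into the parabolic operator. The hypothesis \eqref{moment_estimate_special_lyapunov:ass1} then yields, for all $(t,x)\in[0,T]\times\R^d$,
\begin{equation}
\begin{split}
  &(\tfrac{\partial\mathbb{V}}{\partial t})(t,x) + \tfrac12\operatorname{Trace}\!\big(\sigma(t,x)[\sigma(t,x)]^{*}(\operatorname{Hess}_x\mathbb{V})(t,x)\big) + \langle\mu(t,x),(\nabla_x\mathbb{V})(t,x)\rangle \\
  &\leq e^{-\rho_1 t}\big(-\rho_1 V(x) - \rho_2 + \rho_1 V(x) + \rho_2\big) = 0.
\end{split}
\end{equation}
Thus the analog of the key inequality used in the proof of \cref{moment_estimate_special_lyapunov_prelim} holds for $\mathbb{V}$.

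Invoking It\^o's formula (at the same level of rigor as in the proof of \cref{moment_estimate_special_lyapunov_prelim}) then gives that $\E[\mathbb{V}(t,X_t)] \leq \mathbb{V}(0,\xi)$ for all $t\in[0,T]$, which unfolds to
\begin{equation}
  e^{-\rho_1 t}\,\E[V(X_t)] + \rho_2\int_t^T e^{-\rho_1 s}\, ds \;\leq\; V(\xi) + \rho_2\int_0^T e^{-\rho_1 s}\, ds.
\end{equation}
Cancelling the common tail integral, bounding $\int_0^t e^{-\rho_1 s}\,ds \leq t$ (valid for all $\rho_1\ge 0$, $t\ge 0$), and multiplying by $e^{\rho_1 t}$ produces the claimed estimate $\E[V(X_t)] \leq e^{\rho_1 t}(V(\xi) + t\rho_2)$.

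The only genuine obstacle is the It\^o step itself: because the Lyapunov bound $\rho_1 V(x)+\rho_2$ is not uniform in $x$, the stochastic integral $\int_0^\cdot \langle(\nabla_x\mathbb{V})(s,X_s),\sigma(s,X_s)\,dW_s\rangle$ is a priori only a local martingale, and turning the pointwise differential inequality into the expectation bound requires a standard localization via exit times $\tau_n=\inf\{t\in[0,T]\colon\|X_t\|\geq n\}$ together with Fatou's lemma on $n\to\infty$. However, since the proof of \cref{moment_estimate_special_lyapunov_prelim} already suppresses this technicality, I would do the same here, presenting the It\^o step in one line and relying on the same implicit localization.
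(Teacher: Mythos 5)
Your proof is correct and follows the same overall strategy as the paper --- construct an auxiliary function on which the parabolic operator acts non-positively, invoke the It\^o supermartingale inequality, then unfold. The only difference is your choice of auxiliary function: you take $\mathbb{V}(t,x)=e^{-\rho_1 t}V(x)+\rho_2\int_t^T e^{-\rho_1 s}\,ds$, whereas the paper uses $\mathbb{V}(t,x)=e^{-\rho_1 t}\big(V(x)+\tfrac{\rho_2}{\rho_1}\big)$ after first assuming w.l.o.g.\ $\rho_1>0$ (referring to Lemma~\ref{moment_estimate_special_lyapunov_prelim} for the degenerate case). Your variant handles $\rho_1=0$ automatically, since $\int_t^T e^{-\rho_1 s}\,ds$ reduces to $T-t$ without any division by $\rho_1$, so no case distinction is needed; at the last step you use $\int_0^t e^{-\rho_1 s}\,ds\le t$ where the paper uses the algebraically equivalent $e^a-1\le ae^a$. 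Both variants rely equally on the implicit localization that turns It\^o's formula into the expectation inequality, which you correctly flag as the one suppressed technicality. The computations check out throughout.
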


\begin{proof}[Proof of \cref{moment_estimate_special_lyapunov}]
Throughout this proof assume w.l.o.g.\ that $\rho_1 > 0$ (cf.\ Lemma~\ref{moment_estimate_special_lyapunov_prelim}) and that $T > 0$ and 
let 
 $
 \mathbb{V}
  \colon 
  [0,T]\times\R^d
  \to 
  (0,\infty)
 $
 be the function which satisfies for all 
	$t\in [0,T]$, 
	$x\in \R^d$
 that 
 \begin{equation}
  \mathbb{V}(t,x) 
  = 
  e^{-\rho_1 t}
  \big(
    V(x) + \tfrac{\rho_2}{\rho_1}
  \big).
\end{equation}
 Note that 
 the fact that $V\in C^2(\R^d,(0,\infty))$
 ensures that for all 
    $t\in [0,T]$, 
    $x\in\R^d$ 
 it holds that 
 \begin{enumerate}[(I)]
  \item
  \label{moment_estimate_special_lyapunov:item_V_twice_diff}
  $\mathbb{V}\in C^2([0,T]\times\R^d,(0,\infty))$,
  \item 
  \label{moment_estimate_special_lyapunov:item_V_partial_t}
  $(\tfrac{\partial \mathbb{V}}{\partial t})(t,x) 
  = 
  -\rho_1 e^{-\rho_1 t}(V(x) +\tfrac{\rho_2}{\rho_1})
  $, 
  \item
  \label{moment_estimate_special_lyapunov:item_V_nabla_x}
  $(\nabla_x \mathbb{V})(t,x) 
  = e^{-\rho_1 t}(\nabla V)(x)$, and
  \item 
  \label{moment_estimate_special_lyapunov:item_V_Hess_x}
  $(\operatorname{Hess}_x \mathbb{V})(t,x) 
  = e^{-\rho_1 t}(\operatorname{Hess} V)(x)$. 
 \end{enumerate}
Observe that
 items~\eqref{moment_estimate_special_lyapunov:item_V_partial_t}--\eqref{moment_estimate_special_lyapunov:item_V_Hess_x}
 and
  \eqref{moment_estimate_special_lyapunov:ass1}
 assure that
 for all 
    $t\in [0,T]$, 
    $x\in\R^d$ 
 it holds 
 that 
 \begin{equation}
  \begin{split}
   & 
   (\tfrac{\partial \mathbb{V}}{\partial t})(t,x) 
   + 
   \tfrac12
   \operatorname{Trace}\!\big( 
   \sigma(t,x)[\sigma(t,x)]^{*}(\operatorname{Hess}_x \mathbb{V})(t,x)
   \big) 
   + 
   \langle 
   \mu(t,x),(\nabla_x \mathbb{V})(t,x) 
   \rangle
   \\
&=
   e^{-\rho_1 t}
   \left(
   -\rho_1\big(
     V(x) + \tfrac{\rho_2}{\rho_1}
   \big)
   +
   \tfrac12
   \operatorname{Trace}\!\big( 
   \sigma(t,x)[\sigma(t,x)]^{*}(\operatorname{Hess} V)(x)
   \big) 
   + 
   \langle 
   \mu(t,x),(\nabla V)(x) 
   \rangle
   \right)\\
&\leq
   e^{-\rho_1 t}
   \left(
   -\rho_1V(x)
   - \rho_2
   +
   \rho_1 V(x) + \rho_2
   \right)
=
  0.
  \end{split}
 \end{equation} 
Combining this with 
It\^o's formula
demonstrates that 
for all $t \in [0, T]$ it holds that
\begin{equation}
\begin{split}
  \EXP{\mathbb{V}(t, X_t)} 
\leq
  \EXP{  \mathbb{V}(0, X_0)} 
=
  V(\xi) + \tfrac{\rho_2}{\rho_1}.
\end{split}
\end{equation}
Therefore, we obtain that 
for all $t \in [0, T]$ it holds that
\begin{equation}
\begin{split}
  \Exp{
    V(X_t)
  } 
&=
  \Exp{
    e^{\rho_1 t}
    \left(
       e^{-\rho_1 t}
       \big[
         V(X_t)
         +
         \tfrac{\rho_2}{\rho_1}
       \big]
    \right)
    -
    \tfrac{\rho_2}{\rho_1}
  } 
=
  e^{\rho_1 t} \,
  \Exp{ 
    \mathbb{V}(t, X_t)
  }
  -
  \tfrac{\rho_2}{\rho_1}  \\
&\leq
  e^{\rho_1 t}
  \left[
    V(\xi) + \tfrac{\rho_2}{\rho_1}
  \right]
  -
  \tfrac{\rho_2}{\rho_1} 
=
  e^{\rho_1 t}V(\xi)
  +
  \left(
     e^{\rho_1 t} - 1
  \right)
  \tfrac{\rho_2}{\rho_1}.
\end{split}
\end{equation}
The fact that 
for all $a \in \R$ it holds that
$e^a - 1 \leq a e^a$ hence ensures that
for all $t \in [0, T]$ it holds that
\begin{equation}
\begin{split}
  \EXP{V(X_t)} 
\leq  
  e^{\rho_1 t}V(\xi)
  +
  (\rho_1 t  e^{\rho_1 t})
  \tfrac{ \rho_2}{\rho_1}
=
  e^{\rho_1 t}
  \left(
    V(\xi)
    +
    t \rho_2
  \right).
\end{split}
\end{equation}
 The proof of
 \cref{moment_estimate_special_lyapunov} is thus completed.
\end{proof}

\begin{lemma}
\label{moment_bound_SDE}
Let $d, m \in \N$, $T , C_1, C_2 \in [0,\infty)$,  $\xi \in \R^d$,
let $\left< \cdot, \cdot \right> \colon \R^d \times \R^d \to \R$ be the Euclidean scalar product on $\R^d$,
let $\norm{\cdot} \colon \R^d \to [0,\infty)$ be the Euclidean norm on $\R^d$,
let $\normmm{\cdot} \colon \R^{d \times m} \to [0,\infty)$ be the Frobenius norm on $\R^{d \times m}$,
let $\mu \in C([0, T] \times \R^d , \R^d)$, $\sigma \in C( [0, T] \times \R^d , \R^{d \times m})$ satisfy 
for all $t \in [0, T]$, $x \in \R^d$ that
\begin{equation}
\label{moment_bound_SDE:ass1}
  \max \{ \left <x, \mu(t, x) \right>,  \normmm{\sigma(t, x)}^2 \}  
\leq
  C_1 + C_2 \norm{x}^2,
\end{equation}
let $(\Omega, \mathcal{F}, \P, (\mathbb{F}_t)_{t \in [0,T]})$ be a filtered probability space which satisfies the usual conditions,
let $W \colon [0, T] \times \Omega \to \R^m$ be a standard $(\Omega, \mathcal{F}, \P, (\mathbb{F}_{t \in [0,T]}))$-Brownian motion,
and let 
$X \colon [0, T]  \times \Omega \to \R^d$
be an $(\mathbb{F}_t)_{t \in [0,T]}$/$\Borel(\R^d)$-adapted stochastic process with continuous sample paths which satisfies 
that 
for all $t \in [0,T]$ it holds $\P$-a.s.\ that
\begin{equation}
  X_t
= 
  \xi 
  + 
  \int_0^t \mu(r, X_{r}) dr 
  +
  \int_0^t \sigma(r, X_{r}) dW_r.
\end{equation}
Then 
it holds
for all $p \in [0,\infty)$, $t \in [0, T]$ that
\begin{equation}
\label{moment_bound_SDE:concl1}
\begin{split}
  \Exp{\norm{X_t}^p}
&\leq  
    \left(
      (1 + \norm{\xi}^2)^{\nicefrac{p}{2}}
      +
      t^{ \min \{ \nicefrac{p}{2}, 1 \} } (p+1) |C_1|^{\nicefrac{p}{2}}
    \right)
    \exp
    \left(
     \tfrac{p(p+3)}{2} 
	 \big(
	   \mathbbm{1}_{(2, \infty)}(p) + C_2
	 \big)
	  t
    \right) \\
&\leq
  \max \{T, 1 \}  
    \left(
      (1 + \norm{\xi}^2)^{\nicefrac{p}{2}}
      +
      (p+1) |C_1|^{\nicefrac{p}{2}}
    \right)
    \exp
    \left(
     \tfrac{p(p+3)(1+C_2) T}{2} 
    \right)
<
  \infty.
\end{split}
\end{equation}
\end{lemma}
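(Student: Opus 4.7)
The plan is to apply the Lyapunov-function approach developed in Lemmas~\ref{polynomials_lyapunov} and~\ref{moment_estimate_special_lyapunov} to the family of polynomial Lyapunov functions $V_p(x)=(1+\|x\|^2)^{p/2}$, $p\in[2,\infty)$, and to handle the remaining range $p\in[0,2)$ by Jensen's inequality.

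For $p\in[2,\infty)$, I would feed the Lyapunov-type estimate supplied by Lemma~\ref{polynomials_lyapunov} (which uses precisely the growth hypothesis \eqref{moment_bound_SDE:ass1}) into Lemma~\ref{moment_estimate_special_lyapunov} with $\rho_1=\frac{p(p+1)}{2}(\frac{p-2}{p}+C_2)$ and $\rho_2=(p+1)|C_1|^{p/2}$. This yields $\E[V_p(X_t)]\leq e^{\rho_1 t}(V_p(\xi)+t\rho_2)$. Since $\|X_t\|^p\leq V_p(X_t)$ pointwise, and since the elementary inequalities $(p+1)(p-2)\leq p(p+3)$ and $p+1\leq p+3$ give $\rho_1\leq\frac{p(p+3)}{2}(\mathbbm{1}_{(2,\infty)}(p)+C_2)$ (where the indicator accommodates the vanishing of $\frac{p-2}{p}$ at $p=2$), this produces the first bound in \eqref{moment_bound_SDE:concl1} for $p\geq 2$.

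For $p\in[0,2)$ I would exploit the concavity of $y\mapsto y^{p/2}$ on $[0,\infty)$. Jensen's inequality gives $\E[\|X_t\|^p]\leq\E[V_2(X_t)^{p/2}]\leq(\E[V_2(X_t)])^{p/2}$, so it suffices to substitute the already established $p=2$ bound and then split the resulting power via the subadditivity $(a+b)^{p/2}\leq a^{p/2}+b^{p/2}$ valid for $p/2\in[0,1]$. A short convexity argument yields $3^{p/2}\leq p+1$ for $p\in[0,2]$ (with equality at the endpoints), which replaces the constant $3^{p/2}|C_1|^{p/2}$ by the stated $(p+1)|C_1|^{p/2}$; combined with the crude bound $\frac{3p}{2}\leq\frac{p(p+3)}{2}$ on the Lyapunov exponent, this produces the first inequality in \eqref{moment_bound_SDE:concl1} also in the regime $p\in[0,2)$, where $\min\{p/2,1\}=p/2$ and $\mathbbm{1}_{(2,\infty)}(p)=0$.

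The second inequality and the finiteness claim then follow by pulling $t^{\min\{p/2,1\}}\leq\max\{T,1\}$ in front of the polynomial factor and bounding $\mathbbm{1}_{(2,\infty)}(p)+C_2\leq 1+C_2$ together with $t\leq T$ inside the exponential. The only real nuisance is cosmetic: matching the stated constants exactly (in particular the indicator $\mathbbm{1}_{(2,\infty)}(p)$ and the exponent $\min\{p/2,1\}$ of $t$) across the two regimes $p\geq 2$ and $p<2$ requires the small elementary inequalities mentioned above, but none of these steps is conceptually difficult since all the heavy lifting has already been absorbed into Lemmas~\ref{polynomials_lyapunov} and~\ref{moment_estimate_special_lyapunov}.
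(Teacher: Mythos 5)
Your proposal is correct and follows essentially the same route as the paper's proof: Lemma~\ref{polynomials_lyapunov} plus Lemma~\ref{moment_estimate_special_lyapunov} applied to $V_p$ for $p\in[2,\infty)$, followed by the elementary inequalities $(p+1)(p-2)\leq p(p+3)$ and $p+1\leq p+3$ to massage the exponent into the stated form, and Jensen's inequality with $3^{p/2}\leq p+1$ for the regime $p\in[0,2)$. The only cosmetic difference is that you pass through $\E[V_2(X_t)^{p/2}]$ rather than $\E[(\|X_t\|^2)^{p/2}]$, which is an equivalent starting point since $\|X_t\|^p\leq V_2(X_t)^{p/2}$ and both reduce to the same $p=2$ Lyapunov bound.
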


\begin{proof}[Proof of Lemma~\ref{moment_bound_SDE}]
Throughout this proof  let 
$(\rho_1^{(p)})_{p \in [2, \infty),} (\rho_2^{(p)})_{p \in [2, \infty)}\subseteq [0,\infty)$ satsify 
for all $p \in [2, \infty)$ that
\begin{equation}
\label{moment_bound_SDE:setting1}
  \rho_1^{(p)} 
= 
  \tfrac{p(p+1)}{2} 
	\big(
	  \tfrac{p-2}{p} + C_2
	\big)
\qandq
  \rho_2^{(p)} 
= 
  (p+1) |C_1|^{\nicefrac{p}{2}}
\end{equation}
and
let $V_p \colon \R^d \to (0,\infty)$, $p \in [2, \infty)$, be the functions which satisfy
for all $p \in [2, \infty)$, $x \in \R^d$ that
\begin{equation}
  V_p(x) 
=
  (1 + \norm{x}^2)^{\nicefrac{p}{2}}.
\end{equation}
Observe that Lemma~\ref{polynomials_lyapunov} and \eqref{moment_bound_SDE:ass1} assure that 
for all $t \in [0, T]$, $x \in \R^d$, $p \in [2, \infty)$ it holds that $V_p \in C^\infty(\R^d, (0,\infty))$ and
\begin{equation}
\label{moment_bound_SDE:eq1}
\begin{split}
  &\tfrac{1}{2} 
	\operatorname{Trace}\! \big( 
		\sigma(t, x)[\sigma(t, x)]^{\ast}(\operatorname{Hess} V_p )(x)
	\big)
	+
	\langle 
	 \mu(t,x), 
	 (\nabla V_p)(x)
	\rangle 
\leq
	\rho_1^{(p)} V_p(x) + \rho_2^{(p)}.
\end{split}
\end{equation}
Lemma~\ref{moment_estimate_special_lyapunov} hence implies that 
for all $t \in [0, T]$, $p \in [2, \infty)$ it holds that
\begin{equation}
\label{moment_bound_SDE:eq2}
\begin{split}
  \Exp{ \norm{X_t}^p}
&\leq
  \Exp{V_p(X_t)}
\leq  
  e^{\rho_1^{(p)} t}
  \left(
    V_p(\xi)
    +
    t \rho_2^{(p)}
  \right) \\
&=
  \left(
    (1 + \norm{\xi}^2)^{\nicefrac{p}{2}}
    +
    t (p+1) |C_1|^{\nicefrac{p}{2}}
  \right)
  \exp
  \left(
    \tfrac{p(p+1)}{2} 
	\big(
	  \tfrac{p-2}{p} + C_2
	\big)
	t
  \right) \\
&\leq
  \left(
    (1 + \norm{\xi}^2)^{\nicefrac{p}{2}}
    +
    t^{\min \{ \nicefrac{p}{2}, 1 \}} (p+1) |C_1|^{\nicefrac{p}{2}}
  \right)
  \exp
  \left(
    \tfrac{p(p+3)}{2} 
	\big(
	  \mathbbm{1}_{(2, \infty)}(p) + C_2
	\big)
	t
  \right).
\end{split}
\end{equation}
This, Jensen's inequality, and the fact that 
for all $p \in [0, 2]$ it holds that
$3^{\nicefrac{p}{2}} \leq p + 1$
assure that 
for all $t \in [0, T]$, $p \in [0, 2)$ it holds that
\begin{equation}
\label{moment_bound_SDE:eq3}
\begin{split}
  \Exp{ \norm{X_t}^p}
&=
  \Exp{ 
    \left(
      \norm{X_t}^2
    \right)^{\nicefrac{p}{2}}
  }
\leq
  \left(
    \Exp{ 
      \norm{X_t}^2
    }
  \right)^{\nicefrac{p}{2}} \\
&\leq
  \left[
    \left(
      (1 + \norm{\xi}^2)
      +
      t (2+1) |C_1|
    \right)
    \exp
    \left(
     \tfrac{2(2+1)}{2} 
	  C_2
	  t
    \right)
  \right]^{\nicefrac{p}{2}} \\
&\leq
    \left(
      (1 + \norm{\xi}^2)^{\nicefrac{p}{2}}
      +
      t^{\nicefrac{p}{2}} 3^{\nicefrac{p}{2}} |C_1|^{\nicefrac{p}{2}}
    \right)
    \exp
    \left(
     \tfrac{3p}{2} 
	  C_2
	  t
    \right)\\
&\leq
    \left(
      (1 + \norm{\xi}^2)^{\nicefrac{p}{2}}
      +
      t^{ \min \{ \nicefrac{p}{2}, 1 \} } (p+1) |C_1|^{\nicefrac{p}{2}}
    \right)
    \exp
    \left(
     \tfrac{(p+3)p}{2} 
	 \big(
	   \mathbbm{1}_{(2, \infty)}(p) + C_2
	 \big)
	  t
    \right).
\end{split}
\end{equation}
Combining this with \eqref{moment_bound_SDE:eq2} implies \eqref{moment_bound_SDE:concl1}.
The proof of Lemma~\ref{moment_bound_SDE} is thus completed.
\end{proof}


\subsection{Temporal regularity properties for solutions of SDEs}
\label{sect:temp_reg}
For the proof of our strong $L^2$-error estimates for Euler-Maruyama approximations in Subsection~\ref{sect:EM} we need Corollary~\ref{temp_reg_time} below, which asserts that, under suitable conditions (see Corollary~\ref{temp_reg_time} below for details), solutions of SDEs have a certain temporal regularity property. To prove Corollary~\ref{temp_reg_time} we employ (without providing a proof) a well-known temporal regularity property for solutions of SDEs from the literature stated in Lemma~\ref{temp_reg} below (cf., e.g., Da Prato et al.\ \cite[Proposition 3]{DapratoJentzenRockner10}, Cox et al.\ \cite[Corollary 3.8]{CoxHutzenthalerJentzenvanNervenWelti16}, and Jentzen et al.\ \cite[Proposition 5.1]{jentzen17strong}).
Additionally, we offer in Lemma~\ref{temp_reg_fixedIV} below a self contained proof of an explicit temporal regularity estimate for solutions of SDEs with deterministic initial values which will be used in Subsection~\ref{sect:subsect_flow}.

\begin{lemma}[Temporal regularity of solutions of time-homogeneous SDEs]
\label{temp_reg}
Let $ d, m \in \N $,
$ T \in (0,\infty) $,
let $\norm{\cdot} \colon \R^d \to [0,\infty)$ be the Euclidean norm on $\R^d$,  
let
$
  ( 
    \Omega, \mathcal{F}, \P, 
    ( \mathbb{F}_t )_{ t \in [0,T] }
  )
$
be a filtered probability space which satisfies the usual conditions,
let
$
  W \colon [0,T] \times \Omega
  \to \R^m
$
be a standard
$ ( \Omega, \mathcal{F}, \P, ( \mathbb{F}_t )_{ t \in [0,T] } ) $-Brownian motion,
let $ \mu \colon \R^d \to \R^d $, $ \sigma \colon \R^d \to \R^{ d \times m } $
be globally Lipschitz continuous functions,
and let $ X \colon [0,T] \times \Omega \to \R^d $ be an $(\mathbb{F}_{t})_{t \in [0, T]}$/$\Borel(\R^d)$-adapted stochastic processes with continuous sample paths 
which satisfies that $\Exp{ \norm{X_0}^2} < \infty$ and
which satisfies
that for all $t \in [0, T]$ 
it holds $\P$-a.s.\ that
\begin{equation}
\label{temp_reg:ass2}
  X_t
= 
  X_0
  + 
  \int_{0}^t \mu (X_s ) \, ds
  +
  \int_{0}^t \sigma ( X_s ) \, dW_s.
\end{equation}
Then it holds that 
\begin{equation}
  \sup 
  \left\{
        \frac{ \left(\EXP{ \Norm{X_t - X_s }^2 } \right)^{\nicefrac{1}{2}}}{| t -s |^{\nicefrac{1}{2}}} \in [0, \infty]
        \colon
        t, s \in [0, T], t \neq s
  \right\}
<
  \infty.
\end{equation}
\end{lemma}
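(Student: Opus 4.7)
My plan is to exploit the integral form~\eqref{temp_reg:ass2} of the SDE to write, for all $0 \leq s \leq t \leq T$, that
\begin{equation*}
  X_t - X_s = \int_s^t \mu(X_r)\, dr + \int_s^t \sigma(X_r)\, dW_r
\end{equation*}
holds $\P$-a.s., and to then bound each of the two resulting summands separately in $L^2(\Omega; \R^d)$ by means of Jensen's inequality (for the Bochner integral) and It\^o's isometry (for the stochastic integral). The central ingredient is a uniform second moment bound of the form $\sup_{t \in [0,T]} \E[\norm{X_t}^2] < \infty$, which, together with the linear growth of $\mu$ and $\sigma$ inherited from the global Lipschitz assumption, will provide uniform bounds on $\E[\norm{\mu(X_r)}^2]$ and on the expected squared Frobenius norm of $\sigma(X_r)$.

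To establish the a priori moment bound, I would first observe that the global Lipschitz assumption yields constants $C_1, C_2 \in [0,\infty)$ such that for all $x \in \R^d$ it holds that $\langle x, \mu(x)\rangle \leq C_1 + C_2 \norm{x}^2$ and such that the squared Frobenius norm of $\sigma(x)$ is bounded by $C_1 + C_2\norm{x}^2$; this follows from the Cauchy-Schwarz inequality together with the crude bounds $\norm{\mu(x)} \leq \norm{\mu(0)} + L\norm{x}$ and an analogous estimate for $\sigma$, where $L \in [0,\infty)$ denotes a joint Lipschitz constant. Conditioning on $X_0$, invoking Lemma~\ref{moment_bound_SDE} pathwise with $\xi = X_0(\omega)$ and $p = 2$, and finally integrating against the law of $X_0$ while using the hypothesis $\E[\norm{X_0}^2] < \infty$ then yields $\sup_{t \in [0,T]} \E[\norm{X_t}^2] < \infty$.

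With the uniform second moment bound in hand, for arbitrary $0 \leq s \leq t \leq T$ I would then split
\begin{equation*}
  \E\!\left[\norm{X_t - X_s}^2\right]
\leq
  2\, \E\!\left[\Normm{\int_s^t \mu(X_r)\, dr}^2\right]
  +
  2\, \E\!\left[\Normm{\int_s^t \sigma(X_r)\, dW_r}^2\right].
\end{equation*}
Jensen's inequality (combined with Tonelli's theorem) bounds the first expectation by $(t-s)\int_s^t \E[\norm{\mu(X_r)}^2]\, dr$, while It\^o's isometry rewrites the second expectation as the integral $\int_s^t \E[\|\sigma(X_r)\|_F^2]\, dr$ (with $\|\cdot\|_F$ denoting the Frobenius norm). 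The linear growth of $\mu$ and $\sigma$ combined with the uniform moment bound then yields a constant $K \in [0,\infty)$, depending only on $T$, $L$, $\norm{\mu(0)}$, $\|\sigma(0)\|_F$, and $\E[\norm{X_0}^2]$, such that $\E[\norm{X_t - X_s}^2] \leq K\bigl[(t-s)^2 + (t-s)\bigr]$. Using $(t-s)^2 \leq T(t-s)$ for $s,t \in [0,T]$ and taking square roots then yields the desired $\tfrac{1}{2}$-H\"older bound in $L^2$, from which the claim immediately follows.

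The main technical nuisance, rather than a genuine obstacle, is the measure-theoretic passage from the deterministic initial value assumed in Lemma~\ref{moment_bound_SDE} to the $\mathbb{F}_0$-measurable random initial value $X_0$ of the present lemma; this is however a routine application of regular conditional probabilities and Tonelli's theorem, and could alternatively be bypassed by applying It\^o's formula to $V_2(x) = 1 + \norm{x}^2$ directly to $(X_t)_{t \in [0,T]}$ together with a standard stopping-time localization. All remaining steps amount to textbook manipulations with stochastic integrals and introduce no further difficulty.
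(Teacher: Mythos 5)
The paper does not actually prove Lemma~\ref{temp_reg} at all: the text preceding it explicitly says the lemma is ``employed (without providing a proof)'' and refers the reader to Da~Prato et~al., Cox et~al., and Jentzen et~al.\ for the statement. So there is no proof in the paper to compare against; you have supplied one where the authors chose to outsource the argument.

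That said, your argument is the standard one and it is essentially correct: decompose $X_t - X_s$ via the integral equation, bound the drift term by H\"older/Jensen and the diffusion term by It\^o's isometry, then feed in a uniform second moment bound together with the linear growth of $\mu$ and $\sigma$ (inherited from global Lipschitz continuity). The only place that deserves a sharper justification is the moment bound with random initial value $X_0$. Your primary route — ``condition on $X_0$, invoke Lemma~\ref{moment_bound_SDE} pathwise with $\xi = X_0(\omega)$, integrate'' — is a bit loosely phrased: one cannot apply a statement about expectations ``pathwise,'' and the conditioning argument genuinely uses weak uniqueness of the SDE to identify the conditional law of $(X_t)_{t\in[0,T]}$ given $X_0 = \xi$ with the law of the solution started deterministically at $\xi$. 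For Lipschitz coefficients this is fine, but it is more than a Tonelli manipulation. Your suggested alternative — apply It\^o's formula to $x \mapsto 1 + \norm{x}^2$ along $X$, localize with the stopping times $\tau_n = \inf\{t \ge 0 : \norm{X_t} \ge n\} \wedge T$, estimate with Gronwall's inequality, and pass to the limit by Fatou — is cleaner, self-contained, and avoids the conditioning subtlety entirely; it is also closer in spirit to Lemmas~\ref{moment_estimate_special_lyapunov_prelim} and~\ref{moment_estimate_special_lyapunov}, which the paper proves directly by It\^o's formula. Everything after the moment bound is a routine computation and is correct as written.
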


\begin{lemma}[Temporal regularity of solutions of time-inhomogeneous SDEs]
\label{temp_reg_time}
Let $ d, m \in \N $,
$ T \in (0,\infty) $,
$ L \in [0,\infty) $,
let $\norm{\cdot} \colon \R^d \to [0,\infty)$ be the Euclidean norm on $\R^d$,  
let
$
  ( 
    \Omega, \mathcal{F}, \P, 
    ( \mathbb{F}_t )_{ t \in [0,T] }
  )
$
be a filtered probability space which satisfies the usual conditions,
let
$
  W \colon [0,T] \times \Omega
  \to \R^m
$
be a standard
$ ( \Omega, \mathcal{F}, \P, ( \mathbb{F}_t )_{ t \in [0,T] } ) $-Brownian motion,
let $ \mu \colon [0, T] \times \R^d \to \R^d $ 
and $ \sigma \colon [0, T] \times \R^d \to \R^{ d \times m } $
be globally Lipschitz continuous functions,
and let $ X \colon [0,T] \times \Omega \to \R^d $ be an $(\mathbb{F}_{t})_{t \in [0, T]}$/$\Borel(\R^d)$-adapted stochastic processes with continuous sample paths 
which satisfies that $\Exp{ \norm{X_0}^2} < \infty$ and
which satisfies
that for all $t \in [0, T]$ 
it holds $\P$-a.s.\ that
\begin{equation}
\label{temp_reg_time:ass2}
  X_t
= 
  X_0
  + 
  \int_{0}^t \mu (s, X_s ) \, ds
  +
  \int_{0}^t \sigma ( s, X_s ) \, dW_s.
\end{equation}
Then it holds that 
\begin{equation}
  \sup 
  \left\{
        \frac{ \left(\EXP{ \Norm{X_t - X_s }^2 } \right)^{\nicefrac{1}{2}}}{| t -s |^{\nicefrac{1}{2}}} \in [0, \infty]
        \colon
        t, s \in [0, T], t \neq s
  \right\}
<
  \infty.
\end{equation}
\end{lemma}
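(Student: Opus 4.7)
The plan is to reduce the time-inhomogeneous situation of Lemma~\ref{temp_reg_time} to the time-homogeneous framework already covered by Lemma~\ref{temp_reg} via a standard enlargement of the state space that incorporates the time variable as an additional coordinate. Concretely, I would introduce the $(d+1)$-dimensional process $\tilde X \colon [0,T] \times \Omega \to \R^{d+1}$ defined by $\tilde X_t = (t, X_t)$ together with augmented coefficients $\tilde \mu \colon \R^{d+1} \to \R^{d+1}$ and $\tilde \sigma \colon \R^{d+1} \to \R^{(d+1) \times m}$ given for $(s,x) \in \R \times \R^d$ by
\begin{equation*}
  \tilde \mu(s, x)
=
  \big( 1, \, \mu(\max\{0, \min\{s, T\}\}, x) \big)
\qandq
  \tilde \sigma(s, x)
=
  \begin{pmatrix} 0 \\ \sigma(\max\{0, \min\{s, T\}\}, x) \end{pmatrix}.
\end{equation*}
The clipping in the time coordinate is needed so that $\tilde\mu$ and $\tilde\sigma$ are globally defined on $\R^{d+1}$; it does not change anything on $[0,T] \times \R^d$ since the first coordinate of $\tilde X$ equals $t \in [0,T]$.

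Next I would verify, by a direct computation exploiting that composition with the $1$-Lipschitz clipping map $s \mapsto \max\{0, \min\{s,T\}\}$ preserves global Lipschitz continuity, that the augmented coefficients $\tilde\mu$ and $\tilde\sigma$ are globally Lipschitz continuous on $\R^{d+1}$. Moreover, the process $\tilde X$ has continuous sample paths, is $(\mathbb{F}_t)_{t \in [0,T]}$-adapted, satisfies $\EXP{\norm{\tilde X_0}_{\R^{d+1}}^2} = \EXP{\norm{X_0}_{\R^d}^2} < \infty$, and, by \eqref{temp_reg_time:ass2} together with the fact that $\int_0^t 1 \, ds = t$, satisfies for every $t \in [0,T]$ the time-homogeneous integral equation
\begin{equation*}
  \tilde X_t
=
  \tilde X_0
  +
  \int_0^t \tilde\mu(\tilde X_r) \, dr
  +
  \int_0^t \tilde\sigma(\tilde X_r) \, dW_r
\qquad \P\text{-a.s.}
\end{equation*}

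Lemma~\ref{temp_reg} applied to $\tilde X$ therefore yields a finite constant $\tilde C \in [0,\infty)$ such that for all $t, s \in [0,T]$ it holds that $(\EXP{\norm{\tilde X_t - \tilde X_s}_{\R^{d+1}}^2})^{\nicefrac{1}{2}} \leq \tilde C |t-s|^{\nicefrac{1}{2}}$. The conclusion of Lemma~\ref{temp_reg_time} then follows at once from the elementary inequality $\norm{X_t - X_s}_{\R^d}^2 \leq \norm{\tilde X_t - \tilde X_s}_{\R^{d+1}}^2$.

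The only subtlety — and the step worth double checking — is the global (as opposed to merely local) Lipschitz property of $\tilde\mu$ and $\tilde\sigma$ off of the strip $[0,T] \times \R^d$; this is the reason for introducing the truncation, and once this is in place the rest of the argument is essentially bookkeeping. No new analytic ingredient beyond Lemma~\ref{temp_reg} is needed.
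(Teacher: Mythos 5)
Your proposal is correct and essentially identical to the paper's own proof: the paper also enlarges the state space to $\R^{d+1}$ via $Y_t = (t, X_t)$, introduces augmented drift and diffusion with the same clipping $\min\{\max\{y_1,0\},T\}$ of the time coordinate to ensure global Lipschitz continuity, verifies the resulting time-homogeneous SDE, and invokes Lemma~\ref{temp_reg} before passing back to $X$ via $\Norm{X_t - X_s} \leq \Norm{Y_t - Y_s}$. No discrepancies.
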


\begin{proof}[Proof of Lemma~\ref{temp_reg_time}]
Throughout this proof
let $\normmm{\cdot} \colon \R^{d+1} \to [0,\infty)$ be the Euclidean norm on $\R^{d+1}$,  
let $ Y \colon [0,T] \times \Omega \to \R^{d+1} $ be the stochastic process 
which satisfies for all $t \in [0, T]$ that
\begin{equation}
  Y_t 
=
  \begin{pmatrix}
    t \\
    X_t
  \end{pmatrix},
\end{equation}
and let $ \tilde{\mu} \colon\R^{d+1} \to \R^{d+1} $ 
and $ \tilde{\sigma} \colon\R^{d+1} \to \R^{ (d+1) \times m } $
be the functions which satisfy 
for all $y = (y_1, y_2, \ldots, y_{d+1}) \in \R^{d+1}$ that
\begin{equation}
  \tilde{\mu}(y)
=
  \begin{pmatrix}
    1 \\
    \mu\big( \min\{ \max\{ y_1, 0\}, T \}, (y_2, \ldots, y_{d+1}) \big)
  \end{pmatrix}
  \in \R^{d+1}
\qand
\end{equation}
\begin{equation}
  \tilde{\sigma}(y)
=
  \begin{pmatrix}
    0 \\
    \sigma \big( \min\{ \max\{ y_1, 0\}, T \}, (y_2, \ldots, y_{d+1}) \big)
  \end{pmatrix}
  \in \R^{(d+1) \times m}.
\end{equation}
Observe that the hypothesis that $\mu$ and $\sigma$ are globally Lipschitz continuous functions 
and the fact that
$\R \ni y \mapsto \min\{ \max \{y, 0 \}, T \} \in \R$ is a globally Lipschitz continuous function
assure that $\tilde{\mu}$ and $\tilde{\sigma}$ are globally Lipschitz continuous functions.
Moreover, note that it holds
for all $t \in [0, T]$, $x \in \R^d$ that
\begin{equation}
  \tilde{\mu}((t, x)) 
= 
  \begin{pmatrix}
    1 \\
    \mu ( t, x )
  \end{pmatrix}
\qandq
  \tilde{\sigma}((t, x)) 
= 
  \begin{pmatrix}
    0 \\
    \sigma ( t, x )
  \end{pmatrix}.
\end{equation}
This and \eqref{temp_reg_time:ass2} assure
that for all $t \in [0, T]$ 
it holds $\P$-a.s.\ that
\begin{equation}
\begin{split}
  Y_t
&=
  \begin{pmatrix}
    t \\
    X_t
  \end{pmatrix}
= 
  \begin{pmatrix}
    \int_{0}^t 1 \, ds \\
    X_0 + \int_{0}^t \mu (s, X_s ) \, ds + \int_{0}^t \sigma ( s, X_s ) \, dW_s
  \end{pmatrix}  \\
&=
  \begin{pmatrix}
    0 \\
    X_0
  \end{pmatrix}
  +
  \int_{0}^t
    \begin{pmatrix}
      1  \\
     \mu (s, X_s ) 
    \end{pmatrix}
  ds
  +
  \int_{0}^t 
    \begin{pmatrix}
      0 \\
      \sigma ( s, X_s ) 
    \end{pmatrix}
  dW_s
=
  Y_0
  +
  \int_{0}^t
    \tilde{\mu} (Y_s ) 
  ds
  +
  \int_{0}^t 
    \tilde{\sigma} (Y_s ) 
  dW_s.
\end{split}
\end{equation}
The fact that $\tilde{\mu}$ and $\tilde{\sigma}$ are globally Lipschitz continuous functions and Lemma~\ref{temp_reg}
(with $d = d+1$, $m = m$, $T = T$, $\mu  = \tilde{\mu}$, $\sigma  = \tilde{\sigma}$, $X = Y$
in the notation of Lemma~\ref{temp_reg})
hence prove that
\begin{equation}
      \sup_{ t, s \in [0, T], t \neq s} 
        \frac{\left( \EXP{ \normmm{Y_t - Y_s }^2 } \right)^{ \! \nicefrac{1}{2}}}{| t - s |^{\nicefrac{1}{2}}}
<
  \infty.
\end{equation}
Hence, we obtain that 
\begin{equation}
\begin{split}
      \sup_{ t, s \in [0, T], t \neq s} 
        \frac{ \left( \EXP{ \Norm{X_t - X_s }^2 } \right)^{ \! \nicefrac{1}{2}} }{| t - s |^{\nicefrac{1}{2}}}
&\leq
  \sup_{ t, s \in [0, T], t \neq s} 
        \frac{
          \left( \EXP{ | t - s |^{2} + \Norm{X_t - X_s }^2 } \right)^{ \! \nicefrac{1}{2}}
        } { 
          | t - s |^{\nicefrac{1}{2}}
        }
\\ &=
  \sup_{ t, s \in [0, T], t \neq s} 
        \frac{\left( \EXP{ \normmm{Y_t - Y_s }^2 } \right)^{ \! \nicefrac{1}{2}}}{| t - s |^{\nicefrac{1}{2}}}
<
  \infty.
\end{split}
\end{equation}
The proof of Lemma~\ref{temp_reg_time} is thus completed.
\end{proof}

The following very elementary and well-known result will be helpfull in the proof of Lemma~\ref{temp_reg_fixedIV} below and will be repeatedly used throughout this paper.
\begin{lemma}[A consequence of H\"olders inequality]
\label{Hoelder}
Let $(\Omega, \mathcal{F}, \mu)$ be a measure space and 
let $f \colon \Omega \to [0, \infty]$ be an $\mathcal{F}/\mathcal{B}([0,\infty])$-measurable function.
Then
\begin{equation}
  \left[ \int_\Omega f(\omega) \, \mu (d\omega) \right]^2
\leq
  \mu(\Omega) \int_\Omega |f(\omega)|^2 \, \mu (d\omega).
\end{equation}
\end{lemma}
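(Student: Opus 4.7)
The plan is to recognize Lemma~\ref{Hoelder} as the special case of H\"older's inequality obtained by taking the exponents $p = q = 2$ and pairing $f$ with the constant function $\mathbbm{1}_{\Omega}$. More concretely, the idea is to write
\begin{equation*}
  \int_\Omega f(\omega) \, \mu(d\omega)
=
  \int_\Omega f(\omega) \cdot 1 \, \mu(d\omega)
\end{equation*}
and then invoke the Cauchy--Schwarz inequality in $L^2(\Omega, \mathcal{F}, \mu)$ to bound the right-hand side by
$
  \bigl(\int_\Omega |f(\omega)|^2 \, \mu(d\omega)\bigr)^{\nicefrac{1}{2}}
  \bigl(\int_\Omega 1 \, \mu(d\omega)\bigr)^{\nicefrac{1}{2}}
$.
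Squaring both sides then yields the claimed inequality.

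Before invoking Cauchy--Schwarz I would dispose of the degenerate cases separately so that the argument is rigorous in the extended real number setting. If $\mu(\Omega) = 0$, then $f = 0$ $\mu$-a.e., so both sides of the inequality vanish. If $\int_\Omega |f|^2 \, d\mu = 0$, the same conclusion holds. If $\int_\Omega |f|^2 \, d\mu = \infty$ and $\mu(\Omega) > 0$, the right-hand side equals $\infty$ (with the conventions of measure theory) and the inequality is trivial. In the remaining case both $\mu(\Omega)$ and $\int_\Omega |f|^2 \, d\mu$ are finite and strictly positive, and the standard $L^2$ Cauchy--Schwarz inequality applies directly.

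The main (and essentially only) obstacle is stylistic rather than mathematical: since the paper refers to this as \emph{a consequence of H\"older's inequality} and cites no external reference, one should decide whether to simply quote Cauchy--Schwarz as a known fact or reprove it from scratch (e.g., via the elementary $2ab \leq a^2 + b^2$ inequality applied to $f$ and a normalizing constant). Given that the lemma is used only as a convenient bookkeeping tool throughout the paper, the cleanest presentation is a two-line proof that cites Cauchy--Schwarz and handles the convention $0 \cdot \infty = 0$ explicitly in the degenerate cases above.
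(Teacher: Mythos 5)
Your proposal is exactly the argument the paper uses: pair $f$ with the constant function $1$, apply H\"older's inequality with exponents $p=q=2$ (i.e.\ Cauchy--Schwarz), and square. The extra paragraph on degenerate cases is fine but unnecessary, as H\"older's inequality as usually stated already covers the extended-real-valued setting.
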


\begin{proof}[Proof of Lemma~\ref{Hoelder}]
Note that H\"olders inequality 
demonstrates that
\begin{equation}
\begin{split}
  \left[ \int_\Omega f(\omega) \, \mu (d\omega) \right]^2
&\leq
  \left[ 
    \left(\int_\Omega 1^2 \, \mu (d\omega) \right)^{ \! \nicefrac{1}{2}}
   \left( \int_\Omega  |f(\omega)|^2 \, \mu (d\omega) \right)^{ \! \nicefrac{1}{2}}
  \right]^2 \\
&=
   \mu(\Omega) \int_\Omega |f(\omega)|^2 \, \mu (d\omega).
\end{split}
\end{equation}
The proof of Lemma~\ref{Hoelder} is thus completed.
\end{proof}

\begin{lemma}[Explicit temporal regularity for solutions of SDEs with deterministic initial values]
\label{temp_reg_fixedIV}
Let $ d, m \in \N $,
$ T \in (0,\infty) $,
$ L \in [0,\infty) $,
$ \xi \in \R^d$,
let $\norm{\cdot} \colon \R^d \to [0,\infty)$ be the Euclidean norm on $\R^d$,  
let $\normmm{\cdot} \colon \R^{d \times m} \to [0,\infty)$ be the Frobenius norm on $\R^{d \times m}$,
let
$
  ( 
    \Omega, \mathcal{F}, \P, 
    ( \mathbb{F}_t )_{ t \in [0,T] }
  )
$
be a filtered probability space which satisfies the usual conditions,
let
$
  W \colon [0,T] \times \Omega
  \to \R^m
$
be a standard
$ ( \Omega, \mathcal{F}, \P, ( \mathbb{F}_t )_{ t \in [0,T] } ) $-Brownian motion,
let $ \mu \colon [0, T] \times \R^d \to \R^d $, $ \sigma \colon [0, T] \times \R^d \to \R^{ d \times m } $
be functions
which satisfy for all $t, s \in [0, T]$, $x, y \in \R^d $
that
\begin{equation}
\label{temp_reg_fixedIV:ass1}
  \max \! \big\{ \!
    \norm{
      \mu(t,  x ) - \mu(s,  y )
    }
    ,
    \normmm{
      \sigma(t,  x ) - \sigma(s,  y )
    }
  \big\}
\leq
  L \big(| t - s | + \norm{x-y} \! \big),
\end{equation}
and let $ X \colon [0,T] \times \Omega \to \R^d $ be an $(\mathbb{F}_{t})_{t \in [0, T]}$/$\Borel(\R^d)$-adapted stochastic processes with continuous sample paths 
which satisfies
that for all $t \in [0, T]$ 
it holds $\P$-a.s.\ that
\begin{equation}
\label{temp_reg_fixedIV:ass2}
  X_t
= 
  \xi
  + 
  \int_{0}^t \mu (s, X_s ) \, ds
  +
  \int_{0}^t \sigma ( s, X_s ) \, dW_s.
\end{equation}
Then it holds that 
\begin{equation}
\label{temp_reg_fixedIV:concl1}
\begin{split}
  &\sup 
  \left\{
        \frac{ \left(\EXP{ \Norm{X_t - X_s }^2 } \right)^{\nicefrac{1}{2}}}{| t -s |^{\nicefrac{1}{2}}} \in [0, \infty]
        \colon
        t, s \in [0, T], t \neq s
  \right\}
\\ &\leq
  (1 + \norm{\xi})
  \exp \left(
    10
    \big( 
      \max \{ \norm{\mu(0,0)}, \normmm{\sigma(0,0)},L,1 \} + LT
    \big)^2
    (T+1)
    (L+1) 
  \right)
<
  \infty.
\end{split}
\end{equation}
\end{lemma}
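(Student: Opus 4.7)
\textbf{Proof plan for Lemma~\ref{temp_reg_fixedIV}.} I would assume w.l.o.g.\ that $t \leq s$ and start from the integral representation \eqref{temp_reg_fixedIV:ass2}, which yields
\begin{equation}
  X_s - X_t = \int_t^s \mu(r, X_r)\, dr + \int_t^s \sigma(r, X_r)\, dW_r \quad \text{$\P$-a.s.}
\end{equation}
Applying $(a+b)^2 \leq 2a^2 + 2b^2$, Lemma~\ref{Hoelder} to the drift integral, and the It\^o isometry to the diffusion integral gives
\begin{equation}
  \EXP{\norm{X_s - X_t}^2}
\leq
  2(s-t) \int_t^s \EXP{\norm{\mu(r, X_r)}^2}\, dr
  + 2 \int_t^s \EXP{\normmm{\sigma(r, X_r)}^2}\, dr.
\end{equation}
So the task reduces to establishing a uniform bound on $\sup_{r \in [0,T]} \EXP{\norm{X_r}^2}$ and plugging it into the Lipschitz estimates for $\mu$ and $\sigma$.

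For the moment bound I would introduce the shorthand $K = \max\{\norm{\mu(0,0)}, \normmm{\sigma(0,0)}, L, 1\}$ and use \eqref{temp_reg_fixedIV:ass1} to deduce $\norm{\mu(r,x)} \leq K + LT + L\norm{x} \leq (K+LT)(1 + \norm{x})$ and likewise $\normmm{\sigma(r,x)} \leq (K+LT)(1 + \norm{x})$. Squaring and using Cauchy-Schwarz in the form $\norm{x}(1+\norm{x}) \leq \tfrac{1}{2} + \tfrac{3}{2}\norm{x}^2$ for the drift term yields, for a common $C_1, C_2$ of order $(K+LT)^2$, that
\begin{equation}
  \max\{\langle x, \mu(t,x)\rangle, \normmm{\sigma(t,x)}^2\} \leq C_1 + C_2 \norm{x}^2
\end{equation}
for all $t \in [0,T]$, $x \in \R^d$. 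Lemma~\ref{moment_bound_SDE} applied with $p = 2$ then delivers an explicit bound on $\sup_{r \in [0,T]} \EXP{\norm{X_r}^2}$ in terms of $\norm{\xi}$, $T$, and $(K + LT)^2$.

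To conclude, I would substitute $\norm{\mu(r, X_r)}^2, \normmm{\sigma(r, X_r)}^2 \leq 2(K+LT)^2(1 + \norm{X_r}^2)$ into the first display, factor out $(s-t)$, and bound $2(s-t)^2 + 2(s-t) \leq 4(T+1)(s-t)$, yielding
\begin{equation}
  \EXP{\norm{X_s - X_t}^2}
\leq
  4(K+LT)^2 (T+1)(s-t) \Big( 1 + \sup_{r \in [0,T]} \EXP{\norm{X_r}^2} \Big).
\end{equation}
Taking square roots and combining with the moment bound from the previous step produces a bound of the form $(1+\norm{\xi}) \cdot (s-t)^{1/2}$ times an exponential in $(K+LT)^2 T$. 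The only real obstacle is bookkeeping: arithmetic simplification is required to absorb every multiplicative factor into a single exponential of the form $\exp(10(K+LT)^2(T+1)(L+1))$ as claimed in \eqref{temp_reg_fixedIV:concl1}, using crude estimates like $1+\norm{\xi}^2 \leq (1+\norm{\xi})^2$ and $e^x \geq 1+x$ to collapse polynomial prefactors into the exponential.
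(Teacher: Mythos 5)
Your proposal follows the same route as the paper's proof: split $X_s - X_t$ into drift and diffusion integrals, apply the Cauchy–Schwarz/H\"older estimate and It\^o's isometry, derive linear growth bounds for $\mu$ and $\sigma$ from the global Lipschitz condition, verify the Lyapunov-type monotonicity/growth hypotheses of Lemma~\ref{moment_bound_SDE} to get a uniform bound on $\sup_{r\in[0,T]}\EXP{\norm{X_r}^2}$, and then substitute back. The only cosmetic differences are that you use $(a+b)^2\leq 2a^2+2b^2$ where the paper applies the $L^2$-triangle inequality, and you fold the $|r|\leq T$ contribution directly into the constant $K+LT$ rather than retaining the $\int_t^s r^2\,dr$ term separately as the paper does; both choices lead to the same final estimate after the bookkeeping you correctly flag as the remaining work.
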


\begin{proof}[Proof of Lemma~\ref{temp_reg_fixedIV}]
Throughout this proof let 
$\left< \cdot, \cdot \right> \colon \R^d \times \R^d \to \R$ be the Euclidean scalar product on $\R^d$
and let $C \in (0,\infty) $ be given by
\begin{equation}
  C
=
  2 \big( \!
    \max \{ \norm{\mu(0,0)}, \normmm{\sigma(0,0)},L,1 \} + LT
  \big)^2.
\end{equation}
Note that \eqref{temp_reg_fixedIV:ass1} and the triangle inequality assure that 
for all $t \in [0, T]$, $x \in \R^d$ it holds that
\begin{equation}
\label{temp_reg_fixedIV:eq1.1}  
\begin{split}
  \norm{
      \mu(t,  x )
    }
\leq
    \norm{
      \mu(0,  0 )
    }
  + 
  L \big(| t | + \norm{x} \! \big)
\leq
  C
  + 
  L \big(| t | + \norm{x} \! \big)
\qandq
\end{split}
\end{equation}
\begin{equation}
\label{temp_reg_fixedIV:eq1.2}  
\begin{split}
    \normmm{
      \sigma(t,  x )
    }
\leq
    \normmm{
      \sigma(0,  0 )
    }
  + 
  L \big(| t | + \norm{x} \! \big)
\leq
  C
  + 
  L \big(| t | + \norm{x} \! \big).
\end{split}
\end{equation}
This assures that 
for all $t \in [0, T]$, $x \in \R^d$ it holds that
\begin{equation}
\label{temp_reg_fixedIV:eq2}  
\begin{split}
  & \left <x, \mu(t, x) \right>
\\ & \leq
  \norm{x} \norm{\mu(t, x)}
\leq
  \norm{x} (\norm{ \mu(0,  0 ) } + L(t + \norm{x}))
\\ & \leq
  \norm{x}\max \{\norm{ \mu(0,  0 ) } + LT, L  \} (1 + \norm{x})
\leq
  2\max \{\norm{ \mu(0,  0 ) } + LT, L  \} (1 + \norm{x}^2)
\\ & \leq 
  C (1 + \norm{x}^2).
\end{split}
\end{equation}
In addition, note that \eqref{temp_reg_fixedIV:eq1.2} implies that
for all $t \in [0, T]$, $x \in \R^d$ it holds that
\begin{equation}
\label{temp_reg_fixedIV:eq3}  
\begin{split}
  & \normmm{ \sigma(t, x) }^2
\\ & \leq
 ( \normmm{ \sigma(0,  0 ) } + L(t + \norm{x}) )^2 
\leq
  (\max \{\normmm{ \sigma(0,  0 ) } + LT, L  \})^2  (1 + \norm{x})^2
\\ & \leq
  2(\max \{\normmm{ \sigma(0,  0 ) } + LT, L \})^2 (1 + \norm{x}^2)
\\ & \leq 
  C (1 + \norm{x}^2).
\end{split}
\end{equation}
Moreover, observe that \eqref{temp_reg_fixedIV:ass2}, Lemma~\ref{Hoelder}, Tonelli's theorem, and It\^o's isometry demonstate that
for all $t \in [0, T]$, $s \in [t, T]$ it holds that
\begin{equation}
\begin{split}
  \left(
    \EXP{ 
      \Norm{X_t - X_s }^2 
    } 
  \right)^{\nicefrac{1}{2}}
&=
  \left(
    \Exp{ 
      \norm{
        \int_{t}^s \mu (r, X_r ) \, dr
        +
        \int_{t}^s \sigma ( r, X_r ) \, dW_r
      }^2 
    } 
  \right)^{\! \nicefrac{1}{2}}
\\ & \leq
  \left(
    \Exp{ 
      \norm{
        \int_{t}^s \mu (r, X_r ) \, dr
      }^2 
    } 
  \right)^{ \! \nicefrac{1}{2}}
  +
  \left(
    \Exp{ 
      \norm{
        \int_{t}^s \sigma ( r, X_r ) \, dW_r
      }^2 
    } 
  \right)^{\! \nicefrac{1}{2}}
\\ &\leq
  | t - s |^{\nicefrac{1}{2}}
  \left(
    \int_{t}^s
      \Exp{ 
        \norm{
         \mu (r, X_r ) 
        }^2 
      } 
    dr
  \right)^{ \! \nicefrac{1}{2}}
  +
  \left(
    \int_{t}^s
      \Exp{ 
        \norm{
           \sigma ( r, X_r ) 
        }^2 
      } 
    dr
  \right)^{\! \nicefrac{1}{2}}.
\end{split}
\end{equation}
The triangle inequality, \eqref{temp_reg_fixedIV:eq1.1}, and \eqref{temp_reg_fixedIV:eq1.2} therefore ensure that
for all $t \in [0, T]$, $s \in [t, T]$ it holds that
\begin{equation}
\label{temp_reg_fixedIV:eq4}  
\begin{split}
  &\left(
    \EXP{ 
      \Norm{X_t - X_s }^2 
    } 
  \right)^{\nicefrac{1}{2}}
\\ & \leq
  (| t - s |^{\nicefrac{1}{2}} + 1)
  \left(
    | t - s |^{\nicefrac{1}{2}}
    C
    +
  L
  \left(
    \int_{t}^s
      \Exp{ \left(
        | r |
        +
        \norm{
          X_r 
        }
        \right)^2 
      } 
    dr
  \right)^{ \! \nicefrac{1}{2}}
  \right)
\\& \leq
  (| t - s |^{\nicefrac{1}{2}} + 1)
  \left(
    | t - s |^{\nicefrac{1}{2}}
    C
    +
  L
  \left[
  \left(
    \int_{t}^s
        r^2
    dr
  \right)^{ \! \nicefrac{1}{2}}
  +
  \left(
    \int_{t}^s
      \Exp{
        \norm{
          X_r 
        }^2 
      } 
    dr
  \right)^{ \! \nicefrac{1}{2}}
  \right]
  \right)
  .
\end{split}
\end{equation} 
Furthermore, note that 
\eqref{temp_reg_fixedIV:eq2},  
\eqref{temp_reg_fixedIV:eq3},
\eqref{temp_reg_fixedIV:ass2},
and
Lemma~\ref{moment_bound_SDE}
(with $d = d$, $m = m$, $T = T$, $C_1 = C$, $C_2 = C$, $\xi = \xi$, $\mu = \mu$, $\sigma = \sigma$, $X = X$ in the notation of  Lemma~\ref{moment_bound_SDE})
assure that 
for all $t \in [0,T]$ it holds that
\begin{equation}
  \Exp{ \norm{ X_t}^2 }
\leq  
    \left(
      (1 + \norm{\xi}^2)
      +
      t 3 C
    \right)
    \exp
    \left(
     5
	 C
	  t
    \right) 
\leq
   \left(
      (1 + \norm{\xi}^2)
      +
      3 C T
    \right)
    \exp
    \left(
     5
	 C
	 T
    \right).
\end{equation}
This, 
\eqref{temp_reg_fixedIV:eq4}, 
the fact that $C \geq 1$,
the fact that 
for all $x \in [0,\infty)$ it holds that 
$
  \max \{x, 1+x\} \leq e^x
$,
and
the fact that 
for all $x, y \in [0,\infty)$ it holds that 
$\sqrt{x + y} \leq \sqrt{x} + \sqrt{y}$
demonstrate that
for all $t \in [0, T]$, $s \in [t, T]$ it holds that
\begin{equation}
\label{temp_reg_fixedIV:eq5}  
\begin{split}
  &\left(
    \EXP{ 
      \Norm{X_t - X_s }^2 
    } 
  \right)^{\nicefrac{1}{2}}
\\ & \leq
  (T^{\nicefrac{1}{2}} + 1)
  \left(
     | t - s |^{\nicefrac{1}{2}} 
     C
    +
  L | t - s |^{\nicefrac{1}{2}}
  \left[
      T
      +
      \left[
        (
        (1 + \norm{\xi}^2)
        +
        3 C T
      )
      \exp
      \left(
       5
	   C
	   T
      \right) 
    \right]^{\nicefrac{1}{2}}
  \right]
  \right)
\\ & \leq
  | t - s |^{\nicefrac{1}{2}} 
  \big[1 + \norm{\xi}^2 \big]^{\nicefrac{1}{2}} 
  \exp(T^{\nicefrac{1}{2}} + C + L)
  \big(
     1
    +
    T
    +
    \left[
      \left(
        1
        +
        3 C T
      \right)
      \exp
      \left(
         5
	     C
	     T
      \right) 
    \right]^{\nicefrac{1}{2}}
  \big)
\\ & \leq
  | t - s |^{\nicefrac{1}{2}} 
  (1 + \norm{\xi})
  \exp(T^{\nicefrac{1}{2}} + C + L)
  2
  \exp \left(4CT \right)
\\ & \leq
  | t - s |^{\nicefrac{1}{2}} 
  (1 + \norm{\xi})
  \exp\! \left( C(T^{\nicefrac{1}{2}} + 1 + L + 1 + 4T \right)
\\ & \leq
  | t - s |^{\nicefrac{1}{2}} 
  (1 + \norm{\xi})
  \exp \big(5 C (T+1)(L+1) \big)
  .
\end{split}
\end{equation} 
This implies \eqref{temp_reg_fixedIV:concl1}.
The proof of Lemma~\ref{temp_reg_fixedIV} is thus completed.
\end{proof}

\subsection{Strong error estimates for Euler-Maruyama approximations}
\label{sect:EM}
Our proof of the flow-type property of solutions of SDEs in Subsection~\ref{sect:subsect_flow} below makes use of Euler-Maruyama approximations of solutions. For that reason we present in this subsection explicit strong $L^2$-error estimates for Euler-Maruyama approximations in Proposition~\ref{strongEM} and Corollary~\ref{Euler_maruyama_estimate} below.
The results in this subsection are essentially well-known (cf., e.g., Kloeden \& Platen \cite[Chapter 10]{KloedenPlaten13} and Milstein \cite{Milstein94}).

\begin{prop}[Strong convergence of the Euler-Maruyama method]
\label{strongEM}
Let $ d, m, N \in \N $,
$ T \in (0,\infty) $,
$ L \in [0,\infty) $,
let $\norm{\cdot} \colon \R^d \to [0,\infty)$ be the Euclidean norm on $\R^d$,  
let $\normmm{\cdot} \colon \R^{d \times m} \to [0,\infty)$ be the Frobenius norm on $\R^{d \times m}$,
let
$
  ( 
    \Omega, \mathcal{F}, \P, 
    ( \mathbb{F}_t )_{ t \in [0,T] }
  )
$
be a filtered probability space which satisfies the usual conditions,
let
$
  W \colon [0,T] \times \Omega
  \to \R^m
$
be a standard
$ ( \Omega, \mathcal{F}, \P, ( \mathbb{F}_t )_{ t \in [0,T] } ) $-Brownian motion,
let $ \zeta \colon \Omega \to \R^d$ be an $\mathbb{F}_0$/$\Borel(\R^d)$-measurable function which satisfies that $\Exp{ \norm{\zeta}^2 } < \infty$,
let $ \mu \colon [0, T] \times \R^d \to \R^d $, $ \sigma \colon [0, T] \times \R^d \to \R^{ d \times m } $
be functions 
which satisfy for all $t, s \in [0, T]$, $x, y \in \R^d $
that
\begin{equation}
\label{eq:assume_Lip}
  \max \! \big\{ \!
    \norm{
      \mu(t,  x ) - \mu(s,  y )
    }
    ,
    \normmm{
      \sigma(t,  x ) - \sigma(s,  y )
    }
  \big\}
\leq
  L \big(| t - s | + \norm{x-y} \! \big),
\end{equation}
let $ X \colon [0,T] \times \Omega \to \R^d $ be an $(\mathbb{F}_{t})_{t \in [0, T]}$/$\Borel(\R^d)$-adapted stochastic processes with continuous sample paths 
which satisfies that $\Exp{ \norm{X_0}^2} < \infty$ 
and which satisfies
that for all $t \in [0, T]$ 
it holds $\P$-a.s.\ that
\begin{equation}
\label{strongEM:ass1}
  X_t
= 
  X_0
  + 
  \int_{0}^t \mu (s, X_s ) \, ds
  +
  \int_{0}^t \sigma ( s, X_s ) \, dW_s,
\end{equation}
let $t_0, t_1, \ldots, t_N \in [0, T]$ satisfy that
\begin{equation}
  0
=
  t_0
\leq
  t_1
\leq
  t_2
\leq
  \ldots 
\leq
  t_N
=
  T,
\end{equation}
and let 
$\mathcal{X} \colon \{0, 1, \ldots , N \} \times \Omega \to \R^d$ be the stochastic process which satisfies 
for all $n \in \{1, 2, \ldots, N \}$ that
\begin{equation}
\label{strongEM:ass2}
  \mathcal{X}_0 = \zeta
\qandq
  \mathcal{X}_n
=
  \mathcal{X}_{n-1}
  +
  \mu (t_{n-1}, \mathcal{X}_{n-1} ) (t_n - t_{n-1})
  +
  \sigma (t_{n-1}, \mathcal{X}_{n-1} ) (W_{t_n} - W_{t_{n-1}}).
\end{equation}
Then it holds that
\begin{equation}
\label{strongEM:concl1}
\begin{split}
  \left(
    \Exp{
        \norm{X_T - \mathcal{X}_{N}}^2
      }
  \right)^{ \! \nicefrac{ 1 }{ 2 } } 
&\leq
  \left[
  \left(
    \Exp{
        \norm{X_0 - \zeta}^2
      }
  \right)^{ \! \nicefrac{ 1 }{ 2 } }
  +
  \max_{k \in \{1, 2, \ldots, N \} }  | t_k - t_{k-1} |^{\nicefrac{1}{2}}
  \right] 
\\ & \quad \cdot
  \exp \! \left(
    (1 + L)^2  (1+\sqrt{T})^4  
  \right)
  \left(
    1
    + 
      \sup_{ s, r \in [0, T], s \neq r} 
        \tfrac{ ( \EXP{ \Norm{X_s - X_r }^2 } )^{1/2}}{| s - r |^{1/2}}
  \right)
<
  \infty
  .
\end{split}
\end{equation}
\end{prop}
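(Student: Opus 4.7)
The plan is to introduce a continuous-time extension of the Euler--Maruyama scheme. Define $\kappa \colon [0,T] \to \{0,1,\ldots,N-1\}$ by $\kappa(s) = \max\{n \in \{0,1,\ldots,N-1\} \colon t_n \leq s\}$ for $s \in [0,T)$ and $\kappa(T) = N-1$, and introduce $\mathcal{Y} \colon [0,T] \times \Omega \to \R^d$ satisfying $\mathcal{Y}_0 = \zeta$ and
\begin{equation}
\mathcal{Y}_t = \zeta + \int_0^t \mu\bigl(t_{\kappa(s)}, \mathcal{Y}_{t_{\kappa(s)}}\bigr) \, ds + \int_0^t \sigma\bigl(t_{\kappa(s)}, \mathcal{Y}_{t_{\kappa(s)}}\bigr) \, dW_s.
\end{equation}
A straightforward induction on $n$ using the recursion \eqref{strongEM:ass2} shows that $\mathcal{Y}_{t_n} = \mathcal{X}_n$ for all $n \in \{0,1,\ldots,N\}$, so it suffices to bound $\bigl(\Exp{\norm{X_T - \mathcal{Y}_T}^2}\bigr)^{\!\nicefrac{1}{2}}$.

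Subtracting the integral equations for $X$ and $\mathcal{Y}$, applying Minkowski's inequality in $L^2(\P;\R^d)$, Lemma~\ref{Hoelder} to the Bochner-integral part, and It\^o's isometry to the stochastic integral part yields, for every $t \in [0,T]$,
\begin{equation}
\begin{split}
\bigl(\Exp{\norm{X_t - \mathcal{Y}_t}^2}\bigr)^{\!\nicefrac{1}{2}}
&\leq \bigl(\Exp{\norm{X_0 - \zeta}^2}\bigr)^{\!\nicefrac{1}{2}}
+ \sqrt{T}\,\Bigl(\textint_0^t \Exp{\norm{\mu(s,X_s) - \mu(t_{\kappa(s)},\mathcal{Y}_{t_{\kappa(s)}})}^2}\,ds\Bigr)^{\!\nicefrac{1}{2}} \\
&\quad + \Bigl(\textint_0^t \Exp{\normmm{\sigma(s,X_s) - \sigma(t_{\kappa(s)},\mathcal{Y}_{t_{\kappa(s)}})}^2}\,ds\Bigr)^{\!\nicefrac{1}{2}}.
\end{split}
\end{equation}
I would then apply the Lipschitz hypothesis \eqref{eq:assume_Lip} together with the triangle inequality $\norm{X_s - \mathcal{Y}_{t_{\kappa(s)}}} \leq \norm{X_s - X_{t_{\kappa(s)}}} + \norm{X_{t_{\kappa(s)}} - \mathcal{Y}_{t_{\kappa(s)}}}$ to split each integrand into three contributions: a pure time-step error of size $|s - t_{\kappa(s)}| \leq \max_k|t_k-t_{k-1}|$, a path regularity error $\norm{X_s - X_{t_{\kappa(s)}}}_{L^2}$, and a grid-point error $\norm{X_{t_{\kappa(s)}} - \mathcal{Y}_{t_{\kappa(s)}}}_{L^2}$. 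Lemma~\ref{temp_reg_time} bounds the middle term by $C\,|s - t_{\kappa(s)}|^{1/2}$, where $C$ is precisely the temporal H\"older constant appearing in the last factor of \eqref{strongEM:concl1}, and its finiteness is guaranteed by the hypothesis $\Exp{\norm{X_0}^2} < \infty$.

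Squaring the resulting estimate and using elementary inequalities $(a+b+c)^2 \leq 3(a^2+b^2+c^2)$ produces, for $f(t) = \bigl(\Exp{\norm{X_t - \mathcal{Y}_t}^2}\bigr)^{\!\nicefrac{1}{2}}$, an inequality of the form
\begin{equation}
f(t)^2 \leq A + B \textint_0^t f(s)^2\,ds,
\end{equation}
where $A$ is a constant multiple of $\Exp{\norm{X_0 - \zeta}^2} + (1+C)^2\,\max_k|t_k - t_{k-1}|$ and $B$ is a constant multiple of $(1+L)^2(1+T)$. Evaluating along the grid points $t_0 < t_1 < \ldots < t_N$ reduces this to a discrete recursion, and Corollary~\ref{gronwall_discrete} then yields $f(T)^2 \leq A \exp(BT)$; repackaging the constants produces the Gronwall factor $\exp((1+L)^2(1+\sqrt{T})^4)$ and the additive $\max_k|t_k-t_{k-1}|^{1/2}$ in the first bracket of \eqref{strongEM:concl1}, with the temporal H\"older constant factored out into the final parenthesis. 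The probabilistic content (Minkowski, H\"older via Lemma~\ref{Hoelder}, It\^o's isometry, Gronwall via Corollary~\ref{gronwall_discrete}, and temporal regularity via Lemma~\ref{temp_reg_time}) is routine; the main obstacle is the constant bookkeeping needed to absorb the various $\sqrt{T}$, $L$, and $C$ factors into precisely the form displayed in \eqref{strongEM:concl1}.
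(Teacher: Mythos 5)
Your overall strategy --- subtract the integral representations, apply Minkowski, Lemma~\ref{Hoelder}, and It\^o's isometry, then split each increment into a time-step error, a path-regularity error controlled via Lemma~\ref{temp_reg_time}, and a grid-point error, and finally close the estimate with a discrete Gronwall inequality --- is exactly the route the paper takes. The continuous-time embedding $\mathcal{Y}$ is a cosmetic repackaging of the paper's representation $\mathcal{X}_n = \zeta + \int_0^{t_n}\mu(\mathfrak{t}(s),\mathcal{X}_{\mathfrak{n}(s)})\,ds + \int_0^{t_n}\sigma(\mathfrak{t}(s),\mathcal{X}_{\mathfrak{n}(s)})\,dW_s$ evaluated at grid points; nothing is gained or lost there.

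There is, however, one genuine gap in the final Gronwall step. After the decomposition, the integrand that remains inside $\int_0^t(\cdot)\,ds$ is $\Exp{\Norm{X_{t_{\kappa(s)}} - \mathcal{Y}_{t_{\kappa(s)}}}^2}$, i.e.\ the grid-point error $f(t_{\kappa(s)})^2$, not $f(s)^2$ as you write. Since $f(t_{\kappa(\cdot)})$ is piecewise constant, evaluating at $t = t_n$ converts the integral into the weighted sum $\sum_{k=0}^{n-1}(t_{k+1}-t_k)\,f(t_k)^2$, in which each previous term carries its own weight $h_{k+1} = t_{k+1}-t_k$. This is precisely the hypothesis of the \emph{non-equidistant} Gronwall inequality, Lemma~\ref{gronwall_nonequidistant}, whose conclusion $\epsilon_n \leq \alpha\exp(\sum_k\beta_k) = \alpha\exp(B T)$ is grid-independent because $\sum_k h_k = T$. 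The tool you cite, Corollary~\ref{gronwall_discrete}, only covers a \emph{constant} weight $\beta$ and yields $\alpha\exp(\beta n)$; to force the recursion into that form you would have to take $\beta = B\max_k h_k$, giving $\alpha\exp(B\,N\max_k h_k)$, which is not bounded by $\alpha\exp(BT)$ for non-uniform grids. Since the statement permits arbitrary partitions $0=t_0\leq\cdots\leq t_N=T$ and the constant in \eqref{strongEM:concl1} is grid-independent, Corollary~\ref{gronwall_discrete} is insufficient here and Lemma~\ref{gronwall_nonequidistant} is needed. The remaining bookkeeping of $\sqrt{T}$, $L$, and the temporal H\"older constant is straightforward and consistent with what you describe.
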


\begin{proof}[Proof
of Proposition~\ref{strongEM}]
Throughout this proof assume w.l.o.g.\ that 
$
  t_0
<
  t_1
<
  t_2
<
  \ldots 
<
  t_N
$,
let $(h_n)_{n \in \{1, 2, \ldots, N\}} \subseteq (0,T]$, $H \in (0,T]$, $K \in [0, \infty]$ satisfy 
for all $n \in \{1, 2, \ldots, N\}$ that
\begin{equation}
  h_n = | t_n - t_{n-1} |,
\qquad
  H = \max_{k \in \{1, 2, \ldots, N \} }  | t_k - t_{k-1} |,
\qandq
  K
=
  \sup_{ s, r \in [0, T], s \neq r} 
    \frac{ \left( \EXP{ \Norm{X_s - X_r }^2 } \right)^{\nicefrac{1}{2}}}{| s - r |^{\nicefrac{1}{2}}},
\end{equation}
let
$\mathfrak{t} \colon [0, T] \to \{ t_0, t_1, t_2, \ldots, t_N \}$ be the function which satisfies 
for all $s \in [0, T]$ that
\begin{equation}
  \mathfrak{t}(s) = \max \left( \{t_0, t_1, \ldots, t_N \} \cap [0, s] \right),
\end{equation}
and let
$\mathfrak{n} \colon [0, T] \to \{0, 1, 2, \ldots, N \}$ be the function which satisfies 
for all $s \in [0, T]$ that
\begin{equation}
  \mathfrak{n}(s) = \max \left( \left\{ n \in \{0, 1, 2, \ldots, N \}  \colon  t_n \leq s  \right \} \right).
\end{equation}
Note that 
the hypothesis that $\Exp{ \norm{X_0}^2} < \infty$, 
the fact that $\mu$ and $\sigma$ are globally Lipschitz continuous functions, 
\eqref{strongEM:ass1},
and 
Lemma~\ref{temp_reg_time} imply that $K < \infty$.
Next observe that \eqref{strongEM:ass2} and induction assure that
for all 
$ n \in \{ 0, 1, 2, \dots, N \} $
it holds $ \P $-a.s.\ that
\begin{equation}
\begin{split}
  \mathcal{X}_n
&=
  \mathcal{X}_0
  +
  \left[
    \sum_{k = 1}^{n}
    \mu(t_{k-1},  \mathcal{X}_{k-1})(t_k - t_{k-1})
  \right]
  +
  \left[
    \sum_{k = 1}^{n}
    \sigma(t_{k-1},  \mathcal{X}_{k-1})(W_{t_k} - W_{t_{k-1}})
  \right] \\
&=
  \zeta
  +
  \int_0^{
    t_n
  }
  \mu \big( \mathfrak{t}(s), \mathcal{X}_{\mathfrak{n}(s)} \big) 
  \, ds
  +
  \int_0^{
    t_n
  }
    \sigma \big( \mathfrak{t}(s), \mathcal{X}_{\mathfrak{n}(s)} \big) 
  \, dW_s
  .
\end{split}
\end{equation}
This and \eqref{strongEM:ass1} imply that
for all 
$ n \in \{ 0, 1, 2, \dots, N \} $
it holds $ \P $-a.s.\ that
\begin{equation}
\begin{split}
  X_{t_n}
  -
  \mathcal{X}_n
& =
  X_0 - \zeta
  +
  \int_0^{
    t_n
  }
  \mu\big(
    s, X_s
  \big)
  -
  \mu \big( \mathfrak{t}(s), \mathcal{X}_{\mathfrak{n}(s)} \big) 
  \, ds
  +
  \int_0^{
    t_n
  }
  \sigma\big(
    s, X_s
  \big)
  -
    \sigma \big( \mathfrak{t}(s), \mathcal{X}_{\mathfrak{n}(s)} \big) 
  \, dW_s
\\ & =
  X_0 - \zeta
  +
  \int_0^{
    t_n
  }
  \mu\big(
    s, X_s
  \big)
  -
  \mu\big(
    \mathfrak{t}(s)
    ,
    X_{ 
      \mathfrak{t}(s)
    }
  \big)
  \, ds
  +
  \int_0^{
    t_n
  }
  \sigma\big(
    s, X_s
  \big)
  -
  \sigma\big(
    \mathfrak{t}(s)
    ,
    X_{ 
      \mathfrak{t}(s)
    }
  \big)
  \, dW_s
\\ & +
  \int_0^{
    t_n
  }
  \mu\big(
    \mathfrak{t}(s)
    ,
    X_{ 
      \mathfrak{t}(s)
    }
  \big)
  -
  \mu \big( \mathfrak{t}(s), \mathcal{X}_{\mathfrak{n}(s)} \big) 
  \, ds
  +
  \int_0^{
    t_n
  }
  \sigma\big(
    \mathfrak{t}(s)
    ,
    X_{ 
      \mathfrak{t}(s)
    }
  \big)
  -
    \sigma \big( \mathfrak{t}(s), \mathcal{X}_{\mathfrak{n}(s)} \big) 
  \, dW_s
  .
\end{split}
\end{equation}
The triangle inequality hence proves that for all 
$ n \in \{ 0, 1, 2, \dots, N \} $
it holds that
\begin{equation}
\begin{split}
&
  \left(
    \Exp{
        \norm{X_{t_n} - \mathcal{X}_{n}}^2
      }
  \right)^{ \! \nicefrac{ 1 }{ 2 } } 
\\ & \leq
  \left(
    \Exp{
        \norm{X_0-\zeta}^2
      }
  \right)^{ \! \nicefrac{ 1 }{ 2 } }
  +
  \left( 
    \Exp{ \norm{
      \int_0^{
        t_n
      }
        \mu\big(
          s, X_s
        \big)
        -
        \mu\big(
          \mathfrak{t}(s)
          ,
          X_{ 
            \mathfrak{t}(s)
          }
        \big)
        \,
      ds
    }^2}
  \right)^{ \! \nicefrac{ 1 }{ 2 } }
\\ &
  +
  \left( 
    \Exp{ \norm{
      \int_0^{
        t_n
      }
        \sigma\big(
          s, X_s
        \big)
        -
        \sigma\big(
          \mathfrak{t}(s)
          ,
          X_{ 
            \mathfrak{t}(s)
          }
        \big)
        \,
      dW_s
    }^2}
  \right)^{ \! \nicefrac{ 1 }{ 2 } }
\\ & +
  \left( 
    \Exp{ \norm{
      \int_0^{
        t_n
      }
        \mu\big(
          \mathfrak{t}(s)
          ,
          X_{ 
            \mathfrak{t}(s)
          }
        \big)
        -
        \mu\big(
          \mathfrak{t}(s), \mathcal{X}_{\mathfrak{n}(s)}
        \big)
        \,
      ds
    }^2}
  \right)^{ \! \nicefrac{ 1 }{ 2 } }
\\ &
  +
  \left( 
    \Exp{ \norm{
      \int_0^{
        t_n
      }
        \sigma\big(
          \mathfrak{t}(s)
          ,
          X_{ 
            \mathfrak{t}(s)
          }
        \big)
        -
        \sigma\big(
          \mathfrak{t}(s), \mathcal{X}_{\mathfrak{n}(s)}
        \big)
        \,
      dW_s
    }^2}
  \right)^{ \! \nicefrac{ 1 }{ 2 } }
  .
\end{split}
\end{equation}
Lemma~\ref{Hoelder}, Tonelli's Theorem, and It\^o's isometry 
therefore imply 
that for all 
$ n \in \{ 0, 1, 2, \dots, N \} $
it holds that
\begin{equation}
\begin{split}
&
  \left(
    \Exp{
        \norm{X_{t_n} - \mathcal{X}_{n}}^2
      }
  \right)^{ \! \nicefrac{ 1 }{ 2 } } 
\\ & \leq
  \left(
    \Exp{
        \norm{X_0-\zeta}^2
      }
  \right)^{ \! \nicefrac{ 1 }{ 2 } }
  +
  \bigg(
  \,
  T
  \int_0^{
    t_n
  }
  \Exp{
    \big\|
    \mu\big(
      s, X_s
    \big)
    -
    \mu\big(
      \mathfrak{t}(s)
      ,
      X_{ 
        \mathfrak{t}(s)
      }
    \big)
    \big\|^2
  }
  \,
  ds
  \bigg)^{  \! \nicefrac{ 1 }{ 2 } }
\\ &
  +
  \bigg(
  \int_0^{
    t_n
  }
  \Exp{
  \normmm{
  \sigma\big(
    s, X_s
  \big)
  -
  \sigma\big(
    \mathfrak{t}(s)
    ,
    X_{ 
      \mathfrak{t}(s)
    }
  \big)
  }^2
  }
  \, ds
  \bigg)^{  \! \nicefrac{ 1 }{ 2 } }
\\ & +
  \bigg(
  \,
  T
  \int_0^{
    t_n
  }
  \Exp{
  \big\|
  \mu\big(
    \mathfrak{t}(s)
    ,
    X_{ 
      \mathfrak{t}(s)
    }
  \big)
  -
  \mu \big( \mathfrak{t}(s), \mathcal{X}_{\mathfrak{n}(s)} \big) 
  \big\|^2
  }
  \, ds
  \bigg)^{  \! \nicefrac{ 1 }{ 2 } }
\\ &
  +
  \bigg(
  \int_0^{
    t_n
  }
  \Exp{
  \normmm{
  \sigma\big(
    \mathfrak{t}(s)
    ,
    X_{ 
      \mathfrak{t}(s)
    }
  \big)
  -
    \sigma \big( \mathfrak{t}(s), \mathcal{X}_{\mathfrak{n}(s)} \big) 
  }^2
  }
  \, ds
  \bigg)^{  \! \nicefrac{ 1 }{ 2 } }
  .
\end{split}
\end{equation}
This and \eqref{eq:assume_Lip} show that
for all $ n \in \{ 0, 1, 2, \dots, N \} $
it holds that
\begin{equation}
\begin{split}
&
  \left(
    \Exp{
        \norm{X_{t_n} - \mathcal{X}_{n}}^2
      }
  \right)^{ \! \nicefrac{ 1 }{ 2 } } 
\\ & \leq
\left(
    \Exp{
        \norm{X_0-\zeta}^2
      }
  \right)^{ \! \nicefrac{ 1 }{ 2 } }
  +
  L \sqrt{T}
  \bigg(
  \,
  \int_0^{
    T
  }
  \Exp{
    \left(
      | s - \mathfrak{t}(s) |
      +
      \big\|
      X_s
      -
      X_{ 
        \mathfrak{t}(s)
      }
    \big\|
    \right)^2
  }
  \,
  ds
  \bigg)^{ \! \nicefrac{ 1 }{ 2 } }
\\ &
  +
  L  
  \bigg(
  \int_0^{
    T
  }
  \Exp{
    \left(
      | s - \mathfrak{t}(s) |
      +
      \big\|
      X_s
      -
      X_{ 
        \mathfrak{t}(s)
      }
    \big\|
    \right)^2
  }
  \,
  ds
  \bigg)^{ \! \nicefrac{ 1 }{ 2 } }
\\ & +
  L  \sqrt{T}
  \bigg(
  \int_0^{
    t_n
  }
  \Exp{
  \big\|
    X_{ \mathfrak{t}(s) }
    -
    \mathcal{X}_{\mathfrak{n}(s)}
  \big\|^2
  }
  ds
  \bigg)^{ \! \nicefrac{ 1 }{ 2 } }
\\ &
  +
  L
  \bigg(
  \int_0^{
    t_n
  }
  \Exp{
  \big\|
    X_{ \mathfrak{t}(s) }
    -
    \mathcal{X}_{\mathfrak{n}(s)}
  \big\|^2
  }
  \, ds
  \bigg)^{ \! \nicefrac{ 1 }{ 2 } }
  .
\end{split}
\end{equation}
This, the triangle inequality, and the fact that for all $s \in [0, T]$ it holds that $| s - \mathfrak{t}(s) | \leq H$ imply that 
for all $ n \in \{ 0, 1, 2, \dots, N \} $
it holds that
\begin{equation}
\begin{split}
&
  \left(
    \Exp{
        \norm{X_{t_n} - \mathcal{X}_{n}}^2
      }
  \right)^{ \! \nicefrac{ 1 }{ 2 } } 
\\ & \leq
  \left(
    \Exp{
        \norm{X_0-\zeta}^2
      }
  \right)^{ \! \nicefrac{ 1 }{ 2 } }
  +
  L (1 + \sqrt{T})
  \left[
    \sqrt{T}
    H
    +
    \bigg(
    \int_0^{
      T
    }
    \Exp{
        \big\|
        X_s
        -
        X_{ 
          \mathfrak{t}(s)
        }
      \big\|^2
    }
    ds
    \bigg)^{ \! \nicefrac{ 1 }{ 2 } }
  \right]
\\ & +
  L  (1+\sqrt{T})
  \left(
    \sum_{k = 1}^{n}
      h_k \,
      \Exp{
      \big\|
        X_{ t_{k-1} }
        -
        \mathcal{X}_{{k-1}}
      \big\|^2
      }
  \right)^{ \! \nicefrac{ 1 }{ 2 } }
  .
\end{split}
\end{equation}
The fact that 
for all $x, y \in [0,\infty)$ it holds that
$(x+y)^2 \leq 2x^2 + 2 y^2$ 
hence proves that 
for all $ n \in \{ 0, 1, 2, \dots, N \} $
it holds that
\begin{equation}
\begin{split}
&
    \Exp{
        \norm{X_{t_n} - \mathcal{X}_{n}}^2
      }
\\ & \leq
  2 \left(
  \left(
    \Exp{
        \norm{X_0-\zeta}^2
      }
  \right)^{ \! \nicefrac{ 1 }{ 2 } }
  +
  L (1 + \sqrt{T})
  \left[
    \sqrt{T}
    H
    +
    \bigg(
    \int_0^{
      T
    }
    \Exp{
        \big\|
        X_s
        -
        X_{ 
          \mathfrak{t}(s)
        }
      \big\|^2
    }
    ds
    \bigg)^{ \! \nicefrac{1}{2} }
  \right]
  \right)^2
\\ & +
  2 L^2  (1+\sqrt{T})^2
  \left(
    \sum_{k = 1}^{n}
      h_k \,
      \Exp{
      \big\|
        X_{ t_{k-1} }
        -
        \mathcal{X}_{{k-1}}
      \big\|^2
      }
  \right)
  .
\end{split}
\end{equation}
The discrete Gronwall-type inequality in Lemma~\ref{gronwall_nonequidistant}
(with
$N = N$, 
$\alpha = 2 \big(
  \left(
    \Exp{
        \Norm{X_0-\zeta}^2
      }
  \right)^{ \! \nicefrac{ 1 }{ 2 } } \allowbreak
  +
  L (1 + \sqrt{T})
  [
    \sqrt{T}
    H
    +
    (
    \int_0^{
      T
    }
    \Exp{
        \|
        X_s
        -
        X_{ 
          \mathfrak{t}(s)
        }
      \|^2
    }
    ds
    )^{ \! \nicefrac{1}{2} }
  ]
  \big)^2
$,
$
  (\beta_n)_{n \in \{0, 1, 2, \ldots, N-1 \} }
=
  ( 2 L^2  (1+\sqrt{T})^2 h_{n+1})_{n \in \{0, 1, 2, \ldots, N-1 \} }
$,
$
  (\epsilon_n)_{n \in \{0, 1, 2, \ldots, N \} }
=
  ( 
    \Exp{
        \norm{X_{t_n} - \mathcal{X}_{n}}^2
      }
  )_{n \in \{0, 1, 2, \ldots, N \} }
$
in the notation of Lemma~\ref{gronwall_nonequidistant})
 and the fact that $\sum_{k = 1}^N h_k = T$
therefore show
that 
\begin{equation}
\begin{split}
&
    \Exp{
        \norm{X_{t_N} - \mathcal{X}_{N}}^2
      }
\\ & \leq
  2 \left(
  \left(
    \Exp{
        \norm{X_0-\zeta}^2
      }
  \right)^{ \! \nicefrac{ 1 }{ 2 } }
  +
  L (1 + \sqrt{T})
  \left[
    \sqrt{T}
    H
    +
    \bigg(
    \int_0^{
      T
    }
    \Exp{
        \big\|
        X_s
        -
        X_{ 
          \mathfrak{t}(s)
        }
      \big\|^2
    }
    ds
    \bigg)^{ \! \nicefrac{1}{2} }
  \right]
  \right)^2
\\ & \quad \cdot
  \exp \left(
    2 L^2  (1+\sqrt{T})^2 T
  \right)
  .
\end{split}
\end{equation}
This and the fact that 
for all $s \in [0, T]$ it holds that
$|s - \mathfrak{t}(s)| \leq H$
imply that
\begin{equation}
\begin{split}
&
  \left(
    \Exp{
        \norm{X_{T} - \mathcal{X}_{N}}^2
      }
  \right)^{ \! \nicefrac{1}{2}}
\\ & \leq
  \sqrt{2} \left(
  \left(
    \Exp{
        \norm{X_0-\zeta}^2
      }
  \right)^{ \! \nicefrac{ 1 }{ 2 } }
  +
  L (1 + \sqrt{T})
  \left[
    \sqrt{T}
    H
    +
    \left(
    T  K^2  H
    \right)^{ \! \nicefrac{1}{2}}
  \right]
  \right)
  \exp \left(
    L^2  (1+\sqrt{T})^2 T
  \right)
  .
\end{split}
\end{equation}
The fact that $H \leq \sqrt{T} \sqrt{H}$ hence assures that
\begin{equation}
\begin{split}
&
  \left(
    \Exp{
        \norm{X_{T} - \mathcal{X}_{N}}^2
      }
  \right)^{ \! \nicefrac{1}{2}}
\\ & \leq
  \sqrt{2} \left(
  \left(
    \Exp{
        \norm{X_0-\zeta}^2
      }
  \right)^{ \! \nicefrac{ 1 }{ 2 } }
  +
  L (1 + \sqrt{T})
    \sqrt{T}(\sqrt{T} + 1)
    \sqrt{H}
    (1+K)
  \right)
  \exp \! \left(
    L^2  (1+\sqrt{T})^2 T
  \right)
\\ & \leq
  \sqrt{2} \left(
  \left(
    \Exp{
        \norm{X_0-\zeta}^2
      }
  \right)^{ \! \nicefrac{ 1 }{ 2 } }
  +
  \sqrt{H} 
  \exp \! \left(
    L (1 + \sqrt{T})^2\sqrt{T}
  \right)
  \right)
  \exp \! \left(
    L^2  (1+\sqrt{T})^2 T
  \right)
  (1 + K)
\\ & \leq
  \left(
  \left(
    \Exp{
        \norm{X_0-\zeta}^2
      }
  \right)^{ \! \nicefrac{ 1 }{ 2 } }
  +
  \sqrt{H}
  \right)
  \exp \! \left(
    (1 + L)^2  (1+\sqrt{T})^4
  \right)
  (1 + K)
  .
\end{split}
\end{equation}
This implies \eqref{strongEM:concl1}.
The proof of Proposition~\ref{strongEM}
is thus completed.
\end{proof}

\begin{cor}
\label{Euler_maruyama_estimate}
Let $ d, m, N \in \N $,
$ T \in (0,\infty) $, $t \in [0, T]$, $s \in [t, T]$,
$ L \in [0,\infty) $,
let $\norm{\cdot} \colon \R^d \to [0,\infty)$ be the Euclidean norm on $\R^d$,  
let $\normmm{\cdot} \colon \R^{d \times m} \to [0,\infty)$ be the Frobenius norm on $\R^{d \times m}$,
let
$
  ( 
    \Omega, \mathcal{F}, \P, 
    ( \mathbb{F}_t )_{ t \in [0,T] }
  )
$
be a filtered probability space which satisfies the usual conditions,
let
$
  W \colon [0,T] \times \Omega
  \to \R^m
$
be a standard
$ ( \Omega, \mathcal{F}, \P, ( \mathbb{F}_t )_{ t \in [0,T] } ) $-Brownian motion,
let $ \zeta \colon \Omega \to \R^d$ be an $\mathbb{F}_t$/$\Borel(\R^d)$-measurable function with $\Exp{\norm{\zeta}^2} < \infty$,
let $ \mu \colon [0, T] \times \R^d \to \R^d $, $ \sigma \colon [0, T] \times \R^d \to \R^{ d \times m } $
be functions 
which satisfy for all $r, h \in [0, T]$, $x, y \in \R^d $
that
\begin{equation}
\label{Euler_maruyama_estimate:ass1}
  \max \! \big\{ \!
    \norm{
      \mu(r,  x ) - \mu(h,  y )
    }
    ,
    \normmm{
      \sigma(r,  x ) - \sigma(h,  y )
    }
  \big\}
\leq
  L \big(| r - h | + \norm{x-y} \! \big),
\end{equation}
let $ X \colon [t,s] \times \Omega \to \R^d $ be an $(\mathbb{F}_{r})_{r \in [t, s]}$/$\Borel(\R^d)$-adapted stochastic processes with continuous sample paths 
which satisfies that $\Exp{ \norm{X_t}^2} < \infty$ 
and which satisfies
that for all $r \in [t, s]$ 
it holds $\P$-a.s.\ that
\begin{equation}
\label{Euler_maruyama_estimate:ass2}
  X_r
= 
  X_t
  + 
  \int_{t}^r \mu (h, X_h ) \, dh
  +
  \int_{t}^r \sigma ( h, X_h ) \, dW_h,
\end{equation}
let $r_0, r_1, \ldots, r_N \in [0, T]$ satisfy that
\begin{equation}
  t
=
  r_0
\leq
  r_1
\leq
  r_2
\leq
  \ldots 
\leq
  r_N
=
  s,
\end{equation}
and let 
$\mathcal{X} \colon \{0, 1, \ldots , N \} \times \Omega \to \R^d$ be the stochastic process which satisfies 
for all $n \in \{1, 2, \ldots, N \}$ that
\begin{equation}
\label{Euler_maruyama_estimate:ass3}
  \mathcal{X}_0 = \zeta
\qandq
  \mathcal{X}_n
=
  \mathcal{X}_{n-1}
  +
  \mu (r_{n-1}, \mathcal{X}_{n-1} ) (r_n - r_{n-1})
  +
  \sigma (r_{n-1}, \mathcal{X}_{n-1} ) (W_{r_n} - W_{r_{n-1}}).
\end{equation}
Then it holds that
\begin{equation}
\label{Euler_maruyama_estimate:concl1}
\begin{split}
  &\left(
    \Exp{
        \norm{X_s - \mathcal{X}_{N}}^2
      }
  \right)^{ \! \nicefrac{ 1 }{ 2 } } \\
& \leq
  \left[
  \left(
    \Exp{
        \norm{X_t - \zeta}^2
      }
  \right)^{ \! \nicefrac{ 1 }{ 2 } }
  +
  \max_{k \in \{1, 2, \ldots, N \} }  | r_k - r_{k-1} |^{\nicefrac{1}{2}}
  \right] 
  \exp \! \left(
    (1 + L)^2  (1+\sqrt{T})^4  
  \right)
\\ & \quad \cdot
  \left(
    1
    + 
      \sup \left(
        \left\{ \tfrac{ ( \EXP{ \Norm{X_r - X_h }^2 } )^{1/2}}{| r - h |^{1/2}} \in [0,\infty] \colon (r, h \in [t, s], r \neq h) \right\} \cup \{ 0\}
      \right)
  \right)
<
  \infty
  .
\end{split}
\end{equation}
\end{cor}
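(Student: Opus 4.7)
The plan is to deduce \cref{Euler_maruyama_estimate} from \cref{strongEM} by a translation in time, reducing the SDE on $[t,s]$ to one on $[0,s-t]$ so that \cref{strongEM} applies directly.

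First I would dispose of the degenerate case $t=s$: in that case \eqref{Euler_maruyama_estimate:ass2}--\eqref{Euler_maruyama_estimate:ass3} force $X_s=X_t$ and $\mathcal{X}_N=\zeta$ $\P$-a.s., the supremum set on the right of \eqref{Euler_maruyama_estimate:concl1} equals $\{0\}$, and the inequality follows from $\exp((1+L)^2(1+\sqrt{T})^4)\ge 1$. Assume henceforth $t<s$.

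Next I would introduce the shifted objects: the filtration $\tilde{\mathbb{F}}_r=\mathbb{F}_{t+r}$ for $r\in[0,s-t]$ (which inherits the usual conditions from $(\mathbb{F}_h)_{h\in[0,T]}$), the process $\tilde{W}_r=W_{t+r}-W_t$ (which, by the independent-increments property of Brownian motion, is a standard $(\Omega,\mathcal{F},\P,(\tilde{\mathbb{F}}_r)_{r\in[0,s-t]})$-Brownian motion), the coefficients $\tilde{\mu}(r,x)=\mu(t+r,x)$ and $\tilde{\sigma}(r,x)=\sigma(t+r,x)$ (which by \eqref{Euler_maruyama_estimate:ass1} are globally Lipschitz in $(r,x)$ with the same constant $L$), the shifted solution $\tilde{X}_r=X_{t+r}$ (which inherits $(\tilde{\mathbb{F}}_r)$-adaptedness and $L^2$-integrability of its initial value $\tilde{X}_0=X_t$ from the hypotheses, and which satisfies the SDE $\tilde{X}_r=\tilde{X}_0+\int_0^r\tilde{\mu}(h,\tilde{X}_h)\,dh+\int_0^r\tilde{\sigma}(h,\tilde{X}_h)\,d\tilde{W}_h$ by a standard change-of-variables argument applied to \eqref{Euler_maruyama_estimate:ass2}), and the shifted grid $\tilde{r}_n=r_n-t$, so that the Euler--Maruyama scheme driven by $\tilde{W}$ based on $\tilde{\mu},\tilde{\sigma}$ on this grid with initial value $\zeta$ coincides with $\mathcal{X}_n$ by \eqref{Euler_maruyama_estimate:ass3}.

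Then I would apply \cref{strongEM} with $T$ replaced by $s-t\in(0,T]$, $\mu,\sigma,W,X,t_n$ replaced by their tildes, and $\zeta$ kept as is, using $\tilde{\mathbb{F}}_0=\mathbb{F}_t$-measurability of $\zeta$. The conclusion \eqref{strongEM:concl1} becomes
\begin{equation*}
  \bigl(\Exp{\norm{\tilde{X}_{s-t}-\mathcal{X}_N}^2}\bigr)^{\!\nicefrac{1}{2}}
  \le
  \bigl[\bigl(\Exp{\norm{X_t-\zeta}^2}\bigr)^{\!\nicefrac{1}{2}}+\max_{k}|r_k-r_{k-1}|^{\nicefrac{1}{2}}\bigr]
  \exp\!\bigl((1+L)^2(1+\sqrt{s-t})^4\bigr)\bigl(1+\tilde{K}\bigr),
\end{equation*}
where $\tilde{K}$ is the supremum of $|r-h|^{-\nicefrac{1}{2}}(\E[\|\tilde{X}_r-\tilde{X}_h\|^2])^{\nicefrac{1}{2}}$ over $r\neq h$ in $[0,s-t]$. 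The identity $\tilde{X}_{s-t}=X_s$ and the change of variables $(r,h)\mapsto(t+r,t+h)$ translate this supremum into the one appearing in \eqref{Euler_maruyama_estimate:concl1} (augmenting by $\{0\}$ handles only the empty-set convention, which is harmless). Monotonicity of $(1+\sqrt{\cdot})^4$ in $[0,T]$ yields the claimed $(1+\sqrt{T})^4$ in the exponent.

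The argument is essentially bookkeeping; the only point requiring a little care is verifying that $\tilde{W}$ is a Brownian motion with respect to the shifted filtration and that $\tilde{X}$ solves the translated SDE, both of which are standard facts about strong solutions under time shifts. There is no genuinely difficult step.
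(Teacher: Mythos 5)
Your proposal is correct and follows essentially the same approach as the paper: it assumes w.l.o.g.\ that $s>t$ and then applies Proposition~\ref{strongEM} after translating time by $t$, i.e.\ with the shifted filtration $(\mathbb{F}_{t+r})_{r\in[0,s-t]}$, the shifted Brownian motion $(W_{t+r}-W_t)_{r\in[0,s-t]}$, the shifted coefficients, the shifted process $(X_{t+r})_{r\in[0,s-t]}$, and the shifted grid $(r_n-t)_{n\in\{0,\dots,N\}}$, then uses monotonicity of $\tau\mapsto(1+\sqrt{\tau})^4$ to replace $s-t$ by $T$ in the exponent. The only cosmetic difference is that you spell out the degenerate case $t=s$ rather than dismissing it with a ``w.l.o.g.''
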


\begin{proof}[Proof of Corollary~\ref{Euler_maruyama_estimate}]
Throughout this proof assume w.l.o.g. that $s > t$.
Observe that Proposition~\ref{strongEM} 
(with
$d = d$, 
$m = m$, 
$N = N$,
$T = s-t$, 
$L = L$,
$(\Omega, \mathcal{F}, \P, (\mathbb{F}_r)_{r \in [0,T]}) = (\Omega, \mathcal{F}, \P, (\mathbb{F}_{t + r})_{r \in [0,s-t]})$,
$(W_r)_{r \in [0,T]} = (W_{t + r} - W_t)_{r \in [0,s-t]}$,
$\zeta = \zeta$,
$(\mu(r, x))_{r \in [0,T], x \in \R^d} = (\mu(t+r, x))_{r \in [0,s-t], x \in \R^d}$,
$(\sigma(r, x))_{r \in [0,T], x \in \R^d} = (\sigma(t+r, x))_{r \in [0,s-t], x \in \R^d}$,
$(X_r)_{r \in [0,T]} = (X_{t + r})_{r \in [0,s-t]}$,
$(t_n)_{n \in \{0, 1, \ldots, N \}} = (r_n - t)_{n \in \{0, 1, \ldots, N \}}$,
$(\mathcal{X}_n)_{n \in \{0, 1, \ldots, N \}} = (\mathcal{X}_n)_{n \in \{0, 1, \ldots, N \}}$
in the notation of Proposition~\ref{strongEM})
establishes that
\begin{equation}
\begin{split}
  &\left(
    \Exp{
        \norm{X_{t + (s-t)} - \mathcal{X}_{N}}^2
      }
  \right)^{ \! \nicefrac{ 1 }{ 2 } } \\
& \leq
  \left[
  \left(
    \Exp{
        \norm{X_{t + 0} - \zeta}^2
      }
  \right)^{ \! \nicefrac{ 1 }{ 2 } }
  +
  \max_{k \in \{1, 2, \ldots, N \} }  | r_k - t - (r_{k-1} - t) |^{\nicefrac{1}{2}}
  \right] 
\\ & \quad \cdot
  \exp \! \left(
    (1 + L)^2  (1+\sqrt{|s-t|})^4  
  \right)
  \left(
    1
    + 
      \sup_{ r, h \in [0, s-t], r \neq h} 
        \tfrac{ ( \EXP{ \Norm{X_{t+r} - X_{t+h} }^2 } )^{1/2}}{| r - h |^{1/2}}
  \right)
<
  \infty
  .
\end{split}
\end{equation}
This implies \eqref{Euler_maruyama_estimate:concl1}.
The proof of Corollary~\ref{Euler_maruyama_estimate} is thus completed.
\end{proof}

\subsection{On identically distributed random variables}
The next elementary and well-known result, Lemma~\ref{equivalent_cond_for_id} below, provides a sufficient condition for two random variables to have the same distribution.

\begin{lemma}
\label{equivalent_cond_for_id}
Let $ ( \Omega, \mathcal{F}, \P ) $ be a probability space, 
let $(E, d)$ be a metric space, 
let $X, Y \colon \Omega \to E$ be 
random variables which satisfy that
for all globally bounded and Lipschitz continuous functions $g \colon E \to \R$ it holds that
\begin{equation}
\label{equivalent_cond_for_id:ass1}
\begin{split}
  \Exp{g(X)}
=
  \Exp{g(Y)}.
\end{split}
\end{equation}
Then 
it holds that
$X$ and $Y$ are identically distributed random variables.
\end{lemma}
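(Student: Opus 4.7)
The plan is to show that the laws of $X$ and $Y$ agree on every closed subset of $E$ and then upgrade this to agreement on the full Borel $\sigma$-algebra via a $\pi$-$\lambda$ argument. Since $X$ and $Y$ are random variables into the metric space $(E,d)$, I will implicitly understand them as being Borel-measurable.

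First, for every closed set $F\subseteq E$ and every $n\in\N$ I would introduce the bounded Lipschitz function
\begin{equation}
g_{F,n}\colon E\to[0,1],\qquad g_{F,n}(x)=\max\{1-n\,d(x,F),\,0\},
\end{equation}
where $d(x,F)=\inf_{y\in F}d(x,y)$. A direct check shows that $g_{F,n}$ is globally bounded by $1$ and Lipschitz continuous with Lipschitz constant at most $n$, and (since $F$ is closed, so that $d(x,F)>0$ for $x\notin F$) that $g_{F,n}(x)\to\mathbbm{1}_F(x)$ pointwise as $n\to\infty$. Applying the hypothesis \eqref{equivalent_cond_for_id:ass1} to $g_{F,n}$ and passing to the limit via Lebesgue's dominated convergence theorem (with dominating function $1$) would then yield
\begin{equation}
\P(X\in F)=\lim_{n\to\infty}\Exp{g_{F,n}(X)}=\lim_{n\to\infty}\Exp{g_{F,n}(Y)}=\P(Y\in F).
\end{equation}

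Next, I would invoke the standard uniqueness-of-measures theorem. The collection of closed subsets of $E$ is stable under finite intersections (hence a $\pi$-system) and generates the Borel $\sigma$-algebra $\Borel(E)$, since $\Borel(E)$ is generated by the open sets and the open sets are precisely the complements of closed sets. The law $\P\circ X^{-1}$ and the law $\P\circ Y^{-1}$ are two probability measures on $\Borel(E)$ that agree on this $\pi$-system, so by Dynkin's $\pi$-$\lambda$ theorem (cf., e.g., Klenke \cite[Theorem 1.41]{Klenke14}) they coincide on all of $\Borel(E)$. This is exactly the assertion that $X$ and $Y$ are identically distributed.

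I do not anticipate any genuine obstacle: every step is standard. The one place to be mildly careful is the pointwise convergence $g_{F,n}\to\mathbbm{1}_F$, which crucially uses closedness of $F$; and the invocation of the $\pi$-$\lambda$ theorem, which requires noting that closed sets form a $\pi$-system generating $\Borel(E)$. Neither of these presents a real difficulty.
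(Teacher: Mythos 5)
Your proposal is correct and follows essentially the same route as the paper's proof: both approximate $\mathbbm{1}_F$ for closed $F$ by the bounded Lipschitz truncation $\max\{1-n\,d(\cdot,F),0\}$, pass to the limit by dominated convergence, and then use uniqueness of measures on the $\pi$-system of closed sets. The only cosmetic difference is that the paper factors the approximating function through $h_n\circ D_A$ and verifies Lipschitz continuity of each factor separately, and it restricts to non-empty closed sets (the empty set being trivial).
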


\begin{proof}[Proof of Lemma~\ref{equivalent_cond_for_id}]
Throughout this proof 
for every $n \in \N$ 
let $h_n \colon [0,\infty) \to [0,1]$ be the function which satisfies
for all $r \in  [0,\infty)$ that
\begin{equation}
\label{equivalent_cond_for_id:setting1}
  h_n(r) = \max\{1-n r, 0 \},
\end{equation}
for every closed and non-empty set $A \subseteq E$ let
$D_A \colon E \to [0,\infty)$
be the function which satisfies 
for all $e \in E$ that
\begin{equation}
\label{equivalent_cond_for_id:setting2}
  D_A(e) = \inf_{a \in A} d(e, a),
\end{equation}
and for every $n \in \N$ and every closed and non-empty set $A \subseteq E$ let
$f_{A,n} \colon E \to [0,1]$
be the function which satisfies
for all $e \in E$ that
\begin{equation}
\label{equivalent_cond_for_id:setting3}
  f_{A,n}(e) = h_n(D_A(e)).
\end{equation}
Note that the triangle inequality assures that 
for all closed and non-empty sets $A \subseteq E$ and all $e_1, e_2 \in E$, $a \in A$, $\varepsilon \in (0,\infty)$ 
with $D_A(e_1) \geq D_A (e_2)$ and $d (e_2, a ) \leq D_A (e_2) + \varepsilon$ it holds that
\begin{equation}
\begin{split}
  |D_A(e_1) - D_A(e_2)|
&=
  D_A(e_1) - D_A(e_2)
\leq
  d(e_1, a ) - d(e_2,a) + \varepsilon \\
&\leq
  d(e_1, e_2 ) + d(e_2,a) - d(e_2,a) + \varepsilon
=
  d(e_1, e_2 ) + \varepsilon.
\end{split}
\end{equation}
The fact that for all closed and non-empty sets $A \subseteq E$ and all $e \in E$, $\varepsilon \in (0, \infty)$ 
there exists $a \in A$ such that $d(e,a) \leq D_A(e) + \varepsilon$ hence assures that
for all closed and non-empty sets $A \subseteq E$ and all $e_1, e_2 \in E$ it holds that
\begin{equation}
\label{equivalent_cond_for_id:eq1}
  |D_A(e_1) - D_A(e_2)| \leq d(e_1, e_2).
\end{equation}
Moreover note that 
for all $n \in \N$, $r_1,r_2 \in [0,\infty)$ with $r_1 \leq r_2$ it holds that
\begin{equation}
\begin{split}
  |h_n(r_1) - h_n(r_2)| 
&=
  |h_n(r_2) - h_n(r_1)| 
=
  h_n(r_1) - h_n(r_2)\\
&=
  \max\{1-n r_1, 0 \} - \max\{1-n r_2, 0 \} \\
&=
  \max\big\{1-n r_1 - \max\{1-n r_2, 0 \}, -\max\{1-n r_2, 0 \} \big\} \\
&\leq
  \max\{1-n r_1 - (1-n r_2), 0 \} 
=
  \max\{n(r_2-r_1), 0 \} 
= 
  n |r_1-r_2|.
\end{split}
\end{equation}
Combining this with \eqref{equivalent_cond_for_id:eq1} establishes that 
for all closed and non-empty sets $A \subseteq E$ and all $n \in \N$, $e_1, e_2 \in E$ it holds that
\begin{equation}
  |f_{A,n}(e_1) - f_{A,n}(e_2)| 
=
  | h_n(D_A(e_1)) - h_n(D_A(e_2)) |
\leq
  n | D_A(e_1) - D_A(e_2) |
\leq 
  n d(e_1, e_2).
\end{equation}
This demonstrates that 
for every closed and non-empty set $A \subseteq E$ and every $n \in \N$ it holds that $f_{A,n} \colon E \to [0, 1]$ is a globally bounded and Lipschitz continuous function.
Next observe that 
the fact that 
for all closed and non-empty sets $A \subseteq E$ and all $e \in A$ it holds that 
$D_A(e) = 0$ assures that 
for all closed and non-empty sets $A \subseteq E$ and all $n \in \N$, $e \in A$ it holds that
\begin{equation}
\label{equivalent_cond_for_id:eq2}
  f_{A,n}(e) = h_n(D_A(e)) = h_n(0) = 1.
\end{equation}
Moreover, note
the fact that 
  for all closed and non-empty sets $A \subseteq E$ and all $e \in E \setminus A$ 
  there exists $n \in \N$ such that $D_A(e) > \frac{1}{n}$ 
and 
  the fact that 
  for all $n \in \N$ it holds that $h_n$ is a non-increasing function
assure that 
for all closed and non-empty sets $A \subseteq E$ and all $e \in E \setminus A$ 
there exist $n \in \N$ such that
for all $m \in \{n, n+1, \ldots \}$ it holds that
\begin{equation}
\label{equivalent_cond_for_id:eq3}
  f_{A,m}(e) 
= 
  h_m(D_A(e)) 
\leq 
  h_m(\tfrac{1}{n})
=
  \max \{1- \tfrac{m}{n}, 0 \}
=
  0.
\end{equation}
Combining this and \eqref{equivalent_cond_for_id:eq2} establishes that
for all closed and non-empty sets $A \subseteq E$ and all $e \in E$ it holds that
\begin{equation}
  \lim_{n \to \infty } f_{A,n}(e) = \mathbbm{1}_A(e).
\end{equation}
The theorem of dominated convergence,
the fact that for all closed and non-empty sets $A \subseteq E$ and all $n \in \N$ it holds that $f_{A,n} \colon E \to [0, 1]$ is a globally bounded and Lipschitz continuous function,
and 
\eqref{equivalent_cond_for_id:ass1} therefore imply that
for all closed and non-empty sets $A \subseteq E$ it holds that
\begin{equation}
  \P(X \in A)
=
  \Exp{\mathbbm{1}_A(X)}
=
  \lim_{n \to \infty } \Exp{f_{A,n}(X)}
=
  \lim_{n \to \infty } \Exp{f_{A,n}(Y)}
=
  \Exp{\mathbbm{1}_A(Y)}
=  
  \P(Y \in A).
\end{equation}
The fact that 
$\Borel(E) = \sigmaAlgebra ( \{ A \subseteq E \colon A \text{ is closed} \})$,
the fact that $\{ A \subseteq E \colon A \text{ is closed} \}$ is closed under intersections,
and the uniqueness theorem for measures (see, e.g., Klenke \cite[Lemma 1.42]{Klenke14})
hence assure that 
for all $B \in \Borel(E)$ it holds that
\begin{equation}
  \P(X \in B)
=  
  \P(Y \in B).
\end{equation}
The proof of Lemma~\ref{equivalent_cond_for_id} is thus completed.
\end{proof}

\subsection{On random evaluations of random fields}

This subsection collects elementary and well-known results about random variables originating from evaluations of random fields at random indices.

\begin{lemma}
\label{eval_RF_measurable}
Let $(\Omega, \mathcal{F})$, $(S, \mathcal{S})$, $(E, \mathcal{E})$  be measurable spaces, 
let 
  $U = (U(s))_{s \in S} = (U(s, \omega))_{s \in S, \omega \in \Omega} \colon S \times \Omega \to E$ 
be an 
  $(\mathcal{S} \otimes \mathcal{F}) / \mathcal{E}$-measurable function,
and let $X \colon \Omega \to S$ be an $\mathcal{F} / \mathcal{S}$-measurable function.
Then it holds that the function 
$U(X) = (U(X(\omega), \omega))_{\omega \in \Omega} \colon \Omega \to E$ 
is $\mathcal{F} / \mathcal{E}$-measurable.
\end{lemma}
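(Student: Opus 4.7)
\medskip

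\noindent\textbf{Proof plan for Lemma~\ref{eval_RF_measurable}.} The plan is to factor the map $\omega \mapsto U(X(\omega), \omega)$ through the product space $S \times \Omega$ and then use that a composition of measurable functions is measurable.

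First, I would define the auxiliary function $\Phi \colon \Omega \to S \times \Omega$ by setting $\Phi(\omega) = (X(\omega), \omega)$ for all $\omega \in \Omega$. The next step is to verify that $\Phi$ is $\mathcal{F}/(\mathcal{S} \otimes \mathcal{F})$-measurable. For this I would use the standard criterion that a function into a product $\sigma$-algebra is measurable if and only if its compositions with the two coordinate projections are measurable. Here the projection onto the first coordinate gives $\pi_S \circ \Phi = X$, which is $\mathcal{F}/\mathcal{S}$-measurable by assumption, and the projection onto the second coordinate gives $\pi_\Omega \circ \Phi = \id_\Omega$, which is trivially $\mathcal{F}/\mathcal{F}$-measurable. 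Hence $\Phi$ is $\mathcal{F}/(\mathcal{S} \otimes \mathcal{F})$-measurable.

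Then I would observe that for every $\omega \in \Omega$ it holds that
\begin{equation}
  (U \circ \Phi)(\omega) = U(\Phi(\omega)) = U(X(\omega), \omega) = U(X)(\omega),
\end{equation}
so that $U(X) = U \circ \Phi$. Since $U$ is $(\mathcal{S} \otimes \mathcal{F})/\mathcal{E}$-measurable by assumption and $\Phi$ is $\mathcal{F}/(\mathcal{S} \otimes \mathcal{F})$-measurable by the previous step, the composition $U \circ \Phi$ is $\mathcal{F}/\mathcal{E}$-measurable, which yields the claim.

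There is no genuine obstacle here; the only point requiring a brief justification is the measurability of $\Phi$, and this is handled by the coordinate-projection criterion for measurability into a product $\sigma$-algebra. The rest is a one-line composition argument.
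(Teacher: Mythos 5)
Your proof is correct and follows essentially the same route as the paper: both define the auxiliary map $\omega \mapsto (X(\omega), \omega)$ into $S \times \Omega$, establish its $\mathcal{F}/(\mathcal{S} \otimes \mathcal{F})$-measurability, and conclude by composing with the measurable map $U$. The only cosmetic difference is that you spell out the coordinate-projection criterion explicitly, whereas the paper states the measurability of the auxiliary map without elaboration.
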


\begin{proof}[Proof of Lemma~\ref{eval_RF_measurable}]
Throughout this proof let $\mathcal{X} \colon \Omega \to S \times \Omega$ be the function which satisfies 
for all $\omega \in \Omega$ that
\begin{equation}
  \mathcal{X}(\omega)
= 
  (X(\omega), \omega).
\end{equation}
Observe that the hypothesis that $X \colon \Omega \to S$ is an $\mathcal{F} / \mathcal{S}$-measurable function assures that 
$\mathcal{X}\colon \Omega \to S \times \Omega$ is an $ \mathcal{F} / (\mathcal{S} \otimes \mathcal{F})$-measurable function.
Combining this with the fact that $U  \colon S \times \Omega \to E$ is an $(\mathcal{S} \otimes \mathcal{F}) / \mathcal{E}$-measurable function demonstrates that 
\begin{equation}
  U(X)
= 
  U \circ \mathcal{X}
\end{equation}
is an $\mathcal{F} / \mathcal{E}$-measurable function.
The proof of Lemma~\ref{eval_RF_measurable} is thus completed.
\end{proof}

A proof for the next two elementary and well-known results (see Lemma~\ref{contRF_eval_nonneg} and Lemma~\ref{contRF_eval_integrable} below) can, e.g., be found in \cite[Lemma 2.3 and Lemma 2.4]{OvercomingPaper}.

\begin{lemma}
\label{contRF_eval_nonneg}
Let $ ( \Omega, \mathcal{F}, \P ) $ be a probability space, 
let $ ( S , \delta ) $ be a separable metric space, 
let  $ U =  (U(s))_{s \in S}  \colon S \times \Omega \to [0,\infty)$
be a continuous random field, 
let $X \colon \Omega \to S$ be a random variable, 
and
assume that $U$ and $X$ are independent.
Then
it holds that
\begin{equation}
  \Exp{U(X)} 
= 
  \int_{S}  \Exp{U(s)} (X ( \P )_{\mathcal{B}(S)})(ds).
\end{equation}
\end{lemma}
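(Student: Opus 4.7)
The plan is to recognize the integral identity as an instance of Tonelli's theorem applied to the pushforward of $\P$ under $\Phi \colon \omega \mapsto (X(\omega), \omega)$, where the crucial ingredient is that independence of $X$ and $U$ makes this pushforward agree with the product $\mu \otimes \P$ on the $\sigma$-algebra $\Borel(S) \otimes \sigma(U)$, with $\mu = X(\P)_{\Borel(S)}$ and $\sigma(U) = \sigma(U(s) \colon s \in S)$.

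First I would verify measurability. The continuity of $s \mapsto U(s, \omega)$ together with the separability of $(S, \delta)$ implies joint measurability of $U$: fixing a countable dense set $(s_k)_{k \in \N} \subseteq S$ and Voronoi-type Borel partitions $\{A_{n,1}, \ldots, A_{n,n}\}$ of $S$ (assigning $s \in S$ to the index of the nearest element among $s_1, \ldots, s_n$, ties broken by smallest index), the approximations
\begin{equation}
U_n(s, \omega) = \sum_{k = 1}^n \mathbbm{1}_{A_{n,k}}(s) \, U(s_k, \omega)
\end{equation}
are $\Borel(S) \otimes \sigma(U(s_1), \ldots, U(s_n))$-measurable and converge pointwise to $U$ by density of $(s_k)_{k \in \N}$ and continuity of $U(\cdot, \omega)$. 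Hence $U$ is $\Borel(S) \otimes \sigma(U)$-measurable, and in particular $\Borel(S) \otimes \mathcal{F}$-measurable, so Lemma~\ref{eval_RF_measurable} yields that $U(X) \colon \Omega \to [0, \infty)$ is $\mathcal{F}/\Borel([0,\infty))$-measurable.

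Next I would compute the pushforward $\Phi(\P)$ on the generating $\cap$-stable class of rectangles $A \times B$ with $A \in \Borel(S)$ and $B \in \sigma(U)$. Independence of $X$ and $U$ gives independence of $X$ and $\sigma(U)$, hence
\begin{equation}
(\Phi(\P))(A \times B) = \P(\{X \in A\} \cap B) = \P(X \in A) \, \P(B) = (\mu \otimes \P|_{\sigma(U)})(A \times B).
\end{equation}
The uniqueness theorem for measures therefore yields $\Phi(\P)|_{\Borel(S) \otimes \sigma(U)} = \mu \otimes \P|_{\sigma(U)}$. Since $U = U \circ \Phi|_{\text{argument}}$, more precisely $U(X)(\omega) = U(\Phi(\omega))$, the transformation formula combined with Tonelli's theorem applied to the nonnegative $\Borel(S) \otimes \sigma(U)$-measurable function $U$ produces
\begin{equation}
\Exp{U(X)} = \int_{S \times \Omega} U(s, \omega) \, (\Phi(\P))(ds, d\omega) = \int_S \int_\Omega U(s, \omega) \, \P(d\omega) \, \mu(ds) = \int_S \Exp{U(s)} \, \mu(ds),
\end{equation}
which is the desired identity.

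The main technical obstacle is establishing joint measurability of $U$ with respect to $\Borel(S) \otimes \sigma(U)$ rather than merely $\Borel(S) \otimes \mathcal{F}$; this sharpening is essential for the pushforward argument to kick in, since independence of $X$ only buys us factorization of $\Phi(\P)$ on sets involving $\sigma(U)$, not on all of $\mathcal{F}$. The combination of path-continuity of $U$ and separability of $S$ via the Voronoi-type approximation above is exactly what provides this refinement.
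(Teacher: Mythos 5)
Your proof is correct. The paper does not prove this lemma itself but cites \cite[Lemma 2.3]{OvercomingPaper} for it, and your argument — using separability and path-continuity to build a Voronoi-type discretization yielding $\Borel(S)\otimes\sigma(U)$-measurability, then matching $\Phi(\P)$ with $\mu\otimes\P|_{\sigma(U)}$ on the $\pi$-system of rectangles and invoking Tonelli — is exactly the standard self-contained route; you correctly identified the crucial sharpening (measurability with respect to $\Borel(S)\otimes\sigma(U)$ rather than merely $\Borel(S)\otimes\mathcal{F}$) that makes the independence factorization usable.
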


\begin{lemma}
\label{contRF_eval_integrable}
Let $ ( \Omega, \mathcal{F}, \P ) $ be a probability space, 
let $ ( S , \delta ) $ be a separable metric space, 
let  $ U =  (U(s))_{s \in S}  \colon S \times \Omega \to \R$
be a continuous random field, 
let $X \colon \Omega \to S$ be a random variable, 
assume that $U$ and $X$ are independent, 
and assume that 
$
  \int_{S}  \Exp{|U(s)|} (X ( \P )_{\mathcal{B}(S)})(ds) < \infty
$.
Then
it holds that 
$ (X ( \P )_{\mathcal{B}(S)})(\{ s \in S \colon \Exp{|U(s)|} = \infty \}) = 0$, $\Exp{|U(X)|} < \infty$, and 
\begin{equation}
  \Exp{U(X)} 
= 
  \int_{S}  
    \Exp{U(s)} 
  (X ( \P )_{\mathcal{B}(S)})(ds).
\end{equation}
\end{lemma}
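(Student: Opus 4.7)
The plan is to reduce the signed, integrable case to the non-negative case already handled in Lemma~\ref{contRF_eval_nonneg} by the standard decomposition into positive and negative parts.

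First, I would introduce $U^+, U^- \colon S \times \Omega \to [0, \infty)$ defined by $U^+(s, \omega) = \max\{U(s, \omega), 0\}$ and $U^-(s, \omega) = \max\{-U(s, \omega), 0\}$, so that $U = U^+ - U^-$ and $|U| = U^+ + U^-$ pointwise. Since $U$ has continuous sample paths and $r \mapsto \max\{r, 0\}$ is continuous on $\R$, the random fields $U^+$ and $U^-$ are again continuous. Furthermore, because $U^{\pm}(s, \omega)$ is a deterministic (measurable) function of $U(s, \omega)$, the $\sigma$-algebra generated by $U^{\pm}$ is contained in the $\sigma$-algebra generated by $U$, and hence the independence of $U$ and $X$ transfers to $U^{\pm}$ and $X$.

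Second, I would apply Lemma~\ref{contRF_eval_nonneg} to the non-negative continuous random field $|U| = U^+ + U^-$ (which is independent of $X$ by the same reasoning), to obtain
\begin{equation}
  \Exp{|U(X)|} = \int_S \Exp{|U(s)|} \, (X(\P)_{\Borel(S)})(ds).
\end{equation}
By the hypothesis on the integrability of $s \mapsto \Exp{|U(s)|}$ against the push-forward measure, this quantity is finite. Consequently, $\Exp{|U(X)|} < \infty$, and the set $\{s \in S \colon \Exp{|U(s)|} = \infty\}$ must have measure zero under $X(\P)_{\Borel(S)}$, for otherwise the integral would diverge. The measurability of $U(X) \colon \Omega \to \R$ follows from Lemma~\ref{eval_RF_measurable} applied to the product-measurable $U$ (which is product-measurable because of its sample-path continuity and the separability of $S$).

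Third, I would apply Lemma~\ref{contRF_eval_nonneg} separately to $U^+$ and $U^-$ to get
\begin{equation}
  \Exp{U^{\pm}(X)} = \int_S \Exp{U^{\pm}(s)} \, (X(\P)_{\Borel(S)})(ds),
\end{equation}
where each side is finite since $0 \le U^{\pm} \le |U|$. Subtracting the two identities and using linearity of expectation and of the integral (valid because both quantities are finite) gives the claimed identity $\Exp{U(X)} = \int_S \Exp{U(s)} \, (X(\P)_{\Borel(S)})(ds)$. I do not expect any serious obstacle in this argument; the only point that requires a small verification is the transfer of independence from $U$ to $U^{\pm}$ and $|U|$, which is handled by the observation that these are pointwise measurable functions of $U$.
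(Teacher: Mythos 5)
Your proof is correct. Note that the paper does not actually prove this lemma in-text; it simply cites \cite[Lemma 2.4]{OvercomingPaper} for both Lemma~\ref{contRF_eval_nonneg} and Lemma~\ref{contRF_eval_integrable}. Your argument is the standard reduction to the nonnegative case via the decomposition $U = U^+ - U^-$, which is precisely what the cited reference does, so there is no genuinely different route here. The two small points you flagged are handled correctly: continuity and the inclusion $\sigmaAlgebra(U^{\pm}) \subseteq \sigmaAlgebra(U)$ give you that $U^{\pm}$ and $|U|$ are continuous random fields independent of $X$, so Lemma~\ref{contRF_eval_nonneg} applies; applying it to $|U|$ gives $\Exp{|U(X)|} = \int_S \Exp{|U(s)|}\,(X(\P)_{\Borel(S)})(ds) < \infty$, from which the null-set claim is immediate (a nonnegative function with finite integral is finite almost everywhere). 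Applying Lemma~\ref{contRF_eval_nonneg} separately to $U^+$ and $U^-$ produces two finite quantities dominated by $\Exp{|U(X)|}$, and since both one-sided integrals $\int_S \Exp{U^{\pm}(s)}\,(X(\P)_{\Borel(S)})(ds)$ are finite, their difference equals $\int_S \bigl(\Exp{U^+(s)} - \Exp{U^-(s)}\bigr)\,(X(\P)_{\Borel(S)})(ds)$, and the integrand equals $\Exp{U(s)}$ for $(X(\P)_{\Borel(S)})$-almost every $s$ (namely on the complement of the null set where $\Exp{|U(s)|}=\infty$). Combined with $\Exp{U(X)} = \Exp{U^+(X)} - \Exp{U^-(X)}$ (valid because $U(X)^{\pm} = U^{\pm}(X)$ pointwise and both are integrable), this gives the claimed identity. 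No gap.
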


\subsection{Brownian motions and right-continuous filtrations}
\label{sect:BM_and_filtrations}
The next result, Lemma~\ref{brownian_motion} below, states that a Brownian motion with respect to a filtration is also a Brownian motion with respect to the smallest right-continuous filtration containing the original filtration (cf.\ \eqref{brownian_motion:ass1}).
Lemma~\ref{brownian_motion} and its proof are very similar to Pr{\'e}v{\^o}t \& R\"ockner \cite[Proposition 2.1.13]{PrevotRoeckner07}.

\begin{lemma}
\label{brownian_motion}
Let
$m \in \N$,
$T \in (0,\infty)$,
let
$
  ( 
    \Omega, \mathcal{F}, \P, 
    ( \mathbb{F}_t )_{ t \in [0,T] }
  )
$
be a filtered probability space,
let
$
  W \colon [0,T] \times \Omega
  \to \R^m
$
be a standard
$ ( \Omega, \mathcal{F}, \P, ( \mathbb{F}_t )_{ t \in [0,T] } ) $-Brownian motion,
and let $\mathbb{H}_t \subseteq \mathcal{F}$, $t \in [0, T]$, satisfy 
for all $t \in [0, T]$ that
\begin{equation}
\label{brownian_motion:ass1}
  \mathbb{H}_t 
= 
  \begin{cases}
    \cap_{s \in (t, T]}  \, \mathbb{F}_s   & \colon t < T \\
    \mathbb{F}_T   & \colon t = T.
  \end{cases}
\end{equation}
Then it holds that $W$ is a standard
$ ( \Omega, \mathcal{F}, \P, ( \mathbb{H}_t )_{ t \in [0,T] } ) $-Brownian motion.
\end{lemma}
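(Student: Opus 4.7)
The plan is to verify the four defining properties of a standard $(\mathbb{H}_t)_{t\in[0,T]}$-Brownian motion: (a) continuous sample paths, (b) $W_0=0$ a.s., (c) $\mathbb{H}_t$-adaptedness of $W$, and (d) for every $0\le s<t\le T$, $W_t-W_s$ is independent of $\mathbb{H}_s$ and has distribution $\mathcal{N}(0,(t-s)\mathrm{Id}_{\R^m})$. Properties (a) and (b) are inherited verbatim from the hypothesis that $W$ is a standard $(\mathbb{F}_t)_{t\in[0,T]}$-Brownian motion. Property (c) follows from the observation that \eqref{brownian_motion:ass1} ensures $\mathbb{F}_t\subseteq \mathbb{H}_t$ for all $t\in[0,T]$, so $W_t$, being $\mathbb{F}_t$-measurable, is automatically $\mathbb{H}_t$-measurable. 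The distributional half of (d) is also immediate, because $W_t-W_s$ is $\mathcal{N}(0,(t-s)\mathrm{Id}_{\R^m})$-distributed by the $(\mathbb{F}_t)_{t\in[0,T]}$-Brownian motion property.

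The only non-trivial step is the independence part of (d). I will treat the case $s<T$ first; the case $s=T$ is vacuous since $\mathbb{H}_T=\mathbb{F}_T$. Fix $s\in[0,T)$ and $t\in(s,T]$, and pick a sequence $(s_n)_{n\in\N}\subseteq(s,t]$ with $s_n\downarrow s$. By \eqref{brownian_motion:ass1} one has
\begin{equation}
\mathbb{H}_s \;=\; \bigcap_{r\in(s,T]} \mathbb{F}_r \;\subseteq\; \mathbb{F}_{s_n}
\qquad\text{for every }n\in\N,
\end{equation}
so the $(\mathbb{F}_t)$-Brownian motion property yields that $W_t-W_{s_n}$ is independent of $\mathbb{H}_s$ for each $n\in\N$.

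I would promote this to independence of $W_t-W_s$ from $\mathbb{H}_s$ via a test-function argument. Continuity of $W$ gives $W_t-W_{s_n}\to W_t-W_s$ $\P$-a.s., and for every bounded Lipschitz continuous $g\colon\R^m\to\R$ and every $A\in\mathbb{H}_s$, bounded convergence gives
\begin{equation}
\Exp{g(W_t-W_s)\,\mathbbm{1}_A}
=\lim_{n\to\infty}\Exp{g(W_t-W_{s_n})\,\mathbbm{1}_A}
=\lim_{n\to\infty}\Exp{g(W_t-W_{s_n})}\P(A)
=\Exp{g(W_t-W_s)}\P(A).
\end{equation}
Applying Lemma~\ref{equivalent_cond_for_id} to the pair $(W_t-W_s,\mathbbm{1}_A)\in\R^m\times\R$ (or, equivalently, invoking a standard $\pi$--$\lambda$ argument on cylinder sets, exactly as in the proof of Lemma~\ref{equivalent_cond_for_id}) I conclude that $(W_t-W_s,\mathbbm{1}_A)$ is equal in law to the product law, i.e.\ $W_t-W_s$ is independent of $A$. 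Since $A\in\mathbb{H}_s$ was arbitrary, $W_t-W_s$ is independent of $\mathbb{H}_s$, which completes (d).

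The only real obstacle is this last independence upgrade: a priori $W_t-W_s$ is independent only of each $\mathbb{F}_{s_n}$, not necessarily of the intersection of a strictly larger collection of events lying in all $\mathbb{F}_{s_n}$ simultaneously. The trick that makes it work is that $\mathbb{H}_s$ is contained in \emph{every} $\mathbb{F}_{s_n}$, so the independence at the approximating level transfers to the limit. No other subtlety arises, and the conclusion follows.
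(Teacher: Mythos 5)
Your proof is correct and follows essentially the same route as the paper's: approximate the conditioning time from the right, use path continuity and bounded convergence against bounded Lipschitz test functions, and then upgrade to full independence via closed-set approximants and a $\pi$--$\lambda$ argument (the paper uses increments of the form $W_s - W_{\min\{t+1/k,s\}}$, you use $W_t - W_{s_n}$ with $s_n \downarrow s$; this is the identical device). The only cosmetic difference is that you invoke Lemma~\ref{equivalent_cond_for_id} to package the closed-set argument, whereas the paper re-derives the bump-function machinery inline; note that a literal application of Lemma~\ref{equivalent_cond_for_id} to the pair $(W_t-W_s,\mathbbm{1}_A)$ would require checking equality against \emph{all} bounded Lipschitz test functions on $\R^{m}\times\R$ (not only the product-form ones $g(w)\mathbbm{1}_A$ you have), so the $\pi$--$\lambda$ route you mention as a fallback is the cleaner way to close the argument and is in fact exactly what the paper does via Klenke's Theorem~2.16.
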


\begin{proof}[Proof of Lemma~\ref{brownian_motion}]
Throughout this proof 
let $\norm{\cdot} \colon \R^d \to [0,\infty)$ be the Euclidean norm on $\R^d$,  
for every $n \in \N$ 
let $h_n \colon [0,\infty) \to [0,1]$ be the function which satisfies
for all $r \in  [0,\infty)$ that
\begin{equation}
\label{brownian_motion:setting1}
  h_n(r) = \max\{1-n r, 0 \},
\end{equation}
for every closed and non-empty set $A \subseteq \R^d$ let
$D_A \colon \R^d \to [0,\infty)$
be the function which satisfies 
for all $x \in \R^d$ that
\begin{equation}
\label{brownian_motion:setting2}
  D_A(x) = \inf_{a \in A} \norm{x-a},
\end{equation}
and for every $n \in \N$ and every closed and non-empty set $A \subseteq \R^d$ let
$f_{A,n} \colon \R^d \to [0,1]$
be the function which satisfies
for all $x \in \R^d$ that
\begin{equation}
\label{brownian_motion:setting3}
  f_{A,n}(x) = h_n(D_A(x)).
\end{equation}
Observe that the fact that $W$ has continuous sample paths, 
the fact that for all $t \in [0,T)$, $s \in (t, T]$, $k \in \N$ it holds that
$W_s - W_{\min \{t + \nicefrac{1}{k}, s \}}$ and $\mathbb{H}_t$ are independent,
Klenke \cite[Theorem 5.4]{Klenke14}, 
and the theorem of dominated convergence 
assure that
for all $t \in [0,T)$, $s \in (t, T]$, $B \in \mathbb{H}_t$ and all  globally bounded and continuous functions $g \colon \R^d \to \R$ it holds that
\begin{equation}
\label{brownian_motion:eq0}
\begin{split}
  \Exp{  g(W_s - W_t)\mathbbm{1}_B}
&=
  \Exp{ \left( \lim_{k \to \infty}  g(W_s - W_{\min \{t + \nicefrac{1}{k}, s \}}) \right)\mathbbm{1}_B} \\
&=
  \lim_{k \to \infty} \Exp{  g(W_s - W_{\min \{t + \nicefrac{1}{k}, s \}})\mathbbm{1}_B} \\
&=
  \lim_{k \to \infty} \Exp{  g(W_s - W_{\min \{t + \nicefrac{1}{k}, s \}})} \Exp{\mathbbm{1}_B} \\
&=
  \Exp{  \lim_{k \to \infty} g(W_s - W_{\min \{t + \nicefrac{1}{k}, s \}})} \P(B) 
=
  \Exp{  g(W_s - W_t)} \P(B).
\end{split}
\end{equation}
Next note that 
the fact that closed and non-empty sets $A \subseteq \R^d$ and all $x \in \R^d$ it holds that
$D_A(x) = 0 \Leftrightarrow x \in A$ assures that
for all closed and non-empty sets $A \subseteq \R^d$ and all $x \in \R^d$ it holds that
\begin{equation}
\label{brownian_motion:eq1}
  \lim_{n \to \infty} f_{A,n}(x)
=
  \mathbbm{1}_A(x).
\end{equation}
Moreover, note that the fact that 
for every $n \in \N$ 
it holds that $h_n \colon [0,\infty) \to [0,1]$ is a continuous function and
the fact that 
for every closed and non-empty set $A \subseteq \R^d$
it holds that
$D_A \colon \R^d \to [0,\infty)$ is a continuous function 
assure that  
for every $n \in \N$ and every closed and non-empty set $A \subseteq \R^d$ it holds that 
$f_{A,n} \colon \R^d \to [0,1]$ is a continuous function.
Combining this, \eqref{brownian_motion:eq0}, \eqref{brownian_motion:eq1}, and the theorem of dominated convergence 
shows that 
for all $t \in [0,T)$, $s \in (t,T]$, $B \in \mathbb{H}_t$ and all closed and non-empty sets $A \subseteq \R^d$ it holds that
\begin{equation}
\begin{split}
  \P( \{W_s - W_t \in A \} \cap B)
&=
  \Exp{ \, \mathbbm{1}_A(W_s - W_t)\mathbbm{1}_B}
=
  \lim_{n \to \infty} \Exp{ \, f_{A,n}(W_s - W_t)\mathbbm{1}_B} \\
&=
  \lim_{n \to \infty} \Big( \Exp{ \, f_{A,n}(W_s - W_t)} \P(B) \Big) 
=
  \Exp{ \, \mathbbm{1}_A(W_s - W_t)}\P(B) \\
&=
  \P( \{W_s - W_t \in A \})  \, \P(B).
\end{split}
\end{equation}
This proves that 
for all $t \in [0,T)$, $s \in (t,T]$, $B \in \mathbb{H}_t$ it holds that
$
  (\mathbbm{1}_B)^{-1}( \{ \}, \{ 0 \}, \{ 1 \}, \{ 0, 1 \}  )
$
and
$
  (W_s - W_t)^{-1}( \{ A \subseteq \R^d \colon A \text{ is a closed set} \} )
$
are independent.
The fact that $\{ A \subseteq \R^d \colon A \text{ is a closed set} \}$ is closed under intersections, 
the fact that $\sigmaAlgebra(\{ A \subseteq \R^d \colon A \text{ is a closed set} \})  = \Borel( \R^d )$,
and Klenke \cite[Theorem 2.16]{Klenke14} hence assure that
for all $t \in [0,T)$, $s \in (t,T]$, $B \in \mathbb{H}_t$ it holds that
$W_s - W_t$ and $B$ are independent.
This implies that 
for all $t \in [0,T]$, $s \in [t,T]$ it holds that
$W_s - W_t$ and $\mathbb{H}_t$ are independent.
Combining this with the hypothesis that $W$ is a Brownian motion, and the fact that $W \colon [0, T] \times \Omega \to \R^m$ is an $(\mathbb{H}_{t})_{t \in [0, T]}$/$\Borel(\R^m)$-adapted stochastic processes establishes that  
$
  W \colon [0,T] \times \Omega
  \to \R^m
$
is a standard
$ ( \Omega, \mathcal{F}, \P, ( \mathbb{H}_t )_{ t \in [0,T] } ) $-Brownian motion.
The proof of Lemma~\ref{brownian_motion} is thus completed.
\end{proof}

\subsection{On a distributional flow property for solutions of SDEs}
\label{sect:subsect_flow}

In this subsection we prove a distributional flow property for solutions of SDEs in Lemma~\ref{flow_SDE} below.
The idea for the proof of Lemma~\ref{flow_SDE} is based on the observation that if we replace solution processes of SDEs by Euler-Maruyama approximations the flow-type condition trivially holds (cf.\ the argument below \eqref{flow_SDE:eq4} in the proof of Lemma~\ref{flow_SDE} below).
To prove Lemma~\ref{flow_SDE} below we also need, besides several auxiliary results of the previous subsections, the following well-known statement (see Lemma~\ref{random_IV_SDE} below).

\begin{lemma}
\label{random_IV_SDE}
Let $d, m\in \N$, $T\in (0,\infty)$, $t \in [0, T]$, $s \in [t, T]$,
let $(\Omega, \mathcal{F}, \P, (\mathbb{F}_t)_{t \in [0,T]})$ be a filtered probability space which satisfies the usual conditions,
let $W \colon [0, T] \times \Omega \to \R^m$ be a standard $(\Omega, \mathcal{F}, \P, (\mathbb{F}_r)_{r \in [0,T]})$-Brownian motion,
let $\mu \colon [0, T] \times \R^d \to \R^d$ and $\sigma \colon [0, T] \times \R^d \to \R^{d \times m}$ be globally Lipschitz continuous functions,
let
$ X = (X_{r}(x))_{r \in [t, s], x \in \R^d}  \colon  [t, s] \times \R^d \times \Omega \to \R^d$
be a continuous random field
which satisfies 
for every $x \in \R^d$ that 
$(X_{r}(x))_{r \in [t, s]}  \colon  [t, s] \times \Omega \to \R^d$ is an $(\mathbb{F}_r)_{r \in [t,s]}$/$\Borel(\R^d)$-adapted stochastic process and 
which satisfies that for all $r \in [t, s]$, $x \in \R^d$ 
it holds $\P$-a.s.\ that
\begin{equation}
\label{random_IV_SDE:ass1}
  X_{r}(x)
= 
  x 
  + 
  \int_{t}^r \mu (h, X_{h}(x)) \, dh 
  +
  \int_{t}^r \sigma (h, X_{h}(x)) \, dW_h,
\end{equation}
and let $\xi \colon \Omega \to \R^d$ be an $\mathbb{F}_{t}$/$\Borel(\R^d)$-measurable function with $\Exp{\norm{\xi}^2} < \infty$.
Then 
for all $r \in [t, s]$ it holds $\P$-a.s.\ that
\begin{equation}
\label{random_IV_SDE:concl1}
  X_{r}(\xi)
= 
  \xi 
  + 
  \int_{t}^r \mu \big(h, X_{h}(\xi)\big) \, dh 
  +
  \int_{t}^r \sigma \big(h, X_{h}(\xi)\big) \, dW_h.
\end{equation}
\end{lemma}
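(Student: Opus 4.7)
My plan is to reduce the statement to the case of simple $\mathbb{F}_t$-measurable initial values by approximation, exploiting the Lipschitz dependence of the solution on the initial condition.

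First, I would handle the case where $\xi$ takes only finitely many values, i.e.\ $\xi = \sum_{k=1}^{N} x_k \mathbbm{1}_{A_k}$ for some $N \in \N$, deterministic $x_k \in \R^d$, and disjoint $A_k \in \mathbb{F}_t$ with $\cup_k A_k = \Omega$. Since for $\omega \in A_k$ one has pathwise $\xi(\omega) = x_k$ and hence $X_r(\xi)(\omega) = X_r(x_k)(\omega)$, continuity of the random field yields $X_r(\xi) = \sum_{k=1}^{N} \mathbbm{1}_{A_k} X_r(x_k)$ a.s. Multiplying \eqref{random_IV_SDE:ass1} by $\mathbbm{1}_{A_k}$, using that $A_k \in \mathbb{F}_t$ so that $\mathbbm{1}_{A_k}$ can be pulled inside the Lebesgue and It\^o integrals from $t$ onwards, and summing over $k$ gives \eqref{random_IV_SDE:concl1} for such simple $\xi$.

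Second, for a general $\xi \in L^2(\Omega, \mathbb{F}_t, \P; \R^d)$ I would pick a sequence $(\xi_n)_{n \in \N}$ of simple $\mathbb{F}_t$-measurable random variables with $\xi_n \to \xi$ in $L^2$. Standard SDE stability (a Gronwall argument analogous to the proofs of Lemma~\ref{moment_bound_SDE} and Lemma~\ref{temp_reg_fixedIV}, relying on the global Lipschitz property of $\mu$ and $\sigma$) yields a constant $C \in (0,\infty)$ such that $\sup_{r \in [t,s]} (\EXP{\|X_r(\xi_n) - X_r(\xi)\|^2})^{1/2} \leq C (\EXP{\|\xi_n - \xi\|^2})^{1/2}$, so $X_r(\xi_n) \to X_r(\xi)$ in $L^2$ for each $r \in [t,s]$. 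The global Lipschitz property then gives $\mu(h, X_h(\xi_n)) \to \mu(h, X_h(\xi))$ and $\sigma(h, X_h(\xi_n)) \to \sigma(h, X_h(\xi))$ in $L^2$ uniformly in $h \in [t,s]$, so Tonelli's theorem and It\^o's isometry let me pass to the limit in both integrals on the right-hand side of \eqref{random_IV_SDE:concl1} written for $\xi_n$, while the left-hand side $X_r(\xi_n)$ converges to $X_r(\xi)$ in $L^2$, establishing \eqref{random_IV_SDE:concl1} for $\xi$.

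The main technical obstacle will be producing the Lipschitz-in-initial-condition estimate $(\EXP{\|X_r(\xi_n) - X_r(\xi)\|^2})^{1/2} \leq C (\EXP{\|\xi_n - \xi\|^2})^{1/2}$, since a priori the random field $X$ is only assumed to be continuous and to solve \eqref{random_IV_SDE:ass1} for deterministic $x$. To obtain it rigorously I would first apply the already-established simple-$\xi$ case to $\xi_n$ (so the $L^2$ continuous-dependence estimate reduces to a standard Gronwall argument for SDEs with random initial data), then use the continuity of $x \mapsto X_r(x)$ together with Fatou's lemma to extend the bound to the limit $\xi$. A secondary but routine point is verifying joint $(\Borel(\R^d) \otimes \mathcal{F})$-measurability of $X$ (which follows from its continuity in $x$), so that $X_r(\xi)$ is a well-defined $\mathcal{F}$-measurable random variable via Lemma~\ref{eval_RF_measurable}.
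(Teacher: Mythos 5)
Your proposal is correct, but it takes a genuinely different route from the paper. The paper's proof introduces Euler--Maruyama discretizations $\mathcal{X}^{N,r}_N(x)$: it shows via Corollary~\ref{Euler_maruyama_estimate} and Lemma~\ref{temp_reg_fixedIV} that $\mathcal{X}^{N,r}_N(x)\to X_r(x)$ in $L^2$ with an explicit $O(1+\|x\|)/\sqrt{N}$ rate, deduces that $(X_r(x)-\mathcal{X}^{N,r}_N(x))_x$ and $\xi$ are independent (using the resulting measurability of $X_r(x)$ with respect to $\sigma(W_h-W_t\colon h\in[t,r])$ up to null sets) so that Lemma~\ref{contRF_eval_nonneg} yields $\EXP{\|X_r(\xi)-\mathcal{X}^{N,r}_N(\xi)\|^2}\to 0$, and separately shows that the same Euler--Maruyama scheme started from $\xi$ converges to the solution $Y$ of the SDE with random initial value $\xi$; the two limits are then matched. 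Your approach instead localizes on the events of a finite $\mathbb{F}_t$-measurable partition to settle the case of simple $\xi$ pathwise, then approximates a general $\xi$ in $L^2$ by simple $\xi_n$, and closes the gap between $X_r(\xi_n)$ and $X_r(\xi)$ using a Gronwall estimate between two simple initial values plus a Fatou/subsequence argument invoking the pathwise spatial continuity of the random field. Your acknowledgment of the main pitfall is on point: you cannot Gronwall $X_r(\xi_n)-X_r(\xi)$ directly, because the SDE for $X_r(\xi)$ is exactly what is to be proved; the repair via the Cauchy estimate $\sup_r\EXP{\|X_r(\xi_n)-X_r(\xi_m)\|^2}\le C\EXP{\|\xi_n-\xi_m\|^2}$ followed by Fatou and a.s.\ subsequence extraction is sound. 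The trade-off is that your route avoids the Euler--Maruyama machinery altogether and reads more like a classical density argument, whereas the paper reuses machinery it needs anyway for Lemma~\ref{flow_SDE}, where the explicit discretization and its measurability with respect to Brownian increments are what make the distributional flow identity tractable.
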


\begin{proof}[Proof of Lemma~\ref{random_IV_SDE}]
Throughout this proof assume w.l.o.g. that $s > t$,
let $(u^{N, r}_n)_{n \in \{0, 1, 2, \ldots, N \}, N \in \N, r \in (t, s]} \subseteq [t, s]$ satisfy 
for all $N \in \N$, $n \in \{0, 1, 2, \ldots, N \}$, $r \in (t, s]$ that
$
  u^{N, r}_n = t + \frac{n(r-t)}{N}
$,
for every $N \in \N$, $r \in (t, s]$ 
  let $\mathcal{X}^{N, r} = (\mathcal{X}^{N, r}_n(x))_{n \in \{0, 1, 2, \ldots, N \}, x \in \R^d} \colon \{0, 1, 2, \ldots, N \} \times \R^d \times \Omega \to \R^d$ 
  be the continuous random field which satisfies
  for all $n \in \{1, 2, \ldots, N \}$, $x \in \R^d$ that 
  $\mathcal{X}^{N, r}_0(x) = x$ and
  \begin{equation}
  \label{random_IV_SDE:setting1}
    \mathcal{X}^{N, r}_n (x)
  =
    \mathcal{X}^{N, r}_{n-1} (x)
    +
    \mu \big(u^{N, r}_{n-1}, \mathcal{X}^{N, r}_{n-1}(x) \big) \tfrac{(r-t)}{N}
    +
    \sigma \big(u^{N, r}_{n-1}, \mathcal{X}^{N, r}_{n-1}(x) \big) (W_{u^{N, r}_{n}} - W_{u^{N, r}_{n-1}}),
  \end{equation}
let $\norm{\cdot} \colon \R^d \to [0,\infty)$ be the Euclidean norm on $\R^d$,
let $\normmm{\cdot} \colon \R^{d \times m} \to [0,\infty)$ be the Frobenius norm on $\R^{d \times m}$,
and let $L \in [0,\infty)$ satisfy
for all $r, h \in [0, T]$, $x, y \in \R^d $
that
\begin{equation}
\label{random_IV_SDE:setting2}
  \max \! \big\{ \!
    \norm{
      \mu(r,  x ) - \mu(h,  y )
    }
    ,
    \normmm{
      \sigma(r,  x ) - \sigma(h,  y )
    }
  \big\}
\leq
  L \big(| r - h | + \norm{x-y} \! \big).
\end{equation}
Note that \eqref{random_IV_SDE:ass1}, \eqref{random_IV_SDE:setting1}, \eqref{random_IV_SDE:setting2}, 
Corollary~\ref{Euler_maruyama_estimate}
(with
$d = d$, 
$m = m$, 
$N = N$,
$T = T$, 
$t = t$,
$s = r$,
$L = L$,
$(\Omega, \mathcal{F}, \P, (\mathbb{F}_h)_{h \in [0,T]}) = (\Omega, \mathcal{F}, \P, (\mathbb{F}_h)_{h \in [0,T]})$,
$W = W$,
$\zeta = x$,
$\mu = \mu$,
$\sigma = \sigma$,
$(X_h)_{h \in [t,s]} = (X_h)_{h \in [t,r]}$,
$(r_n)_{n \in \{0, 1, \ldots, N \}} = (u^{N,r}_n)_{n \in \{0, 1, \ldots, N \}}$,
$(\mathcal{X}_n)_{n \in \{0, 1, \ldots, N \}} = (\mathcal{X}^{N,r}_{n}(x))_{n \in \{0, 1, \ldots, N \}}$
for $N \in \N$, $x \in \R^d$, $r \in (t, s]$
in the notation of Corollary~\ref{Euler_maruyama_estimate}),
and
Lemma~\ref{temp_reg_fixedIV}
(with
$d = d$, 
$m = m$, 
$T = r-t$, 
$\xi = x$,
$L = L$,
$(\Omega, \mathcal{F}, \P, (\mathbb{F}_h)_{h \in [0,T]}) = (\Omega, \mathcal{F}, \P, (\mathbb{F}_{t + r})_{r \in [0,r-t]})$,
$(W_h)_{h \in [0,T]} = (W_{t + h} - W_t)_{h \in [0,r-t]}$,
$(\mu(h, x))_{h \in [0,T], x \in \R^d} = (\mu(t+h, x))_{h \in [0,r-t], x \in \R^d}$,
$(\sigma(h, x))_{h \in [0,T], x \in \R^d} = (\sigma(t+h, x))_{h \in [0,r-t], x \in \R^d}$,
$(X_h)_{h \in [0,T]} = (X_{t + h})_{h \in [0,r-t]}$
for $x \in \R^d$, $r \in (t, s]$
in the notation of Lemma~\ref{temp_reg_fixedIV})
assure that 
for all $x \in \R^d$, $N \in \N$, $r \in (t, s]$ it holds that
\begin{equation}
\label{random_IV_SDE:eq1}
\begin{split}
    &\left(\Exp{\Normm{X_{r}(x) - \mathcal{X}^{N,r}_{N}(x)}^2} \right)^{\nicefrac{1}{2}} \\
& \leq
  \left[
  \left(
    \Exp{
        \norm{X_t(x) - x}^2
      }
  \right)^{ \! \nicefrac{ 1 }{ 2 } }
  +
  \max_{k \in \{1, 2, \ldots, N \} }  | u^{N,r}_k - u^{N,r}_{k-1} |^{\nicefrac{1}{2}}
  \right] 
\\ & \quad \cdot
  \exp \! \left(
    (1 + L)^2  (1+\sqrt{T})^4  
  \right)
  \left(
    1
    + 
      \sup_{ h,l \in [t, r], h \neq l} 
        \tfrac{ ( \EXP{ \Norm{X_h(x) - X_l(x) }^2 } )^{1/2}}{| h - l |^{1/2}}
  \right)\\
& \leq
  \frac{\sqrt{|r - t|}}{\sqrt{N}}
  \exp \! \left(
    (1 + L)^2  (1+\sqrt{T})^4  
  \right)
\\ & \quad \cdot
  \left(
    1
    + 
    (1 + \norm{x})
  \exp \left(
    10
    \big( 
      \max \{ \norm{\mu(t,0)}, \normmm{\sigma(t,0)},L,1 \} + LT
    \big)^2
    (T+1)
    (L+1) 
  \right)
  \right)\\
& \leq
  \frac{(1 + \norm{x})}{\sqrt{N}}
  \exp \! \left(
    12
    \big( 
      \max \{ \norm{\mu(t,0)}, \normmm{\sigma(t,0)},L,1 \} + LT
    \big)^2
    (1 + L)^2  (1+\sqrt{T})^4  
  \right)
  .
\end{split}
\end{equation}
This ensures that 
for all $r \in [t, s]$, $x \in \R^d$ it holds that 
$\limsup_{N \to \infty} \EXP{\Norm{X_{r}(x) - \mathcal{X}^{N,r}_{N}(x)}^2} = 0$.
This and the fact that 
for all $r \in [t, s]$, $x \in \R^d$, $N \in \N$
it holds that $\mathcal{X}^{N,r}_{N}(x) \colon \Omega \to \R^d$ is 
$\mathfrak{S}(W_h-W_t \colon h \in [t, r])$/$\Borel(\R^d)$-measurable 
imply that 
for all $r \in [t, s]$, $x \in \R^d$ it holds that $X_{r}(x) \colon \Omega \to \R^d$ is 
$\mathfrak{S}(\mathfrak{S}(W_h-W_t \colon h \in [t, r]) \cup \{ A \in \mathcal{F} \colon \P(A) = 0\})$/$\Borel(\R^d)$-measurable.
Combining this with the fact that $\xi \colon \Omega \to \R^d$ is an $\mathbb{F}_{t}$/$\Borel(\R^d)$-measurable function and the fact that $W \colon [0, T] \times \Omega \to \R^m$ is a standard $(\Omega, \mathcal{F}, \P, (\mathbb{F}_r)_{r \in [0,T]})$-Brownian motion demonstrates
for all $r \in [t, s]$, $N \in \N$
it holds that $(X_{r}(x) - \mathcal{X}^{N,r}_{N}(x))_{x \in \R^d}$ and $\xi$ are independent.
Lemma~\ref{contRF_eval_nonneg} and \eqref{random_IV_SDE:eq1} hence assure that
for all $N \in \N$, $r \in (t, s]$ it holds that
\begin{equation}
\label{random_IV_SDE:eq2}
\begin{split}
  &\Exp{\Normm{X_{r}(\xi) - \mathcal{X}^{N,r}_{N}(\xi)}^2} \\
&=
  \int_{\R^d}
    \Exp{\Normm{X_{r}(x) - \mathcal{X}^{N,r}_{N}(x)}^2}
  (\xi(\P)_{\Borel(\R^d)})(dx) \\
&\leq
  \int_{\R^d}
    \left[
     \tfrac{\exp  \left(
    12
    ( 
      \max \{ \norm{\mu(0,0)}, \normmm{\sigma(0,0)},L,1 \} + LT
    )^2
    (1 + L)^2  (1+\sqrt{T})^4  
  \right)}{\sqrt{N}} (1 + \norm{x})
  \right]^2
  (\xi(\P)_{\Borel(\R^d)})(dx) \\
&\leq
  \left[
   \tfrac{\exp  \left(
    24
    ( 
      \max \{ \norm{\mu(t,0)}, \normmm{\sigma(t,0)},L,1 \} + LT
    )^2
    (1 + L)^2  (1+\sqrt{T})^4  
    \right)
    }{
    N
    } 
  \right]
  2\left(1 +  \Exp{ \norm{\xi}^2} \right).
\end{split}
\end{equation}
Next observe that 
the hypothesis that $\mu$ and $\sigma$ are globally Lipschitz continuous functions,
the hypothesis that $\Exp{\norm{\xi}^2} < \infty$,
and the existence theorem for the solutions of SDEs (see, e.g., Karatzas \& Shreve \cite[Proposition 5.2.9]{KaratzasShreve12})
prove that 
there exists an 
$(\mathbb{F}_r)_{r \in [t,s]}$/$\Borel(\R^d)$-adapted stochastic process $Y \colon [t, s] \times \Omega \to \R^d$ with continuous sample paths
which satisfies that
for all $r \in [t, s]$
it holds $\P$-a.s.\ that
\begin{equation}
\label{random_IV_SDE:eq3}
  Y_r
= 
  \xi
  + 
  \int_{t}^r \mu (h, Y_{h}) \, dr 
  +
  \int_{t}^r \sigma (h, Y_{h} ) \, dW_h.
\end{equation}
Moreover, observe that \eqref{random_IV_SDE:setting1} ensures that 
for all $N \in \N$, $n \in \{1, 2, \ldots, N \}$, $r \in (t, s]$ and all functions $\zeta \colon \Omega \to \R^d$ it holds that
$\mathcal{X}^{N, r}_0(\zeta) = \zeta$ and
\begin{equation}
\label{random_IV_SDE:eq4}
    \mathcal{X}^{N, r}_n (\zeta)
=
    \mathcal{X}^{N, r}_{n-1} (\zeta)
    +
    \mu \big(u^{N, r}_{n-1}, \mathcal{X}^{N, r}_{n-1}(\zeta) \big) \tfrac{(r-t)}{N}
    +
    \sigma \big(u^{N, r}_{n-1}, \mathcal{X}^{N, r}_{n-1}(\zeta) \big) (W_{u^{N, r}_{n}} - W_{u^{N, r}_{n-1}}).
\end{equation}
Combining this, \eqref{random_IV_SDE:setting2}, the fact that $\Exp{\norm{Y_t}^2} = \Exp{\norm{\xi}^2} < \infty$, and \eqref{random_IV_SDE:eq3} with Corollary~\ref{Euler_maruyama_estimate} 
(with
$d = d$, 
$m = m$, 
$N = N$,
$T = T$, 
$t = t$,
$s = r$,
$L = L$,
$(\Omega, \mathcal{F}, \P, (\mathbb{F}_h)_{h \in [0,T]}) = (\Omega, \mathcal{F}, \P, (\mathbb{F}_h)_{h \in [0,T]})$,
$W = W$,
$\zeta = \xi$,
$\mu = \mu$,
$\sigma = \sigma$,
$(X_h)_{h \in [t,s]} = (Y_h)_{h \in [t,r]}$,
$(r_n)_{n \in \{0, 1, \ldots, N \}} = (u^{N,r}_n)_{n \in \{0, 1, \ldots, N \}}$,
$(\mathcal{X}_n)_{n \in \{0, 1, \ldots, N \}} = (\mathcal{X}^{N,r}_{n}(\xi))_{n \in \{0, 1, \ldots, N \}}$
for $N \in \N$, $r \in (t, s]$
in the notation of Corollary~\ref{Euler_maruyama_estimate})
demonstrates
that
for all $N \in \N$, $r \in (t, s]$ it holds that
\begin{equation}
\label{random_IV_SDE:eq5}
\begin{split}
    &\left(\Exp{\Normm{Y_r - \mathcal{X}^{N,r}_{N}(\xi)}^2} \right)^{\nicefrac{1}{2}} \\
& \leq
  \left[
  \left(
    \Exp{
        \norm{Y_t - \xi}^2
      }
  \right)^{ \! \nicefrac{ 1 }{ 2 } }
  +
  \max_{k \in \{1, 2, \ldots, N \} }  | u^{N,r}_k - u^{N,r}_{k-1} |^{\nicefrac{1}{2}}
  \right] 
\\ & \quad \cdot
  \exp \! \left(
    (1 + L)^2  (1+\sqrt{T})^4  
  \right)
  \left(
    1
    + 
      \sup_{ h,l \in [t, r], h \neq l} 
        \tfrac{ ( \EXP{ \Norm{Y_h - Y_l }^2 } )^{1/2}}{| h - l |^{1/2}}
  \right) \\
&=
  \tfrac{
    \sqrt{|r-t|}\exp  \left(
    (1 + L)^2  (1+\sqrt{T})^4  
    \right)
  }{
    \sqrt{N}
  }
  \left(
    1
    + 
      \sup_{ h,l \in [t, r], h \neq l} 
        \tfrac{ ( \EXP{ \Norm{Y_h - Y_l }^2 } )^{1/2}}{| h - l |^{1/2}}
  \right) 
< 
  \infty.
\end{split}
\end{equation}
The triangle inequality and \eqref{random_IV_SDE:eq2} hence show that 
for all $r \in (t, s]$ it holds that
\begin{equation}
\begin{split}
    &\left(\Exp{\Normm{X_{r}(\xi) - Y_r}^2} \right)^{ \! \nicefrac{1}{2}}
\\ &\leq  
  \limsup_{N \to \infty} \left[
    \left(\Exp{\Normm{X_{r}(\xi) - \mathcal{X}^{N,r}_{N}(\xi)}^2} \right)^{\! \nicefrac{1}{2}}
    +
    \left(\Exp{\Normm{\mathcal{X}^{N,r}_{N}(\xi) - Y_r}^2} \right)^{\! \nicefrac{1}{2}}
  \right]\\
&\leq  
  \left[
    \limsup_{N \to \infty} 
    \left(\Exp{\Normm{X_{r}(\xi) - \mathcal{X}^{N,r}_{N}(\xi)}^2} \right)^{\! \nicefrac{1}{2}}
  \right]
  +
  \left[
    \limsup_{N \to \infty} 
    \left(\Exp{\Normm{\mathcal{X}^{N,r}_{N}(\xi) - Y_r}^2} \right)^{\! \nicefrac{1}{2}}
  \right]
=
  0.
\end{split}
\end{equation}
Combining this with the fact that $(X_{r}(\xi))_{r \in [t, s]}$ and $(Y_r)_{r \in [t, s]}$ are continuous random fields demonstrates that
\begin{equation}
  \P \big( \, \forall \, r \in [t, s] \colon X_{r}(\xi) = Y_r \big) 
= 
  \P \big( \, \forall \, r \in (t, s] \cap \Q \colon X_{r}(\xi) = Y_r \big) 
=
  1.
\end{equation}
This and \eqref{random_IV_SDE:eq3} prove that 
for all $r \in [t, s]$
it holds $\P$-a.s.\ that
\begin{equation}
  X_{r}(\xi)
= 
  \xi 
  + 
  \int_{t}^r \mu \big(h, X_{h}(\xi)\big) \, dh
  +
  \int_{t}^r \sigma \big(h, X_{h}(\xi) \big) \, dW_h.
\end{equation}
The proof of Lemma~\ref{random_IV_SDE} is thus completed.
\end{proof}

\begin{samepage}
\begin{lemma}
\label{flow_SDE}
Let $d, m\in \N$, $T\in (0,\infty)$, 
let $\mu \colon [0, T] \times \R^d \to \R^d$ and $\sigma \colon [0, T] \times \R^d \to \R^{d \times m}$ be globally Lipschitz continuous functions,
let $(\Omega, \mathcal{F}, \P)$ be a complete probability space,
let $(\mathbb{F}^1_{t})_{t \in [0, T]}$ and $(\mathbb{F}^2_{t})_{t \in [0, T]}$ be filtrations on $(\Omega, \mathcal{F}, \P)$ which satisfy the usual conditions,
assume that $\mathbb{F}^1_{T}$ and $\mathbb{F}^2_{T}$ are independent,
for every $i \in \{1, 2\}$
let $W^i \colon [0, T] \times \Omega \to \R^m$
be a standard $(\Omega, \mathcal{F}, \P, (\mathbb{F}^i_{t})_{t \in [0, T]})$-Brownian motion,
and
for every $i \in \{1, 2\}$ let
$ X^i = (X^i_{t,s}(x))_{s \in [t, T], t \in [0, T], x \in \R^d}  \colon \{ (t,s) \in [0, T]^2 \colon t \leq s \} \times \R^d \times \Omega \to \R^d$
be a continuous random field
which satisfies for every $t \in [0, T]$, $x \in \R^d$ that
$(X^i_{t,s}(x))_{s \in [t, T]} \colon [t, T] \times \Omega \to \R^d$ is an $(\mathbb{F}^i_{s})_{s \in [t, T]}$/$\Borel(\R^d)$-adapted stochastic process and
which satisfies that 
for all $t \in [0, T]$, $s \in [t, T]$, $x \in \R^d$
it holds $\P$-a.s.\ that
\begin{equation}
\label{flow_SDE:ass2}
  X^i_{t,s}(x) 
= 
  x 
  + 
  \int_{t}^s \mu \big(r, X^i_{t,r}(x)\big) \, dr 
  +
  \int_{t}^s \sigma \big(r, X^i_{t,r}(x)\big) \, dW^i_r.
\end{equation}
Then it holds 
for all $r, s, t \in [0,T]$, $x \in \R^d$, $B \in \mathcal{B}(\R^d)$ with $t \leq s \leq r$ that 
$
  \P( X^1_{t,t}(x) = x) = 1
$
and
\begin{equation}
\label{flow_SDE:concl1}
  \P \big(   X^1_{s,r}( X^2_{t,s}(x))  \in {B}  \big) 
= 
  \P\big(X^1_{t,r}(x) \in {B} \big).
\end{equation}
\end{lemma}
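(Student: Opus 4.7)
The statement $X^1_{t,t}(x) = x$ follows directly from \eqref{flow_SDE:ass2} by setting $s = t$. For the distributional flow identity \eqref{flow_SDE:concl1}, by Lemma~\ref{equivalent_cond_for_id} it suffices to establish, for every bounded Lipschitz $g \colon \R^d \to \R$ and every $t, s, r \in [0, T]$, $x \in \R^d$ with $t \leq s \leq r$, that
\begin{equation}
  \Exp{g(X^1_{s,r}(X^2_{t,s}(x)))}
=
  \Exp{g(X^1_{t,r}(x))}.
\end{equation}

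To interpret $X^1_{s,\cdot}(X^2_{t,s}(x))$ as a strong solution, I would enlarge the filtration on $[s, T]$ by setting $\mathbb{G}_u = \bigcap_{v \in (u, T]} \sigmaAlgebra(\mathbb{F}^1_v \cup \mathbb{F}^2_s \cup \{A \in \mathcal{F} \colon \P(A) = 0\})$ for $u \in [s, T)$ and $\mathbb{G}_T = \sigmaAlgebra(\mathbb{F}^1_T \cup \mathbb{F}^2_s \cup \{A \in \mathcal{F} \colon \P(A) = 0\})$. The independence of $\mathbb{F}^1_T$ and $\mathbb{F}^2_T$ makes $(W^1_u - W^1_s)_{u \in [s, T]}$ a standard Brownian motion with respect to $(\mathbb{G}_u)_{u \in [s, T]}$ (Lemma~\ref{brownian_motion} handles the right-continuity step), and this filtration satisfies the usual conditions by construction. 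Since $X^2_{t,s}(x)$ is $\mathbb{F}^2_s \subseteq \mathbb{G}_s$-measurable, Lemma~\ref{random_IV_SDE} yields that $(X^1_{s,u}(X^2_{t,s}(x)))_{u \in [s, r]}$ is a strong solution on $[s, r]$ of the SDE with initial condition $X^2_{t,s}(x)$ at time $s$, driven by $W^1$.

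The heart of the argument uses Euler--Maruyama approximations on a refining sequence of grids $t = t^N_0 < t^N_1 < \ldots < t^N_N = r$ with mesh tending to zero and $s$ appearing as $t^N_{k_N}$ for some $k_N \in \{1, 2, \ldots, N-1\}$. Write $\mathcal{U}^N$ for the terminal value of the Euler--Maruyama scheme on the full grid started at $x$ driven by $W^1$, $\mathcal{Y}^N$ for the terminal value of the scheme on $[t, s]$ started at $x$ driven by $W^2$, and $\mathcal{V}^N(\zeta)$ for the terminal value of the scheme on $[s, r]$ started at a random initial condition $\zeta$ driven by $W^1$. Three applications of Corollary~\ref{Euler_maruyama_estimate} then deliver the $L^2$-convergences $\mathcal{U}^N \to X^1_{t,r}(x)$, $\mathcal{Y}^N \to X^2_{t,s}(x)$, and $\mathcal{V}^N(X^2_{t,s}(x)) \to X^1_{s,r}(X^2_{t,s}(x))$, the last one invoking the enlarged filtration from the previous paragraph.

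The decisive identity is that $\mathcal{V}^N(\mathcal{Y}^N)$ and $\mathcal{U}^N$ are values of one and the same deterministic map $\Phi^N \colon (\R^m)^N \to \R^d$, evaluated respectively at the spliced increments
\begin{equation*}
  (W^2_{t^N_1} - W^2_{t^N_0}, \ldots, W^2_{t^N_{k_N}} - W^2_{t^N_{k_N - 1}}, W^1_{t^N_{k_N + 1}} - W^1_{t^N_{k_N}}, \ldots, W^1_{t^N_N} - W^1_{t^N_{N-1}})
\end{equation*}
and the pure $W^1$-increments $(W^1_{t^N_1} - W^1_{t^N_0}, \ldots, W^1_{t^N_N} - W^1_{t^N_{N-1}})$. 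These two random vectors are centred Gaussians with the same block-diagonal covariance (by independence of $\mathbb{F}^1_T$ and $\mathbb{F}^2_T$ combined with the independent-increment property of each $W^i$), so they are identically distributed and hence $\Exp{g(\mathcal{V}^N(\mathcal{Y}^N))} = \Exp{g(\mathcal{U}^N)}$ for every $N \in \N$. A discrete Gronwall estimate on the Euler--Maruyama recursion (using the global Lipschitz bound on $\mu$ and $\sigma$) produces a constant $C \in (0,\infty)$ independent of $N$ with $(\Exp{\norm{\mathcal{V}^N(\mathcal{Y}^N) - \mathcal{V}^N(X^2_{t,s}(x))}^2})^{\nicefrac{1}{2}} \leq C (\Exp{\norm{\mathcal{Y}^N - X^2_{t,s}(x)}^2})^{\nicefrac{1}{2}} \to 0$, and combining this with the Lipschitz continuity of $g$ and the $L^2$-convergences above yields the required identity in the limit $N \to \infty$. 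The main technical obstacle I anticipate is the measurability and independence bookkeeping around the enlarged filtration $(\mathbb{G}_u)_{u \in [s, T]}$ — verifying cleanly that $W^1$ remains a Brownian motion with respect to it and that $\mathcal{V}^N(\cdot)$ is well-defined as an evaluation of a product-measurable random field at a $\mathbb{G}_s$-measurable random variable in the sense of Lemma~\ref{eval_RF_measurable} — together with making the uniform-in-$N$ Lipschitz bound for $\mathcal{V}^N$ in its initial condition precise.
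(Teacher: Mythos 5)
Your proposal is correct and tracks the paper's argument closely: the same three Euler--Maruyama schemes on $[t,s]$ and $[s,r]$, the same appeals to Lemma~\ref{brownian_motion}, Lemma~\ref{random_IV_SDE}, Corollary~\ref{Euler_maruyama_estimate}, Lemma~\ref{moment_bound_SDE}, and Lemma~\ref{equivalent_cond_for_id}, and the same equidistribution argument for the spliced increment vector $(W^2\text{-increments on }[t,s],\, W^1\text{-increments on }[s,r])$ versus the pure $W^1$-increment vector. The only inessential deviation is that you obtain $\mathcal{V}^N(\mathcal{Y}^N) \to X^1_{s,r}(X^2_{t,s}(x))$ via a separate uniform Lipschitz-in-initial-condition estimate for the discrete scheme, whereas the paper runs Corollary~\ref{Euler_maruyama_estimate} directly with the random initial condition $\zeta = \mathcal{Y}^N_N$, so that the initial-condition mismatch is absorbed into the corollary's first error term and no extra Gronwall step is needed.
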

\end{samepage}

\begin{proof}[Proof of Lemma~\ref{flow_SDE}]
Throughout this proof let $r, s, t \in [0,T]$, $x \in \R^d$ satisfy that $t \leq s \leq r$, 
let $(u^N_n)_{n \in \{0, 1, 2, \ldots, N \}, N \in \N} \subseteq [t, s]$, $(v^N_n)_{n \in \{0, 1, 2, \ldots, N \}, N \in \N} \subseteq [s, r]$ satisfy 
for all $N \in \N$, $n \in \{0, 1, 2, \ldots, N \}$ that
$
  u^N_n = t + \frac{n(s-t)}{N}
$
and
$
  v^N_n = s + \frac{n(r-s)}{N}
$,
for every $N \in \N$ 
  let $\mathcal{X}^N \colon \{0, 1, 2, \ldots, 2N \} \times \Omega \to \R^d$ and
  $\mathcal{Y}^N, \mathcal{Z}^N \colon \{0, 1, 2, \ldots, N \} \times \Omega \to \R^d$ 
  be the stochastic processes which satisfy 
  for all $n \in \{1, 2, \ldots, N \}$ that
  \begin{equation}
  \label{flow_SDE:setting1}
    \mathcal{X}^N_0 = x,
  \qquad
    \mathcal{X}^{N}_n
  =
    \mathcal{X}^{N}_{n-1}
    +
    \mu (u^N_{n-1}, \mathcal{X}^{N}_{n-1} ) \tfrac{(s-t)}{N}
    +
    \sigma (u^N_{n-1}, \mathcal{X}^{N}_{n-1} ) (W^1_{u^N_{n}} - W^1_{u^N_{n-1}}),
  \end{equation}
  \vspace{-16pt}
  \begin{equation}
  \label{flow_SDE:setting2}
    \mathcal{X}^{N}_{N+n}
  =
    \mathcal{X}^{N}_{N+n-1}
    +
    \mu (v^N_{n-1}, \mathcal{X}^{N}_{n-1} ) \tfrac{(r-s)}{N}
    +
    \sigma (v^N_{n-1}, \mathcal{X}^{N}_{N + n-1} ) (W^1_{v^N_{n}} - W^1_{v^N_{n-1}}),
  \end{equation}
  \begin{equation}
  \label{flow_SDE:setting3}
    \mathcal{Y}^N_0 = x,
  \qquad
    \mathcal{Y}^{N}_n
  =
    \mathcal{Y}^{N}_{n-1}
    +
    \mu (u^N_{n-1}, \mathcal{Y}^{N}_{n-1} ) \tfrac{(s-t)}{N}
    +
    \sigma (u^N_{n-1}, \mathcal{Y}^{N}_{n-1} ) (W^2_{u^N_{n}} - W^2_{u^N_{n-1}}),
  \end{equation}
  \begin{equation}
  \label{flow_SDE:setting4}
    \mathcal{Z}^N_0 = \mathcal{Y}^{N}_N,
  \qandq
    \mathcal{Z}^{N}_n
  =
    \mathcal{Z}^{N}_{n-1}
    +
    \mu (v^N_{n-1}, \mathcal{Z}^{N}_{n-1} ) \tfrac{(r-s)}{N}
    +
    \sigma (v^N_{n-1}, \mathcal{Z}^{N}_{n-1} ) (W^1_{v^N_{n}} - W^1_{v^N_{n-1}}),
  \end{equation}
let $\mathbb{G}_h \subseteq \mathcal{F}$, $h \in [0, T]$, and $\mathbb{H}_h \subseteq \mathcal{F}$, $h \in [0, T]$, be the sigma-algebras which satisfy 
for all $h \in [0, T]$ that
\begin{equation}
\label{flow_SDE:setting5}
  \mathbb{G}_h = \sigmaAlgebra( \mathbb{F}^1_h \cup  \mathbb{F}^2_h )
\qandq
  \mathbb{H}_h 
= 
  \begin{cases}
    \cap_{l \in (h, T]}  \, \mathbb{G}_l   & \colon h < T \\
    \mathbb{G}_T   & \colon h = T,
  \end{cases}
\end{equation}
let $\left< \cdot, \cdot \right> \colon \R^d \times \R^d \to \R$ be the Euclidean scalar product on $\R^d$,
let $\norm{\cdot} \colon \R^d \to [0,\infty)$ be the Euclidean norm on $\R^d$,
and
let $\normmm{\cdot} \colon \R^{d \times m} \to [0,\infty)$ be the Frobenius norm on $\R^{d \times m}$.
Note that the hypothesis that $(\mathbb{F}^1_{t})_{t \in [0, T]}$ and $(\mathbb{F}^2_{t})_{t \in [0, T]}$ are filtrations on $(\Omega, \mathcal{F}, \P)$ which satisfy the usual conditions and \eqref{flow_SDE:setting5} imply that 
$(\mathbb{H}_{t})_{t \in [0, T]}$ is a filtration on $(\Omega, \mathcal{F}, \P)$ which satisfies the usual conditions.
Moreover, observe that \eqref{flow_SDE:ass2} assures that 
\begin{equation}
\label{flow_SDE:eq1}
  \P( X^1_{t,t}(x) = x) = 1.
\end{equation}
Furthermore, note that 
the hypothesis that $\mu$ and $\sigma$ are globally Lipschitz continuous,
\eqref{flow_SDE:ass2}, 
\eqref{flow_SDE:setting1}, 
\eqref{flow_SDE:setting2}, 
\eqref{flow_SDE:setting3}, 
and Corollary~\ref{Euler_maruyama_estimate}
demonstrate that 
there exists a real number $C \in (0,\infty)$ which satisfies that
for all $N \in \N$ it holds that
\begin{equation}
\label{flow_SDE:eq1.1}
  \left(\Exp{\Normm{X^1_{t, r}(x) - \mathcal{X}^{N}_{2N}}^2} \right)^{ \! \nicefrac{1}{2}}
\leq
  \frac{C }{\sqrt{N}}
\qandq
  \left(\Exp{\Normm{X^2_{t, s}(x) - \mathcal{Y}^{N}_{N}}^2} \right)^{ \! \nicefrac{1}{2}}
\leq
  \frac{C }{\sqrt{N}}.
\end{equation}
This implies that 
\begin{equation}
\label{flow_SDE:eq2}
  \limsup_{N \to \infty}
     \left(\Exp{\Normm{X^1_{t, r}(x) - \mathcal{X}^{N}_{2N}}^2} \right)^{ \! \nicefrac{1}{2}}
\leq
  \limsup_{N \to \infty}
    \frac{C}{\sqrt{N}} 
=
  0.
\end{equation}
Moreover, observe that the hypothesis that $\mu$ and $\sigma$ are globally Lipschitz continuous implies that
\begin{equation}
  \sup_{h \in [0, T], y \in \R^d} \frac{\left <y, \mu(h, y) \right> +  \normmm{\sigma(h, y)}^2  }{1 + \norm{y}^2} < \infty.
\end{equation}
Lemma~\ref{moment_bound_SDE} therefore demonstrates that 
\begin{equation}
\label{flow_SDE:eq3}
\Exp{ \norm{X^2_{t, s}(x)}^2} < \infty.
\end{equation}
Next note that the fact that 
for all $h \in [0,T]$, $l \in [h, T]$ it holds that
$W^1_l-W^1_h$, $\mathbb{F}^1_h$, and $\mathbb{F}^2_h$ are independent
assures that 
for all $h \in [0,T]$, $l \in [h, T]$ it holds that
$W^1_l-W^1_h$ and $\mathbb{G}_h$ are independent.
This, the fact that $W^1 \colon [0, T] \times \Omega \to \R^{d}$ is a Brownian motion, and
the fact that $W^1 \colon [0, T] \times \Omega \to \R^{d}$ is an $(\mathbb{G}_h)_{h \in [0, T]}$/$\Borel(\R^d)$-adapted stochastic process
imply that
$W^1 \colon [0, T] \times \Omega \to \R^{d}$ is a standard $(\Omega, \mathcal{F}, \P, (\mathbb{G}_h)_{h \in [0,T]})$-Brownian motion.
Lemma~\ref{brownian_motion} and \eqref{flow_SDE:setting5} hence
ensure that $W^1 \colon [0, T] \times \Omega \to \R^{d}$ is a standard $(\Omega, \mathcal{F}, \P, (\mathbb{H}_h)_{h \in [0,T]})$-Brownian motion.
Combining this, 
the fact that $(\Omega, \mathcal{F}, \P, (\mathbb{H}_h)_{h \in [0,T]})$ is a filtered probability space which satisfies the usual conditions,
the fact that 
for all $y \in \R^d$ it holds that $(X^1_{s, h}(y))_{h \in [s, r]} \colon  [s, r] \times \Omega \to \R^d$ is an $(\mathbb{H}_h)_{h \in [s, r]}$/$\Borel(\R^d)$-adapted stochastic process, 
\eqref{flow_SDE:ass2},
the fact that 
$X^2_{t, s}(x) \colon \Omega \to \R^d$ is $\mathbb{H}_s$/$\Borel(\R^d)$-measurable,
and
\eqref{flow_SDE:eq3}
with Lemma~\ref{random_IV_SDE}
(with
$d = d$, 
$m = m$, 
$T = T$, 
$t = s$,
$s = r$,
$(\Omega, \mathcal{F}, \P, (\mathbb{F}_h)_{h \in [0,T]}) = (\Omega, \mathcal{F}, \P, (\mathbb{H}_h)_{h \in [0,T]})$,
$W = W^1$,
$\mu = \mu$,
$\sigma = \sigma$,
$(X_h(y))_{h \in [t,s], y \in \R^d} = (X^1_{s, h}(y))_{h \in [s,r], y \in \R^d}$,
$\xi = X^2_{t, s}(x)$
in the notation of Lemma~\ref{random_IV_SDE})
proves that 
for all $h \in [s, r]$ it holds $\P$-a.s.\ that
\begin{equation}
  X^1_{s,h}( X^2_{t,s}(x))
= 
   X^2_{t,s}(x)
  + 
  \int_{s}^h \mu \big(l, X^1_{s,l}( X^2_{t,s}(x))\big) dl
  +
  \int_{s}^h \sigma \big(l, X^1_{s,l}( X^2_{t,s}(x)) \big) dW^1_l.
\end{equation}
The fact that $(\Omega, \mathcal{F}, \P, (\mathbb{H}_h)_{h \in [0,T]})$ is a filtered probability space which satisfies the usual conditions,
the fact that $W^1 \colon [0, T] \times \Omega \to \R^{d}$ is a standard $(\Omega, \mathcal{F}, \P, (\mathbb{H}_h)_{h \in [0,T]})$-Brownian motion,
the fact that $\mathcal{Y}^N_N \colon \Omega \to \R^{d}$ is $\mathbb{H}_s$/$\Borel(\R^d)$-measurable,
the hypothesis that $\mu$ and $\sigma$ are globally Lipschitz continuous functions,
the fact that $ (X^1_{s,h}( X^2_{t,s}(x)))_{h \in [s, r]} \colon [s, r] \times \Omega \to \R^d$ is an $(\mathbb{H}_h)_{h \in [s, r]}$/$\Borel(\R^d)$-adapted stochastic process with continuous sample paths, 
\eqref{flow_SDE:eq3},
\eqref{flow_SDE:setting4},
and Corollary~\ref{Euler_maruyama_estimate} 
(with
$d = d$, 
$m = m$, 
$N = N$,
$T = T$, 
$t = s$,
$s = r$,
$  L 
= 
  \sup_{h, l \in [0, T], y, z \in \R^d \colon (h, y) \neq (l, z)}
  \frac{
    \norm{
      \mu(h, y ) - \mu(l, z)
    }
    +
    \normmm{
      \sigma(h, y ) - \sigma(l, z )
    }
  }
  {
    | h - l | + \norm{y-z}
  }
$,
$(\Omega, \mathcal{F}, \P, (\mathbb{F}_h)_{h \in [0,T]}) = (\Omega, \mathcal{F}, \P, (\mathbb{H}_h)_{h \in [0,T]})$,
$W = W^1$,
$\zeta = \mathcal{Y}^N_N$,
$\mu = \mu$,
$\sigma = \sigma$,
$(X_h)_{h \in [t,s]} = (X^1_{s, h}(X^2_{t,s}(x)))_{h \in [s,r]}$,
$(r_n)_{n \in \{0, 1, \ldots, N \}} = (v^{N}_n)_{n \in \{0, 1, \ldots, N \}}$,
$(\mathcal{X}_n)_{n \in \{0, 1, \ldots, N \}} = (\mathcal{Z}^{N}_{n})_{n \in \{0, 1, \ldots, N \}}$
for $N \in \N$
in the notation of Corollary~\ref{Euler_maruyama_estimate})
hence demonstrate that there exists a real number $K \in (0,\infty)$ which satisfies that
for all $N \in \N$ it holds that
\begin{equation}
  \left(\Exp{\Normm{ X^1_{s,r}( X^2_{t,s}(x)) - \mathcal{Z}^{N}_{N}}^2} \right)^{\! \nicefrac{1}{2}}
\leq
  K
  \left[ 
    \left(\Exp{\norm{X^2_{t,s}(x) -\mathcal{Y}^{N}_{N}}^2} \right)^{\! \nicefrac{1}{2}}
    +
    \frac{1}{\sqrt{N}}
  \right].
\end{equation}
This and \eqref{flow_SDE:eq1.1} imply that 
\begin{equation}
\label{flow_SDE:eq4}
  \limsup_{N \to \infty}
     \left(\Exp{\Normm{ X^1_{s,r}( X^2_{t,s}(x)) - \mathcal{Z}^{N}_{N}}^2} \right)^{ \! \nicefrac{1}{2}}
\leq
  \limsup_{N \to \infty}
 K 
  \left[ 
  \frac{C}{\sqrt{N}}
    +
  \frac{1}{\sqrt{N}}
  \right]
=
  0.
\end{equation}
Furthermore, observe that \eqref{flow_SDE:setting1}--\eqref{flow_SDE:setting4} 
assure that
for all $N \in \N$ it holds that
$\mathcal{X}^N_{2N}$ and $\mathcal{Z}^N_N$ have the same distribution.
This, \eqref{flow_SDE:eq2}, and \eqref{flow_SDE:eq4} imply that 
for all globally bounded and Lipschitz continuous functions $g \colon \R^d \to \R$ it holds that
\begin{equation}
  \Exp{g(X^1_{s,r}( X^2_{t,s}(x)))} 
=
  \lim_{N \to \infty}
    \Exp{g(\mathcal{Z}^{N}_{N})}
=
  \lim_{N \to \infty}
    \Exp{g(\mathcal{X}^{N}_{2N})}
= 
  \Exp{g(X^1_{t, r}(x))}.
\end{equation}
Lemma~\ref{equivalent_cond_for_id} hence assures that $X^1_{s,r}( X^2_{t,s}(x))$ and $X^1_{t, r}(x)$ are identically distributed.
Combining this with \eqref{flow_SDE:eq1} completes the proof of Lemma~\ref{flow_SDE}.
\end{proof}

\section{Full history recursive multilevel Picard (MLP) approximation algorithms }
\label{sect:results}
In this section we present the proposed MLP scheme and perform a rigorous complexity analysis.
First, we introduce our MLP scheme (cf.\ \eqref{setting:eq5} in Subsection~\ref{sect:setting} below) as an approximation algorithm for a solution (cf.\ $u$ in Setting~\ref{setting} in Subsection~\ref{sect:setting} below)	of certain type of stochastic fixed point equation (cf.\ \eqref{setting:eq4} in Subsection~\ref{sect:setting} below) in Subsection~\ref{sect:setting}.
Subsequently, the goal of Subsections~\ref{sect:bound_solution}--\ref{sect:error_analysis} is to obtain an estimate for the $L^2$-error between the MLP scheme and the solution of the stochastic fixed point equation.
This results in Proposition~\ref{L2_estimate} and Corollary~\ref{exponential_convergence} in Subsection~\ref{sect:error_analysis} below.
In Subsection~\ref{sect:comp_effort} we estimate the computational effort needed to simulate realizations of the MLP scheme and combine this with the $L^2$-error estimate in Corollary~\ref{exponential_convergence} to obtain a computational complexity analysis for the MLP algorithm in Proposition~\ref{comp_and_error}.
Finally, in Subsection~\ref{sect:kolmogorov}, we exploit a connection between stochastic fixed point equations and viscosity solutions of semilinear Kolmogorov PDEs to carry over the complexity analysis of Subsection~\ref{sect:comp_effort} to semilinear Kolmogorov PDEs (cf.\ \eqref{general_thm:ass8} in Theorem~\ref{general_thm} below) demonstrating that our proposed MLP algorithm overcomes the curse of dimensionality in the approximation of semilinear Kolmogorov PDEs in Theorem~\ref{general_thm}, the main result of this paper.

\subsection{Stochastic fixed point equations and MLP approximations}
\label{sect:setting}

\begin{setting}
\label{setting}
Let $d \in \N$, $T \in (0,\infty)$, $L \in [0,\infty)$, $\Theta = \cup_{n = 1}^\infty \Z^n$, 
$u \in C( [0,T] \times \R^d , \R)$,
$g \in C( \R^d , \R)$,
$f \in C( [0,T] \times \R^d \times \R , \R)$ satisfy
for all $t \in [0, T]$, $ x  \in \R^d$, $v, w \in \R$ that 
\begin{equation}
\label{setting:eq0}
  |f(t, x, v) - f(t, x, w)|
\leq
  L | v - w |,
\end{equation}
let $ ( \Omega, \mathcal{F}, \P ) $ be a probability space, 
let 
$ \mathcal{R}^\theta \colon \Omega \to [0, 1]$, 
  $\theta \in \Theta$, 
be independent  $\mathcal{U}_{ [0, 1]}$-distributed random variables,
let 
$ R^\theta = (R^\theta_t)_{t \in [0,T]} \colon [0,T] \times \Omega \to [0, T]$, 
  $\theta \in \Theta$, 
be the stochastic processes which satisfy 
for all $t \in [0,T]$, $\theta \in \Theta$ that
\begin{equation}
\label{setting:eq1}
  R^\theta_t = t + (T-t)\mathcal{R}^\theta,
\end{equation}
let 
$ X^\theta = (X^\theta_{t,s}(x))_{s \in [t, T], t \in [0, T], x \in \R^d}  \colon \{ (t,s) \in [0, T]^2 \colon t \leq s \} \times \R^d \times \Omega \to \R^d$, 
  $\theta \in \Theta$, 
be independent continuous random fields
which satisfy 
for all $r, s, t \in [0,T]$, $x \in \R^d$, $\theta, \vartheta \in \Theta$, $B \in \mathcal{B}(\R^d)$ with $t \leq s \leq r$ and $\theta \neq \vartheta$ that 
$
   \P(X^\theta_{t,t}(x) = x) = 1
$
and
\begin{equation}
\label{setting:eq2}
  \P \big(   X^\theta_{s,r}( X^\vartheta_{t,s}(x))  \in {B}  \big) 
= 
  \P\big(X^\theta_{t,r}(x) \in {B} \big),
\end{equation}
assume that 
$
  (
    X^\theta
  )_{\theta \in \Theta}
$
and
$
  (
    \mathcal{R}^\theta
  )_{\theta \in \Theta}
$
are independent, 
assume
for all $t \in [0,T]$, $x \in \R^d$ that
$
  \EXPP{
    |  g ( X^0_{t, T}(x) )  |
    +
    \int_{t}^T
      | f (r, X^0_{t, r}(x),  u(r, X^0_{t, r}(x)) ) |
    \, dr
  }
< 
  \infty
$
and
\begin{equation}
\label{setting:eq4}
  u(t, x)
=
  \Exp{
    g \big( X^0_{t, T}(x) \big)
    +
    \int_{t}^T
      f \big(r, X^0_{t, r}(x),  u(r, X^0_{t, r}(x)) \big)
    \, dr
  },
\end{equation}
and let 
$
  V^{\theta}_{M,n} \colon [0,T] \times \R^d \times \Omega \to \R
$, $M, n \in \Z$, $\theta \in \Theta$,
be functions which satisfy 
for all $M, n \in \N$, $\theta \in \Theta$, $t \in [0,T]$, $x \in \R^d$ that
$
  V^{\theta}_{M,-1} (t, x) = V^{\theta}_{M,0} (t, x) = 0
$
and
\begin{equation}
\label{setting:eq5}
\begin{split}
  V^{\theta}_{M,n}(t, x) 
&=
  \frac{1}{M^n} 
  \Bigg[
    \sum_{m = 1}^{M^n}
      g \big( X^{(\theta, n, -m)}_{t, T}(x) \big)
  \Bigg] \\
&\quad
  +
  \sum_{k = 0}^{n-1}
    \frac{(T-t)}{M^{n-k}} 
    \Bigg[
      \sum_{m = 1}^{M^{n-k}}
        f \Big( 
          R^{(\theta, k, m)}_t , X^{(\theta, k, m)}_{t, R^{(\theta, k, m)}_t}(x),
          V^{(\theta, k, m)}_{M, k} \big( R^{(\theta, k, m)}_t  ,  X^{(\theta, k, m)}_{t, R^{(\theta, k, m)}_t}(x) \big)
        \Big) \\
&\quad
        -
       \mathbbm{1}_{\N}(k)
       f \Big( 
         R^{(\theta, k, m)}_t  ,  X^{(\theta, k, m)}_{t, R^{(\theta, k, m)}_t}(x),
         V^{(\theta, k, -m)}_{M, k-1} \big( R^{(\theta, k, m)}_t  ,  X^{(\theta, k, m)}_{t, R^{(\theta, k, m)}_t}(x) \big)
        \Big)
    \Bigg].
\end{split}
\end{equation}
\end{setting}

\subsection{A priori bounds for solutions of stochastic fixed point equations}
\label{sect:bound_solution}

In our $L^2$-error analysis (see Subsection~\ref{sect:error_analysis} below) of the MLP scheme introduced in Setting~\ref{setting} we need to estimate expectations involving the solution of the stochastic fixed point equation. This estimate is carried out in Lemma~\ref{bound_solution} below.
In order to prove Lemma~\ref{bound_solution} we need the elementary and well known time reversed Gronwall inequality in Lemma~\ref{gronwall_backwards}.

\begin{lemma}[Time reversed time-continuous Gronwall inequality]
\label{gronwall_backwards}
Let $T, \alpha, \beta \in [0,\infty)$ and let $\epsilon \colon [0, T] \to [0, \infty]$ be a $\mathcal{B}([0, T]) / \mathcal{B}( [0, \infty])$-measurable function which satisfies 
for all $t \in [0, T]$ that
$
  \int_{0}^T \epsilon(r) \, dr
<
  \infty
$ 
and
$
  \epsilon(t)
\leq
  \alpha + \beta \int_{t}^T \epsilon(r) \, dr.
$
Then
\begin{enumerate}[(i)]
\item \label{gronwall_backwards:item1}
it holds 
for all $t \in [0, T]$ that
$
  \epsilon(t)
\leq
  \alpha \exp(\beta (T-t))
$
and
\item \label{gronwall_backwards:item2}
it holds that
$
 \sup_{t \in [0, T]} 
   \epsilon(t)
\leq
  \alpha \exp(\beta T)
< 
  \infty
$.
\end{enumerate}
\end{lemma}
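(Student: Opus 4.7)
The plan is to prove item~\eqref{gronwall_backwards:item1} by iterating the hypothesis, and to obtain item~\eqref{gronwall_backwards:item2} as an immediate consequence by bounding $T-t \le T$ for $t \in [0,T]$.

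The core step is to show by induction on $n \in \N_0$ that for every $t \in [0,T]$ it holds that
\[
  \epsilon(t)
  \le
  \alpha \sum_{k=0}^{n} \frac{(\beta(T-t))^k}{k!}
  + \frac{\beta^{n+1}}{n!}\int_{t}^{T}(r-t)^n \, \epsilon(r)\,dr.
\]
The base case $n=0$ is the hypothesis itself. For the inductive step I would substitute the hypothesis $\epsilon(r) \le \alpha + \beta\int_r^T \epsilon(s)\,ds$ into the remainder integral, use $\int_t^T (r-t)^n\,dr = (T-t)^{n+1}/(n+1)$ for the contribution of the constant $\alpha$, and apply Tonelli's theorem to the other term to collapse the double integral $\int_t^T (r-t)^n \int_r^T \epsilon(s)\,ds\,dr$ into $\tfrac{1}{n+1}\int_t^T (s-t)^{n+1}\epsilon(s)\,ds$. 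This reproduces the claimed formula with $n$ replaced by $n+1$.

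To conclude item~\eqref{gronwall_backwards:item1} I would pass to the limit $n\to\infty$. The partial sums converge to $\alpha \exp(\beta(T-t))$ by the power series representation of the exponential, while the remainder satisfies the crude bound
\[
  \frac{\beta^{n+1}}{n!}\int_{t}^{T}(r-t)^n \epsilon(r)\,dr
  \le
  \frac{\beta(\beta T)^n}{n!}\int_{0}^{T}\epsilon(r)\,dr,
\]
which tends to $0$ as $n\to\infty$ by the assumption $\int_0^T \epsilon(r)\,dr < \infty$. Item~\eqref{gronwall_backwards:item2} then follows by maximizing $T-t$ over $t\in[0,T]$.

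The only genuinely nontrivial ingredient is the application of Tonelli's theorem in the inductive step, which is unproblematic here because the integrand is nonnegative. An equivalent alternative would be to introduce $\tilde{\epsilon}(s) := \epsilon(T-s)$ for $s\in[0,T]$, thereby converting the backward integral inequality into a forward one and reducing the statement to a standard forward-in-time continuous Gronwall lemma; but since the paper does not invoke such a lemma among its preceding results, the direct iteration above has the advantage of keeping the argument self-contained.
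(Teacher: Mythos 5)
Your proof is correct, and it takes a genuinely different route from the paper's.

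The paper proceeds by the time-reversal substitution $\varepsilon(t) := \epsilon(T-t)$, verifies via the transformation theorem that $\int_0^t \varepsilon(r)\,dr = \int_{T-t}^T \epsilon(s)\,ds$, and then invokes a standard forward-in-time continuous Gronwall inequality cited from the literature (Grohs et al., Lemma~2.11 of \cite{GrohsHornungJentzenVW18}). This is short but outsources the Gronwall engine to an external reference. Your argument instead iterates the integral inequality directly, producing the explicit Picard-type estimate
\[
  \epsilon(t)
  \le
  \alpha \sum_{k=0}^{n} \frac{(\beta(T-t))^k}{k!}
  + \frac{\beta^{n+1}}{n!}\int_{t}^{T}(r-t)^n \epsilon(r)\,dr,
\]
and kills the remainder via the crude bound $\frac{\beta(\beta T)^n}{n!}\int_0^T \epsilon(r)\,dr \to 0$, which is where the hypothesis $\int_0^T\epsilon(r)\,dr<\infty$ is genuinely used. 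The base case, the Tonelli collapse $\int_t^T (r-t)^n\int_r^T\epsilon(s)\,ds\,dr = \tfrac{1}{n+1}\int_t^T (s-t)^{n+1}\epsilon(s)\,ds$, and the passage to the limit are all sound. What your approach buys is self-containedness: no external Gronwall citation and no time-reversal bookkeeping. What the paper's approach buys is brevity and reuse of an established lemma. One small correction to your closing remark: the paper does in fact invoke a forward Gronwall lemma --- it is simply cited from an external reference rather than proved among the paper's own preceding results.
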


\begin{proof}[Proof of Lemma~\ref{gronwall_backwards}]
Throughout this proof 
let 
$\Phi \colon [0,T] \to [0,T]$ 
and 
$\varepsilon \colon [0,T] \to [0,\infty]$ 
be the functions which satisfy
for all $t \in [0,T]$ that
\begin{equation}
\label{gronwall_backwards:setting1}
\Phi(t) = T-t
\qandq
\varepsilon(t) = \epsilon(\Phi(t)) = \epsilon(T-t).
\end{equation}
Observe that the integral transformation theorem (see, e.g., Klenke \cite[Theorem 4.10]{Klenke14}) implies that 
for all $t \in [0, T]$ it holds that
\begin{equation}
\label{gronwall_backwards:eq1}
\begin{split}
  \int_{0}^t \varepsilon(r) \, dr
&=
  \int_{[0,t]} \epsilon(\Phi(r)) \, \Borelmeasure_{[0,t]}(dr)
=
  \int_{\Phi([0,t])} \epsilon(s) \, \Phi(\Borelmeasure_{[0,t]})_{\Borel(\Phi([0,t]))} (ds) \\
&=
  \int_{[T-t,T]} \epsilon(s) \,  \Borelmeasure_{[T-t,T]}(ds)
=
  \int_{T-t}^T \epsilon(s) \, ds.
\end{split}
\end{equation}
Hence, we obtain that
\begin{equation}
\label{gronwall_backwards:eq2}
\int_{0}^T \varepsilon(r) \, dr 
=
  \int_{0}^T \epsilon(r) \, dr
< 
  \infty
\end{equation}
Moreover, observe that \eqref{gronwall_backwards:setting1},  \eqref{gronwall_backwards:eq1}, and the hypothesis that 
for all $t \in [0, T]$ it holds that
$
  \epsilon(t)
\leq
  \alpha + \beta \int_{t}^T \epsilon(r) \, dr
$
assure that 
for all $t \in [0, T]$ it holds that
\begin{equation}
\label{gronwall_backwards:eq3}
\begin{split}
  \varepsilon(t)
=
  \epsilon(T-t)
\leq
  \alpha + \beta \int_{T-t}^T \epsilon(r) \, dr
=
  \alpha + \beta \int_{0}^t \varepsilon(r) \, dr.
\end{split}
\end{equation}
Combining this and \eqref{gronwall_backwards:eq2} with Gronwall's integral inequality (cf, e.g., Grohs et al.\ \cite[Lemma 2.11]{GrohsHornungJentzenVW18}) demonstrates that 
for all $t \in [0, T]$ it holds that
\begin{equation}
  \varepsilon(t)
\leq
  \alpha \exp(\beta t).
\end{equation}
Hence, we obtain that 
for all $t \in [0, T]$ it holds that
\begin{equation}
  \epsilon(t) 
=
  \epsilon(T - (T-t))
=
  \varepsilon(T-t)
\leq
  \alpha \exp(\beta (T-t))
\leq
  \alpha \exp(\beta T).
\end{equation}
This establishes items~\eqref{gronwall_backwards:item1}--\eqref{gronwall_backwards:item2}.
The proof of Lemma~\ref{gronwall_backwards} is thus completed.
\end{proof}

\begin{lemma}
\label{bound_solution}
Assume Setting~\ref{setting}, let $\xi \in \R^d$, $C \in [0,\infty]$ satisfy
that
\begin{equation}
\label{bound_solution:ass1}
  C 
=
    \left( \Exp{ | g(X^0_{0, T}(\xi))|^2 } \right)^{\nicefrac{1}{2}}
    +
    \sqrt{T}
    \left( \int_{0}^T \Exp{ | f( t , X^0_{0, t}(\xi) , 0 ) |^2 } \, dt \right)^{ \! \nicefrac{1}{2}},
\end{equation}
and assume that
$
  \int_0^T \left( \Exp{ | u(t, X^0_{0,t}(\xi))|^2 } \right)^{\nicefrac{1}{2}} \, dt 
< 
  \infty
$.
Then
\begin{enumerate}[(i)]
\item \label{bound_solution:item1}
it holds
for all $t \in [0, T]$ that 
$
  \left( \Exp{ | u(t, X^0_{0,t}(\xi))|^2 } \right)^{\! \nicefrac{1}{2}}
\leq
  C \exp (L (T-t))
$
and
\item \label{bound_solution:item2}
it holds that
$
  \sup_{ t \in [0, T] }
    \left( \Exp{ | u(t, X^0_{0,t}(\xi))|^2 } \right)^{ \! \nicefrac{1}{2}}
\leq
    C \exp (LT)
$.
\end{enumerate}     

\end{lemma}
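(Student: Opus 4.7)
The plan is to derive a time reversed Gronwall-type integral inequality for the function $\epsilon \colon [0,T] \to [0, \infty]$ defined by $\epsilon(t) = (\Exp{|u(t, X^0_{0,t}(\xi))|^2})^{1/2}$ and then apply Lemma~\ref{gronwall_backwards} with $\alpha = C$ and $\beta = L$. Item~\eqref{bound_solution:item2} will then follow from item~\eqref{bound_solution:item1} by taking the supremum over $t \in [0,T]$.

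The starting point is to rewrite the fixed point equation \eqref{setting:eq4} using an independent copy of the solution field. Since $(X^\theta)_{\theta \in \Theta}$ are i.i.d.\ as random fields, \eqref{setting:eq4} remains valid with $X^0$ replaced by $X^1$, so that $X^1$ is independent of $X^0_{0,t}(\xi)$. Substituting $x = X^0_{0,t}(\xi)$ and using conditional Jensen's inequality (conditioning on $\sigmaAlgebra(X^0_{0,t}(\xi))$) together with Minkowski's inequality for $L^2(\P)$ yields
\[
  \epsilon(t)
  \leq
  \bigl(\Exp{|g(X^1_{t,T}(X^0_{0,t}(\xi)))|^2}\bigr)^{\!1/2}
  +
  \Bigl(\Exp{\bigl(\textstyle\int_t^T |f(r,X^1_{t,r}(X^0_{0,t}(\xi)),u(r,X^1_{t,r}(X^0_{0,t}(\xi))))| \, dr\bigr)^2}\Bigr)^{\!1/2}.
\]
To the second summand I would apply Minkowski's integral inequality (pulling the $L^2$ norm inside the $dr$-integral), and to both summands the distributional flow property \eqref{setting:eq2} of Setting~\ref{setting}, which, combined with the i.i.d.\ nature of $(X^\theta)_{\theta \in \Theta}$, shows that $X^1_{t,s}(X^0_{0,t}(\xi))$ and $X^0_{0,s}(\xi)$ are identically distributed for $s \in [t,T]$.

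After this reduction, exploiting the Lipschitz continuity \eqref{setting:eq0} of $f$ in its last variable via the triangle inequality $|f(r,y,u(r,y))| \leq |f(r,y,0)| + L|u(r,y)|$ and Lemma~\ref{Hoelder} to convert the $L^1$-integral of $(\Exp{|f(r,X^0_{0,r}(\xi),0)|^2})^{1/2}$ over $[t,T]$ into the $L^2$-form appearing in \eqref{bound_solution:ass1} gives the bound
\[
  \epsilon(t)
  \leq
  C
  +
  L \smallint_t^T \epsilon(r) \, dr.
\]
The hypothesis $\int_0^T \epsilon(r)\, dr < \infty$ ensures all integrals are well defined and Fubini is applicable. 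The main obstacle is the rigorous justification of the substitution $x \mapsto X^0_{0,t}(\xi)$ into the fixed point equation and the application of the flow property inside the random field evaluation; this will be handled by invoking Lemma~\ref{contRF_eval_integrable} (using the independence of $X^1$ and $X^0_{0,t}(\xi)$) together with \eqref{setting:eq2}. Once this is in place, Lemma~\ref{gronwall_backwards} yields $\epsilon(t) \leq C \exp(L(T-t))$ for all $t \in [0,T]$, finishing the proof.
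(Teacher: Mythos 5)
Your proposal is correct and follows essentially the same route as the paper: derive the time-reversed inequality $\epsilon(t)\leq C+L\int_t^T\epsilon(r)\,dr$ from the fixed-point equation via the flow property, Minkowski's integral inequality, the Lipschitz decomposition of $f$, and Lemma~\ref{Hoelder}, then apply Lemma~\ref{gronwall_backwards}. The only presentational difference is that you invoke conditional Jensen with respect to $\sigmaAlgebra(X^0_{0,t}(\xi))$, whereas the paper disintegrates against the law $\mu_t$ of $X^0_{0,t}(\xi)$ and applies ordinary Jensen pointwise in $z$ before integrating; these are equivalent (and note that Setting~\ref{setting} together with Lemma~\ref{indep_and_measurable}~\eqref{indep_and_measurable:item7} only gives identical one-dimensional marginals of $(X^\theta)_\theta$, not identical distribution as random fields, but that is all the argument needs since every term in \eqref{setting:eq4} depends on $X^0$ only through a single time evaluation once Fubini is applied).
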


\begin{proof}[Proof of Lemma~\ref{bound_solution}]
Throughout this proof assume w.l.o.g.\ that $C < \infty$
and
let $\mu_{t} \colon \mathcal{B}(\R^d) \to [0,1]$, $t \in [0,T]$, be the probability measures which satisfy 
for all $t \in [0,T]$, $B \in \mathcal{B}(\R^d)$ that
\begin{equation}
\label{bound_solution:setting1}
  \mu_t(B) 
=
  \P( X^0_{0,t}(\xi) \in B )
=
  \P( X^1_{0,t}(\xi) \in B )
=
  \big((X^1_{0,t}(\xi))(\P)_{\Borel(\R^d)} \big) (B)
\end{equation}
(cf.\ item~\eqref{indep_and_measurable:item7} in Lemma~\ref{indep_and_measurable}).
Note that \eqref{setting:eq4} and the triangle inequality ensure that
for all $t \in [0,T]$ it holds that
\begin{equation}
\label{bound_solution:eq1}
\begin{split}
  &\left( \Exp{ | u(t, X^0_{0,t}(\xi))|^2 } \right)^{ \!\nicefrac{1}{2}}
=
  \left( 
    \int_{\R^d}
      | u(t, z)|^2
    \, \mu_t(dz)
  \right)^{ \! \nicefrac{1}{2}} \\
&=
  \left( 
    \int_{\R^d}
      \left| 
        \Exp{
          g \big( X^0_{t, T}(z) \big)
          +
          \textint_{t}^T
            f \big(r, X^0_{t, r}(z),  u(r, X^0_{t, r}(z)) \big)
          \, dr
        }
      \right|^2
    \, \mu_t(dz)
  \right)^{ \! \nicefrac{1}{2}} \\
&\leq
  \left( 
    \int_{\R^d}
      \left| 
        \Exp{
          g \big( X^0_{t, T}(z) \big)
        }
      \right|^2
    \, \mu_t(dz)
  \right)^{ \! \nicefrac{1}{2}}
  +
  \left( 
    \int_{\R^d}
      \left| 
        \Exp{
          \textint_{t}^T
            f \big(r, X^0_{t, r}(z),  u(r, X^0_{t, r}(z)) \big)
          \, dr
        }
      \right|^2
    \, \mu_t(dz)
  \right)^{ \! \nicefrac{1}{2}}.
\end{split}
\end{equation}
Jensen's inequality hence assures that
for all $t \in [0,T]$ it holds that
\begin{equation}
\label{bound_solution:eq2}
\begin{split}
  &\left( \Exp{ | u(t, X^0_{0,t}(\xi))|^2 } \right)^{ \!\nicefrac{1}{2}} 
\leq
  \left( 
    \int_{\R^d}
        \Exp{
          \big|  g \big( X^0_{t, T}(z) \big)  \big|^2
        }
    \, \mu_t(dz)
  \right)^{ \! \nicefrac{1}{2}} \\
&\quad  
  +
  \left( 
    \int_{\R^d}
        \Exp{
          \left( \textint_{t}^T
            \big|  f \big(r, X^0_{t, r}(z),  u(r, X^0_{t, r}(z)) \big)  \big|
          \, dr \right)^2
        }
    \, \mu_t(dz)
  \right)^{ \! \nicefrac{1}{2}}.
\end{split}
\end{equation}
Furthermore, observe that \eqref{bound_solution:setting1}, the fact that $X^0$ and $X^1$ are independent and continuous random fields, \eqref{setting:eq2}, and Lemma~\ref{contRF_eval_nonneg} demonstrate that 
for all $t \in [0,T]$ it holds that
\begin{equation}
\label{bound_solution:eq3}
\begin{split}
  \left( 
    \int_{\R^d}
        \Exp{
          \big|  g \big( X^0_{t, T}(z) \big)  \big|^2
        }
    \, \mu_t(dz)
  \right)^{ \! \nicefrac{1}{2}}
&=
  \left( 
        \Exp{
          \big|  g \big( X^0_{t, T}(X^1_{0, t}(\xi) ) \big)  \big|^2
        }
  \right)^{ \! \nicefrac{1}{2}} 
=
  \left( 
        \Exp{
          \big|  g \big( X^0_{0, T}(\xi) \big)  \big|^2
        }
  \right)^{ \! \nicefrac{1}{2}}.
\end{split}
\end{equation}
In addition, note that 
Minkowski's integral inequality (cf., e.g., Jentzen \& Kloeden \cite[Proposition 8 in Appendix A.1]{JentzenKloeden11}),  
\eqref{bound_solution:setting1}, 
the fact that $X^0$ and $X^1$ are independent and continuous random fields, 
\eqref{setting:eq2},
and Lemma~\ref{contRF_eval_nonneg}
imply that
for all $t \in [0,T]$ it holds that
\begin{equation}
\label{bound_solution:eq4}
\begin{split}
  &\left( 
    \int_{\R^d}
        \Exp{
          \left( \textint_{t}^T
            \big|  f \big(r, X^0_{t, r}(z),  u(r, X^0_{t, r}(z)) \big)  \big|
          \, dr \right)^2
        }
    \, \mu_t(dz)
  \right)^{ \! \nicefrac{1}{2}} \\
&\leq
   \int_{t}^T
     \left( 
      \int_{\R^d}
          \Exp{
              \big|  f \big(r, X^0_{t, r}(z),  u(r, X^0_{t, r}(z)) \big)  \big|^2
          }
      \, \mu_t(dz)
    \right)^{ \! \nicefrac{1}{2}} 
  \, dr \\
&=
   \int_{t}^T
     \left( 
          \Exp{
              \big|  f \big(r, X^0_{t, r}(X^1_{0, t}(\xi)),  u(r, X^0_{t, r}(X^1_{0, t}(\xi))) \big)  \big|^2
          }
    \right)^{ \! \nicefrac{1}{2}} 
  \, dr \\
&=
   \int_{t}^T
     \left( 
          \Exp{
              \big|  f \big(r, X^0_{0, r}(\xi),  u(r, X^0_{0, r}(\xi)) \big)  \big|^2
          }
    \right)^{ \! \nicefrac{1}{2}} 
  \, dr.
\end{split}
\end{equation}
Moreover, observe that \eqref{setting:eq0} ensures that 
for all $t \in [0, T]$, $x \in \R^d$, $v \in \R$ it holds that
\begin{equation}
\label{bound_solution:eq5}
  |  f (t, x, v)  |
\leq
  |  f (t, x, 0)  | + |   f (t, x, v) - f (t, x, 0)  |
\leq
  |  f (t, x, 0)  | + L |  v  |.
\end{equation}
This, \eqref{bound_solution:eq4}, and the triangle inequality imply that
for all $t \in [0,T]$ it holds that
\begin{equation}
\label{bound_solution:eq6}
\begin{split}
  &\left( 
    \int_{\R^d}
        \Exp{
          \left( \textint_{t}^T
            \big|  f \big(r, X^0_{t, r}(z),  u(r, X^0_{t, r}(z)) \big)  \big|
          \, dr \right)^2
        }
    \, \mu_t(dz)
  \right)^{ \! \nicefrac{1}{2}} \\
&\leq
   \int_{t}^T
     \left( 
          \Exp{
              \big|  f \big(r, X^0_{0, r}(\xi), 0 \big)  \big|^2
          }
    \right)^{ \! \nicefrac{1}{2}} 
  \, dr
  + 
  L \int_{t}^T
     \left( 
          \Exp{
              | u(r, X^0_{0, r}(\xi))  |^2
          }
    \right)^{ \! \nicefrac{1}{2}} 
  \, dr.
\end{split}
\end{equation}
Furthermore, note that Lemma~\ref{Hoelder} assures that
for all $t \in [0, T]$ it holds that
\begin{equation}
\label{bound_solution:eq7}
\begin{split}
  \int_{t}^T
     \left( 
          \Exp{
               \big|  f \big(r, X^0_{0, r}(\xi), 0 \big)  \big|^2
          }
    \right)^{ \!  \nicefrac{1}{2}} 
  \, dr
&=
  \left( \left[ 
    \int_{t}^T
     \left( 
          \Exp{
              \big|  f \big(r, X^0_{0, r}(\xi), 0 \big)  \big|^2
          }
    \right)^{ \!  \nicefrac{1}{2}} 
    \, dr
  \right]^2 \right)^{\!\nicefrac{1}{2}} \\
&\leq
  \left( 
    (T-t)
    \int_{t}^T
          \Exp{
              \big|  f \big(r, X^0_{0, r}(\xi), 0 \big)  \big|^2
          }
    \, dr
  \right)^{ \! \nicefrac{1}{2}}\\
&\leq
  \sqrt{T}
  \left( 
    \int_{0}^T
          \Exp{
               \big|  f \big(r, X^0_{0, r}(\xi), 0 \big)  \big|^2
          }
    \, dr
  \right)^{ \! \nicefrac{1}{2}}.
\end{split}
\end{equation}
Combining this with \eqref{bound_solution:ass1}, \eqref{bound_solution:eq2}, \eqref{bound_solution:eq3}, and \eqref{bound_solution:eq6} implies that
for all $t \in [0,T]$ it holds that 
\begin{equation}
\label{bound_solution:eq8}
\begin{split}
  &\left( \Exp{ | u(t, X^0_{0,t}(\xi))|^2 } \right)^{ \!\nicefrac{1}{2}} \\
&\leq
   \left( 
        \Exp{
          \big|  g \big( X^0_{0, T}(\xi) \big)  \big|^2
        }
  \right)^{ \! \nicefrac{1}{2}}
  +
  \sqrt{T}
  \left( 
    \int_{0}^T
          \Exp{
               \big|  f \big(r, X^0_{0, r}(\xi), 0 \big)  \big|^2
          }
    \, dr
  \right)^{ \! \nicefrac{1}{2}} \\
&\quad
  + 
  L \int_{t}^T
     \left( 
          \Exp{
              | u(r, X^0_{0, r}(\xi))  |^2
          }
    \right)^{ \! \nicefrac{1}{2}} 
  \, dr. \\
&=
  C 
  + 
  L \int_{t}^T
     \left( 
          \Exp{
              | u(r, X^0_{0, r}(\xi))  |^2
          }
    \right)^{ \! \nicefrac{1}{2}} 
  \, dr.
\end{split}
\end{equation}
The hypothesis that 
$
  \int_0^T \left( \Exp{ | u(t, X^0_{0,t}(\xi))|^2 } \right)^{\nicefrac{1}{2}} \, dt 
< 
  \infty
$
and Lemma~\ref{gronwall_backwards} 
(with
$ T = T $,
$ \alpha = C $,
$ \beta = L $,
$ (\epsilon(t))_{t \in [0, T]} = \big(  ( \EXP{ | u(t, X^0_{0,t}(\xi))|^2 } )^{\nicefrac{1}{2}}  \big)_{t \in [0, T]}  $
in the notation of Lemma~\ref{gronwall_backwards})
hence establish items~\eqref{bound_solution:item1}--\eqref{bound_solution:item2}.
The proof of Lemma~\ref{bound_solution} is thus completed.
\end{proof}

\subsection{Properties of MLP approximations}
\label{sect:properties}
In this subsection we establish in Lemma~\ref{indep_and_measurable} below some elementary properties of the MLP approximations (cf.\ \eqref{setting:eq5} in Setting~\ref{setting} above) introduced in Setting~\ref{setting} above.
For this we need two elementary and well known results on identically distributed random variables (see Lemma~\ref{distribution_of_sum_of_indep_RV} and Lemma~\ref{distribution_of_eval_RF} below).

\begin{lemma}
\label{distribution_of_sum_of_indep_RV}
Let $d,N \in \N$, 
let $ ( \Omega, \mathcal{F}, \P ) $ be a probability space, 
let $ X_k  \colon \Omega \to \R^d$, $k \in \{1, 2, \ldots, N \}$, be independent random variables,
let $ Y_k  \colon \Omega \to \R^d$, $k \in \{1, 2, \ldots, N \}$, be independent random variables,
and assume for every $k \in \{1, 2, \ldots, N \}$ that $X_k$ and $Y_k$ are identically distributed.
Then it holds that 
$
  \big( \sum_{k = 1}^N X_k \big)
  \colon \Omega \to \R^d
$
and
$
  \big( \sum_{k = 1}^N Y_k \big)
  \colon \Omega \to \R^d
$
are identically distributed random variables.
\end{lemma}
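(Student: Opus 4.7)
The plan is to reduce the statement about sums to a statement about joint distributions, and then apply the pushforward under the continuous (hence Borel measurable) addition map $f\colon (\R^d)^N \to \R^d$, $f(x_1,\ldots,x_N) = \sum_{k=1}^N x_k$. Concretely, I would introduce the random vectors $\mathfrak{X} = (X_1,\ldots,X_N)$ and $\mathfrak{Y} = (Y_1,\ldots,Y_N)$ with values in $(\R^d)^N$, and first show that these vectors are identically distributed. Once this is established, the identity $\sum_{k=1}^N X_k = f(\mathfrak{X})$ and $\sum_{k=1}^N Y_k = f(\mathfrak{Y})$, combined with the pushforward identity $f(\mathfrak{X}(\P)_{\Borel((\R^d)^N)}) = f(\mathfrak{Y}(\P)_{\Borel((\R^d)^N)})$, immediately yields the conclusion.

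To prove that $\mathfrak{X}$ and $\mathfrak{Y}$ have the same distribution, I would exploit the fact that their distributions on $\Borel((\R^d)^N)$ are product measures. More precisely, the hypothesis that $(X_k)_{k \in \{1,\ldots,N\}}$ are independent yields that for all Borel sets $B_1, B_2, \ldots, B_N \in \Borel(\R^d)$ it holds that
\begin{equation}
\P(\mathfrak{X} \in B_1 \times B_2 \times \cdots \times B_N)
= \prod_{k=1}^N \P(X_k \in B_k),
\end{equation}
and analogously for $\mathfrak{Y}$. The hypothesis that $X_k$ and $Y_k$ are identically distributed for each $k$ then gives that the right-hand sides agree, so $\mathfrak{X}$ and $\mathfrak{Y}$ agree on the $\pi$-system of measurable rectangles. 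Since this $\pi$-system generates $\Borel((\R^d)^N)$, the uniqueness theorem for measures (cf., e.g., Klenke \cite[Lemma 1.42]{Klenke14}) implies $\mathfrak{X}(\P)_{\Borel((\R^d)^N)} = \mathfrak{Y}(\P)_{\Borel((\R^d)^N)}$.

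With the joint distributions identified, the final step is routine: for every $B \in \Borel(\R^d)$ it holds that
\begin{equation}
\P\!\left(\smallsum_{k=1}^N X_k \in B\right) = \P(\mathfrak{X} \in f^{-1}(B)) = \P(\mathfrak{Y} \in f^{-1}(B)) = \P\!\left(\smallsum_{k=1}^N Y_k \in B\right),
\end{equation}
which is the desired conclusion. There is no real obstacle here; the only substantive step is the appeal to the uniqueness theorem to pass from agreement on rectangles to agreement on the full Borel $\sigma$-algebra. If one prefers an even shorter route, the same conclusion follows directly from the characterization of the distribution of a finite family of independent random variables as the product of the marginals (again, e.g., Klenke \cite[Theorem 2.16 and Theorem 14.14]{Klenke14}), avoiding the explicit $\pi$-system argument altogether.
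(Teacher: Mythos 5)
Your proposal is correct and follows essentially the same route as the paper's proof: introduce the tuple-valued random vectors $\mathfrak{X}$ and $\mathfrak{Y}$, show they agree on measurable rectangles using independence and equality of marginals, invoke the uniqueness theorem for measures to identify their joint laws, and push forward under the continuous addition map. The only cosmetic difference is that you explicitly flag the alternative route via the product-measure characterization, whereas the paper carries out only the $\pi$-system argument.
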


\begin{proof}[Proof of Lemma~\ref{distribution_of_sum_of_indep_RV}]
Throughout this proof 
let $\mathfrak{X}, \mathfrak{Y} \colon \Omega \to \R^{Nd}$ be the random variables which satisfy that
\begin{equation}
  \mathfrak{X} = (X_1, \ldots, X_N)
\qandq
  \mathfrak{Y} = (Y_1, \ldots, Y_N)
\end{equation}
and 
let $f \in C(\R^{Nd}, \R^d)$ be the function which satisfies 
for all $v_1, v_2, \ldots, v_N \in \R^d$ that
$
  f(v_1, v_2, \ldots, v_N) = \sum_{k=1}^N v_k
$.
Observe that 
the hypothesis that $(X_k)_{k \in \{1, 2, \ldots, N \}}$ are independent,  
the hypothesis that $(Y_k)_{k \in \{1, 2, \ldots, N \}}$ are independent,
and 
the hypothesis that for every $k \in \{1, 2, \ldots, N \}$ it holds that $ X_k$ and $Y_k$  are identically distributed random variables
assure that
for all $(B_{k})_{k \in \{1, 2, \ldots, N \}} \subseteq \Borel(\R^d)$ it holds that
\begin{equation}
\begin{split}
  \P \big(
    \mathfrak{X} \in  \left( B_{1} \times B_{2} \times \ldots \times  B_{N} \right)
  \big) 
&=
  \P \! \left( \,
    \forall \, k \in \{1, 2, \ldots, N \}
    \colon
    X_k \in B_{k}
  \right) \\
&=
  \prod_{k = 1}^N
  \P \! \left( 
    X_k \in B_{k}
  \right) 
=
  \prod_{k = 1}^N
  \P \! \left( 
    Y_k \in B_{k}
  \right) \\
&=
  \P \! \left( \,
    \forall \, k \in \{1, 2, \ldots, N \}
    \colon
    Y_k \in B_{k}
  \right) \\
&=
  \P  \big( \,
    \mathfrak{Y} \in \left( B_{1} \times B_{2} \times \ldots \times  B_{N} \right)
  \big).
\end{split}
\end{equation}
This, the fact that 
\begin{equation}
\begin{split}
  \Borel(\R^{Nd}) 
&= 
  \sigmaAlgebra  \Big( \!
      \left( B_{1} \times B_{2} \times \ldots \times  B_{N} \right) 
    \in \mathcal{P}(\R^{LNd}) 
    \colon 
    \big( 
      \, \forall \, k \in \{1, 2, \ldots, N \}\colon B_{k} \in \Borel(\R^{d}) 
    \big)
  \Big),
\end{split}
\end{equation}
and the uniqueness theorem for measures (see, e.g., Klenke \cite[Lemma 1.42]{Klenke14})
imply that 
it holds for all $B \in \Borel(\R^{Nd})$ that
\begin{equation}
\begin{split}
  \P \big(
    \mathfrak{X} \in  B
  \big) 
=
  \P  \big( \,
    \mathfrak{Y} \in B
  \big).
\end{split}
\end{equation}
Hence, we obtain that
for all $B \in \Borel(\R^{d})$ it holds that
\begin{equation}
\begin{split}
  \P \left(
    { \textstyle \sum_{k = 1}^N} X_k \in B
  \right)   
&=
  \P \left(
    f(\mathfrak{X}) \in B
  \right)  
=
  \P \left(
    \mathfrak{X} \in f^{-1}(B)
  \right)  \\
&=
  \P \left(
    \mathfrak{Y} \in f^{-1}(B)
  \right)  
=
  \P \left(
    f(\mathfrak{Y}) \in B
  \right)  
=
  \P \left(
    { \textstyle \sum_{k = 1}^N} Y_k \in B
  \right).
\end{split}
\end{equation}
This shows that 
$
  \big( \sum_{k = 1}^N X_k \big)
  \colon \Omega \to \R^d
$
and
$
  \big( \sum_{k = 1}^N Y_k \big)
  \colon \Omega \to \R^d
$
are identically distributed random variables.
The proof of Lemma~\ref{distribution_of_sum_of_indep_RV} is thus completed.
\end{proof}

\begin{lemma}
\label{distribution_of_eval_RF}
Let $ ( \Omega, \mathcal{F}, \P ) $ be a probability space, 
let $ ( S , \delta ) $ be a separable metric space, 
let $(E, \delta)$ be a metric space,
let  $ U, V  \colon S \times \Omega \to E$
be continuous random fields, 
let $X, Y \colon \Omega \to S$ be 
random variables,
assume that $U$ and $X$ are independent, 
assume that $V$ and $Y$ are independent, 
assume
for all $s \in S$ that $U(s)$ and $V(s)$ are identically distributed, and
assume that $X$ and $Y$ are identically distributed.
Then
it holds that
$U(X) = (U(X(\omega), \omega))_{\omega \in \Omega} \colon \Omega \to E$ and
$V(Y) = (V(Y(\omega), \omega))_{\omega \in \Omega} \colon \Omega \to E$ 
are identically distributed random variables.
\end{lemma}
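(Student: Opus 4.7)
The plan is to reduce identical distribution of $U(X)$ and $V(Y)$ to testing against bounded Lipschitz real-valued functions via Lemma~\ref{equivalent_cond_for_id}, and then, for each such test function $g$, to use the evaluation formula of Lemma~\ref{contRF_eval_integrable} to convert each expectation into an integral of a pointwise expectation against the law of the evaluation point.

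First I would fix an arbitrary globally bounded and Lipschitz continuous function $g \colon E \to \R$ and consider the composed real-valued random field $g \circ U \colon S \times \Omega \to \R$. This composition is continuous because $g$ is continuous and $U$ is a continuous random field, it is globally bounded because $g$ is, and it remains independent of $X$ because $U$ is independent of $X$ (so the sub-$\sigma$-algebra generated by $g \circ U$ is contained in that generated by $U$). Hence Lemma~\ref{contRF_eval_integrable} (applied with the continuous random field $g \circ U$ whose moduli are uniformly bounded) yields
\begin{equation*}
  \Exp{g(U(X))}
=
  \int_S \Exp{g(U(s))} \, (X(\P)_{\Borel(S)})(ds),
\end{equation*}
and the analogous identity holds with $(U,X)$ replaced by $(V,Y)$.

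The remaining hypotheses now finish the argument: for each $s \in S$ the identical distribution of $U(s)$ and $V(s)$ gives $\Exp{g(U(s))} = \Exp{g(V(s))}$, while the identical distribution of $X$ and $Y$ gives $X(\P)_{\Borel(S)} = Y(\P)_{\Borel(S)}$. Substituting these two equalities into the two integral representations from the previous step delivers $\Exp{g(U(X))} = \Exp{g(V(Y))}$. Since $g$ was an arbitrary bounded Lipschitz function $E \to \R$, Lemma~\ref{equivalent_cond_for_id} concludes that $U(X)$ and $V(Y)$ are identically distributed.

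I do not anticipate any real obstacle in this plan; the only care needed is verifying that $g \circ U$ inherits continuity, global boundedness, and independence of $X$ so that Lemma~\ref{contRF_eval_integrable} applies cleanly, and each of these three properties is immediate from the hypotheses on $U$, $X$, and $g$.
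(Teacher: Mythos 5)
Your proposal is correct and follows essentially the same route as the paper: reduce to bounded Lipschitz test functions via Lemma~\ref{equivalent_cond_for_id}, apply Lemma~\ref{contRF_eval_integrable} to rewrite each expectation as an integral of the pointwise expectation against the law of the evaluation point, and then exchange using the hypothesized identical distributions. The only cosmetic difference is that the paper also explicitly records that $U(X)$ and $V(Y)$ are measurable (via Lemma~\ref{eval_RF_measurable} and the cited external result) before applying the expectation identities, a point you implicitly rely on.
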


\begin{proof}[Proof of Lemma~\ref{distribution_of_eval_RF}]
First, note that Grohs et al. \cite[Lemma 2.4]{BeckBeckerGrohsJaafariJentzen18}, the fact that $U$ and $V$ are continuous random fields, and
Lemma~\ref{eval_RF_measurable} ensure that $U(X)$ and $V(Y)$ are random variables.
Next observe 
the hypothesis that $U$ and $X$ are independent, 
the hypothesis that $V$ and $Y$ are independent, 
the hypothesis that
for all $s \in S$ it holds that 
$U(s)$ and $V(s)$ are identically distributed, 
the hypothesis that $X$ and $Y$ are identically distributed and Lemma~\ref{contRF_eval_integrable}
demonstrate that 
for all globally bounded and Lipschitz continuous functions $g \colon E \to \R$ it holds that
\begin{equation}
\begin{split}
  \Exp{g(U(X))}
=
  \int_S \Exp{g(U(s))} (X(\P)_{\Borel(S)})(ds)
=
  \int_S \Exp{g(V(s))} (Y(\P)_{\Borel(S)})(ds)
=
    \Exp{g(V(Y))}.
\end{split}
\end{equation}
Combining this with Lemma~\ref{equivalent_cond_for_id} assures that $U(X)$ and $V(Y)$ are identically distributed.
The proof of Lemma~\ref{distribution_of_eval_RF} is thus completed.
\end{proof}

\begin{lemma}[Properties of MLP approximations]
\label{indep_and_measurable}
Assume Setting~\ref{setting} and let $M \in \N$.
Then 
\begin{enumerate}[(i)]
\item \label{indep_and_measurable:item1}
for all  $\theta \in \Theta$, $n \in \N_0$ it holds that 
$
V^\theta_{M, n} \colon [0,T] \times \R^d \times \Omega \to \R
$
is a continuous random field,



\item \label{indep_and_measurable:item2}
for all $\theta \in \Theta$, $n \in \N_0$ it holds that $V^\theta_{M, n}$ is  
$
  \left(
    \Borel([0,T] \times \R^d)  
  \otimes 
    \sigmaAlgebra 
    (   
      ( \mathcal{R}^{(\theta, \vartheta)})_{ \vartheta \in \Theta}
      ,
      ( X^{(\theta, \vartheta)})_{\vartheta \in \Theta}
    )
  \right)
\! /
  \Borel(\R)
$-measurable,

%
%
%

\item \label{indep_and_measurable:item6}
for all $\theta \in \Theta $, $n \in \N_0$, $t \in [0, T]$, $x \in \R^d$ it holds that
\begin{equation}
\begin{split}
  &
    \big( (\N \cap [0, n]) \times \N \big) \ni (k,m) \mapsto \\
&\quad
    \begin{cases}
      g ( X^{(\theta, n, -m)}_{t, T}(x))   &: k = n \\[10pt]
      \begin{aligned}
        & \Big[
          f \big( 
            R^{(\theta, k, m)}_t , X^{(\theta, k, m)}_{t, R^{(\theta, k, m)}_t}(x),
            V^{(\theta, k, m)}_{M, k} \big( R^{(\theta, k, m)}_t  ,  X^{(\theta, k, m)}_{t, R^{(\theta, k, m)}_t}(x) \big)
          \big)  \\[0.1cm]
          &-
         \mathbbm{1}_{\N}(k)
         f \big( 
            R^{(\theta, k, m)}_t  ,  X^{(\theta, k, m)}_{t, R^{(\theta, k, m)}_t}(x),
            V^{(\theta, k, -m)}_{M, k-1} \big( R^{(\theta, k, m)}_t  ,  X^{(\theta, k, m)}_{t, R^{(\theta, k, m)}_t}(x) \big)
         \big)
       \Big]
      \end{aligned}
      & :k < n
    \end{cases}
\end{split}
\end{equation}
is an independent family of random variables,

\item \label{indep_and_measurable:item7}
for all $t \in [0,T]$, $s \in [t, T]$, $x \in \R^d$ it holds that 
$
  X^{\theta}_{t, s}(x) \colon \Omega \to \R^d
$, $\theta \in \Theta$,
are identically distributed random variables,
and

\item \label{indep_and_measurable:item8}
for all $n \in \N_0$, $t \in [0,T]$, $x \in \R^d$ it holds that 
$
  V^\theta_{M, n}(t, x) \colon \Omega \to \R^d
$, $\theta \in \Theta$,
are identically distributed random variables.
\end{enumerate}
\end{lemma}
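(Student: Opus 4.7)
The plan is to dispatch item~\eqref{indep_and_measurable:item7} directly from the distributional flow property~\eqref{setting:eq2} and then prove items~\eqref{indep_and_measurable:item1}, \eqref{indep_and_measurable:item2}, \eqref{indep_and_measurable:item6}, and \eqref{indep_and_measurable:item8} jointly by induction on $n \in \N_0$. For item~\eqref{indep_and_measurable:item7}, I would specialize \eqref{setting:eq2} to $s = r$: the hypothesis $\P(X^\theta_{r,r}(x) = x) = 1$ combined with the continuity of $X^\theta$ in the spatial variable yields, via a countable dense subset argument, $\P(\forall\, y \in \R^d \colon X^\theta_{r,r}(y) = y) = 1$. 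Substituting this identity into \eqref{setting:eq2} gives $\P(X^\vartheta_{t,r}(x) \in B) = \P(X^\theta_{t,r}(x) \in B)$ for all $\theta, \vartheta \in \Theta$ with $\theta \neq \vartheta$, and the case $\theta = \vartheta$ is handled by comparison to a third distinct index in the infinite set $\Theta$.

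For the induction, the base case $n = 0$ is immediate from $V^\theta_{M,0} \equiv 0$. For the inductive step, assume items~\eqref{indep_and_measurable:item1}, \eqref{indep_and_measurable:item2}, \eqref{indep_and_measurable:item6}, and \eqref{indep_and_measurable:item8} hold for all indices $k \in \{0, 1, \ldots, n - 1\}$. Item~\eqref{indep_and_measurable:item1} for $V^\theta_{M,n}$ follows by inspecting \eqref{setting:eq5}: the continuity of $g$, $f$, and of the random fields $X^{(\theta, \vartheta)}$, combined with the inductive continuity of $V^{(\theta, k, \pm m)}_{M, k}$ and $V^{(\theta, k, \pm m)}_{M, k-1}$, ensures that every summand is jointly continuous in $(t, x)$ for every $\omega \in \Omega$. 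Item~\eqref{indep_and_measurable:item2} follows similarly: each summand is a composition of product measurable random fields, with the evaluation of $V^{(\theta, k, m)}_{M, k}$ at the random argument $(R^{(\theta, k, m)}_t, X^{(\theta, k, m)}_{t, R^{(\theta, k, m)}_t}(x))$ being measurable by Lemma~\ref{eval_RF_measurable}, and the resulting $\sigma$-algebra is contained in $\sigmaAlgebra((\mathcal{R}^{(\theta, \vartheta)}, X^{(\theta, \vartheta)})_{\vartheta \in \Theta})$.

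Item~\eqref{indep_and_measurable:item6} is the combinatorial core of the argument and rests on the tree structure of $\Theta = \cup_{n = 1}^\infty \Z^n$. Applying item~\eqref{indep_and_measurable:item2} with $\theta$ replaced by $(\theta, k, \pm m)$ shows that the random variable indexed by $(k, m)$ is measurable with respect to $\sigmaAlgebra((\mathcal{R}^\vartheta, X^\vartheta)_{\vartheta \in \Theta_{k,m}})$ for an index set $\Theta_{k,m} \subseteq \Theta$ whose elements all begin with a prefix among $(\theta, n, -m)$ (for the terminal level $k = n$) or $(\theta, k, m)$, $(\theta, k, m, \cdot)$, and $(\theta, k, -m, \cdot)$ (for $k < n$). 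The key observation, which constitutes the main bookkeeping obstacle, is that $\Theta_{k,m} \cap \Theta_{k', m'} = \emptyset$ whenever $(k, m) \neq (k', m')$; combining this disjointness with the hypothesis that $(\mathcal{R}^\vartheta, X^\vartheta)_{\vartheta \in \Theta}$ is an independent family yields the claimed joint independence. Finally, item~\eqref{indep_and_measurable:item8} follows by combining the inductive hypothesis with item~\eqref{indep_and_measurable:item7}: the distribution of each summand in \eqref{setting:eq5} is invariant under the substitution $\theta \to \theta'$, a fact obtained for the $g$-terms from item~\eqref{indep_and_measurable:item7} and for the $f$-terms by first applying Lemma~\ref{distribution_of_eval_RF} to the composition of $V^{(\theta, k, m)}_{M, k}$ with $(R^{(\theta, k, m)}_t, X^{(\theta, k, m)}_{t, R^{(\theta, k, m)}_t}(x))$ (where the required independence between random field and random evaluation point is supplied precisely by the disjoint prefix structure used for item~\eqref{indep_and_measurable:item6}), and then applying Lemma~\ref{distribution_of_sum_of_indep_RV} together with item~\eqref{indep_and_measurable:item6} to transfer this invariance to the full sum.
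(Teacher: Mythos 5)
Your proposal is correct and the overall architecture --- dispatching item~\eqref{indep_and_measurable:item7} directly from the flow identity \eqref{setting:eq2} and handling the remaining items by induction on $n$, with the measurability content of item~\eqref{indep_and_measurable:item2} combined with the disjointness of the subtrees of $\Theta$ rooted at $(\theta, k, m)$, $(\theta, k, m, \cdot)$, $(\theta, k, -m, \cdot)$, and $(\theta, n, -m)$ driving both items~\eqref{indep_and_measurable:item6} and \eqref{indep_and_measurable:item8} via Lemmas~\ref{eval_RF_measurable}, \ref{distribution_of_sum_of_indep_RV}, and \ref{distribution_of_eval_RF} --- is exactly the paper's. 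The only genuine deviation is in item~\eqref{indep_and_measurable:item7}: you propose to upgrade the hypothesis $\P(X^\theta_{s,s}(x) = x) = 1$ (which a priori holds for each fixed $x$) to a pathwise identity $\P(\forall\, y \in \R^d \colon X^\theta_{s,s}(y) = y) = 1$ by intersecting over a countable dense set of $y$ and using continuity of the random field, and then substitute this almost-sure identity directly into \eqref{setting:eq2} with $s = r$ to conclude $X^\vartheta_{t,r}(x) = X^\theta_{r,r}(X^\vartheta_{t,r}(x))$ almost surely and hence in distribution. The paper instead avoids the pathwise upgrade entirely and works on the level of test integrals: applying Lemma~\ref{contRF_eval_integrable} to disintegrate $\Exp{g(X^\theta_{s,s}(X^\vartheta_{t,s}(x)))}$ against the law of $X^\vartheta_{t,s}(x)$ (using the independence of $X^\theta$ and $X^\vartheta$) and then collapsing the inner expectation $\Exp{g(X^\theta_{s,s}(z))} = g(z)$ pointwise in $z$, finishing with Lemma~\ref{equivalent_cond_for_id}. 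Both routes are sound; yours is arguably the more elementary one (it needs no independence between $X^\theta$ and $X^\vartheta$ for this step and avoids the detour through bounded Lipschitz test functions), while the paper's approach stays consistent with the random-field-evaluation machinery it already develops and reuses elsewhere. Your organization as a single joint induction versus the paper's separate inductions for items~\eqref{indep_and_measurable:item1}, \eqref{indep_and_measurable:item2}, and \eqref{indep_and_measurable:item8} (with \eqref{indep_and_measurable:item6} derived directly from \eqref{indep_and_measurable:item2}) is purely a matter of bookkeeping and equally valid, though one should note, as the paper implicitly does, that for the identical-distribution argument in the $f$-terms of item~\eqref{indep_and_measurable:item8} one needs not only the marginal equality $V^{(\theta,k,m)}_{M,k}(s,y) \sim V^{(\theta',k,m)}_{M,k}(s,y)$ from the inductive hypothesis but also the independence of $V^{(\theta,k,m)}_{M,k}$ from $V^{(\theta,k,-m)}_{M,k-1}$, which again falls out of the disjoint subtree structure.
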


\begin{proof}[Proof of Lemma~\ref{indep_and_measurable}]
We first prove item~\eqref{indep_and_measurable:item1} by induction on $n \in \N_0$.
For the base case $n = 0$ observe that the hypothesis that 
for all $\theta \in \Theta$ it holds that
$
  V^\theta_{M, 0} = 0
$
demonstrates that 
for all $\theta \in \Theta$ it holds that
$
  V^\theta_{M, 0} \colon [0,T] \times \R^d \times \Omega \to \R^d
$
is a continuous random field.
This establishes item~\eqref{indep_and_measurable:item1} in the base case $n = 0$.
For the induction step $ \N_0 \ni (n-1) \to n \in \N$ 
let $n \in \N$ and assume that 
for every $ k \in \N_0 \cap[0, n)$, $\theta \in \Theta$ it holds that
$
  V^\theta_{M, k} \colon [0,T] \times \R^d \times \Omega \to \R^d
$
is a continuous random field.
Combining this, the hypothesis that $g$ and $f$ are continuous functions, and
the fact that 
for all $\theta \in \Theta$ 
it holds that 
  $R^\theta \colon [0,T] \times \Omega \to [0,T]$ 
and 
  $ X^\theta  \colon \{ (t,s) \in [0, T]^2 \colon t \leq s \} \times \R^d \times \Omega \to \R^d$
are continuous random fields
with \eqref{setting:eq5},
Grohs et al.\ \cite[Lemma 2.4]{BeckBeckerGrohsJaafariJentzen18}, and Lemma~\ref{eval_RF_measurable}
proves that for all $\theta \in \Theta$ it holds that
$
  V^\theta_{M, n} \colon [0,T] \times \R^d \times \Omega \to \R^d
$
is a continuous random field.
Induction thus establishes item~\eqref{indep_and_measurable:item1}.
Next we prove item~\eqref{indep_and_measurable:item2} by induction on $n \in \N_0$.
For the base case $n = 0$ observe that the hypothesis that 
for all $\theta \in \Theta$ it holds that
$
  V^\theta_{M, 0} = 0
$
demonstrates that 
for all $\theta \in \Theta$ it holds that
$V^\theta_{M, 0} \colon [0,T] \times \R^d \times \Omega \to \R $ is 
$
  \left(
    \Borel([0,T] \times \R^d)  
  \otimes 
    \sigmaAlgebra 
    (   
      ( \mathcal{R}^{(\theta, \vartheta)})_{ \vartheta \in \Theta}
      ,
      ( X^{(\theta, \vartheta)})_{\vartheta \in \Theta}
    )
  \right)
/
  \Borel(\R)
$-measurable.
This implies item~\eqref{indep_and_measurable:item2} in the base case $n = 0$.
For the induction step $ \N_0 \ni (n-1) \to n \in \N$ 
let $n \in \N$ and assume that 
for all $ k \in \N_0 \cap[0, n)$, $\theta \in \Theta$ it holds that
$V^\theta_{M, k}$ is  
$
  \left(
    \Borel([0,T] \times \R^d)  
  \otimes 
    \sigmaAlgebra 
    (   
      ( \mathcal{R}^{(\theta, \vartheta)})_{ \vartheta \in \Theta}
      ,
      ( X^{(\theta, \vartheta)})_{\vartheta \in \Theta}
    )
  \right)
/
  \Borel(\R)
$-measurable.
Combining this, the fact that $f$ and $g$ are Borel measurable, and 
the fact that
for all $\theta \in \Theta$ 
it holds that 
  $ X^\theta  \colon \{ (t,s) \in [0, T]^2 \colon t \leq s \} \times \R^d \times \Omega \to \R^d$
is a continuous random field
with \eqref{setting:eq5} and Lemma~\ref{eval_RF_measurable}
proves that for all $\theta \in \Theta$, $t \in [0,T]$, $x \in \R^d$ it holds that
\begin{equation}
\label{indep_and_measurable:eq1}
\begin{split}
  &\sigmaAlgebra(V^\theta_{M, n}(t, x))  \\
&\subseteq
  \sigmaAlgebra 
  \Big(   
    ( X^{(\theta, n, -m)}_{t, T}(x))_{ m \in \{1, 2, \ldots, M^n \}}
    ,
    ( R^{(\theta, k, m)}_t)_{ m \in \{1, 2, \ldots, M^{n-k}\}, k \in  \N_0 \cap [0, n)} 
    ,\\
&\qquad
    ( X^{(\theta, k, m)}_{t,  R^{(\theta, k, m)}_t}(x))_{ m \in \{1, 2, \ldots, M^{n-k}\}, k \in  \N_0 \cap [0, n)}
    ,
    ( X^{(\theta, k, m, \vartheta)})_{ m \in \{1, 2, \ldots, M^{n-k}\}, k \in  \N_0 \cap [0, n), \vartheta \in \Theta}
    , \\
&\qquad
    ( \mathcal{R}^{(\theta, k, m, \vartheta)})_{ m \in \{1, 2, \ldots, M^{n-k}\}, k \in  \N_0 \cap [0, n), \vartheta \in \Theta}
    ,
    ( X^{(\theta, k, -m, \vartheta)})_{ m \in \{1, 2, \ldots, M^{n-k}\}, k \in  \N \cap [0, n), \vartheta \in \Theta}
    , \\
&\qquad
    ( \mathcal{R}^{(\theta, k, - m, \vartheta)})_{ m \in \{1, 2, \ldots, M^{n-k}\}, k \in  \N \cap [0, n), \vartheta \in \Theta}
  \Big) \\
&\subseteq
  \sigmaAlgebra 
  \left(   
    ( \mathcal{R}^{(\theta, \vartheta)})_{ \vartheta \in \Theta}
    ,
    ( X^{(\theta, \vartheta)})_{\vartheta \in \Theta}
  \right).
\end{split}
\end{equation}
Moreover, observe that item~\eqref{indep_and_measurable:item1} and Grohs et al.\ \cite[Lemma 2.4]{BeckBeckerGrohsJaafariJentzen18} ensure that for all $\theta \in \Theta$ it holds that $V^\theta_{M, n}$ is 
$
  \left(
    \Borel([0,T] \times \R^d)  
  \otimes 
    \sigmaAlgebra 
    (   
      V^\theta_{M, n}
    )
  \right)
\! /
  \Borel(\R)
$-measurable.
Combining this with \eqref{indep_and_measurable:eq1} demonstrates that 
for all $\theta \in \Theta$ it holds that
$V^\theta_{M, n}$ is  
$
  \left(
    \Borel([0,T] \times \R^d)  
  \otimes 
    \sigmaAlgebra 
    (   
      ( \mathcal{R}^{(\theta, \vartheta)})_{ \vartheta \in \Theta}
      ,
      ( X^{(\theta, \vartheta)})_{\vartheta \in \Theta}
    )
  \right)
/
  \Borel(\R)
$-measurable.
Induction thus establishes item~\eqref{indep_and_measurable:item2}.
Furthermore, observe that item~\eqref{indep_and_measurable:item2}, 
the hypothesis that $(X^\theta)_{\theta \in \Theta}$ are independent, 
the hypothesis that $(\mathcal{R}^\theta)_{\theta \in \Theta}$ are independent, 
the hypothesis that $ (X^\theta)_{\theta \in \Theta}$ and $(\mathcal{R}^\theta)_{\theta \in \Theta}$ are independent,
and Lemma~\ref{eval_RF_measurable}
prove item~\eqref{indep_and_measurable:item6}.
Next observe that \eqref{setting:eq2}, the hypothesis that $(X^\theta)_{\theta \in \Theta}$ are independent, Lemma~\ref{contRF_eval_integrable}
(with 
$S = \R^d$, 
$U = g(X^\theta_{s, s}(\cdot))$, 
$X = X^{\vartheta}_{t, s}(x)$
for $g \in C(\R^d, \R)$, $t \in [0,T]$, $s \in [t, T]$, $x \in \R^d$, $\theta, \vartheta \in \Theta$ in the notation of Lemma~\ref{contRF_eval_integrable}),
and the fact that 
for all $t \in [0,T]$, $x \in \R^d$, $\theta \in \Theta$ it holds that
$
   \P(X^\theta_{t,t}(x) = x) = 1
$
assure that
for all $t \in [0,T]$, $s \in [t, T]$, $x \in \R^d$, $\theta, \vartheta \in \Theta$ with $\theta \neq \vartheta$ and 
all globally bounded and continuous functions $g \colon \R^d \to \R$ it holds that
\begin{equation}
\begin{split}
  \Exp{g(X^\theta_{t,s}(x))}
&=
  \Exp{g(X^\theta_{s, s}(X^\vartheta_{t,s}(x)))}
=
  \int_{\R^d}
    \Exp{g(X^\theta_{s, s}(z))}
  ((X^\vartheta_{t,s}(x))(\P)_{\Borel(\R^d)})(dz) \\
&=
  \int_{\R^d}
    g(z)
  ((X^\vartheta_{t,s}(x))(\P)_{\Borel(\R^d)})(dz) 
=
  \Exp{g(X^\vartheta_{t,s}(x))}.
\end{split}
\end{equation}
Combining this with Lemma~\ref{equivalent_cond_for_id} demonstrates that 
for all $t \in [0, T]$, $s \in [t, T]$, $x \in \R^d$, $\theta, \vartheta \in \Theta$ it holds that 
$X^\theta_{t,s}(x) \colon \Omega \to \R^d$ and $X^\vartheta_{t,s}(x) \colon \Omega \to \R^d$ are identically distributed random variables.
This establishes item~\eqref{indep_and_measurable:item7}.
Next we prove item~\eqref{indep_and_measurable:item8} by induction on $n \in \N_0$.
For the base case $n = 0$ observe that the hypothesis that 
for all $\theta \in \Theta$ it holds that
$
  V^\theta_{M, 0} = 0
$
demonstrates that 
for all $t \in [0,T]$, $x \in \R^d$ it holds that
$
  V^\theta_{M, 0}(t, x) \colon \Omega \to \R^d
$, $\theta \in \Theta$,
are identically distributed random variables.
This establishes item~\eqref{indep_and_measurable:item8} in the base case $n = 0$.
For the induction step $ \N_0 \ni (n-1) \to n \in \N$ 
let $n \in \N$ and assume that 
for all $ k \in \N_0 \cap[0, n)$, $t \in [0,T]$, $x \in \R^d$ it holds that
$
 V^\theta_{M, k}(t, x) \colon \Omega \to \R^d 
$, $\theta \in \Theta$,
are identically distributed random variables.
This, 
the hypothesis that $(X^\theta)_{\theta \in \Theta}$ are independent, 
the hypothesis that $(R^\theta)_{\theta \in \Theta}$ are independent,
the hypothesis that $ (X^\theta)_{\theta \in \Theta}$ and $(\mathcal{R}^\theta)_{\theta \in \Theta}$ are independent,
item~\eqref{indep_and_measurable:item2},
Lemma~\ref{distribution_of_sum_of_indep_RV}, 
and
Lemma~\ref{distribution_of_eval_RF}
(with 
$S = [0, T] \times \R^d$,
$E = \R$,
$
  U 
= 
  \big(
    f ( 
          s , y,
          V^{(\theta, k, m)}_{M, k} ( s, y )
    )  
        -
      \mathbbm{1}_{\N}(k)
      f ( 
          s, y,
          V^{(\theta, k, -m)}_{M, k-1} ( s, y )
        ) 
  \big)_{(s, y) \in [0, T] \times \R^d}
$,
$
  V 
= 
  \big(
    f ( 
          s , y,
          V^{(\vartheta, k, m)}_{M, k} ( s, y )
    )  
        -
      \mathbbm{1}_{\N}(k)
      f ( 
          s, y,
          V^{(\vartheta, k, -m)}_{M, k-1} ( s, y )
        ) 
  \big)_{(s, y) \in [0, T] \times \R^d}
$,
$X = (R^{(\theta, k, m)}_t , X^{(\theta, k, m)}_{t, R^{(\theta, k, m)}_t}(x))$,
$Y = (R^{(\vartheta, k, m)}_t , X^{(\vartheta, k, m)}_{t, R^{(\vartheta, k, m)}_t}(x))$
for $\theta, \vartheta \in \Theta$, $t \in [0, T]$, $x \in \R^d$, $k \in \N_0 \cap [0,n)$, $m \in \N$ with $\theta \neq \vartheta$ in the notation of Lemma~\ref{distribution_of_eval_RF})
assure that 
for all $t \in [0, T]$, $x \in \R^d$, $k \in \N_0 \cap [0,n)$, $m \in \N$ it holds that
\begin{equation}
\begin{split}
& \Big( f \big( 
          R^{(\theta, k, m)}_t , X^{(\theta, k, m)}_{t, R^{(\theta, k, m)}_t}(x),
          V^{(\theta, k, m)}_{M, k} \big( R^{(\theta, k, m)}_t  ,  X^{(\theta, k, m)}_{t, R^{(\theta, k, m)}_t}(x) \big)
        \big)  \\
&
        -
      \mathbbm{1}_{\N}(k)
      f \big( 
          R^{(\theta, k, m)}_t  ,  X^{(\theta, k, m)}_{t, R^{(\theta, k, m)}_t}(x),
          V^{(\theta, k, -m)}_{M, k-1} \big( R^{(\theta, k, m)}_t  ,  X^{(\theta, k, m)}_{t, R^{(\theta, k, m)}_t}(x) \big)
        \big) 
\Big)_{\theta \in \Theta}
\end{split}
\end{equation}
are identically distributed random variables.
Items~\eqref{indep_and_measurable:item6}--\eqref{indep_and_measurable:item7}, 
\eqref{setting:eq5}, 
and
Lemma~\ref{distribution_of_sum_of_indep_RV}
therefore
ensure that
for all $t \in [0, T]$, $x \in \R^d$ it holds that
$
  V^\theta_{M, n}(t, x) \colon \Omega \to \R^d
$, $ \theta \in \Theta $,
are identically distributed random variables.
Induction thus establishes item~\eqref{indep_and_measurable:item8}.
The proof of Lemma~\ref{indep_and_measurable} is thus completed.
\end{proof}

\subsection{Analysis of approximation errors of MLP approximations}
\label{sect:error_analysis}
Proposition~\ref{L2_estimate}, resp.\ Corollary~\ref{exponential_convergence}, in Subsection~\ref{sect:errors_MLP} below presents estimates for the $L^2$-approximation error of the MLP scheme (cf.\ \eqref{setting:eq5} in Setting~\ref{setting} above) introduced in Setting~\ref{setting} with respect to the solution of the stochastic fixed point equation (cf.\ \eqref{setting:eq4} in Setting~\ref{setting} above) 
for every iteration (cf.\ $n \in \N$ in \eqref{setting:eq5} in Subsection~\ref{sect:setting} above) and every Monte Carlo accuracy (cf.\ $M \in \N$ in \eqref{setting:eq5} in Subsection~\ref{sect:setting} above) of the MLP scheme. 
The essential idea for the proof of those statements is to decompose the $L^2$-approximation error into a bias and a variance part and to analyze them separately (see Subsections~\ref{sect:expectations_MLP}--\ref{sect:variance_MLP}). This approach leads to a recursive inequality (cf.\ \eqref{L2_estimate:eq12} in the proof of Proposition~\ref{L2_estimate} below) which can be treated using an elementary Gronwall inequality, proven in Subsection~\ref{sect:gronwall_special} (see Lemma~\ref{gronwall_special}).
For the proofs of the statements in this subsection we need some elementary and well-known results 
(see Lemma~\ref{uniform_evaluation}, Lemma~\ref{var_of_sum}, and Lemma~\ref{change_of_var})
which we state and prove where they are used.

\subsubsection{Expectations of MLP approximations}
\label{sect:expectations_MLP}

\begin{lemma}
\label{uniform_evaluation}
Assume Setting~\ref{setting}, 
let $\theta \in \Theta$, $t \in [0,T]$, 
let $U_1 \colon [t, T] \times \Omega \to [0, \infty]$ and $U_2 \colon [t, T] \times \Omega \to \R$ be continuous random fields which satisfy 
for all $i \in \{1, 2\}$ that $U_i$ and $\mathcal{R}^{\theta}$ are independent and 
$
  \int_t^T
    \Exp{| U_2(r) |}
  dr
< 
  \infty
$.
Then
it holds 
for all $i \in \{1, 2\}$ that   
$\operatorname{Borel}_{[t, T]} (\{r \in [t, T] \colon \EXP{ |U_2( {r} ) |} = \infty \}) = 0$,
$\Exp{ | U_2( R^\theta_t) | } < \infty$,
and
\begin{equation}
\label{uniform_evaluation:concl}
  (T-t) \, \Exp{ U_i( R^\theta_t) }
=
  \int_t^T
    \Exp{ U_i(r) }
  dr.
\end{equation}
\end{lemma}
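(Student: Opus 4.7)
The plan is to identify the pushforward distribution of $R^\theta_t$ as a (normalized) Lebesgue measure on $[t,T]$ and then to invoke the random-field evaluation lemmas Lemma~\ref{contRF_eval_nonneg} and Lemma~\ref{contRF_eval_integrable}.

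First I would dispose of the degenerate case $t = T$, where \eqref{setting:eq1} gives $R^\theta_T = T$ deterministically, so both sides of \eqref{uniform_evaluation:concl} reduce to $0$; the integrability claim at $r=T$ is vacuous as $[T,T]$ is a $\Borelmeasure$-null set. For $t < T$, the hypothesis that $\mathcal{R}^\theta$ is $\mathcal{U}_{[0,1]}$-distributed together with $R^\theta_t = t + (T-t)\mathcal{R}^\theta$ and a direct change of variables on $\R$ shows that for every $B \in \Borel([t,T])$
\begin{equation*}
  \big(R^\theta_t(\P)_{\Borel([t,T])}\big)(B)
=
  \P\!\left(\mathcal{R}^\theta \in \tfrac{B-t}{T-t}\right)
=
  \tfrac{1}{T-t}\,\Borelmeasure_{[t,T]}(B),
\end{equation*}
so that $R^\theta_t(\P)_{\Borel([t,T])} = \tfrac{1}{T-t}\,\Borelmeasure_{[t,T]}$.

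Next, the hypothesis that $U_1$ and $\mathcal{R}^\theta$ are independent, combined with $R^\theta_t = t + (T-t)\mathcal{R}^\theta$, ensures that $U_1$ and $R^\theta_t$ are independent. Since Lemma~\ref{contRF_eval_nonneg} requires a $[0,\infty)$-valued field whereas $U_1$ takes values in $[0,\infty]$, I would first truncate by $U_1^{(n)} := \min\{U_1, n\}$, $n \in \N$, which are continuous $[0,\infty)$-valued random fields independent of $R^\theta_t$; applying Lemma~\ref{contRF_eval_nonneg} to each $U_1^{(n)}$ and letting $n \to \infty$ by monotone convergence on both sides yields
\begin{equation*}
  \Exp{U_1(R^\theta_t)}
=
  \lim_{n\to\infty}\Exp{U_1^{(n)}(R^\theta_t)}
=
  \lim_{n\to\infty}\tfrac{1}{T-t}\int_t^T \Exp{U_1^{(n)}(r)}\,dr
=
  \tfrac{1}{T-t}\int_t^T \Exp{U_1(r)}\,dr,
\end{equation*}
which after multiplication by $(T-t)$ is the identity \eqref{uniform_evaluation:concl} for $U_1$.

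Applying the identity just established to the nonnegative continuous random field $|U_2|$ (which is independent of $\mathcal{R}^\theta$ since $U_2$ is) gives
\begin{equation*}
  \Exp{|U_2(R^\theta_t)|}
=
  \tfrac{1}{T-t}\int_t^T \Exp{|U_2(r)|}\,dr
<
  \infty
\end{equation*}
by the integrability hypothesis, which proves the finiteness claim for $U_2$; the standard fact that a $\Borelmeasure_{[t,T]}$-integrable $[0,\infty]$-valued function is a.e.\ finite then yields the null-set claim $\Borelmeasure_{[t,T]}(\{r \in [t,T] \colon \EXP{|U_2(r)|} = \infty\}) = 0$. With these two finiteness statements in hand, Lemma~\ref{contRF_eval_integrable} applied to $U_2$ and $R^\theta_t$ delivers \eqref{uniform_evaluation:concl} for $U_2$. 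The only mild obstacle in this argument is the extended-real codomain of $U_1$, which is circumvented by the routine truncation-plus-monotone-convergence step above.
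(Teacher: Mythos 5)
Your proposal is correct and follows essentially the same route as the paper: identify $R^\theta_t$ as $\mathcal{U}_{[t,T]}$-distributed, then apply Lemma~\ref{contRF_eval_nonneg} to $U_1$ and Lemma~\ref{contRF_eval_integrable} to $U_2$ (after feeding in the hypothesis $\int_t^T\EXP{|U_2(r)|}\,dr<\infty$). The one place where you go beyond the paper is the truncation $U_1^{(n)}=\min\{U_1,n\}$ followed by monotone convergence: you correctly observe that $U_1$ has codomain $[0,\infty]$ whereas Lemma~\ref{contRF_eval_nonneg} is stated for $[0,\infty)$-valued fields, and your truncation step resolves this mismatch cleanly; the paper simply invokes Lemma~\ref{contRF_eval_nonneg} on $U_1$ without comment, so your version is actually the more careful one on this point. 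Two small remarks: (a) the independence of $U_1^{(n)}$ and $R^\theta_t$ that you need for each $n$ does follow from $\sigmaAlgebra(U_1^{(n)})\subseteq\sigmaAlgebra(U_1)$, as you implicitly use, and it is worth saying so explicitly; (b) in the degenerate case $t=T$ the statement $\EXP{|U_2(R^\theta_T)|}=\EXP{|U_2(T)|}<\infty$ is not actually forced by the hypothesis $\int_T^T\EXP{|U_2(r)|}\,dr=0<\infty$, so calling it ``vacuous'' is not quite accurate; the paper sidesteps this by assuming w.l.o.g.\ $t<T$, which has the same effect, and both formulations share this minor imprecision.
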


\begin{proof}[Proof of Lemma~\ref{uniform_evaluation}]
Throughout this proof assume w.l.o.g.\ that $t < T$.
Observe that \eqref{setting:eq1} implies that $R^\theta_t$ is $\mathcal{U}_{[t, T]}$-distributed.
Combining this with the fact that $U_1$ is continuous, the fact that $U_1$ and $R^\theta_t$ are independent,
and Lemma~\ref{contRF_eval_nonneg} assures that
\begin{equation}
\label{uniform_evaluation:eq1}
\begin{split}
  (T-t)\Exp{ U_1( R^\theta_t) }
&=
  (T-t) \int_{[t, T]} \Exp{ U_1( r ) } (R^\theta_t(\P)_{\Borel([t, T])})(dr) \\
&=
  (T-t) \int_{[t, T]} \Exp{ U_1( r ) } (\mathcal{U}_{[t, T]})(dr) \\
&=
  \frac{(T-t)}{(T-t)} \int_t^T \Exp{ U_1( r ) } dr 
=
  \int_t^T
    \Exp{ U_1(r) }
  dr.
\end{split}
\end{equation}
In addition, note that 
the fact that $R^\theta_t$ is $\mathcal{U}_{[t, T]}$-distributed,
the fact that $U_2$ is continuous, 
the fact that $U_2$ and $R^\theta_t$ are independent,
the hypothesis that
$
  \int_t^T
    \Exp{| U_2(r) |}
  dr
< 
  \infty
$,
and Lemma~\ref{contRF_eval_integrable} ensure that 
$\operatorname{Borel}_{[t, T]} (\{r \in [t, T] \colon \EXP{ |U_2( {r} ) |} = \infty \}) = 0$,
$\Exp{ | U_2( R^\theta_t) | } < \infty$,
and
\begin{equation}
\label{uniform_evaluation:eq2}
\begin{split}
  (T-t)\Exp{ U_2( R^\theta_t) }
&=
  (T-t) 
  \int_{[t, T]} 
    \Exp{ U_2( r ) } 
  (R^\theta_t(\P)_{\Borel([t, T])})(dr) \\
&=
  (T-t) 
  \int_{[t, T]} 
    \Exp{ U_2( r ) } 
  (\mathcal{U}_{[t, T]})(dr) \\
&=
  \frac{(T-t)}{(T-t)} 
  \int_t^T 
    \Exp{ U_2( r ) } 
  dr 
=
  \int_t^T
    \Exp{ U_2( r ) } 
  dr.
\end{split}
\end{equation}
Combining this with \eqref{uniform_evaluation:eq1} establishes \eqref{uniform_evaluation:concl}.
The proof of Lemma~\ref{uniform_evaluation} is thus completed.
\end{proof}

\begin{lemma}[Expectations of MLP approximations]
\label{exp_of_approx}
Assume Setting~\ref{setting} and assume 
for all $t \in [0,T]$, $x \in \R^d$ that
$
  \int_{t}^T
    \Exp{
      | f (r, X^0_{t, r}(x),  0 ) |
   } 
 \, dr
< 
  \infty
$.
Then 
\begin{enumerate}[(i)]

\item \label{exp_of_approx:item1}
for all $M \in \N$, $n \in \N_0$, $t \in [0, T]$, $s \in [t, T]$, $ x \in \R^d$ it holds that 
\begin{equation}
\begin{split}
  &\Exp{| V^0_{M,n}(s, X^0_{t, s}(x)) |}
  +
  (T-t) 
  \Exp{
      |  V^0_{M,n}(R^0_t, X^0_{t, R^0_t}(x)) | 
  }\\
&\qquad 
  +
  (T-t) 
  \Exp{
      | f \big( R^0_t, X^0_{t, R^0_t}(x),  V^0_{M,n}(R^0_t, X^0_{t, R^0_t}(x)) \big) |
  } \\
&=
  \Exp{| V^0_{M,n}(s, X^0_{t, s}(x)) |}
  +
  \int_{t}^T
    \Exp{
      |  V^0_{M,n}(r, X^0_{t, r}(x)) | 
    }
  dr \\
&\qquad 
  +
  \int_{t}^T
    \Exp{
      | f \big(r, X^0_{t, r}(x),  V^0_{M,n}(r, X^0_{t, r}(x)) \big) |
    }
  dr
<
  \infty
\end{split}
\end{equation}
and

\item \label{exp_of_approx:item2}
for all $M,n \in \N$, $t \in [0, T]$, $ x \in \R^d$ it holds that 
\begin{equation}
\begin{split}
  &\Exp{ V^0_{M,n}(t, x)}  
=
  \Exp{
    g(X^0_{t, T}(x))
    +
    \int_{t}^T
        f\big( r , X^0_{t, r}(x), V^0_{M,n-1}(r , X^0_{t, r}(x))  \big)
    \, dr
  }.
\end{split}
\end{equation}
\end{enumerate}
\end{lemma}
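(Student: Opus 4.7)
The plan is to establish item~\eqref{exp_of_approx:item1} by combining an inductive finiteness argument with a direct application of Lemma~\ref{uniform_evaluation}, and then to derive item~\eqref{exp_of_approx:item2} by taking expectations in the recursion~\eqref{setting:eq5}, using identical-distribution statements to collapse the inner Monte Carlo averages, telescoping the resulting sum over levels $k$, and invoking Lemma~\ref{uniform_evaluation} once more to rewrite the uniform-time expectation as an integral.

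To prepare for Lemma~\ref{uniform_evaluation}, I would first observe that by item~\eqref{indep_and_measurable:item2} of Lemma~\ref{indep_and_measurable} the random field $V^0_{M,n}$ is measurable with respect to $\sigmaAlgebra((\mathcal{R}^{(0,\vartheta)})_{\vartheta \in \Theta}, (X^{(0,\vartheta)})_{\vartheta \in \Theta})$, which is independent of $\mathcal{R}^0$ and of $X^0$; since $R^0_t$ depends only on $\mathcal{R}^0$, the continuous random fields $[t,T] \ni r \mapsto V^0_{M,n}(r, X^0_{t,r}(x))$ and $[t,T] \ni r \mapsto f(r, X^0_{t,r}(x), V^0_{M,n}(r, X^0_{t,r}(x)))$ are both independent of $\mathcal{R}^0$. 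This supplies the independence hypothesis of Lemma~\ref{uniform_evaluation}, which then produces both the claimed equality between the $R^0_t$-expectation and the integral and the equivalence of their integrabilities. The finiteness assertions in item~\eqref{exp_of_approx:item1} I would then establish by induction on $n \in \N_0$: the base case $n=0$ is immediate since $V^0_{M,0}=0$, leaving only $\int_t^T \EXP{| f(r, X^0_{t,r}(x), 0) |}\,dr$, which is finite by hypothesis; the inductive step follows from the triangle inequality applied to~\eqref{setting:eq5}, combined with items~\eqref{indep_and_measurable:item7}--\eqref{indep_and_measurable:item8} of Lemma~\ref{indep_and_measurable} and the flow property~\eqref{setting:eq2} to reduce each summand to an object built from $X^0$ and $V^0_{M,k}$ for $k < n$, together with the Lipschitz bound $|f(r,y,v)| \leq |f(r,y,0)| + L|v|$ and the inductive hypothesis.

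For item~\eqref{exp_of_approx:item2} I would take the expectation of both sides of~\eqref{setting:eq5}. Linearity of expectation, together with items~\eqref{indep_and_measurable:item7}--\eqref{indep_and_measurable:item8} of Lemma~\ref{indep_and_measurable} and Lemma~\ref{distribution_of_eval_RF}, collapses the inner Monte Carlo averages: the $g$-term reduces to $\Exp{g(X^0_{t,T}(x))}$, and every level-$k$ summand inside the bracket is rewritten in terms of $R^0_t$, $X^0_{t, R^0_t}(x)$, and $V^0_{M,k}$. The resulting sum over $k=0,\ldots,n-1$ then telescopes to $(T-t)\,\Exp{f(R^0_t, X^0_{t,R^0_t}(x), V^0_{M,n-1}(R^0_t, X^0_{t,R^0_t}(x)))}$, which by item~\eqref{exp_of_approx:item1} equals the desired integral $\int_t^T \Exp{f(r, X^0_{t,r}(x), V^0_{M,n-1}(r, X^0_{t,r}(x)))}\,dr$.

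The main technical obstacle is the careful bookkeeping required to identify the joint law of each triple $(R^{(0,k,m)}_t, X^{(0,k,m)}_{t,R^{(0,k,m)}_t}(x), V^{(0,k,m)}_{M,k})$ with the canonical joint law built from $(R^0, X^0, V^0_{M,k})$, and likewise for the corresponding object at level $k-1$ with the sign-flipped multi-index. This identification rests on the independence of subfamilies indexed by distinct multi-indices granted by Setting~\ref{setting}, the measurability statement of Lemma~\ref{indep_and_measurable} item~\eqref{indep_and_measurable:item2}, and the tower-like structure of Lemma~\ref{distribution_of_eval_RF} applied one factor at a time; once the laws are matched, every superscript may be replaced by $0$ when computing expectations, which is the step that makes the telescoping argument legitimate and reduces the proof to an application of Lemma~\ref{uniform_evaluation}.
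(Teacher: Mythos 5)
Your proposal is correct and follows essentially the same route as the paper: establish the equality between the $(T-t)$-weighted uniform-time expectation and the time integral via Lemma~\ref{uniform_evaluation} using the independence of $V^0_{M,n}$, $X^0$, and $\mathcal{R}^0$; prove the finiteness in item~(i) by induction on $n$ using the triangle inequality on \eqref{setting:eq5}, the flow property, items~\eqref{indep_and_measurable:item7}--\eqref{indep_and_measurable:item8} of Lemma~\ref{indep_and_measurable}, and the bound $|f(r,y,v)|\le|f(r,y,0)|+L|v|$; and then obtain item~(ii) by taking expectations, collapsing the Monte Carlo sums via Lemma~\ref{distribution_of_eval_RF}, telescoping over $k$, and applying Lemma~\ref{uniform_evaluation} (together with Fubini) once more. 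The only detail you leave implicit — that $\Exp{|g(X^0_{t,T}(x))|}<\infty$, needed in the inductive step — is supplied directly by the integrability condition built into Setting~\ref{setting}, so no gap results.
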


\begin{proof}[Proof of Lemma~\ref{exp_of_approx}]
Throughout this proof let $M \in \N$, $x \in \R^d$. 
Observe that 
Lemma~\ref{uniform_evaluation},
items~\eqref{indep_and_measurable:item1}--\eqref{indep_and_measurable:item2} in Lemma~\ref{indep_and_measurable},
and the fact that 
for all $n \in \N$ it holds that $V_{M,n}^0$, $X^0$, and $\mathcal{R}^0$ are independent
demonstrate that
for all $n \in \N_0$, $t \in [0, T]$ it holds that
\begin{equation}
\label{exp_of_approx:eq1}
\begin{split}
  &(T-t) 
  \Exp{
      |  V^0_{M,n}(R^0_t, X^0_{t, R^0_t}(x)) | 
  }
  +
  (T-t) 
  \Exp{
      \big| f \big(R^0_t, X^0_{t, R^0_t}(x),  V^0_{M,n}(R^0_t, X^0_{t, R^0_t}(x)) \big) \big|
  } \\
&=
  \int_{t}^T
    \Exp{
      |  V^0_{M,n}(r, X^0_{t, r}(x)) | 
    }
  dr
  +
  \int_{t}^T
    \Exp{
      \big| f \big(r, X^0_{t, r}(x),  V^0_{M,n}(r, X^0_{t, r}(x)) \big) \big|
    }
  \, dr.
\end{split}
\end{equation}
Next we claim that 
for all $n \in \N_0$, $t \in [0, T]$, $s \in [t, T]$ it holds that
\begin{equation}
\label{exp_of_approx:eq2}
\begin{split}
  &\Exp{| V^0_{M,n}(s, X^0_{t, s}(x)) |}
  +
  \int_{t}^T
    \Exp{
      |  V^0_{M,n}(r, X^0_{t, r}(x)) | 
    }
  dr \\
&\quad 
  +
  \int_{t}^T
    \Exp{
      | f (r, X^0_{t, r}(x),  V^0_{M,n}(r, X^0_{t, r}(x)) ) |
    }  
  dr
<
  \infty.
\end{split}
\end{equation}
We now prove \eqref{exp_of_approx:eq2} by induction on $n \in \N_0$.
For the base case $n = 0$ observe that 
the hypothesis that $V^0_{M,0} = 0$
and 
the hypothesis that 
for all $t \in [0,T]$ it holds that
$
  \int_{t}^T
    \Exp{
      | f (r, X^0_{t, r}(x),  0 ) |
   } 
 \, dr
< 
  \infty
$
imply that
for all $t \in [0, T]$, $s \in [t, T]$ it holds that
\begin{equation}
\label{exp_of_approx:eq3}
\begin{split}
&\Exp{| V^0_{M,0}(s, X^0_{t, s}(x)) |}
  +
  \int_{t}^T
    \Exp{
      |  V^0_{M,0}(r, X^0_{t, r}(x)) | 
    }
  dr \\
&\quad 
  +
  \int_{t}^T
    \Exp{
      \big| f \big(r, X^0_{t, r}(x),  V^0_{M,0}(r, X^0_{t, r}(x)) \big) \big|
    } 
  dr
=
  \int_{t}^T
    \Exp{
      | f (r, X^0_{t, r}(x),  0 ) |
      }  
  dr 
<
  \infty.
\end{split}
\end{equation}
This establishes \eqref{exp_of_approx:eq2} in the case $n = 0$.
For the induction step $ \N_0 \ni (n-1) \to n \in \N$ 
let $n \in \N$ and assume that 
for all $ k \in \N_0 \cap[0, n)$, $t \in [0, T]$, $s \in [t, T]$ it holds that
\begin{equation}
\label{exp_of_approx:eq4}
\begin{split}
  &\Exp{| V^0_{M,k}(s, X^0_{t, s}(x)) |}
  +
  \int_{t}^T
    \Exp{
      |  V^0_{M,k}(r, X^0_{t, r}(x)) | 
    }
  dr \\
&\quad 
  +
  \int_{t}^T
    \Exp{
      \big| f \big(r, X^0_{t, r}(x),  V^0_{M,k}(r, X^0_{t, r}(x)) \big) \big|
    }  
  dr
<
  \infty.
\end{split}
\end{equation}
Note that \eqref{setting:eq5} and the triangle inequality ensure that 
for all $t \in [0, T]$, $s \in [t, T]$ it holds that
\begin{equation}
\label{exp_of_approx:eq5}
\begin{split}
  &\Exp{| V^0_{M,n}(s, X^0_{t, s}(x)) |} \\
&\leq
  \frac{1}{M^n} 
  \Bigg[
    \sum_{m = 1}^{M^n}
      \Exp{ \big| 
        g \big( X^{(0, n, -m)}_{s, T}(X^0_{t, s}(x)) \big)
      \big| }
  \Bigg] \\
&\quad
  +
  \sum_{k = 0}^{n-1}
    \frac{(T-s)}{M^{n-k}} 
    \Bigg[
      \sum_{m = 1}^{M^{n-k}}
        \Exp{ \left| 
          f \Big( 
            R^{(0, k, m)}_s , X^{(0, k, m)}_{s, R^{(0, k, m)}_s}(X^0_{t, s}(x)),
            V^{(0, k, m)}_{M, k} \big( R^{(0, k, m)}_s  ,  X^{(0, k, m)}_{s, R^{(0, k, m)}_s}(X^0_{t, s}(x)) \big)
          \Big) 
        \right| }\\
&\qquad
        +
       \mathbbm{1}_{\N}(k)
       \Exp{ \left| 
         f \Big( 
           R^{(0, k, m)}_s  ,  X^{(0, k, m)}_{s, R^{(0, k, m)}_s}(X^0_{t, s}(x)),
           V^{(0, k, -m)}_{M, k-1} \big( R^{(0, k, m)}_s  ,  X^{(0, k, m)}_{s, R^{(0, k, m)}_s}(X^0_{t, s}(x)) \big)
          \Big)
        \right| }
    \Bigg].
\end{split}
\end{equation}
Furthermore, observe that 
\eqref{setting:eq2}, 
\eqref{setting:eq4},
and
item~\eqref{indep_and_measurable:item7} in Lemma~\ref{indep_and_measurable}
assure that
for all $m \in \Z$, $t \in [0, T]$, $s \in [t, T]$ it holds that
\begin{equation}
\label{exp_of_approx:eq6}
\begin{split}
  \Exp{ \big| 
    g \big( X^{(0, n, m)}_{s, T}(X^0_{t, s}(x)) \big)
  \big| }
=
  \Exp{ \big| 
    g \big( X^{(0, n, m)}_{t, T}(x) \big)
  \big| }
=
  \Exp{ \big| 
    g \big( X^0_{t, T}(x) \big)
  \big| }
< 
  \infty.
\end{split}
\end{equation}
Moreover, note that 
Lemma~\ref{uniform_evaluation},
the hypothesis that $(X^\theta)_{\theta \in \Theta}$ are independent, 
the hypothesis that $(\mathcal{R}^\theta)_{\theta \in \Theta}$ are independent,
the hypothesis that $ (X^\theta)_{\theta \in \Theta}$ and $(\mathcal{R}^\theta)_{\theta \in \Theta}$ are independent,
items~\eqref{indep_and_measurable:item1}--\eqref{indep_and_measurable:item2}~{\&}~\eqref{indep_and_measurable:item7}--\eqref{indep_and_measurable:item8} in Lemma~\ref{indep_and_measurable},
\eqref{setting:eq2},
and 
Lemma~\ref{contRF_eval_nonneg} 
demonstrate that
for all $i,j,l,m \in \Z$, $k \in \N_0$, $t \in [0, T]$, $s \in [t, T]$ it holds that
\begin{equation}
\label{exp_of_approx:eq7}
\begin{split}
  &(T-s)
  \Exp{ \left| 
          f \Big( 
            R^{(0, j, m)}_s , X^{(0, j, m)}_{s, R^{(0, j, m)}_s}(X^0_{t, s}(x)),
            V^{(0, j, i)}_{M, k} \big( R^{(0, j, m)}_s  ,  X^{(0, j, m)}_{s, R^{(0, j, m)}_s}(X^0_{t, s}(x)) \big)
          \Big) 
  \right| } \\
&=
  \int_s^T
    \Exp{ \left| 
          f \Big( 
            r, X^{(0, j, m)}_{s, r}(X^0_{t, s}(x)),
            V^{(0, j, i)}_{M, k} \big( r ,  X^{(0, j, m)}_{s, r}(X^0_{t, s}(x)) \big)
          \Big) 
    \right| }
  \, dr \\
&=
 \int_s^T
 \int_{\R^d}
    \Exp{ \big| 
          f \big( 
            r, y,
            V^{(0, j, i)}_{M, k} ( r ,  y) 
          \big) 
    \big| }
  ( (X^{(0, j, m)}_{s, r}(X^0_{t, s}(x)))(\P)_{\Borel(\R^d)})(dy)
  \, dr \\
&=
 \int_s^T
 \int_{\R^d}
    \Exp{ \big| 
          f \big( 
            r, y,
            V^{0}_{M, k} ( r ,  y) 
          \big) 
    \big| }
  (X^0_{t, r}(x)(\P)_{\Borel(\R^d)})(dy)
  \, dr \\
&=
 \int_s^T
    \Exp{ \left| 
          f \big( 
            r, X^0_{t, r}(x),
            V^{0}_{M, k} ( r ,  X^0_{t, r}(x) )
          \big) 
    \right| }
  dr.
\end{split}
\end{equation}
Combining this with \eqref{exp_of_approx:eq4}, \eqref{exp_of_approx:eq5}, and \eqref{exp_of_approx:eq6} establishes that
for all $t \in [0, T]$, $s \in [t, T]$ it holds that
\begin{equation}
\label{exp_of_approx:eq8}
\begin{split}
  &\Exp{| V^0_{M,n}(s, X^0_{t, s}(x)) |} \\
&\leq
  \Bigg(
    \sum_{k = 0}^{n-1}
    \frac{1}{M^{n-k}} 
    \Bigg[
      \sum_{m = 1}^{M^{n-k}}
        \int_s^T
        \Exp{ \left| 
          f \big( 
            r, X^0_{t, r}(x),
            V^{0}_{M, k} \big( r ,  X^0_{t, r}(x) \big)
          \big) 
        \right| }
  dr\\
&
        +
       \mathbbm{1}_{\N}(k)
       \int_s^T
       \Exp{ \left| 
          f \big( 
            r, X^0_{t, r}(x),
            V^{0}_{M, k-1} \big( r ,  X^0_{t, r}(x) \big)
          \big) 
       \right| }
       dr
    \Bigg]
    \Bigg)
    +
    \frac{1}{M^n} 
  \Bigg[
    \sum_{m = 1}^{M^n}
      \Exp{ \left| 
        g \big( X^0_{t, T}(x) \big)
      \right| }
  \Bigg] \\
&=
  \Bigg[
  \sum_{k = 0}^{n-1}
        \int_s^T
        \Exp{ \left| 
          f \big( 
            r, X^0_{t, r}(x),
            V^{0}_{M, k} \big( r ,  X^0_{t, r}(x) \big)
          \big) 
        \right| }
  dr\\
&\qquad
        +
       \mathbbm{1}_{\N}(k)
       \int_s^T
       \Exp{ \left| 
          f \big( 
            r, X^0_{t, r}(x),
            V^{0}_{M, k-1} \big( r ,  X^0_{t, r}(x) \big)
          \big) 
       \right| }
       dr
  \Bigg] 
  +
       \Exp{ \left| 
         g \big( X^0_{t, T}(x) \big)
      \right| } \\
&\leq
  2 
  \left[
  \sum_{k = 0}^{n-1}
        \int_t^T
        \Exp{ \left| 
          f \big( 
            r, X^0_{t, r}(x),
            V^{0}_{M, k} \big( r ,  X^0_{t, r}(x) \big)
          \big) 
        \right| }
  dr
  \right]
  +
         \Exp{ \left| 
        g \big( X^0_{t, T}(x) \big)
      \right| } 
<
  \infty.
\end{split} 
\end{equation}
Hence, we obtain that
for all $t \in [0, T]$ it holds that 
\begin{equation}
\label{exp_of_approx:eq9}
\begin{split}
  &\int_{t}^T
    \Exp{
      |  V^0_{M,n}(r, X^0_{t, r}(x)) | 
    }
  dr
\leq
  (T-t) \sup_{s \in [t, T]}
    \Exp{
      |  V^0_{M,n}(s, X^0_{t, s}(x)) | 
    } \\
&\leq
  (T-t)
  \left(
    2 
    \left[
    \sum_{k = 0}^{n-1}
        \int_t^T
        \Exp{ \left| 
          f \big( 
            r, X^0_{t, r}(x),
            V^{0}_{M, k} \big( r ,  X^0_{t, r}(x) \big)
          \big) 
        \right| }
       dr
      \right]
       +
       \Exp{ \left| 
        g \big( X^0_{t, T}(x) \big)
      \right| } 
  \right) 
<
  \infty.
\end{split}
\end{equation}
The hypothesis that 
for all $t \in [0,T]$ it holds that
$
  \int_{t}^T
    \Exp{
      | f (r, X^0_{t, r}(x),  0 ) |
   } 
 \, dr
< 
  \infty
$
and 
the fact that 
for all $t \in [0, T]$, $x \in \R^d$, $v \in \R$ it holds that
$|f(t, x, v)| \leq |f(t, x, 0)| + L |v|$
therefore assure that
for all $t \in [0, T]$ it holds that
\begin{equation}
\begin{split} 
  &\int_{t}^T
    \Exp{
      \big| f \big(r, X^0_{t, r}(x),  V^0_{M,n}(r, X^0_{t, r}(x)) \big) \big|
    }
  dr \\
&\leq
  \int_{t}^T
    \Exp{
      |f (r, X^0_{t, r}(x),  0 )|
    }
   dr 
   +
   L
  \int_{t}^T
    \Exp{
      | V^0_{M,n}(r, X^0_{t, r}(x))|
    }
   dr
<
  \infty.
\end{split}
\end{equation}
This, \eqref{exp_of_approx:eq8}, and \eqref{exp_of_approx:eq9} establish that
for all $t \in [0, T]$, $s \in [t, T]$ it holds that
\begin{equation}
\begin{split}
  &\Exp{| V^0_{M,n}(s, X^0_{t, s}(x)) |}
  +
  \int_{t}^T
    \Exp{
      |  V^0_{M,n}(r, X^0_{t, r}(x)) | 
    }
  dr \\
&\quad 
  +
  \int_{t}^T
    \Exp{
      \big| f \big(r, X^0_{t, r}(x),  V^0_{M,n}(r, X^0_{t, r}(x)) \big) \big|
    }  
  dr
<
  \infty.
\end{split}
\end{equation}
Induction thus proves \eqref{exp_of_approx:eq2}.
Combining \eqref{exp_of_approx:eq1} and \eqref{exp_of_approx:eq2} establishes item~\eqref{exp_of_approx:item1}.
Next observe that 
\eqref{setting:eq5}, 
\eqref{exp_of_approx:eq2},
items~\eqref{indep_and_measurable:item1}--\eqref{indep_and_measurable:item2}~{\&}~\eqref{indep_and_measurable:item7}--\eqref{indep_and_measurable:item8} in Lemma~\ref{indep_and_measurable}, 
the hypothesis that $(X^\theta)_{\theta \in \Theta}$ are independent, 
the hypothesis that $(\mathcal{R}^\theta)_{\theta \in \Theta}$ are independent,
the hypothesis that $ (X^\theta)_{\theta \in \Theta}$ and $(\mathcal{R}^\theta)_{\theta \in \Theta}$ are independent,
and
Lemma~\ref{distribution_of_eval_RF} 
ensure that
for all $n \in \N$, $t \in [0, T]$ it holds that
\begin{equation}
\begin{split}
  &\Exp{ V^0_{M,n}(t, x)} \\
&=
  \frac{1}{M^n} 
  \Bigg(
    \sum_{m = 1}^{M^n}
      \Exp{ 
        g \big( X^{(0, n, -m)}_{t, T}(x) \big) 
      }
  \Bigg) \\
&\quad
  +
  \sum_{k = 0}^{n-1}
    \frac{(T-t)}{M^{n-k}} 
    \Bigg[
      \sum_{m = 1}^{M^{n-k}}
        \Exp{
          f \Big( 
            R^{(0, k, m)}_t , X^{(0, k, m)}_{t, R^{(0, k, m)}_t}(x),
            V^{(0, k, m)}_{M, k} \big( R^{(0, k, m)}_t  ,  X^{(0, k, m)}_{t, R^{(0, k, m)}_t}(x) \big)
          \Big) 
        }\\
&\qquad
        -
       \mathbbm{1}_{\N}(k)
       \Exp{
         f \Big( 
           R^{(0, k, m)}_t  ,  X^{(0, k, m)}_{t, R^{(0, k, m)}_t}(x),
           V^{(0, k, -m)}_{M, k-1} \big( R^{(0, k, m)}_t  ,  X^{(0, k, m)}_{t, R^{(0, k, m)}_t}(x) \big)
          \Big)
        }
    \Bigg] \\
&=
  (T-t)
  \Bigg( \sum_{k = 0}^{n-1}
      \EXPPP{
        f \big( 
          R^{0}_t , X^{0}_{t, R^{0}_t}(x),
          V^{0}_{M, k} ( R^{0}_t  ,  X^{0}_{t, R^{0}_t}(x) )
        \big)
      } \\
&\qquad
      -
      \mathbbm{1}_{\N}(k) \EXPPP{
         f \big( 
           R^{0}_t  ,  X^{0}_{t, R^{0}_t}(x),
           V^{0}_{M, k-1} ( R^{0}_t  ,  X^{0}_{t, R^{0}_t}(x) )
          \big)
      } 
  \Bigg)
  +
  \EXPP{ 
      g \big( X^{0}_{t, T}(x) \big)
  }\\
&=
  (T-t) \,
  \EXPPP{
       f \big( 
         R^{0}_t  ,  X^{0}_{t, R^{0}_t}(x),
         V^{0}_{M, n-1} ( R^{0}_t  ,  X^{0}_{t, R^{0}_t}(x) )
        \big)
    }
  +
  \EXPP{ 
      g \big( X^{0}_{t, T}(x) \big)
  }.
\end{split}
\end{equation}
Lemma~\ref{uniform_evaluation},
items~\eqref{indep_and_measurable:item1}--\eqref{indep_and_measurable:item2} in Lemma~\ref{indep_and_measurable}, 
the fact that
for all $n \in \N_0$ it holds that $V^0_{M,n}$, $X^0$, and $\mathcal{R}^0$ are independent,
\eqref{exp_of_approx:eq2},
and
Fubini's theorem
therefore imply that
for all $n \in \N$, $t \in [0, T]$ it holds that
\begin{equation}
\begin{split}
  \Exp{ V^0_{M,n}(t, x)} 
&=
  \int_{t}^T
    \Exp{
        f\big( r , X^0_{t, r}(x), V^0_{M,n-1}(r , X^0_{t, r}(x))  \big)
    }
    dr
  +
  \Exp{
    g(X^0_{t, T}(x))
  }\\
&=
  \Exp{
    g(X^0_{t, T}(x))
    +
    \int_{t}^T
        f\big( r , X^0_{t, r}(x), V^0_{M,n-1}(r , X^0_{t, r}(x))  \big)
    \, dr
  }.
\end{split}
\end{equation}
This establishes item~\eqref{exp_of_approx:item2}.
The proof of Lemma~\ref{exp_of_approx} is thus completed.
\end{proof}

\subsubsection{Biases of MLP approximations}
\label{sect:biases_MLP}

\begin{lemma}[Biases of MLP approximations]
\label{bias_estimate}
Assume Setting~\ref{setting}
and assume 
for all $t \in [0,T]$, $x \in \R^d$ that
$
  \int_{t}^T
    \Exp{
      | f (r, X^0_{t, r}(x),  0 ) |
   } 
 \, dr
< 
  \infty
$.
Then 
it holds 
for all $M, n \in \N$, $t \in [0, T]$, $ x \in \R^d$ that 
\begin{equation}
\begin{split}
  &\left| u(t, x) - \Exp{V^0_{M,n}(t, x)} \right|^2 
\leq
  L^2 T
  \int_{t}^T
    \EXPP{ \big| u(r, X^0_{t, r}(x)) - V^0_{M,n-1}(r, X^0_{t, r}(x)) \big|^2 }
  \, dr.
\end{split}
\end{equation}
\end{lemma}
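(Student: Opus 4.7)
The plan is to combine item~\eqref{exp_of_approx:item2} of Lemma~\ref{exp_of_approx} with the stochastic fixed point equation \eqref{setting:eq4}. Both representations have the same $g(X^0_{t,T}(x))$ term inside the expectation, so subtracting them telescopes the terminal contribution and yields
\begin{equation*}
  u(t,x) - \Exp{V^0_{M,n}(t,x)}
=
  \Exp{\textint_t^T \! \big[f\big(r,X^0_{t,r}(x),u(r,X^0_{t,r}(x))\big) - f\big(r,X^0_{t,r}(x),V^0_{M,n-1}(r,X^0_{t,r}(x))\big)\big] \, dr}.
\end{equation*}
Item~\eqref{exp_of_approx:item1} of Lemma~\ref{exp_of_approx}, together with the integrability hypothesis on $u$ implicit in \eqref{setting:eq4}, provides the integrability required to manipulate this expression freely.

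Next I would square both sides and apply Jensen's inequality to pull the square inside the outer expectation. Then I would invoke Lemma~\ref{Hoelder} (with $\mu = \operatorname{Borel}_{[t,T]}$) to move the square inside the time integral at the cost of a factor $(T-t) \leq T$, obtaining
\begin{equation*}
  \big| u(t,x) - \Exp{V^0_{M,n}(t,x)} \big|^2
\leq
  T \, \Exp{\textint_t^T \big| f\big(r,X^0_{t,r}(x),u(r,X^0_{t,r}(x))\big) - f\big(r,X^0_{t,r}(x),V^0_{M,n-1}(r,X^0_{t,r}(x))\big) \big|^2 \, dr}.
\end{equation*}

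Finally, I would apply the Lipschitz assumption \eqref{setting:eq0} pointwise in $r$ and $\omega$ to replace the squared $f$-differences with $L^2$ times the squared differences of the arguments, and swap expectation and integral via Tonelli's theorem to arrive at the claimed bound. No genuine obstacle is anticipated here; the argument is essentially a textbook computation once Lemma~\ref{exp_of_approx} is in hand. The only point requiring care is to ensure that every application of Jensen's inequality, Tonelli's theorem, and the subtraction of the two expectation expressions is justified by finiteness of the relevant integrals, which is precisely the content of item~\eqref{exp_of_approx:item1} of Lemma~\ref{exp_of_approx}.
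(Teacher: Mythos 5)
Your proposal is correct and takes essentially the same route as the paper's proof: both subtract the fixed point representation~\eqref{setting:eq4} from item~\eqref{exp_of_approx:item2} of Lemma~\ref{exp_of_approx} so that the terminal terms cancel, and then combine the Lipschitz bound~\eqref{setting:eq0} with Lemma~\ref{Hoelder} and Jensen's inequality to produce the constant $L^2 T$. The only difference is cosmetic, namely the order of operations (the paper bounds the bias in absolute value, applies Lipschitz, and only then squares and uses Lemma~\ref{Hoelder} plus Jensen, whereas you square first and then apply Jensen, H\"older, Lipschitz, and Tonelli in that order); the estimates and constants coincide.
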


\begin{proof}[Proof of Lemma~\ref{bias_estimate}]
Note that Lemma~\ref{exp_of_approx}, the hypothesis that 
for all $t \in [0,T]$, $x \in \R^d$ it holds that
$
  \int_{t}^T
    \Exp{
      | f (r, X^0_{t, r}(x),  0 ) |
   } 
 \, dr
< 
  \infty
$,
\eqref{setting:eq0}, \eqref{setting:eq4},
and Tonelli's theorem
demonstrate that 
for all $M, n \in \N$, $t \in [0, T]$, $ x \in \R^d$ it holds that
\begin{equation}
\begin{split}
  &\left| u(t, x) - \Exp{V^0_{M,n}(t, x)} \right| \\
&\leq 
  \Bigg|
    \Exp{
      g \big( X^0_{t, T}(x) \big)
      +
      \int_{t}^T
        f \big(r, X^0_{t, r}(x),  u(r, X^0_{t, r}(x)) \big)
      \, dr
    } \\
&\qquad
    - 
    \Exp{
    g(X^0_{t, T}(x))
    +
    \int_{t}^T
        f\big( r , X^0_{t, r}(x), V^0_{M,n-1}(r , X^0_{t, r}(x))  \big)
    \, dr
  }
  \Bigg| \\
&=
  \Bigg|
  \Exp{
      \int_{t}^T
        f \big(r, X^0_{t, r}(x),  u(r, X^0_{t, r}(x)) \big)
        -
        f\big( r , X^0_{t, r}(x), V^0_{M,n-1}(r , X^0_{t, r}(x))  \big)
      \, dr
  } 
  \Bigg|\\
&\leq
  \Exp{
    \int_{t}^T
      \big|
        f \big(r, X^0_{t, r}(x),  u(r, X^0_{t, r}(x)) \big)
        -
        f\big( r , X^0_{t, r}(x), V^0_{M,n-1}(r , X^0_{t, r}(x))  \big)
      \big|
    \, dr
  } \\
&\leq
  \Exp{
    \int_{t}^T
      L
      \big|
        u(r, X^0_{t, r}(x)) 
        -
       V^0_{M,n-1}(r , X^0_{t, r}(x))
      \big|
    \, dr
  } \\
&\leq
  L 
  \int_{t}^T
    \EXPP{ \big| u(r, X^0_{t, r}(x)) - V^0_{M,n-1}(r, X^0_{t, r}(x)) \big| }
  \, dr.
\end{split}
\end{equation}
Lemma~\ref{Hoelder} and Jensen's inequality hence show that
for all $M, n \in \N$, $t \in [0, T]$, $ x \in \R^d$ it holds that
\begin{equation}
\begin{split}
  &\left| u(t, x) - \Exp{V^0_{M,n}(t, x)} \right|^2 \\
&\leq
  L^2
  \left(
    \int_{t}^T
      \EXPP{ \big| u(r, X^0_{t, r}(x)) - V^0_{M,n-1}(r, X^0_{t, r}(x)) \big| }
    \, dr 
  \right)^2 \\
&\leq
  L^2 (T-t)
  \int_{t}^T
    \left( \Exp{ \big| u(r, X^0_{t, r}(x)) - V^0_{M,n-1}(r, X^0_{t, r}(x)) \big| } \right)^2
  \, dr \\
&\leq
  L^2 T
  \int_{t}^T
    \EXPP{ \big| u(r, X^0_{t, r}(x)) - V^0_{M,n-1}(r, X^0_{t, r}(x)) \big|^2 }
  \, dr.
\end{split}
\end{equation}
The proof of Lemma~\ref{bias_estimate} is thus completed.
\end{proof}

\subsubsection{Estimates for the variances of MLP approximations}
\label{sect:variance_MLP}

\begin{lemma}
\label{var_of_sum}
Let $n \in \N$, 
let $(\Omega, \mathcal{F}, \P)$ be a probability space, and
let $X_1, X_2, \ldots, X_n \colon \Omega \to \R$ be independent random variables which satisfy
for all $i \in \{1, 2, \ldots, n \}$ that 
$\Exp{|X_i|} < \infty$.
Then 
it holds that
\begin{equation}
  \var \! \left(
    \sum_{i = 1}^n
      X_i
  \right)
=
  \Exp{ 
    \big|
      \textstyle \EXPP{\sum_{i = 1}^n  X_i} -  \sum_{i = 1}^n  X_i 
    \big|^2
  }
=
  \sum_{i = 1}^n
  \Exp{ 
    \left|
        \Exp{X_i} - X_i 
    \right|^2
  }
=
  \sum_{i = 1}^n
    \var \left(
      X_i
    \right).
\end{equation}
\end{lemma}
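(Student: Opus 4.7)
The first and third equalities in the conclusion are immediate: by linearity of expectation, $\Exp{\sum_{i=1}^n X_i} = \sum_{i=1}^n \Exp{X_i}$ (well-defined since each $\Exp{|X_i|} < \infty$), so both outer expressions unfold to the middle one by the definition of variance. The substantive content is therefore the middle equality. Introducing the centered variables $Y_i := \Exp{X_i} - X_i$, which are independent, have mean zero, and satisfy $\Exp{|Y_i|} < \infty$, the whole lemma reduces to the identity
\begin{equation*}
  \Exp{ \big| \textstyle \sum_{i=1}^n Y_i \big|^2 }
  =
  \sum_{i=1}^n \Exp{|Y_i|^2}
  \qquad
  \text{(as elements of $[0,\infty]$).}
\end{equation*}

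The plan is to prove this by induction on $n$. The base case $n=1$ is trivial, and for the inductive step, setting $U := \sum_{i=1}^{n-1} Y_i$ and $V := Y_n$, one has that $U$ and $V$ are independent centered random variables with $\Exp{|U|} + \Exp{|V|} < \infty$ (the independence because $U$ is $\sigmaAlgebra(Y_1, \ldots, Y_{n-1})$-measurable and $Y_n$ is independent of this $\sigma$-algebra). So it suffices to establish that $\Exp{(U+V)^2} = \Exp{U^2} + \Exp{V^2}$ for such $U, V$, since then an application of the induction hypothesis to $\Exp{U^2}$ closes the argument.

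For the two-variable identity I would split into two cases. If both $\Exp{U^2}$ and $\Exp{V^2}$ are finite, then Lemma~\ref{Hoelder} (Cauchy--Schwarz) gives $\Exp{|UV|} < \infty$, independence together with $\Exp{U} = \Exp{V} = 0$ yields $\Exp{UV} = \Exp{U}\Exp{V} = 0$, and expanding $(U+V)^2 = U^2 + 2UV + V^2$ inside the expectation concludes. If, on the other hand, say $\Exp{U^2} = \infty$, one invokes Tonelli's theorem on the product distribution $(U(\P)_{\Borel(\R)}) \otimes (V(\P)_{\Borel(\R)})$ (legitimate since $(u+v)^2 \geq 0$) and uses the elementary pointwise bound $(u+v)^2 \geq \tfrac{1}{2} u^2 - v^2$ (a consequence of the identity $(u+v)^2 - \tfrac{1}{2}(u+2v)^2 = \tfrac{1}{2} u^2 - v^2$) to deduce that the inner integral $\int_{\R} (u+v)^2 \, \mu_U(du) \geq \tfrac{1}{2}\Exp{U^2} - v^2 = \infty$ for every $v \in \R$, whence $\Exp{(U+V)^2} = \infty = \Exp{U^2} + \Exp{V^2}$.

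I expect the only mildly delicate step to be the infinite-variance case, where one must avoid the temptation to expand $\Exp{(U+V)^2}$ directly (which would involve an $\infty - \infty$ difficulty with the cross term); the Young-type pointwise inequality together with Tonelli's theorem neatly sidesteps this. Everything else is routine expansion and use of independence.
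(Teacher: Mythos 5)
Your proof is correct and takes a genuinely different route from the paper's. The paper directly expands the square of the centered sum into a double sum over $i, j$, pulls the expectation inside, and kills the off-diagonal terms using independence (the cross terms are finite because independence and $\Exp{|X_i|} < \infty$ give $\Exp{|Y_i Y_j|} = \Exp{|Y_i|}\Exp{|Y_j|} < \infty$ for $i \neq j$); the write-up is terse about justifying the exchange of expectation and sum when some $\Exp{Y_i^2}$ is infinite, though the step goes through since the double sum splits into a nonnegative diagonal part and an integrable off-diagonal part. Your induction on $n$ paired with an explicit treatment of the infinite-variance case via Tonelli and the Young-type bound $(u+v)^2 \geq \tfrac{1}{2}u^2 - v^2$ handles that edge case more cleanly. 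One small correction: Lemma~\ref{Hoelder} is the single-function inequality $(\int f\,d\mu)^2 \leq \mu(\Omega)\int |f|^2\,d\mu$, not the two-function Cauchy--Schwarz inequality, so it does not yield $\Exp{|UV|} < \infty$ from $\Exp{U^2}, \Exp{V^2} < \infty$; in fact you do not need Cauchy--Schwarz at all, since $U$ and $V$ are independent with $\Exp{|U|}, \Exp{|V|} < \infty$ and therefore $\Exp{|UV|} = \Exp{|U|}\Exp{|V|} < \infty$ directly, which is precisely the fact from Klenke that the paper's own proof invokes.
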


\begin{proof}[Proof of Lemma~\ref{var_of_sum}]
Note that the fact that 
for all independent random variables $Y,Z \colon \Omega \to \R$ with $\EXP{ | Y | + | Z | } < \infty$ it holds that 
$
	\EXP{  | YZ  |  } < \infty
$ 
and 
$
	\EXP{YZ} = \EXP{Y} \, \EXP{Z}
$ 
(cf., e.g., Klenke \cite[Theorem 5.4]{Klenke14}) 
and the hypothesis that  
$ 
X_i\colon \Omega\to \R$, $i \in \{1, 2, \dots, n\}$,
are independent random variables
assure that
\begin{equation}
\begin{split}
  &\Exp{ 
    \big|
      \textstyle \EXPP{\sum_{i = 1}^n  X_i} -  \sum_{i = 1}^n  X_i 
    \big|^2
  } \\
&=
	\Exp{  
	\big|
			{ \textstyle \sum_{ i = 1 }^{ n } \big(\Exp{X_i} - X_{ i }} \big)
	\big|^2  }	\\
&=
		\Exp{
		  {  \sum_{ i,j= 1 }^{ n } } \,
		  ( \Exp{X_i} - X_{ i } ) ( \Exp{X_j} - X_{ j } )  
		}	  \\
&=
\left[
  {  \sum\limits_{ i= 1}^{ n }} \,
\EXPP{  \vert\Exp{X_i} - X_{ i }\vert^2  }	 
\right]
+
\left[
  {  \sum \limits_{ i,j= 1,i\neq j }^{ n }}
\EXPP{   \Exp{X_i} - X_{ i } } \,
\EXPP{  \Exp{X_j} - X_{ j }   }	 
\right] \\
&=
	  \sum\limits_{ i= 1}^{ n } \,
\EXPP{  \vert\Exp{X_i} - X_{ i }\vert^2  }.
\end{split}
\end{equation}
The proof of Lemma~\ref{var_of_sum} is thus completed.
\end{proof}

\begin{lemma}[Estimates for the variances of MLP approximations]
\label{variance_estimate}
Assume Setting~\ref{setting}
and assume 
for all $t \in [0,T]$, $x \in \R^d$ that
$
  \int_{t}^T
    \Exp{
      | f (r, X^0_{t, r}(x),  0 ) |
   } 
 \, dr
< 
  \infty
$.
Then 
it holds 
for all $M, n \in \N$, $t \in [0, T]$, $ x \in \R^d$ that 
\begin{equation}
\begin{split}
  &\Exp{ \left| V^0_{M,n}(t, x) - \Exp{V^0_{M,n}(t, x)} \right|^2 } \\
&\leq
  \tfrac{1}{M^n} 
  \Big(
    \Exp{ | g(X^0_{t, T}(x))|^2 } + T\int_{t}^T \Exp{ | f( r , X^0_{t, r}(x) , 0 ) |^2 } \, dr
  \Big) \\
&\quad 
  +
  \sum_{k = 1}^{n-1}
    \tfrac{2L^2T}{M^{n-k}}
    \bigg(
      \int_{t}^T  \Exp{ \big| u( r , X^0_{t, r}(x)) - V^0_{M,k}( r , X^0_{t,r}(x)) \big|^2 } \,dr \\
&\quad
      +
      \int_{t}^T  \Exp{ \big| u( r , X^0_{t, r}(x)) - V^0_{M,k-1}( r , X^0_{t,r}(x)) \big|^2 } \,dr
    \bigg).
\end{split}
\end{equation}
\end{lemma}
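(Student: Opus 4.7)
The plan is to exploit the fact that \eqref{setting:eq5} expresses $V^0_{M,n}(t,x)$ as a sum of independent summands (by item~\eqref{indep_and_measurable:item6} of Lemma~\ref{indep_and_measurable}) and to apply Lemma~\ref{var_of_sum} to decompose the variance into a sum of variances. I would organize the summands of \eqref{setting:eq5} into three groups: the $M^n$ copies of $\tfrac{1}{M^n}g(X^{(0,n,-m)}_{t,T}(x))$; the $M^n$ level-$0$ summands, which because $\mathbbm{1}_\N(0)=0$ and $V^{\theta}_{M,0}\equiv0$ reduce to $\tfrac{T-t}{M^n}f(R^{(0,0,m)}_t,X^{(0,0,m)}_{t,R^{(0,0,m)}_t}(x),0)$; and, for each $k\in\{1,2,\ldots,n-1\}$, the $M^{n-k}$ difference-terms at level $k$. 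For each such summand I would bound the variance by the corresponding second moment.

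For the $g$-summands, items~\eqref{indep_and_measurable:item7}--\eqref{indep_and_measurable:item8} of Lemma~\ref{indep_and_measurable} let me identify $X^{(0,n,-m)}_{t,T}(x)$ distributionally with $X^0_{t,T}(x)$, so that the total contribution collapses to $\tfrac{1}{M^n}\EXP{|g(X^0_{t,T}(x))|^2}$. For the level-$0$ summands, Lemma~\ref{uniform_evaluation} applied to the random field $r\mapsto |f(r,X^{(0,0,m)}_{t,r}(x),0)|^2$ (which is independent of $\mathcal{R}^{(0,0,m)}$) converts $(T-t)\EXP{|f(R^{(0,0,m)}_t,X^{(0,0,m)}_{t,R^{(0,0,m)}_t}(x),0)|^2}$ into $\int_t^T \EXP{|f(r,X^0_{t,r}(x),0)|^2}\,dr$; combined with the prefactor $\tfrac{(T-t)^2}{M^n}$ and the bound $T-t\leq T$ this yields the term $\tfrac{T}{M^n}\int_t^T\EXP{|f(r,X^0_{t,r}(x),0)|^2}\,dr$ appearing in the claim.

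For $k\in\{1,2,\ldots,n-1\}$, within each difference-term I would add and subtract $f\big(R^{(0,k,m)}_t,X^{(0,k,m)}_{t,R^{(0,k,m)}_t}(x),u(R^{(0,k,m)}_t,X^{(0,k,m)}_{t,R^{(0,k,m)}_t}(x))\big)$, apply $(a+b)^2\leq 2a^2+2b^2$, and use the Lipschitz hypothesis \eqref{setting:eq0} to extract a factor $2L^2$, leaving expectations of $|V^{(0,k,m)}_{M,k}(\cdot)-u(\cdot)|^2$ and $|u(\cdot)-V^{(0,k,-m)}_{M,k-1}(\cdot)|^2$ evaluated at $(R^{(0,k,m)}_t,X^{(0,k,m)}_{t,R^{(0,k,m)}_t}(x))$. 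Lemma~\ref{uniform_evaluation} applied to the corresponding continuous random fields in $r$, together with item~\eqref{indep_and_measurable:item8} of Lemma~\ref{indep_and_measurable} and Lemma~\ref{distribution_of_eval_RF}, then replace these by $\int_t^T\EXP{|V^0_{M,j}(r,X^0_{t,r}(x))-u(r,X^0_{t,r}(x))|^2}\,dr$ for $j\in\{k,k-1\}$. Collecting the prefactors $M^{n-k}\cdot\tfrac{(T-t)^2}{M^{2(n-k)}}\cdot 2L^2$ and using $T-t\leq T$ once more produces the required $\tfrac{2L^2 T}{M^{n-k}}$.

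The main obstacle is bookkeeping: one has to verify at each stage that the prescribed independences hold so that Lemma~\ref{var_of_sum}, Lemma~\ref{uniform_evaluation}, and Lemma~\ref{distribution_of_eval_RF} are applicable, and that the random copies appearing in \eqref{setting:eq5} can be identified with their $0$-indexed versions in distribution by means of items~\eqref{indep_and_measurable:item6}--\eqref{indep_and_measurable:item8} of Lemma~\ref{indep_and_measurable}. Once these independence and distributional identifications are established, the remainder of the argument is a direct calculation.
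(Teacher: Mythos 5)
Your proposal is correct and follows essentially the same route as the paper's own proof: decompose the variance via Lemma~\ref{var_of_sum} using the independence from item~\eqref{indep_and_measurable:item6} of Lemma~\ref{indep_and_measurable} (together with the integrability from item~\eqref{exp_of_approx:item1} of Lemma~\ref{exp_of_approx}, which you should make explicit), bound each variance by a second moment, collapse the $M^{n-k}$ identically distributed copies via items~\eqref{indep_and_measurable:item7}--\eqref{indep_and_measurable:item8} and Lemma~\ref{distribution_of_eval_RF}, treat the $k=0$ level using $V^\theta_{M,0}=0$, apply the Lipschitz bound and insert $u$ with $(a+b)^2\le 2a^2+2b^2$ for $k\ge1$, and finally convert the $R^0_t$-evaluations to time integrals via Lemma~\ref{uniform_evaluation}. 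The only cosmetic difference is that the paper applies the Lipschitz bound to the $f$-difference first and then inserts $u$, whereas you insert $f(\cdot,u(\cdot))$ first and then apply Lipschitz to each piece; both yield the identical factor $2L^2$ and the same final estimate.
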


\begin{proof}[Proof of Lemma~\ref{variance_estimate}]
Throughout this proof 
let $M, n \in \N$, $t \in [0, T]$, $ x \in \R^d$. 
Observe that Lemma~\ref{var_of_sum}, 
item~\eqref{exp_of_approx:item1} in Lemma~\ref{exp_of_approx}, 
the fact that 
for all $\theta \in \Theta$ it holds that 
$\Exp{ |g ( X^{0}_{t, T}(x))|} < \infty$,
item~\eqref{indep_and_measurable:item6} in Lemma~\ref{indep_and_measurable},
and
\eqref{setting:eq5}
imply that
\begin{equation}
\label{variance_estimate:eq01}
\begin{split}
  &\Exp{ \left| V^0_{M,n}(t, x) - \Exp{V^0_{M,n}(t, x)} \right|^2 } 
=
  \var(V^0_{M,n}(t, x))  \\
&=
  \Bigg[
    \sum_{m = 1}^{M^n}
      \var \! \left (\tfrac{1}{M^n} g \big( X^{(0, n, -m)}_{t, T}(x) \big) \right)
  \Bigg] \\
&\quad
  +
  \sum_{k = 0}^{n-1}
      \sum_{m = 1}^{M^{n-k}}
      \var \Bigg(
        \tfrac{(T-t)}{M^{n-k}} 
        \bigg[
          f \Big( 
          R^{(0, k, m)}_t , X^{(0, k, m)}_{t, R^{(0, k, m)}_t}(x),
          V^{(0, k, m)}_{M, k} \big( R^{(0, k, m)}_t  ,  X^{(0, k, m)}_{t, R^{(0, k, m)}_t}(x) \big)
        \Big) \\
&\qquad
        -
       \mathbbm{1}_{\N}(k)
       f \Big( 
         R^{(0, k, m)}_t  ,  X^{(0, k, m)}_{t, R^{(0, k, m)}_t}(x),
         V^{(0, k, -m)}_{M, k-1} \big( R^{(0, k, m)}_t  ,  X^{(0, k, m)}_{t, R^{(0, k, m)}_t}(x) \big)
        \Big)
      \bigg]
    \Bigg). \\
\end{split}
\end{equation}
Moreover, note that item~\eqref{indep_and_measurable:item7} in Lemma~\ref{indep_and_measurable}
and the fact that 
for all $Z \in \mathcal{L}^1(\P, \R)$ it holds that $\var(Z) \leq \EXP{ | Z |^2}$ ensure that 
\begin{equation}
\label{variance_estimate:eq02}
\begin{split}
  \sum_{m = 1}^{M^n}
    \var \! \left (\tfrac{1}{M^n} g \big( X^{(0, n, -m)}_{t, T}(x) \big) \right)
&=
  M^n
    \var \! \left (\tfrac{1}{M^n} g ( X^{0}_{t, T}(x) ) \right)
  \\
&=
    \tfrac{M^n}{M^{2n}} \var \! \left ( g ( X^{0}_{t, T}(x) ) \right)
\leq
  \tfrac{ 1}{M^n} 
  \big(
    \Exp{ | g(X^0_{t, T}(x))|^2 }
  \big).
\end{split}
\end{equation}
In addition, note that
items~\eqref{indep_and_measurable:item1}--\eqref{indep_and_measurable:item2}~{\&}~\eqref{indep_and_measurable:item7}--\eqref{indep_and_measurable:item8} in Lemma~\ref{indep_and_measurable},
the hypothesis that $(X^\theta)_{\theta \in \Theta}$ are independent, 
the hypothesis that $(\mathcal{R}^\theta)_{\theta \in \Theta}$ are independent,
the hypothesis that $ (X^\theta)_{\theta \in \Theta}$ and $(\mathcal{R}^\theta)_{\theta \in \Theta}$ are independent,
the fact that 
for all $Z \in \mathcal{L}^1(\P, \R)$ it holds that $\var(Z) \leq \EXP{ | Z |^2}$,  
and 
Lemma~\ref{distribution_of_eval_RF}
show that
for all $k \in \N_0 \cap [0,n)$ it holds that 
\begin{equation}
\label{variance_estimate:eq05}
\begin{split}
  &\sum_{m = 1}^{M^{n-k}}
    \var \Bigg(
      \tfrac{(T-t)}{M^{n-k}} 
      \bigg[
        f \Big( 
          R^{(0, k, m)}_t , X^{(0, k, m)}_{t, R^{(0, k, m)}_t}(x),
          V^{(0, k, m)}_{M, k} \big( R^{(0, k, m)}_t  ,  X^{(0, k, m)}_{t, R^{(0, k, m)}_t}(x) \big)
        \Big) \\
&\qquad
        -
        \mathbbm{1}_{\N}(k)
        f \Big( 
          R^{(0, k, m)}_t  ,  X^{(0, k, m)}_{t, R^{(0, k, m)}_t}(x),
          V^{(0, k, -m)}_{M, k-1} \big( R^{(0, k, m)}_t  ,  X^{(0, k, m)}_{t, R^{(0, k, m)}_t}(x) \big)
        \Big)
      \bigg]
    \Bigg) \\
&=
  {M^{n-k}}
    \var \Bigg(
      \tfrac{(T-t)}{M^{n-k}} 
      \bigg[
        f \Big( 
          R^{0}_t , X^{0}_{t, R^{0}_t}(x),
          V^{0}_{M, k} \big( R^{0}_t  ,  X^{0}_{t, R^{0}_t}(x) \big)
        \Big) \\
&\qquad
        -
      \mathbbm{1}_{\N}(k)        
       f \Big( 
          R^{0}_t  ,  X^{0}_{t, R^{0}_t}(x),
          V^{1}_{M, k-1} \big( R^{0}_t  ,  X^{0}_{t, R^{0}_t}(x) \big)
        \Big)
      \bigg]
    \Bigg) \\   
&=
      \tfrac{ M^{n-k}(T-t)^2}{M^{2(n-k)}} 
      \var \Bigg(
          f \Big( 
          R^{0}_t , X^{0}_{t, R^{0}_t}(x),
          V^{0}_{M, k} \big( R^{0}_t  ,  X^{0}_{t, R^{0}_t}(x) \big)
        \Big) \\
&\qquad
        -
      \mathbbm{1}_{\N}(k)
       f \Big( 
         R^{0}_t  ,  X^{0}_{t, R^{0}_t}(x),
         V^{1}_{M, k-1} \big( R^{0}_t  ,  X^{0}_{t, R^{0}_t}(x) \big)
        \Big)
    \Bigg) \\
&\leq
      \tfrac{(T-t)^2}{M^{n-k}} 
      \EXPPPP{ \Big|
          f \Big( 
          R^{0}_t , X^{0}_{t, R^{0}_t}(x),
          V^{0}_{M, k} \big( R^{0}_t  ,  X^{0}_{t, R^{0}_t}(x) \big)
        \Big) \\
&\qquad
        -
      \mathbbm{1}_{\N}(k)
       f \Big( 
         R^{0}_t  ,  X^{0}_{t, R^{0}_t}(x),
         V^{1}_{M, k-1} \big( R^{0}_t  ,  X^{0}_{t, R^{0}_t}(x) \big)
        \Big)
      \Big|^2 }.
\end{split}
\end{equation}
Lemma~\ref{uniform_evaluation},
the fact that $X^0$ and $R^0$ are independent,
and the hypothesis that 
for all $\theta \in \Theta$ it holds that $V^\theta_{M,0} = 0$ 
therefore demonstrate that
\begin{equation}
\label{variance_estimate:eq03}
\begin{split}
&\sum_{m = 1}^{M^{n}}
    \var \left(
      \tfrac{(T-t)}{M^{n}} 
        f \Big( 
          R^{(0, 0, m)}_t , X^{(0, 0, m)}_{t, R^{(0, 0, m)}_t}(x),
          V^{(0, 0, m)}_{M, 0} \big( R^{(0, 0, m)}_t  ,  X^{(0, 0, m)}_{t, R^{(0, 0, m)}_t}(x) \big)
        \Big) 
    \right)\\
&\leq
  \frac{(T-t)^2}{M^n} \Exp{ | f( R^0_t , X^0_{t, R^0_t}(x) , 0 ) |^2 } 
=
  \frac{(T-t)}{M^n} \int_{t}^T \Exp{ | f( r , X^0_{t, r}(x) , 0 ) |^2 } \, dr.
\end{split}
\end{equation}
In addition, observe that 
\eqref{setting:eq0},
\eqref{variance_estimate:eq05},
the fact that 
for all $x, y \in [0,\infty)$ it holds that
$
  |x + y|^2 \leq 2 (|x|^2 + |y|^2)
$,
items~\eqref{indep_and_measurable:item1}--\eqref{indep_and_measurable:item2}~{\&}~\eqref{indep_and_measurable:item8} in Lemma~\ref{indep_and_measurable},
the hypothesis that $(X^\theta)_{\theta \in \Theta}$ are independent, 
the hypothesis that $(\mathcal{R}^\theta)_{\theta \in \Theta}$ are independent, 
the hypothesis that $ (X^\theta)_{\theta \in \Theta}$ and $(\mathcal{R}^\theta)_{\theta \in \Theta}$ are independent,
and
Lemma~\ref{distribution_of_eval_RF}
assure that
for all $k \in \N \cap [1, n)$ it holds that 
\begin{equation}
\label{variance_estimate:eq06}
\begin{split}
    &\sum_{m = 1}^{M^{n-k}}
      \var \Bigg(
        \tfrac{(T-t)}{M^{n-k}} 
        \bigg[
          f \Big( 
          R^{(0, k, m)}_t , X^{(0, k, m)}_{t, R^{(0, k, m)}_t}(x),
          V^{(0, k, m)}_{M, k} \big( R^{(0, k, m)}_t  ,  X^{(0, k, m)}_{t, R^{(0, k, m)}_t}(x) \big)
        \Big) \\
&\qquad
        -
       f \Big( 
         R^{(0, k, m)}_t  ,  X^{(0, k, m)}_{t, R^{(0, k, m)}_t}(x),
         V^{(0, k, -m)}_{M, k-1} \big( R^{(0, k, m)}_t  ,  X^{(0, k, m)}_{t, R^{(0, k, m)}_t}(x) \big)
        \Big)
      \bigg]
    \Bigg) \\
&\leq
      \tfrac{(T-t)^2}{M^{n-k}} 
      \EXPPPP{ L^2
        \big|
          V^{0}_{M, k} \big( R^{0}_t  ,  X^{0}_{t, R^{0}_t}(x) \big)
          -
          V^{1}_{M, k-1} \big( R^{0}_t  ,  X^{0}_{t, R^{0}_t}(x) \big)
        \big|^2 
      } \\
&\leq
      \tfrac{2L^2(T-t)^2}{M^{n-k}} 
      \bigg(
      \Exp{ 
        \big|
          V^{0}_{M, k} \big( R^{0}_t  ,  X^{0}_{t, R^{0}_t}(x) \big)
          -
          u \big( R^{0}_t  ,  X^{0}_{t, R^{0}_t}(x) \big)
        \big|^2 
      }\\
&\quad
      +
      \Exp{
        \big|
          V^{0}_{M, k-1} \big( R^{0}_t  ,  X^{0}_{t, R^{0}_t}(x) \big)
          -
          u \big( R^{0}_t  ,  X^{0}_{t, R^{0}_t}(x) \big)
        \big|^2 
      }
    \bigg). \\
\end{split}
\end{equation}
Lemma~\ref{uniform_evaluation}, items~\eqref{indep_and_measurable:item1}--\eqref{indep_and_measurable:item2} in Lemma~\ref{indep_and_measurable},
the hypothesis that $(X^\theta)_{\theta \in \Theta}$ are independent, 
the hypothesis that $(\mathcal{R}^\theta)_{\theta \in \Theta}$ are independent, and
the hypothesis that $ (X^\theta)_{\theta \in \Theta}$ and $(\mathcal{R}^\theta)_{\theta \in \Theta}$ are independent 
hence ensure that
for all $k \in \N \cap [1,n)$ it holds that 
\begin{equation}
\label{variance_estimate:eq08}
\begin{split}
    &\sum_{m = 1}^{M^{n-k}}
      \var \Bigg(
        \tfrac{(T-t)}{M^{n-k}} 
        \bigg[
          f \Big( 
          R^{(0, k, m)}_t , X^{(0, k, m)}_{t, R^{(0, k, m)}_t}(x),
          V^{(0, k, m)}_{M, k} \big( R^{(0, k, m)}_t  ,  X^{(0, k, m)}_{t, R^{(0, k, m)}_t}(x) \big)
        \Big) \\
&\qquad
        -
       f \Big( 
         R^{(0, k, m)}_t  ,  X^{(0, k, m)}_{t, R^{(0, k, m)}_t}(x),
         V^{(0, k, -m)}_{M, k-1} \big( R^{(0, k, m)}_t  ,  X^{(0, k, m)}_{t, R^{(0, k, m)}_t}(x) \big)
        \Big)
      \bigg]
    \Bigg) \\
&\leq
    \tfrac{2L^2(T-t)}{M^{n-k}} 
    \Bigg(
      \int_{t}^T 
        \Exp{ 
          \big|
            V^{0}_{M, k} \big( r  ,  X^{0}_{t, r}(x) \big)
            -
            u \big( r ,  X^{0}_{t, r}(x) \big)
          \big|^2
      }
      \, dr \\
&\quad
      +
      \int_{t}^T 
        \Exp{ 
          \big|
            V^{0}_{M, k-1} \big( r  ,  X^{0}_{t, r}(x) \big)
            -
            u \big( r ,  X^{0}_{t, r}(x) \big)
          \big|^2
      }
      \, dr
    \Bigg).
\end{split}
\end{equation}
Combining this with \eqref{variance_estimate:eq01}, \eqref{variance_estimate:eq02}, and \eqref{variance_estimate:eq03} establishes that
\begin{equation}
\label{variance_estimate:eq09}
\begin{split}
  &\Exp{ \left| V^0_{M,n}(t, x) - \Exp{V^0_{M,n}(t, x)} \right|^2 } \\
&\leq
  \tfrac{1}{M^n} 
  \Big(
    \Exp{ | g(X^0_{t, T}(x))|^2 } 
  \Big) 
  +
  \tfrac{(T-t)}{M^n} \int_{t}^T \Exp{ | f( r , X^0_{t, r}(x) , 0 ) |^2 } \, dr \\
& \quad
  +
  \sum_{k = 1}^{n-1}
  \tfrac{2L^2(T-t)}{M^{n-k}} 
    \Bigg(
      \int_{t}^T 
        \Exp{ 
          \big|
            V^{0}_{M, k} \big( r  ,  X^{0}_{t, r}(x) \big)
            -
            u \big( r ,  X^{0}_{t, r}(x) \big)
          \big|^2
      }
      \, dr \\
&\quad
      +
      \int_{t}^T 
        \Exp{ 
          \big|
            V^{0}_{M, k-1} \big( r  ,  X^{0}_{t, r}(x) \big)
            -
            u \big( r ,  X^{0}_{t, r}(x) \big)
          \big|^2
      }
      \, dr
    \Bigg) \\
&\leq
  \tfrac{1}{M^n} 
  \Big(
    \Exp{ | g(X^0_{t, T}(x))|^2 } + T\int_{t}^T \Exp{ | f( r , X^0_{t, r}(x) , 0 ) |^2 } \, dr
  \Big) \\
&\quad 
  +
  \sum_{k = 1}^{n-1}
    \tfrac{2L^2T}{M^{n-k}}
    \Bigg(
      \int_{t}^T  \Exp{ \big| u( r , X^0_{t, r}(x)) - V^0_{M,k}( r , X^0_{t,r}(x)) \big|^2 } \,dr \\
&\quad
      +
      \int_{t}^T  \Exp{ \big| u( r , X^0_{t, r}(x)) - V^0_{M,k-1}( r , X^0_{t,r}(x)) \big|^2 } \,dr
    \Bigg).\\
\end{split}
\end{equation}
The proof of Lemma~\ref{variance_estimate} is thus completed.
\end{proof}

\subsubsection{On a geometric time-discrete Gronwall inequality}
\label{sect:gronwall_special}

\begin{lemma}
\label{gronwall_special}
Let $\alpha, \beta \in [0,\infty)$, $M \in (0,\infty)$, $(\epsilon_{n,q})_{n,q \in \N_0 } \subseteq [0,\infty]$ satisfy 
for all $n,q \in \N_0$ that
\begin{equation}
\label{gronwall_special:ass1}
  \epsilon_{n,q}
\leq
  \frac{\alpha}{M^{n+q}}
  + 
  \beta
  \left[
    \sum_{k = 0}^{n-1} \frac{\epsilon_{k,q+1}}{M^{n-(k+1)}}
  \right].
\end{equation}
Then 
it holds 
for all $n,q \in \N_0$ that
\begin{equation}
\label{gronwall_special:concl1}
  \epsilon_{n,q}
\leq
   \frac{\alpha(1 + \beta)^n}{M^{n+q}}
<
  \infty.
\end{equation}
\end{lemma}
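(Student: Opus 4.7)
The plan is to proceed by induction on $n \in \N_0$, with $q \in \N_0$ treated as a free parameter (so that the inductive hypothesis is applied at index $q+1$ when bounding $\epsilon_{n,q}$). The recursion in \eqref{gronwall_special:ass1} has a convenient structure: the power of $M$ in the denominator of each summand in the sum is $M^{n-(k+1)}$, which combines with the inductive hypothesis $\epsilon_{k,q+1}\leq \alpha(1+\beta)^k/M^{k+q+1}$ to give exactly $\alpha(1+\beta)^k/M^{n+q}$. Thus all denominators align to $M^{n+q}$, and the problem reduces to a scalar geometric-series computation.

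In the base case $n=0$ the sum $\sum_{k=0}^{-1}$ is empty, so \eqref{gronwall_special:ass1} yields $\epsilon_{0,q}\leq \alpha/M^q=\alpha(1+\beta)^0/M^{0+q}$ for every $q\in\N_0$. For the inductive step, fix $n\in\N$ and assume that for every $k\in\{0,1,\ldots,n-1\}$ and every $q\in\N_0$ it holds that $\epsilon_{k,q}\leq \alpha(1+\beta)^k/M^{k+q}$. Applying this with indices $(k,q+1)$ inside \eqref{gronwall_special:ass1} and simplifying gives
\begin{equation}
\epsilon_{n,q}
\leq
\frac{\alpha}{M^{n+q}}
+
\beta\sum_{k=0}^{n-1}\frac{\alpha(1+\beta)^{k}}{M^{n-(k+1)}\,M^{k+q+1}}
=
\frac{\alpha}{M^{n+q}}\Biggl(1+\beta\sum_{k=0}^{n-1}(1+\beta)^{k}\Biggr).
\end{equation}

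The key algebraic step is to evaluate $1+\beta\sum_{k=0}^{n-1}(1+\beta)^k$. For $\beta>0$ the geometric-series formula yields $\sum_{k=0}^{n-1}(1+\beta)^k=\frac{(1+\beta)^n-1}{\beta}$, so the expression telescopes to $(1+\beta)^n$; for $\beta=0$ the sum vanishes and the expression equals $1=(1+0)^n$. In both cases one obtains $\epsilon_{n,q}\leq \alpha(1+\beta)^n/M^{n+q}$, which closes the induction and simultaneously shows finiteness of the bound since $\alpha,\beta\in[0,\infty)$ and $M\in(0,\infty)$. There is no real obstacle here: the only point requiring a hint of care is keeping track of the shift from $q$ to $q+1$ in the inductive hypothesis (which is precisely why the statement quantifies over all $q\in\N_0$ simultaneously) and handling $\beta=0$ as a separate trivial case so as to avoid dividing by zero.
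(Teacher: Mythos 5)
Your proof is correct and takes essentially the same approach as the paper: induction on $n$ with $q$ quantified simultaneously, applying the inductive hypothesis at index $q+1$, and collapsing the geometric series $1+\beta\sum_{k=0}^{n-1}(1+\beta)^k=(1+\beta)^n$. The only cosmetic difference is that the paper dispatches $\beta=0$ via a w.l.o.g.\ remark while you treat it explicitly; both are fine.
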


\begin{proof}[Proof of Lemma~\ref{gronwall_special}]
Throughout this proof assume w.l.o.g.\ that $\beta > 0$.
We prove \eqref{gronwall_special:concl1} by induction on $n \in \N_0$.
For the base case $n = 0$ observe that \eqref{gronwall_special:ass1} assures that
for all $q \in \N_0$ it holds that
\begin{equation}
  \epsilon_{0,q}
\leq
  \frac{\alpha}{M^{0+q}}
=
  \frac{\alpha}{M^{0+q}} (1 + \beta)^0 < \infty.
\end{equation}
This proves \eqref{gronwall_special:concl1} in the base case $n = 0$.
For the induction step $\N_0 \ni (n-1) \to n \in \N$ observe that \eqref{gronwall_special:ass1} ensures that
for all $n \in \N$, $q \in \N_0$ with 
$ 
  \forall \, k \in \N_0 \cap [0,n) , p \in \N_0
\colon 
  \epsilon_{k,p}
\leq
  \alpha \frac{(1 + \beta)^k}{M^{k+p}}
$
it holds that 
\begin{equation}
\begin{split}
  \epsilon_{n,q}
&\leq
  \frac{\alpha}{M^{n+q}}
  + 
  \beta
  \left[
    \sum_{k = 0}^{n-1} \frac{\epsilon_{k,q+1}}{M^{n-(k+1)}}
  \right]
\leq
  \frac{\alpha}{M^{n+q}}
  + 
  \beta
  \left[
    \sum_{k = 0}^{n-1} \frac{\alpha \frac{(1 + \beta)^k}{M^{k+q +1}}} {M^{n-(k+1)}}
  \right] \\
&=
  \frac{\alpha}{M^{n+q}}
  + 
  \beta
  \left[
    \sum_{k = 0}^{n-1} \frac{\alpha (1 + \beta)^k}{M^{n-(k+1) + (k + q + 1)}}
  \right] 
=
  \frac{\alpha}{M^{n+q}}
  + 
  \beta
  \left[
    \sum_{k = 0}^{n-1} \frac{\alpha (1 + \beta)^k}{M^{n + q}}
  \right] \\
&=
  \frac{\alpha}{M^{n+q}}
  \left(
    1
    +
    \beta
    \left[
     \sum_{k = 0}^{n-1}  (1 + \beta)^k
    \right]
  \right)
=
  \frac{\alpha}{M^{n+q}}
  \left(
    1
    +
    \beta
    \frac{(1 + \beta)^n - 1}{(1 + \beta) - 1}
  \right) \\
&=
  \frac{\alpha}{M^{n+q}}
    (1 + \beta)^n .
\end{split}
\end{equation}
Induction hence establishes \eqref{gronwall_special:concl1}.
The proof of Lemma~\ref{gronwall_special} is thus completed.
\end{proof}

\subsubsection{Error estimates for MLP approximations}
\label{sect:errors_MLP}

\begin{cor}
\label{error_estimate}
Assume Setting~\ref{setting}
and assume 
for all $t \in [0,T]$, $x \in \R^d$ that
$
  \int_{t}^T
    \Exp{
      | f (r, X^0_{t, r}(x),  0 ) |
   } 
 \, dr
< 
  \infty
$.
Then 
it holds 
for all $M, n \in \N$, $t \in [0, T]$, $ x \in \R^d$ that 
\begin{equation}
\begin{split}
  &\Exp{ | u(t, x) - V^0_{M,n}(t, x) |^2 } \\
&\leq
  \tfrac{1}{M^n} 
  \Big(
    \Exp{ | g(X^0_{t, T}(x))|^2 } + T\int_{t}^T \Exp{ | f( r , X^0_{t, r}(x) , 0 ) |^2 } \, dr
  \Big) \\
&\quad 
  +
  \sum_{k = 0}^{n-1}
    \tfrac{4L^2T}{M^{n-(k+1)}}
      \int_{t}^T  \Exp{ \big| u( r , X^0_{t, r}(x)) - V^0_{M,k}( r , X^0_{t,r}(x)) \big|^2 } \,dr.
\end{split}
\end{equation}
\end{cor}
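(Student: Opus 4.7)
The plan is to derive Corollary~\ref{error_estimate} from the standard $L^2$ bias--variance decomposition together with Lemma~\ref{bias_estimate} (bias) and Lemma~\ref{variance_estimate} (variance). First I would fix $M,n \in \N$, $t \in [0,T]$, $x \in \R^d$ and note that item~\eqref{exp_of_approx:item1} of Lemma~\ref{exp_of_approx} ensures $\Exp{|V^0_{M,n}(t,x)|} < \infty$, while Lemma~\ref{variance_estimate} in particular yields $\Var(V^0_{M,n}(t,x)) < \infty$; hence $V^0_{M,n}(t,x) \in \mathcal{L}^2(\P,\R)$ and one has the identity
\begin{equation*}
  \Exp{|u(t,x) - V^0_{M,n}(t,x)|^2}
  =
  |u(t,x) - \Exp{V^0_{M,n}(t,x)}|^2
  +
  \Exp{|V^0_{M,n}(t,x) - \Exp{V^0_{M,n}(t,x)}|^2}.
\end{equation*}

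Next I would apply Lemma~\ref{bias_estimate} to the first term and Lemma~\ref{variance_estimate} to the second term. The variance bound already contributes the full $\tfrac{1}{M^n}\bigl(\Exp{|g(X^0_{t,T}(x))|^2} + T\!\int_t^T \Exp{|f(r,X^0_{t,r}(x),0)|^2}\,dr\bigr)$ term that appears in the desired inequality, so only the ``history'' terms remain to be collected. The bias squared contributes one summand at iteration index $k = n-1$ with coefficient $L^2T$, and the variance sum contributes, for each $k \in \N \cap [1,n)$, a summand in $V^0_{M,k}$ and a summand in $V^0_{M,k-1}$, each with coefficient $\tfrac{2L^2T}{M^{n-k}}$.

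The main obstacle (and the only substantive step) is a short bookkeeping argument re-indexing these three sources by the single index $j \in \N_0 \cap [0,n)$ labelling $V^0_{M,j}$ in the target inequality. For $j = 0$ only the second variance contribution at $k = 1$ appears, giving coefficient $\tfrac{2L^2T}{M^{n-1}} \leq \tfrac{4L^2T}{M^{n-j-1}}$. For $j \in \N \cap [1,n-1)$ both variance contributions are active, at $k = j$ and $k = j+1$ respectively, and using $M^{-(n-j)} \leq M^{-(n-j-1)}$ (which holds since $M \in \N$) their sum is at most $\tfrac{4L^2T}{M^{n-j-1}}$. Finally, for $j = n-1$ the bias squared coefficient $L^2T$ combines with the first variance contribution at $k = n-1$ of magnitude $\tfrac{2L^2T}{M}$, and $L^2T + \tfrac{2L^2T}{M} \leq 4L^2T = \tfrac{4L^2T}{M^{n-j-1}}$, again by $M \geq 1$.

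Assembling the three regimes shows that the sum of the bias squared and the variance is bounded by the right-hand side of the corollary; the degenerate case $n = 1$ (where the variance sum in Lemma~\ref{variance_estimate} is empty) reduces immediately to the bias bound and matches the target at $j = 0$. No analytic difficulty is expected beyond this combinatorial re-indexing, since all the nontrivial probabilistic estimates have already been proved in Lemmas~\ref{bias_estimate} and~\ref{variance_estimate}.
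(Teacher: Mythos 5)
Your proposal is correct and follows essentially the same route as the paper: fix $(M,n,t,x)$, invoke the integrability from Lemma~\ref{exp_of_approx} to justify the bias--variance decomposition, bound the bias by Lemma~\ref{bias_estimate} and the variance by Lemma~\ref{variance_estimate}, and then use $M^{-(n-k)} \le M^{-(n-k-1)}$ together with $L^2T \le 2L^2T$ to collect all contributions under the single index $k$ with coefficient $\tfrac{4L^2T}{M^{n-(k+1)}}$. Your presentation organizes the bookkeeping by the target index $j$ (with three cases) whereas the paper does it by splitting and re-indexing the two variance sums and absorbing the bias at the endpoint, but these are the same computation.
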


\begin{proof}[Proof of Corollary~\ref{error_estimate}]
Throughout this proof  
let $M, n \in \N$, $t \in [0, T]$, $ x \in \R^d$, $C \in [0,\infty]$, $(e_k)_{k \in \N_0 \cap [0,n)} \subseteq [0,\infty]$ satisfy that 
for all $k \in \N_0 \cap [0,n)$ that
\begin{equation}
  C 
=
  \Exp{ | g(X^0_{t, T}(x))|^2 } + T \int_{t}^T \Exp{ | f( r , X^0_{t, r}(x) , 0 ) |^2 } \, dr
\end{equation}
and 
\begin{equation}
  e_k
=
  \int_{t}^T  \Exp{ \big| u( r , X^0_{t, r}(x)) - V^0_{M,k}( r , X^0_{t,r}(x)) \big|^2 } \,dr.
\end{equation}
Note that 
item~\eqref{exp_of_approx:item1} in Lemma~\ref{exp_of_approx}, 
the bias variance decomposition of the mean square error (cf., e.g., Jentzen \& von Wurstemberger \cite[Lemma 2.2]{JentzenVW18}), 
the hypothesis that 
for all $s \in [0, T]$, $z \in \R^d$ it holds that 
$
  \int_{s}^T
    \Exp{
      | f (r, X^0_{s, r}(z),  0 ) |
   } 
 \, dr
< 
  \infty
$,
Lemma~\ref{bias_estimate}, 
and 
Lemma~\ref{variance_estimate} demonstrate that
\begin{equation}
\label{error_estimate:eq01}
\begin{split}
  &\Exp{| u(t, x) - V^0_{M,n} (t, x) |^2} \\
&=
  \Exp{ \big| V^0_{M,n} (t, x) - \EXP{V^0_{M,n} (t, x)} \big|^2}
  +
  \big| u(t, x) - \EXP{V^0_{M,n} (t, x)} \big|^2 \\
&\leq
  \tfrac{C}{M^n} 
  +
    \sum_{k = 1}^{n-1}
      \tfrac{2L^2T}{M^{n-k}}
      (
        e_k 
        +
        e_{k-1}
      )
  +
  L^2 T
  e_{n-1} \\
&\leq
  \tfrac{C}{M^n} 
  +
  \left[
    \sum_{k = 1}^{n-1}
      \tfrac{2L^2T}{M^{n-k}}
        e_k 
  \right]
   +
    \sum_{k = 0}^{n-1}
      \tfrac{2L^2T}{M^{n-(k+1)}}
        e_k 
\leq
  \tfrac{C}{M^n} 
  +
    \sum_{k = 0}^{n-1}
      \tfrac{4L^2T}{M^{n-(k+1)}}
        e_k.
\end{split}
\end{equation}
The proof of Corollary~\ref{error_estimate} is thus completed.
\end{proof}

\begin{lemma}
\label{change_of_var}
Let $T \in [0, \infty)$, $q \in \N$
and
let
$ 
  U \colon [0,T] \to [0,\infty]
$
be a $\mathcal{B}([0,T]) /  \mathcal{B}([0,\infty])$-measurable function.
Then
\begin{equation}
\begin{split}
&\int_0^T \frac{t^{q-1}}{(q-1)!} 
    \int_{t}^T
       U(r) 
    \, dr
  \, dt 
=
  \int_0^T \frac{t^{q}}{q!} \,
    U(t)
  \, dt .
\end{split}
\end{equation}
\end{lemma}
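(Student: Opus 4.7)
The plan is to establish the identity by a single application of Tonelli's theorem, exploiting the fact that $U$ is nonnegative and Borel measurable, which allows us to interchange the order of integration without any integrability conditions. First I would rewrite the iterated integral on the left-hand side as an integral over the two-dimensional region
\begin{equation}
\Delta = \{ (t, r) \in [0,T]^2 \colon 0 \leq t \leq r \leq T \},
\end{equation}
noting that the integrand $[0,T]^2 \ni (t,r) \mapsto \frac{t^{q-1}}{(q-1)!} U(r) \mathbbm{1}_{\Delta}(t,r) \in [0,\infty]$ is $\mathcal{B}([0,T]^2)/\mathcal{B}([0,\infty])$-measurable as a product of Borel measurable functions in each variable.

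Next I would apply Tonelli's theorem (see, e.g., Klenke \cite[Theorem 14.16]{Klenke14}) to interchange the order of integration, which is justified by the nonnegativity of the integrand. This yields
\begin{equation}
\int_0^T \frac{t^{q-1}}{(q-1)!} \int_{t}^T U(r) \, dr \, dt
=
\int_0^T U(r) \int_0^r \frac{t^{q-1}}{(q-1)!} \, dt \, dr.
\end{equation}
The inner integral is then evaluated explicitly as $\int_0^r \frac{t^{q-1}}{(q-1)!} \, dt = \frac{r^q}{q!}$, which gives the desired identity after renaming the integration variable $r$ back to $t$.

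There is no significant obstacle here: the only point that requires mild care is verifying the measurability of the integrand on the product space so that Tonelli's theorem applies, but this is immediate from the fact that $U$ is Borel measurable and that polynomials and indicator functions of triangular regions are Borel measurable. Since the integrand is nonnegative, no integrability hypothesis is needed, so the identity holds in $[0,\infty]$ without any additional assumptions on $U$.
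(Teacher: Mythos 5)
Your proposal is correct and follows essentially the same approach as the paper: both rewrite the iterated integral using the indicator function of the triangular region, apply Tonelli's theorem (justified by nonnegativity) to swap the order of integration, and then evaluate the inner integral $\int_0^r \frac{t^{q-1}}{(q-1)!}\,dt = \frac{r^q}{q!}$ explicitly.
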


\begin{proof}[Proof of Lemma~\ref{change_of_var}]
Observe that Tonelli's theorem assures that
\begin{equation}
\begin{split}
  &\int_0^T \frac{t^{q-1}}{(q-1)!} 
    \int_{t}^T
      U(r) 
    \, dr
  \, dt \\
&=
  \int_0^T \int_{0}^T
    \frac{t^{q-1}}{(q-1)!} 
    U(r) \,
    \mathbbm{1}_{ \{  (\mathfrak{t}, \mathfrak{r}) \in [0,T]^2 \colon \mathfrak{t} \leq \mathfrak{r}  \}}(t, r)
  \, dr \, dt  \\
&=
  \int_0^T \int_{0}^T
    \frac{t^{q-1}}{(q-1)!} 
    U(r) \,
    \mathbbm{1}_{\{  (\mathfrak{t}, \mathfrak{r}) \in [0,T]^2 \colon \mathfrak{t} \leq \mathfrak{r}  \}}(t, r)
  \, dt \, dr \\
&= 
  \int_0^T 
    \int_{0}^r
      \frac{t^{q-1}}{(q-1)!} 
    \, dt    \,
    U(r)
  \, dr
=
  \int_0^T \frac{r^{q}}{q!} \,
    U(r)
  \, dr .
\end{split}
\end{equation}
The proof of Lemma~\ref{change_of_var} is thus completed.
\end{proof}


\begin{prop}
\label{L2_estimate}
Assume Setting~\ref{setting}, let $\xi \in \R^d$, $C \in [0,\infty]$ satisfy
that 
\begin{equation}
\label{L2_estimate:ass1}
  C 
=
  \left[
    \left( \Exp{ | g(X^0_{0, T}(\xi))|^2 } \right)^{ \! \nicefrac{1}{2}} 
    +
    \sqrt{T}
    \left( \int_{0}^T \Exp{ | f( t , X^0_{0, t}(\xi) , 0 ) |^2 } \, dt  \right)^{ \! \nicefrac{1}{2}}
  \right]
  \exp(LT),
\end{equation}
and assume 
for all $t \in [0,T]$, $x \in \R^d$ that 
$
  \int_0^T \left( \Exp{ | u(r, X^0_{0,r}(\xi))|^2 } \right)^{\nicefrac{1}{2}} dr
  +
  \int_{t}^T
    \Exp{
      | f (r, X^0_{t, r}(x),  0 ) |
   } 
  dr
< 
  \infty
$.
Then 
it holds 
for all $M \in \N$, $n \in \N_0$ that
\begin{equation}
  \left(
    \Exp{| u(0, \xi) - V^0_{M,n} (0, \xi) |^2 }
  \right)^{\nicefrac{1}{2}}
\leq
  \frac{ C(1 + 2 L T )^n \exp(\tfrac{M}{2})}{M^{\nicefrac{n}{2}}}.
\end{equation}
\end{prop}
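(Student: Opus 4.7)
My proof would rely on deriving a recursive inequality for $e_n(t) := \Exp{|u(t, X^0_{0,t}(\xi)) - V^0_{M,n}(t, X^0_{0,t}(\xi))|^2}$ and then applying Lemma~\ref{gronwall_special}; since $X^0_{0,0}(\xi) = \xi$ almost surely, the quantity to bound coincides with $e_n(0)^{1/2}$. To derive the recursion, I note that by item~\eqref{indep_and_measurable:item2} of Lemma~\ref{indep_and_measurable}, $V^0_{M, n}$ is measurable with respect to $\sigmaAlgebra(\mathcal{R}^{(0,\vartheta)}, X^{(0,\vartheta)} \colon \vartheta \in \Theta)$, hence independent of $X^0 = X^{(0)}$ and of an independent copy $X^1 := X^{(1)}$ of $X^0$. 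I apply Corollary~\ref{error_estimate} pointwise in $x$, integrate the result against the law of $X^0_{0,t}(\xi)$ via Lemma~\ref{contRF_eval_nonneg}, and substitute $x = X^1_{0,t}(\xi)$. The distributional flow identity \eqref{setting:eq2} yields $X^0_{t,r}(X^1_{0,t}(\xi)) \stackrel{d}{=} X^0_{0,r}(\xi)$, which rewrites the $g$- and $f$-driven terms in terms of $X^0_{0,\cdot}(\xi)$; conditioning on $V^0_{M,k}$ and using Lemma~\ref{contRF_eval_nonneg} once more collapses the nonlinear integrand back to $e_k(r)$. The outcome is
\begin{equation*}
  e_n(t) \leq \frac{A_0}{M^n} + 4L^2 T \sum_{k=0}^{n-1} \frac{1}{M^{n-k-1}} \int_t^T e_k(r)\, dr,
\end{equation*}
where $A_0 := \Exp{|g(X^0_{0,T}(\xi))|^2} + T \int_0^T \Exp{|f(r, X^0_{0,r}(\xi), 0)|^2}\, dr$ obeys $A_0 \leq C^2 \exp(-2LT) \leq C^2$ by subadditivity of the square root applied to $\sqrt{A_0}$.

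Next, set $F_n^{(q)} := \int_0^T \tfrac{t^{q-1}}{(q-1)!} e_n(t)\, dt$ for $q \in \N$ and $F_n^{(0)} := e_n(0)$. Weighting the recursion by $\tfrac{t^{q-1}}{(q-1)!}$, integrating in $t$, and invoking Lemma~\ref{change_of_var} (or simply evaluating at $t=0$ when $q=0$) yields the unified recursion
\begin{equation*}
  F_n^{(q)} \leq \frac{A_0 T^q / q!}{M^n} + 4L^2 T \sum_{k=0}^{n-1} \frac{F_k^{(q+1)}}{M^{n-k-1}} \qquad (n, q \in \N_0).
\end{equation*}
Normalising via $\epsilon_{n,q} := F_n^{(q)} / T^q$ and using $A_0 M^q / q! \leq C^2 \exp(M)$ (valid since $M^q/q! \leq \exp(M)$ for all $q \in \N_0$) converts this into
\begin{equation*}
  \epsilon_{n,q} \leq \frac{C^2 \exp(M)}{M^{n+q}} + 4L^2 T^2 \sum_{k=0}^{n-1} \frac{\epsilon_{k, q+1}}{M^{n-k-1}},
\end{equation*}
which is exactly the hypothesis of Lemma~\ref{gronwall_special} with $\alpha = C^2 \exp(M)$ and $\beta = 4L^2 T^2$. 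That lemma then delivers $\epsilon_{n,q} \leq C^2 \exp(M) (1 + 4L^2 T^2)^n / M^{n+q}$, and specialising to $q = 0$ together with the inequality $1 + 4L^2 T^2 \leq (1 + 2LT)^2$ yields $e_n(0) \leq C^2 \exp(M) (1 + 2LT)^{2n} / M^n$, from which the proposition follows by taking square roots.

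The most delicate step is the first: averaging the pointwise-in-$x$ estimate from Corollary~\ref{error_estimate} against the random initial point $X^0_{0,t}(\xi)$ while preserving the MLP independence structure. This hinges simultaneously on the measurability/independence content of Lemma~\ref{indep_and_measurable} (to detach $V^0_{M,k}$ from the driving processes $X^0, X^1$) and on the flow identity \eqref{setting:eq2}, without which the composed evaluation $X^0_{t,r}(X^1_{0,t}(\xi))$ would not reduce in law to $X^0_{0,r}(\xi)$. The remainder is bookkeeping tuned to Lemma~\ref{gronwall_special}: the normalisation $T^{-q}$ is the unique one making the coefficient multiplying $\epsilon_{k,q+1}$ independent of $q$, while the elementary bound $M^q/q! \leq \exp(M)$ is what lets the inhomogeneous term be absorbed uniformly in $q$ into a single constant $\alpha = C^2\exp(M)$ that does not blow up with $q$.
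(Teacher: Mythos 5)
Your outline reproduces the paper's argument almost verbatim: integrate the pointwise estimate from Corollary~\ref{error_estimate} against the law of $X^0_{0,t}(\xi)$, invoke the flow identity and the independence structure of Lemma~\ref{indep_and_measurable} to collapse composed evaluations, introduce the weighted quantities, pass through Lemma~\ref{change_of_var}, normalise so Lemma~\ref{gronwall_special} applies, and finish with $1+4L^2T^2 \le (1+2LT)^2$. The bookkeeping — including the observation $A_0 \le C^2 e^{-2LT}$ and the role of $M^q/q! \le e^M$ — is correct.

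There is, however, one genuine gap. Corollary~\ref{error_estimate} only produces the recursion for $n \in \N$, so your claimed ``unified recursion $(n,q\in\N_0)$'' is not established at $n=0$. But Lemma~\ref{gronwall_special} needs the hypothesis at $n=0$ as well, i.e.\ $\epsilon_{0,q} \le \alpha / M^q$ with $\alpha = C^2 e^M$. Since $V^0_{M,0}=0$, the quantity $\epsilon_{0,q}$ is a weighted moment of $\Exp{|u(t,X^0_{0,t}(\xi))|^2}$, and nothing in what you have written bounds it. This is precisely where the paper invokes Lemma~\ref{bound_solution}, which uses the stochastic fixed-point equation \eqref{setting:eq4} and the backward-in-time Gronwall inequality (Lemma~\ref{gronwall_backwards}) to obtain $\sup_{t\in[0,T]}\Exp{|u(t,X^0_{0,t}(\xi))|^2}\le C^2$; only then does the base case $\epsilon_{0,q}\le C^2/q!\le C^2 e^M/M^q$ hold, and this is also exactly why the standing hypothesis $\int_0^T (\Exp{|u(r,X^0_{0,r}(\xi))|^2})^{1/2}\,dr<\infty$ appears in the proposition. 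You need to add this step before Lemma~\ref{gronwall_special} can legitimately be applied.
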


\begin{proof}[Proof of Proposition~\ref{L2_estimate}]
Throughout this proof
assume w.l.o.g.\ that $C < \infty$,
let $M \in \N$,
let
$\epsilon_{n,q} \in [0,\infty]$, $n,q \in \N_0$, be the extended real numbers which satisfy 
for all $n, q \in \N_0$ that
\begin{equation}
\label{L2_estimate:setting1}
  \epsilon_{n,0}
=
  \EXPP{| u(0, \xi) - V^0_{M,n} (0, \xi) |^2 }
\qand
\end{equation}
\begin{equation}
\label{L2_estimate:setting2}
  \epsilon_{n,q+1}
=
  \frac{1}{T^{q+1}}
    \int_0^T \frac{t^q}{q!}  \, 
      \Exp{| u(t, X^0_{0,t}(\xi)) - V^0_{M,n} (t, X^0_{0,t}(\xi)) |^2 }
    \, dt,
\end{equation}
and let $\mu_{t} \colon \mathcal{B}(\R^d) \to [0,1]$, $t \in [0,T]$, be the probability measures which satisfy 
for all $t \in [0,T]$, $B \in \mathcal{B}(\R^d)$ that
\begin{equation}
\label{L2_estimate:setting3}
  \mu_t(B) 
=
  \P( X^0_{0,t}(\xi) \in B )
=
  \P( X^1_{0,t}(\xi) \in B )
=
  \big((X^1_{0,t}(\xi))(\P)_{\Borel(\R^d)} \big) (B)
\end{equation}
(cf.\ item~\eqref{indep_and_measurable:item7} in Lemma~\ref{indep_and_measurable}).
Note that the fact that 
for all $x, y \in [0,\infty)$ it holds that 
$ 
  (x + y)^2 \geq x^2 + y^2
$
assures that
\begin{equation}
\label{L2_estimate:eq01}
\begin{split}
  C^2 
&\geq
  \left[
    \Exp{ | g(X^0_{0, T}(\xi))|^2 }
    +
    T
   \int_{0}^T \Exp{ | f( t , X^0_{0, t}(\xi) , 0 ) |^2 } \, dt
  \right]
  \exp(2LT) \\
&\geq
   \Exp{ | g(X^0_{0, T}(\xi))|^2 }
    +
    T
   \int_{0}^T \Exp{ | f( t , X^0_{0, t}(\xi) , 0 ) |^2 } \, dt.
\end{split}
\end{equation}
Next observe that 
items~\eqref{indep_and_measurable:item1}--\eqref{indep_and_measurable:item2} in Lemma~\ref{indep_and_measurable}, 
the hypothesis that $(X^\theta)_{\theta \in \Theta}$ are independent, 
the hypothesis that $(\mathcal{R}^\theta)_{\theta \in \Theta}$ are independent, 
the hypothesis that $ (X^\theta)_{\theta \in \Theta}$ and $(\mathcal{R}^\theta)_{\theta \in \Theta}$ are independent,
Tonelli's theorem, 
Corollary~\ref{error_estimate}, 
and
Lemma~\ref{contRF_eval_nonneg} ensure that
for all $n \in \N$, $t \in [0,T]$ it holds that
\begin{equation}
\label{L2_estimate:eq02}
\begin{split}
  &\Exp{| u(t, X^0_{0,t}(\xi)) - V^0_{M,n} (t, X^0_{0,t}(\xi)) |^2} 
=
  \int_{\R^d}
    \Exp{| u(t, z) - V^0_{M,n} (t, z) |^2}
  \, \mu_{t}(dz) \\
&\leq 
  \int_{\R^d}
  \Bigg[
    \tfrac{1}{M^n} 
    \Big(
      \Exp{ | g(X^0_{t, T}(z))|^2 } + T \int_{t}^T \Exp{ | f( r , X^0_{t, r}(z) , 0 ) |^2 } \, dr
    \Big)\\
&\quad 
    +
    \sum_{k = 0}^{n-1}
      \tfrac{4L^2T}{M^{n-(k+1)}}
        \int_{t}^T  \Exp{ \big| u( r , X^0_{t, r}(z)) - V^0_{M,k}( r , X^0_{t,r}(z)) \big|^2 } \,dr 
  \Bigg]
  \, \mu_{t}(dz) \\
&=
    \tfrac{1}{M^n} 
    \bigg(
      \int_{\R^d} \Exp{ | g(X^0_{t, T}(z))|^2 }  \, \mu_{t}(dz)  
      +   
      T \int_{t}^T \int_{\R^d}\Exp{ | f( r , X^0_{t, r}(z) , 0 ) |^2 } \, \mu_{t}(dz) \, dr
    \bigg)\\
&\quad 
    +
    \sum_{k = 0}^{n-1}
      \tfrac{4L^2T}{M^{n-(k+1)}}
        \int_{t}^T  
          \int_{\R^d}
            \Exp{ \big| u( r , X^0_{t, r}(z)) - V^0_{M,k}( r , X^0_{t,r}(z)) \big|^2 } 
          \, \mu_{t}(dz)
        \,dr.
\end{split}
\end{equation}
Moreover, observe that
\eqref{L2_estimate:setting3}, 
\eqref{L2_estimate:eq01}, 
the fact that 
$
  X^0
$  
and
$
  X^1
$
are independent and continuous random fields, \eqref{setting:eq2},
and
Lemma~\ref{contRF_eval_nonneg} imply that
for all $t \in [0,T]$ it holds that
\begin{equation}
\label{L2_estimate:eq03}
\begin{split}
  &\int_{\R^d} \Exp{ | g(X^0_{t, T}(z))|^2 }  \, \mu_{t}(dz)
  +   
  T \int_{t}^T \int_{\R^d}\Exp{ | f( r , X^0_{t, r}(z) , 0 ) |^2 } \, \mu_{t}(dz) \, dr \\
&=
  \Exp{ | g(X^0_{t, T}(X^1_{0, t}(\xi)))|^2 }
  +
   T \int_{t}^T\Exp{ | f( r , X^0_{t, r}(X^1_{0, t}(\xi)) , 0 ) |^2 }\, dr  \\
&=
  \Exp{ | g(X^0_{0, T}(\xi))|^2 }
  +
   T \int_{t}^T\Exp{ | f( r , X^0_{0, r}(\xi) , 0 ) |^2 }\, dr  
\leq
  C^2.
\end{split}
\end{equation}
In addition, note that 
\eqref{L2_estimate:setting3}, 
items~\eqref{indep_and_measurable:item1}--\eqref{indep_and_measurable:item2} in Lemma~\ref{indep_and_measurable}, 
the hypothesis that $(X^\theta)_{\theta \in \Theta}$ are independent, 
the hypothesis that $(\mathcal{R}^\theta)_{\theta \in \Theta}$ are independent, and
the hypothesis that $ (X^\theta)_{\theta \in \Theta}$ and $(\mathcal{R}^\theta)_{\theta \in \Theta}$ are independent,
\eqref{setting:eq2},
Lemma~\ref{contRF_eval_nonneg},
and
Lemma~\ref{distribution_of_eval_RF}
assure that
for all $n \in \N_0$, $t \in [0,T]$, $r \in [t, T]$ it holds that
\begin{equation}
\label{L2_estimate:eq04}
\begin{split}
  &\int_{\R^d}
    \Exp{ \big| u( r , X^0_{t, r}(z)) - V^0_{M,n}( r , X^0_{t,r}(z)) \big|^2 } 
  \, \mu_{t}(dz) \\
&=
  \Exp{ \big| u( r , X^0_{t, r}(X^1_{0, t}(\xi))) - V^0_{M,n}( r , X^0_{t,r}(X^1_{0, t}(\xi))) \big|^2 } \\
&=
  \Exp{ \big| u( r , X^0_{0, r}(\xi)) - V^0_{M,n}(r , X^0_{0, r}(\xi)) \big|^2 }.
\end{split}
\end{equation}
Combining this with \eqref{L2_estimate:eq02} and \eqref{L2_estimate:eq03} ensures that
for all $n \in \N$, $t \in [0,T]$ it holds that
\begin{equation}
\label{L2_estimate:eq05}
\begin{split}
  &\Exp{| u(t, X^0_{0,t}(\xi)) - V^0_{M,n} (t, X^0_{0,t}(\xi)) |^2} \\
&\leq
  \frac{C^2}{M^n}
  +
  \sum_{k = 0}^{n-1}
    \tfrac{4L^2T}{M^{n-(k+1)}}
      \int_{t}^T  
        \Exp{ \big| u( r , X^0_{0, r}(\xi)) - V^0_{M,k}( r , X^0_{0,r}(\xi)) \big|^2 } 
      \,dr.
\end{split}
\end{equation}
The fact that 
$\P(X^0_{0, 0}(\xi) = \xi) = 1$, 
the fact that 
for all $n \in \N$ it holds that $V^0_{M,n}$, $X^0$, and $\mathcal{R}^0$ are independent, 
Lemma~\ref{distribution_of_eval_RF}, 
and 
\eqref{L2_estimate:setting1}
hence imply that 
for all $n \in \N$ it holds that
\begin{equation}
\label{L2_estimate:eq06}
\begin{split}
  \epsilon_{n, 0}
&=
  \Exp{| u(0, X^0_{0,0}(\xi)) - V^0_{M,n} (0, X^0_{0,0}(\xi)) |^2} \\
&\leq
  \frac{C^2}{M^n}
  +
  \sum_{k = 0}^{n-1}
    \tfrac{4L^2T^2}{M^{n-(k+1)}T}
        \int_{0}^T  
          \Exp{ \big| u( r , X^0_{0, r}(\xi)) - V^0_{M,k}( r , X^0_{0,r}(\xi)) \big|^2 } 
        \,dr  \\
&=
    \frac{C^2}{M^n (0!)}
  +
  4L^2T^2
  \left[
    \sum_{k = 0}^{n-1}
    \frac{\epsilon_{k,1}}{M^{n-(k+1)}}
  \right].
\end{split}
\end{equation}
Moreover, observe that Lemma~\ref{change_of_var} 
(with
$ T = T $,
$ q = q $,
$ (U(r))_{r \in [0, T]} = (  \EXP{ | u(r, X^0_{0, r}(\xi)) - V^0_{M,n}(r, X^0_{0, r}(\xi)) |^2 })_{r \in [0, T]}$
for $n \in \N_0$, $q \in \N$
in the notation of Lemma~\ref{change_of_var})
demonstrates that 
for all $n \in \N_0$, $q \in \N$ it holds that
\begin{equation}
\label{L2_estimate:eq07}
\begin{split}
  &\frac{1}{T^q}
  \left(
    \int_0^T \frac{t^{q-1}}{(q-1)!} 
      \int_{t}^T
        \EXPP{ \big| u(r, X^0_{0, r}(\xi)) - V^0_{M,n}(r, X^0_{0, r}(\xi)) \big|^2 }
      \, dr
    \, dt
  \right) \\
&=
  \frac{T}{T^{q+1}}
  \left(
    \int_0^T \frac{t^{q}}{q!} 
      \EXPP{ \big| u(t, X^0_{0, t}(\xi)) - V^0_{M,n}(t, X^0_{0, t}(\xi)) \big|^2 }
    \, dt 
  \right)\\
&=
  T \epsilon_{n, q+1}.
\end{split}
\end{equation}
This and \eqref{L2_estimate:eq05} imply that
for all $n, q \in \N$ it holds that
\begin{equation}
\label{L2_estimate:eq08}
\begin{split}
  \epsilon_{n,q}
&=
  \frac{1}{T^q}
  \left(
    \int_0^T \frac{t^{q-1}}{(q-1)!}  \, 
      \Exp{| u(t, X^0_{0,t}(\xi)) - V^0_{M,n} (t, X^0_{0,t}(\xi)) |^2 }
    \, dt 
  \right)\\
&\leq
 \frac{C^2}{T^qM^n} 
  \left(
    \int_0^T
      \frac{t^{q-1}}{(q-1)!}  
     \,dt 
   \right)\\
&\quad
  +
    \sum_{k = 0}^{n-1}
    \tfrac{4L^2T}{M^{n-(k+1)}T^q}
    \left(
      \int_0^T \frac{t^{q-1}}{(q-1)!}  
        \int_{t}^T  
          \Exp{ \big| u( r , X^0_{0, r}(\xi)) - V^0_{M,k}( r , X^0_{0,r}(\xi)) \big|^2 } 
        \,dr
      \,dt 
    \right)\\
&=
  \frac{C^2}{T^q M^n} \frac{T^q}{ q! }
  +
  4L^2T
  \Bigg[  \sum_{k = 0}^{n-1}
    \frac{T\epsilon_{k, q+1}}{M^{n-(k+1)}}
  \Bigg]
=
  \frac{C^2} {M^n(q!)} 
  +
  4L^2T^2
  \Bigg[  \sum_{k = 0}^{n-1}
    \frac{\epsilon_{k, q+1}}{M^{n-(k+1)}}
  \Bigg].
\end{split}
\end{equation}
Furthermore, note the fact that 
$
  \int_0^T \left( \Exp{ | u(r, X^0_{0,r}(\xi))|^2 } \right)^{\nicefrac{1}{2}} dr 
< 
  \infty
$ 
and
Lemma~\ref{bound_solution} prove that
\begin{equation}
\label{L2_estimate:eq09}
  \sup_{ t \in [0, T] }
    \Exp{ | u(t, X^0_{0,t}(\xi))|^2 }
\leq
    C^2.
\end{equation}
The fact that 
$
  \P(X^0_{0,0}(\xi) = \xi) = 1
$
and 
the fact that $V^0_{M,0} = 0$ 
hence 
assure that
\begin{equation}
\label{L2_estimate:eq10}
  \epsilon_{0,0}
=
  | u(0, \xi) |^2 
=
  \Exp{| u(0, X^0_{0,0}(\xi)) |^2 }
\leq
  C^2
=
  \frac{C^2}{M^0 0! }.
\end{equation}
Moreover, observe that 
\eqref{L2_estimate:eq09} 
and
the fact that $V^0_{M,0} = 0$ 
ensure that
for all $q \in \N$ it holds that
\begin{equation}
\label{L2_estimate:eq11}
\begin{split}
  \epsilon_{0,q}
&=
  \frac{1}{T^q}
  \int_0^T \frac{t^{q-1}}{(q-1)!}  \, 
    \Exp{| u(t, X^0_{0,t}(\xi)) |^2 }
  \, dt 
\leq
  \frac{C^2}{T^q}
  \int_0^T \frac{t^{q-1}}{(q-1)!} \, dt
=
  \frac{C^2}{T^q}\frac{T^q}{q!}
=
  \frac{C^2}{M^0(q!) }.
\end{split}
\end{equation}
Combining this, \eqref{L2_estimate:eq06}, \eqref{L2_estimate:eq08}, and \eqref{L2_estimate:eq10} demonstrates that
for all $n,q \in \N_0$ it holds that
\begin{equation}
\label{L2_estimate:eq12}
\begin{split}
  \epsilon_{n,q}
&\leq
  \frac{C^2}{M^n(q)!}
  +
  4L^2T^2
  \left[
    \sum_{k = 0}^{n-1}
      \frac{ \epsilon_{k, q+1}}{M^{n-(k+1)}} 
  \right] \\
&=
  \frac{C^2 M^{q}}{M^{n+q}(q)!}
  +
  4L^2T^2
  \left[
    \sum_{k = 0}^{n-1}
      \frac{ \epsilon_{k, q+1}}{M^{n-(k+1)}}
  \right]
\leq
  \frac{C^2 \exp(M)}{M^{n+q}}
  +
  4L^2T^2
  \left[
    \sum_{k = 0}^{n-1}
      \frac{ \epsilon_{k, q+1}}{M^{n-(k+1)}}
  \right].
\end{split}
\end{equation}
Lemma~\ref{gronwall_special} 
(with
$\alpha = C^2 \exp(M)$,
$\beta = 4L^2T^2$,
$M = M$,
$(\epsilon_{n,q})_{n,q \in \N_0} = (\epsilon_{n,q})_{n,q \in \N_0}$
in the notation of Lemma~\ref{gronwall_special})
therefore
proves that 
for all $n,q \in \N_0$ it holds that
\begin{equation}
\label{L2_estimate:eq13}
  \epsilon_{n,q}
\leq
   \frac{C^2 \exp(M)(1 + 4L^2T^2)^n}{M^{n+q}}.
\end{equation}
This implies that 
for all $n \in \N_0$ it hold that
\begin{equation}
  \EXPP{| u(0, \xi) - V^0_{M,n} (0, \xi) |^2 }
=
  \epsilon_{n,0}
\leq
  \frac{C^2 (1 + 4 L^2 T^2 )^n \exp(M)}{M^n}.
\end{equation}
The fact that 
for all $x, y \in [0,\infty)$ it holds that
$
  \sqrt{x + y} \leq \sqrt{x} + \sqrt{y}
$
hence demonstrates that
for all $n \in \N_0$ it holds that
\begin{equation}
  \left(
    \Exp{| u(0, \xi) - V^0_{M,n} (0, \xi) |^2 }
  \right)^{\nicefrac{1}{2}}
\leq
  \frac{ C(\sqrt{1 + 4 L^2 T^2} )^n \exp(\tfrac{M}{2})}{M^{\nicefrac{n}{2}}}
\leq
  \frac{ C(1 + 2 L T )^n \exp(\tfrac{M}{2})}{M^{\nicefrac{n}{2}}}.
\end{equation}
The proof of Proposition~\ref{L2_estimate} is thus completed.
\end{proof}

\begin{cor}
\label{exponential_convergence}
Assume Setting~\ref{setting}, let $\xi \in \R^d$, $C \in [0,\infty]$ satisfy
that 
\begin{equation}
\label{exponential_convergence:ass1}
  C 
=
  \left[
    \left( \Exp{ | g(X^0_{0, T}(\xi))|^2 } \right)^{ \! \nicefrac{1}{2}} 
    +
    \sqrt{T}
    \left( \int_{0}^T \Exp{ | f( t , X^0_{0, t}(\xi) , 0 ) |^2 } \, dt  \right)^{ \! \nicefrac{1}{2}}
  \right]
  \exp(LT),
\end{equation}
and assume 
for all $t \in [0,T]$, $x \in \R^d$ that 
$
  \int_0^T \left( \Exp{ | u(r, X^0_{0,r}(\xi))|^2 } \right)^{\nicefrac{1}{2}} dr
  +
  \int_{t}^T
    \Exp{
      | f (r, X^0_{t, r}(x),  0 ) |
   } 
  dr
< 
  \infty
$.
Then
it holds  
for all $N \in \N$ that
\begin{equation}
  \left(
    \Exp{| u(0, \xi) - V^0_{N, N} (0, \xi) |^2 }
  \right)^{\nicefrac{1}{2}}
\leq
  C 
  \left[
    \frac{ \sqrt{e}(1 + 2 L T )}{\sqrt{N}}
  \right]^N.
\end{equation}
\end{cor}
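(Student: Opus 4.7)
The plan is to derive Corollary~\ref{exponential_convergence} as an immediate specialization of Proposition~\ref{L2_estimate}. I would first note that the hypotheses of Corollary~\ref{exponential_convergence} are identical to those of Proposition~\ref{L2_estimate}, so the proposition is directly applicable without any additional work.

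Next, I would apply Proposition~\ref{L2_estimate} with $M = N$ and $n = N$ for each $N \in \N$. This yields
\begin{equation*}
  \left(\EXPP{ | u(0, \xi) - V^0_{N, N}(0, \xi) |^2 }\right)^{\!\nicefrac{1}{2}}
\leq
  \frac{C (1 + 2LT)^N \exp(\nicefrac{N}{2})}{N^{\nicefrac{N}{2}}}.
\end{equation*}
The remaining task is purely algebraic: rewrite the right-hand side by pulling both $(1+2LT)^N$ and $\exp(N/2) = (\sqrt{e})^N$ into the $N$-th power together with $N^{-N/2} = (\sqrt{N})^{-N}$, giving
\begin{equation*}
  \frac{C (1 + 2LT)^N \exp(\nicefrac{N}{2})}{N^{\nicefrac{N}{2}}}
=
  C \left[ \frac{\sqrt{e} \, (1 + 2LT)}{\sqrt{N}} \right]^N.
\end{equation*}

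I do not expect any genuine obstacle here; the entire content of the corollary is the observation that the bound in Proposition~\ref{L2_estimate}, once evaluated along the diagonal $M = n = N$, factors as a single $N$-th power that decays super-exponentially in $N$. The only thing to be careful about is bookkeeping of the exponent, i.e.\ verifying that $\exp(M/2)/M^{n/2} = (\sqrt{e})^N/(\sqrt{N})^N$ when $M = n = N$, which is the identity $\exp(N/2) = (e^{1/2})^N$ combined with $N^{N/2} = (\sqrt{N})^N$.
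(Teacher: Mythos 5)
Your proposal is correct and matches the paper's proof, which simply invokes Proposition~\ref{L2_estimate} with $M=n=N$ and notes that $(1+2LT)^N\exp(N/2)/N^{N/2}=\bigl(\sqrt{e}\,(1+2LT)/\sqrt{N}\bigr)^N$. The hypotheses are indeed identical, so the specialization requires no additional verification.
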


\begin{proof}[Proof of Corollary~\ref{exponential_convergence}]
Proposition~\ref{L2_estimate} establishes Corollary~\ref{exponential_convergence}.
The proof of Corollary~\ref{exponential_convergence} is thus completed.
\end{proof}

\subsection{Complexity analysis for MLP approximation algorithms}
\label{sect:comp_effort}
In this subsection we consider the computational effort of the MLP scheme (cf.\ \eqref{setting:eq5} in Setting~\ref{setting} above) introduced in Setting~\ref{setting} and combine it with the $L^2$-error estimate in Corollary~\ref{exponential_convergence} to obtain a complexity analysis for the MLP scheme in Proposition~\ref{comp_and_error} below.
In Lemma~\ref{comp_effort_estimate} we think for all $M,n \in \N$ of $\RN_{M,n}$ as the number of realizations of $1$-dimensional random variables needed to simulate one realization of $V^\theta_{M,n}(t, x)$ for any $\theta \in \Theta$, $t \in [0, T]$, $x \in \R^d$.
The recursive inequality in \eqref{comp_effort_estimate:ass1} in Lemma~\ref{comp_effort_estimate} is based on \eqref{setting:eq5} and the assumption that the number of 
realizations of $1$-dimensional random variables needed to simulate $X^{\theta}_{t, r}(x)$ for any $\theta \in \Theta$, $t \in [0, T]$, $r \in [t, T]$, $x \in \R^d$ is bounded by $\alpha d$.

\begin{lemma}
\label{comp_effort_estimate}
Let $d \in \N$, $\alpha \in [1,\infty)$,
$(\RN_{M,n})_{M,n\in \Z}\subseteq [0,\infty)$ satisfy
for all $n,M \in \N$ that 
$\RN_{M,0}=0$
and 
\begin{equation}
\label{comp_effort_estimate:ass1}
  \RN_{M,n}
\leq 
  \alpha d M^n 
  +
  \sum_{k=0}^{n-1}
    \left[
      M^{(n-k)}( \alpha d+1 + \RN_{M,k}+ \mathbbm{1}_{ \N }( k )  \RN_{M,k-1})
    \right].
\end{equation}
Then 
it holds 
for all $n, M\in\N$ that
$
\RN_{ n, M }
\leq \alpha d \,(5M)^n
$.
\end{lemma}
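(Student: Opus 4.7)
The plan is to prove the bound $\RN_{M,n} \leq \alpha d (5M)^n$ by strong induction on $n \in \N_0$, for every fixed $M \in \N$. The base case $n=0$ is immediate from the hypothesis $\RN_{M,0} = 0 \leq \alpha d = \alpha d(5M)^0$ (using $\alpha \geq 1$, $d \geq 1$).

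For the induction step, assuming $\RN_{M,k} \leq \alpha d (5M)^k$ for every $k \in \{0, 1, \ldots, n-1\}$, the first key move is to bound each inner quantity $\alpha d + 1 + \RN_{M,k} + \mathbbm{1}_\N(k)\RN_{M,k-1}$ uniformly by $\tfrac{16}{5}\alpha d (5M)^k$. For $k \geq 1$ this uses $\alpha d + 1 \leq 2\alpha d \leq 2\alpha d(5M)^k$, the inductive bound $\RN_{M,k} \leq \alpha d(5M)^k$, and the observation $(5M)^{k-1} \leq \tfrac{1}{5}(5M)^k$ to get $\RN_{M,k-1} \leq \tfrac{1}{5}\alpha d (5M)^k$; summing these gives the factor $2 + 1 + \tfrac{1}{5} = \tfrac{16}{5}$. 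For $k = 0$ the term $\mathbbm{1}_\N(0)\RN_{M,-1}$ vanishes and $\RN_{M,0} = 0$, so the bound $\alpha d + 1 \leq 2\alpha d \leq \tfrac{16}{5}\alpha d = \tfrac{16}{5}\alpha d(5M)^0$ holds trivially.

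Substituting into the recursion \eqref{comp_effort_estimate:ass1} yields
\begin{equation*}
  \RN_{M,n}
  \leq
  \alpha d M^n + \tfrac{16}{5}\alpha d M^n \sum_{k = 0}^{n-1} 5^k
  =
  \alpha d M^n \Big[ 1 + \tfrac{16}{5} \cdot \tfrac{5^n - 1}{4} \Big]
  =
  \alpha d M^n \Big[ \tfrac{1}{5} + \tfrac{4}{5} \cdot 5^n \Big].
\end{equation*}
The final step is the elementary estimate $\tfrac{1}{5} + \tfrac{4}{5} \cdot 5^n \leq 5^n$, which is equivalent to $1 \leq 5^n$ and thus holds for every $n \in \N_0$; this gives $\RN_{M,n} \leq \alpha d (5M)^n$, completing the induction.

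The only genuine subtlety is the bookkeeping with the constant $\tfrac{16}{5}$: a naive bound of $5\alpha d (5M)^k$ on the inner quantity produces the geometric sum with factor $5$, which exactly saturates the target and leaves no room. One has to exploit the $\tfrac{1}{5}$ gain from $(5M)^{k-1} \leq \tfrac{1}{5}(5M)^k$ so that the coefficient of the dominant term becomes $\tfrac{16}{5} < 4$, which is precisely what makes the final check $\tfrac{1}{5} + \tfrac{4}{5}\cdot 5^n \leq 5^n$ go through. No other step requires more than direct computation.
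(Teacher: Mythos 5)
Your proof is correct, and it takes a genuinely different route from the paper's. The paper first rescales by $M^{-n}$, telescopes the geometric sum via $\frac{1-M^{-n}}{1-M^{-1}}\leq\frac{1}{1-1/2}$, and feeds the result into the discrete Gronwall inequality (Corollary~\ref{gronwall_discrete}); because that rescaling estimate breaks down at $M=1$, the paper is forced into a separate, parallel argument for the case $M=1$, again via Gronwall. Your strong induction avoids both the external Gronwall lemma and the case split entirely: the key observation that $(5M)^{k-1}\leq\tfrac{1}{5}(5M)^k$ holds uniformly for $M\geq 1$, so the coefficient $\tfrac{16}{5}$ on the summand and the final check $\tfrac{1}{5}+\tfrac{4}{5}\cdot 5^n\leq 5^n$ go through for every $M\in\N$ at once. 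This is a tighter and more self-contained argument than the one in the paper, at the cost of having to track the constant $\tfrac{16}{5}$ by hand; the paper's Gronwall route is a bit looser but slots into machinery reused elsewhere in the article.
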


\begin{proof}[Proof of Lemma~\ref{comp_effort_estimate}]
First, observe that~\eqref{comp_effort_estimate:ass1} and the hypothesis that 
for all $ M \in \N$ it holds that 
$\RN_{M,0}=0$ imply that
for all $ n \in \N $, $M \in \N\cap [2,\infty)$ it holds 
that
\begin{equation}  
\label{comp_effort_estimate:eq1}
\begin{split}
  (M^{-n} \RN_{M,n})
&\leq 
  \alpha d+\sum_{k=0}^{n-1}\left[ M^{-k}( \alpha d +1 + \RN_{M,k}+ \mathbbm{1}_{ \N }( k )  \RN_{M,k-1})\right]\\
&\leq 
  \alpha d+ (\alpha d+1) \left[ \sum_{k=0}^{n-1} M^{-k} \right]  + \left[ \sum_{k=0}^{n-1} M^{-k} \RN_{M,k} \right] + \left[ \sum_{k=0}^{n-2} M^{-(k+1)} \RN_{ M, k } \right] \\
&=
  \alpha d+ (\alpha d+1) \tfrac{(1 - M^{-n})}{(1- M^{-1})} + \left[ \sum_{k=0}^{n-1} M^{-k} \RN_{M,k} \right] +  \frac{1}{M} \left[ \sum_{k=0}^{n-2} M^{-k} \RN_{ M, k } \right] \\
&\leq
  \alpha d+ (\alpha d+1) \tfrac{1}{(1- \frac{1}{2})} + \left( 1 + \tfrac{1}{M}\right) \left[ \sum_{k=0}^{n-1} M^{-k} \RN_{M,k} \right] \\
&= 
  3\alpha d + 2 + \left( 1 + \tfrac{1}{M}\right) \left[ \sum_{k=1}^{n-1} M^{-k} \RN_{M,k} \right].
\end{split}     
\end{equation}
The discrete Gronwall inequality in Corollary~\ref{gronwall_discrete}
(with
$N = \infty$,
$\alpha = 3\alpha d + 2$,
$\beta =  \left( 1 + \tfrac{1}{M}\right)$,
$(\epsilon_n)_{n \in \N_0} = (M^{-(n+1)} \RN_{M,(n+1)})_{n \in \N_0}$
in the notation of Corollary~\ref{gronwall_discrete})
hence ensures that 
for all $ n \in \N_0 $, $M \in \N\cap [2,\infty)$ it holds that
\begin{equation}
\label{comp_effort_estimate:eq2}
 (M^{-(n + 1)} \RN_{ M, n+1 })
\leq
  (3\alpha d+2)\left(2+\tfrac1M\right)^{n}.
\end{equation}
This establishes that 
for all $ n \in \N $, $M \in \N\cap [2,\infty)$ it holds that
\begin{equation}
\label{comp_effort_estimate:eq3}
 \RN_{ M, n }
\leq
  (3\alpha d+2)\left(2+\tfrac1M\right)^{n-1}M^n
\leq (5\alpha d) 3^{n-1}M^n
\leq \alpha d (5M)^n.
\end{equation}
Moreover, observe that the fact that $\RN_{1,0} = 0$ and \eqref{comp_effort_estimate:ass1} demonstrate that 
for all $n \in \N$ it holds that
\begin{equation}
  \RN_{1, n}
\leq
  \alpha d
  +
  \sum_{k = 0}^{n-1} (\alpha d+1 + \RN_{1,k} + \mathbbm{1}_{\N}(k)\RN_{1,k-1})
\leq
  \alpha d + n(\alpha d+1) + 2 \sum_{k = 1}^{n-1}\RN_{1,k}.
\end{equation}
Hence, we obtain 
for all $n \in \N$, $k \in \N \cap (0,n]$ that
\begin{equation}
  \RN_{1,k}
\leq
  \alpha d + k(\alpha d+1) + 2 \sum_{l = 1}^{k-1}\RN_{1,l}
\leq
  \alpha d + n(\alpha d+1) + 2 \sum_{l = 1}^{k-1}\RN_{1,l}.
\end{equation}
Combining this with the discrete Gronwall inequality in Corollary~\ref{gronwall_discrete} 
(with
$N = n-1$,
$\alpha = \alpha d + n(\alpha d+1)$,
$\beta =  2$,
$(\epsilon_k)_{k \in \N_0 \cap [0,N]} = (\RN_{1,k+1})_{k \in \N_0 \cap [0,n)}$
in the notation of Corollary~\ref{gronwall_discrete})
proves that 
for all $n \in \N$, $k \in \N_0 \cap [0,n)$ it holds that
\begin{equation}
  \RN_{1,k+1}
\leq
  (\alpha d + n(\alpha d+1)) 3^{k}.
\end{equation}
The fact that 
for all $n \in \N$ it holds that
$(1 + 2n) 3^{n-1} \leq 5^n$
hence
shows that 
for all $n \in \N$ it holds that
\begin{equation}
  \RN_{1,n}
\leq
  (\alpha d + n(\alpha d+1)) 3^{n-1}
=
  \alpha d \, \left(1 + n\left(1+\tfrac{1}{\alpha d}\right)\right) 3^{n-1}
\leq
  \alpha d \, (1 + 2n) 3^{n-1}
\leq
  \alpha d \, 5^n.
\end{equation}
Combining this with \eqref{comp_effort_estimate:eq3} completes the proof of \cref{comp_effort_estimate}.
\end{proof}

\begin{prop}
\label{comp_and_error}
Assume \cref{setting},
let $\xi \in \R^d$, $C \in [0,\infty)$, $\alpha \in [1,\infty)$,
$(\RN_{M,n})_{M,n\in \Z}\subseteq\N_0$ satisfy
for all $n,M \in \N$ that 
\begin{equation}
\label{comp_and_error:ass1}
\RN_{M,0}=0,
\qquad
  \RN_{M,n}
\leq 
  \alpha d M^n 
  +
  \sum_{k=0}^{n-1}
    \left[
      M^{(n-k)}( \alpha d+1 + \RN_{M,k}+ \mathbbm{1}_{ \N }( k )  \RN_{M,k-1})
    \right],
\end{equation}
\begin{equation}
\label{comp_and_error:ass2}
\begin{split}
\andq
  C 
&=
    \left[
      \left( \Exp{ | g(X^0_{0, T}(\xi))|^2 } \right)^{ \! \nicefrac{1}{2}} 
      +
      \sqrt{T}
      \left( \int_{0}^T \Exp{ | f( t , X^0_{0, t}(\xi) , 0 ) |^2 } \, dt  \right)^{ \! \! \nicefrac{1}{2}}
    \right]
    \exp(LT),
\end{split}
\end{equation}
and assume 
for all $t \in [0,T]$, $x \in \R^d$ that 
$
  \int_0^T \left( \Exp{ | u(r, X^0_{0,r}(\xi))|^2 } \right)^{\nicefrac{1}{2}} dr
  +
  \int_{t}^T
    \Exp{
      | f (r, X^0_{t, r}(x),  0 ) |
   } 
  dr
< 
  \infty
$.
Then 
there exists a function 
$ 
  N 
  \colon (0,\infty) \to \N
$ 
such that
for all $\varepsilon, \delta \in (0,\infty)$ it holds that
\begin{equation}
  \left(
    \Exp{| u(0, \xi) - V^0_{N_\varepsilon, N_\varepsilon} (0, \xi) |^2 }
  \right)^{\nicefrac{1}{2}}
\leq
  \varepsilon
\qand
\end{equation}
\begin{equation}
\begin{split}
  \RN_{N_\varepsilon,N_\varepsilon}
&\leq
  \alpha \,
  d \,  
  \max \{1, C ^{2+\delta} \}
  \left[
    \sup_{n \in \N}
        \tfrac{(4+8LT)^{(3+\delta)(n+1)}}{n^{(n\delta/2)}}
  \right]
  (\min\{1, \varepsilon\})^{-(2 + \delta)}
<
  \infty.
\end{split}
\end{equation}
\end{prop}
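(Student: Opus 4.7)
The proof combines the $L^2$-error estimate from Corollary~\ref{exponential_convergence} with the computational-effort bound from Lemma~\ref{comp_effort_estimate} via a suitable choice of $N_\varepsilon$. Set $\beta := \sqrt{e}(1+2LT)$, $\gamma := 4+8LT = 4(1+2LT)$, and $S_\delta := \sup_{n\in\N}\gamma^{(3+\delta)(n+1)}/n^{n\delta/2}$; the supremum is finite because $n^{n\delta/2}$ grows faster than any exponential in $n$. For each $\varepsilon\in(0,\infty)$ let $\tilde\varepsilon:=\min\{1,\varepsilon\}$ and take $N_\varepsilon\in\N$ to be the smallest positive integer such that $C\beta^{N_\varepsilon}/N_\varepsilon^{N_\varepsilon/2}\leq\tilde\varepsilon$; the set on the right is non-empty (again because $N^{N/2}$ eventually dominates $\beta^N$), so $N_\varepsilon$ is well-defined. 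Corollary~\ref{exponential_convergence} then yields $(\EXP{|u(0,\xi)-V^0_{N_\varepsilon,N_\varepsilon}(0,\xi)|^2})^{1/2}\leq\tilde\varepsilon\leq\varepsilon$, and Lemma~\ref{comp_effort_estimate} gives $\RN_{N_\varepsilon,N_\varepsilon}\leq\alpha d\,(5N_\varepsilon)^{N_\varepsilon}$.

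It thus remains to show $(5N_\varepsilon)^{N_\varepsilon}\leq \max\{1,C^{2+\delta}\}S_\delta\tilde\varepsilon^{-(2+\delta)}$. The case $N_\varepsilon=1$ is trivial: $(5N_\varepsilon)^{N_\varepsilon}=5\leq\gamma^{2(3+\delta)}\leq S_\delta\leq\max\{1,C^{2+\delta}\}S_\delta\tilde\varepsilon^{-(2+\delta)}$ since $\gamma\geq 4$, $\tilde\varepsilon\leq 1$, and $\max\{1,C^{2+\delta}\}\geq 1$. For $N_\varepsilon\geq 2$, the minimality of $N_\varepsilon$ gives $C\beta^{N_\varepsilon-1}/(N_\varepsilon-1)^{(N_\varepsilon-1)/2}>\tilde\varepsilon$; raising to the $(2+\delta)$th power and rearranging produces
\begin{equation*}
\max\{1,C^{2+\delta}\}\tilde\varepsilon^{-(2+\delta)}\ \geq\ C^{2+\delta}\tilde\varepsilon^{-(2+\delta)}\ \geq\ \frac{(N_\varepsilon-1)^{(2+\delta)(N_\varepsilon-1)/2}}{\beta^{(2+\delta)(N_\varepsilon-1)}}.
\end{equation*}
It therefore suffices to prove the purely deterministic pointwise inequality, for every integer $n\geq 2$,
\begin{equation*}
(5n)^n\,\beta^{(2+\delta)(n-1)}\,/\,(n-1)^{(2+\delta)(n-1)/2}\ \leq\ \gamma^{(3+\delta)(n+1)}/n^{n\delta/2},
\end{equation*}
since the right side is bounded by $S_\delta$. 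Applying $N_\varepsilon = n$ and noting the right side is at most $S_\delta$ then yields the desired complexity bound on $\RN_{N_\varepsilon,N_\varepsilon}$.

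The verification of this last inequality is a matter of exponent bookkeeping. Using $(n/(n-1))^{n-1}\leq e$, one has $(n-1)^{(2+\delta)(n-1)/2}\geq n^{(2+\delta)(n-1)/2}/e^{(2+\delta)/2}$, and $1+2LT\geq 1$ lets one absorb the $(1+2LT)^{n+5+2\delta}$ surplus on the right. After collecting the powers of $n$ (the exponents $n$, $n\delta/2$ and $-(2+\delta)(n-1)/2$ combine to $1+\delta/2$) and of $e$ (they collapse to $(2+\delta)n/2$), the inequality reduces to
\begin{equation*}
(5\,e^{(2+\delta)/2})^n\, n^{1+\delta/2}\ \leq\ 4^{(3+\delta)(n+1)},
\end{equation*}
which holds because $r_\delta:=5e^{(2+\delta)/2}/4^{3+\delta}\leq 5e/64<1$ for every $\delta\geq 0$, so $r_\delta^n\,n^{1+\delta/2}$ attains its maximum at a small, explicit $n$ and is dwarfed by $4^{3+\delta}\geq 64$.

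The main obstacle is precisely this last bookkeeping step: the exponents $(3+\delta)(n+1)$, $n^{n\delta/2}$, and the base $4+8LT$ in the definition of $S_\delta$ are tuned exactly so that the $\beta$-factors from the minimality argument cancel against the $n^{(2+\delta)(n-1)/2}$ and $(5N_\varepsilon)^{N_\varepsilon}$ contributions, modulo the single dominant inequality $5\beta^{2+\delta}\leq\gamma^{3+\delta}$; everything else is elementary.
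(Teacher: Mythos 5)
Your proposal is correct and takes essentially the same route as the paper's proof: define $N_\varepsilon$ as the first integer at which the bound from Corollary~\ref{exponential_convergence} drops below the target tolerance, use minimality of $N_\varepsilon$ to lower-bound $(N_\varepsilon-1)^{(N_\varepsilon-1)/2}/\beta^{N_\varepsilon-1}$ in terms of $C$ and $\varepsilon$, invoke Lemma~\ref{comp_effort_estimate} for $\RN_{N_\varepsilon,N_\varepsilon}\le\alpha d(5N_\varepsilon)^{N_\varepsilon}$, and close via the elementary inequalities $n^n/(n-1)^{n-1}\le en$ and $\sqrt{5e}(1+2LT)\le 4(1+2LT)$. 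The only cosmetic difference is that you build $\min\{1,\varepsilon\}$ into the definition of $N_\varepsilon$ up front, whereas the paper defines $N_\varepsilon$ with $\varepsilon$ and passes to $\min\{1,\varepsilon\}$ only in the final bound; both routes and all the exponent bookkeeping check out.
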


\begin{proof}[Proof of Proposition~\ref{comp_and_error}]
Throughout this proof let $\kappa  \in (0,\infty)$ be given by
\begin{equation}
\kappa 
=
  \sqrt{e}(1+2LT),
\end{equation}
let 
$ 
  N 
  \colon (0,\infty) \to \N
$ 
be the function which satisfies
for all $\varepsilon \in (0,\infty)$ that
\begin{equation}
\label{comp_and_error:setting1}
  N_\varepsilon
=
  \min 
  \left\{
    n \in \N 
    \colon  
    C \left[\frac{ \kappa}{\sqrt{n}} \right]^n \leq \varepsilon 
  \right\},
\end{equation}
and let $\delta \in (0,\infty)$.
Note that \eqref{comp_and_error:setting1} and Corollary~\ref{exponential_convergence} assure that 
for all $\varepsilon \in (0,\infty)$ it holds that
\begin{equation}
\label{comp_and_error:eq1}
  \left(
    \Exp{| u(0, \xi) - V^0_{N_\varepsilon, N_\varepsilon} (0, \xi) |^2 }
  \right)^{\nicefrac{1}{2}}
\leq
  C\left[\frac{ \kappa}{\sqrt{N_\varepsilon}} \right]^{N_\varepsilon}
\leq
  \varepsilon.
\end{equation}
Moreover, observe that \eqref{comp_and_error:setting1} ensures that 
for all $\varepsilon \in (0,\infty)$ with $N_\varepsilon \geq 2$ it holds that
\begin{equation}
\label{comp_and_error:eq2}
  C\left[\frac{ \kappa}{\sqrt{N_\varepsilon -1}} \right]^{N_\varepsilon - 1}
> 
  \varepsilon.
\end{equation}
Lemma~\ref{comp_effort_estimate} and \eqref{comp_and_error:ass1} hence show that
for all $\varepsilon \in (0,\infty)$ with $N_\varepsilon \geq 2$ it holds that
\begin{equation}
\label{comp_and_error:eq3}
\begin{split}
  \RN_{N_\varepsilon,N_\varepsilon}
&\leq
  \alpha \,
  d \,
  (5 N_\varepsilon)^{N_\varepsilon}
\leq
  \alpha \,
  d \,
  (5 N_\varepsilon)^{N_\varepsilon}
  \left[
    C\left[\frac{ \kappa}{\sqrt{N_\varepsilon -1}} \right]^{N_\varepsilon - 1}
    \varepsilon^{-1}
  \right]^{2+\delta} \\
&=
  \alpha \,
  d \,
 C^{2 + \delta}
 \varepsilon^{-(2+\delta)}
 \left[
   \frac{
     (5 N_\varepsilon)^{N_\varepsilon} \kappa^{(N_\varepsilon-1)(2+\delta)}
   }{
     (N_\varepsilon-1)^{(N_\varepsilon-1)(1 + \nicefrac{\delta}{2})}
   }
  \right] \\
&\leq
  \alpha \,
  d \,
 C^{2 + \delta}
 \varepsilon^{-(2+\delta)}
 \sup_{n \in \N \cap [2,\infty)}
 \left[
   \frac{
     (5 n)^{n} \kappa^{(n-1)(2+\delta)}
   }{
     (n-1)^{(n-1)(1 + \nicefrac{\delta}{2})}
   }
  \right].
\end{split}
\end{equation}
Next note that 
for all $n \in \N \cap [2,\infty)$ it holds that
\begin{equation}
\label{comp_and_error:eq4}
  \frac{n^n}{(n-1)^{(n-1)}}
=
  \left(
    \frac{n}{n-1}
  \right)^{n-1}
  n
=
  \left(
    1
    +
    \frac{1}{n-1}
  \right)^{n-1}
  n
\leq
  e \, n.
\end{equation}
Furthermore, observe that the fact that $\kappa \geq \sqrt{e}$ and the fact that  $\sqrt{5e} \leq 4$ imply that
for all $n \in \N \cap [2, \infty)$ it holds that
\begin{equation}
\begin{split}
  5^{n} \, e \,  \kappa^{(n-1)(2+\delta)}
&\leq
  (\sqrt{5})^{n(2+\delta)} \, (\sqrt{e})^{2 + \delta} \,  \kappa^{(n-1)(2+\delta)} \\
&\leq
  (\sqrt{5})^{n(2+\delta)} \, \kappa^{2 + \delta} \,  \kappa^{(n-1)(2+\delta)} \\
&=
  (\sqrt{5}\kappa)^{n(2+\delta)} \\
&=
  ( \sqrt{5 e} (1+2LT) )^{n(2+\delta)} \\
&\leq
  ( 4 (1+2LT) )^{n(2+\delta)}
=
  ( 4 +8LT )^{n(2+\delta)}.
\end{split}
\end{equation}
Combining this, \eqref{comp_and_error:eq4}, and 
the fact that 
for all $n \in \N$ it holds that 
$n \leq  ( 4 +8LT )^n$
demonstrates that
\begin{equation}
\label{comp_and_error:eq5}
\begin{split}
  \sup_{n \in \N \cap [2,\infty)}
  \left[
    \frac{
      (5 n)^{n} \kappa^{(n-1)(2+\delta)}
    }{
      (n-1)^{(n-1)(1 + \nicefrac{\delta}{2})}
    }
  \right]
&=
  \sup_{n \in \N \cap [2,\infty)}
  \left[
    \frac{n^n}{(n-1)^{(n-1)}}
    \frac{
      5^{n} \kappa^{(n-1)(2+\delta)}
    }{
      (n-1)^{\nicefrac{((n-1)\delta)}{2}}
    }
  \right] \\
&\leq
   \sup_{n \in \N \cap [2,\infty)}
  \left[
    \frac{
      e \, n \, 5^{n} \kappa^{(n-1)(2+\delta)}
    }{
      (n-1)^{\nicefrac{((n-1)\delta)}{2}}
    }
  \right] \\
&\leq
   \sup_{n \in \N \cap [2,\infty)}
  \left[
    \frac{
      n ( 4 +8LT )^{n(2+\delta)}
    }{
      (n-1)^{\nicefrac{((n-1)\delta)}{2}}
    }
  \right]\\
&\leq
   \sup_{n \in \N \cap [2,\infty)}
  \left[
    \frac{
      ( 4 +8LT )^{n(3+\delta)}
    }{
      (n-1)^{\nicefrac{((n-1)\delta)}{2}}
    }
  \right].
\end{split}
\end{equation}
In addition, observe that 
\begin{equation}
\begin{split}
  \sup_{n \in \N \cap [2,\infty)}
  \left[
    \frac{
      ( 4 +8LT )^{n(3+\delta)}
    }{
      (n-1)^{\nicefrac{((n-1)\delta)}{2}}
    }
  \right]
&=
  ( 4 +8LT )^{3+\delta}
  \sup_{n \in \N}
  \left[
    \frac{
      ( 4 +8LT )^{n(3+\delta)}
    }{
      n^{\nicefrac{(n\delta)}{2}}
    }
  \right] \\
&=
  ( 4 +8LT )^{3+\delta}
  \sup_{n \in \N}
  \left[
    \frac{
      ( 4 +8LT )^{(3+\delta)}
    }{
      n^{\nicefrac{\delta}{2}}
    }
  \right]^n
<
  \infty.
\end{split}
\end{equation}
This, \eqref{comp_and_error:eq3}, and \eqref{comp_and_error:eq5} prove that
for all $\varepsilon \in (0,\infty)$ with $N_\varepsilon \geq 2$ it holds that
\begin{equation}
\label{comp_and_error:eq6}
\begin{split}
  \RN_{N_\varepsilon,N_\varepsilon}
&\leq
  \alpha \,
  d \,
 C^{2 + \delta}
 \varepsilon^{-(2+\delta)}
 \sup_{n \in \N}
 \left[
    \frac{
      ( 4 +8LT )^{(n+1)(3+\delta)}
    }{
      n^{\nicefrac{(n\delta)}{2}}
    }
  \right]
<
  \infty.
\end{split}
\end{equation}
Next note that
the hypothesis that $\RN_{1, 0} = 0$, \eqref{comp_and_error:ass1}, and 
the fact that
$
  3 
\leq 
  \sup_{n \in \N}
 \left[
    \frac{
      ( 4 +8LT )^{(n+1)(3+\delta)}
    }{
      n^{\nicefrac{(n\delta)}{2}}
    }
  \right]
<
  \infty
$
assure that
for all $\varepsilon \in (0,\infty)$ with $N_\varepsilon = 1$ it holds that
\begin{equation}
\begin{split}
  \RN_{N_\varepsilon, N_\varepsilon} 
&=
  \RN_{1, 1} 
\leq
  2 \alpha  d  + 1
\leq
  3 \alpha  d \\
&\leq
  \alpha \,
  d \,  
  \max \{1, C ^{2+\delta} \}
  \left[
    \sup_{n \in \N}
    \frac{
      ( 4 +8LT )^{(n+1)(3+\delta)}
    }{
      n^{\nicefrac{(n\delta)}{2}}
    }
  \right]
  (\min\{1, \varepsilon\})^{-(2 + \delta)}
<
  \infty.
\end{split}
\end{equation}
This and \eqref{comp_and_error:eq6} demonstrate that
for all $\varepsilon \in (0,\infty)$ it holds that
\begin{equation}
\begin{split}
  \RN_{N_\varepsilon, N_\varepsilon} 
\leq
  \alpha \,
  d \,  
  \max \{1, C ^{2+\delta} \}
  \left[
    \sup_{n \in \N}
    \frac{
      ( 4 +8LT )^{(n+1)(3+\delta)}
    }{
      n^{\nicefrac{(n\delta)}{2}}
    }
  \right]
  (\min\{1, \varepsilon\})^{-(2 + \delta)}
<
  \infty.
\end{split}
\end{equation}
Combining this with \eqref{comp_and_error:eq1} completes the proof of Proposition~\ref{comp_and_error}.
\end{proof}

\subsection{MLP approximations for semilinear partial differential equations (PDEs)}
\label{sect:kolmogorov}

Thanks to an equivalence between semilinear Kolmogorov PDEs and stochastic fixed points equations 
we can carry over the complexity analysis of Subsection~\ref{sect:comp_effort} for the approximation of solutions of stochastic fixed points equations to our proposed MLP scheme for the approximation of solutions of semilinear Kolmogorov PDEs (cf.\ \eqref{general_d_fixed:ass8} in Subsection~\ref{sect:kolmogorov_d_fixed} below) resulting in Proposition~\ref{general_d_fixed}.
Considering this complexity analysis over variable dimensions shows that our proposed MLP algorithm overcomes the curse of dimensionality in the approximation of solutions of certain semilinear Kolmogorov PDEs (see Theorem~\ref{general_thm} in Subsection~\ref{sect:kolmogorov_d_variable} below, the main result of this paper, for details).

\subsubsection{MLP approximations in fixed space dimensions}
\label{sect:kolmogorov_d_fixed}

\begin{prop}
\label{general_d_fixed}
Let $d, m \in \N$, $T \in (0,\infty)$, $ L, K, p, C_1, C_2, \mathfrak{C} \in [0,\infty)$, $\alpha \in [1,\infty)$, $\xi \in \R^d$, $\Theta = \cup_{n = 1}^\infty \Z^n$,
let $\left< \cdot, \cdot \right> \colon \R^d \times \R^d \to \R$ be the Euclidean scalar product on $\R^d$,
let $\norm{\cdot} \colon \R^d \to [0,\infty)$ be the Euclidean norm on $\R^d$,
let $\normmm{\cdot} \colon \R^{d \times m} \to [0,\infty)$ be the Frobenius norm on $\R^{d \times m}$,
assume that
\begin{equation}
  \mathfrak{C}
=
  4 K 
  e^{T(L+2+ p(p+ 2) (C_2 + 1))}
  \left( 
        (1 + \norm{\xi}^2)^{ \nicefrac{p}{2}}
        +
        (2p+1) |C_1|^{\nicefrac{ p }{2}}
  \right),
\end{equation}
let $g \in C( \R^d , \R)$, $f \in C( [0,T] \times \R^d \times \R , \R)$ satisfy 
for all $t \in [0, T]$, $ x  \in \R^d$, $v, w \in \R$ that 
\begin{equation}
\label{general_d_fixed:ass1}
  \max \{ | g(x) | , | f(t, x, 0) | \}
\leq
  K(1 + \norm{x}^{p})
\qandq
  |f(t, x, v) - f(t, x, w)|
\leq
  L | v - w |,
\end{equation}
let $\mu \colon [0, T] \times \R^d \to \R^d$ and $\sigma \colon [0, T] \times \R^d \to \R^{d \times m}$ be globally Lipschitz continuous functions
which satisfy
for all $t \in [0, T]$, $x \in \R^d$ that
\begin{equation}
\label{general_d_fixed:ass2}
  \max \{ \left <x, \mu(t, x) \right>,  \normmm{\sigma(t, x)}^2 \}  
\leq
  C_1 + C_2 \norm{x}^2,
\end{equation}
let $ ( \Omega, \mathcal{F}, \P ) $ be a complete probability space, 
let 
$ \mathcal{R}^\theta \colon \Omega \to [0, 1]$, 
  $\theta \in \Theta$, 
be independent  $\mathcal{U}_{ [0, 1]}$-distributed random variables,
let 
$ R^\theta = (R^\theta_t)_{t \in [0,T]} \colon [0,T] \times \Omega \to [0, T]$, 
  $\theta \in \Theta$, 
be the stochastic processes which satisfy 
for all $t \in [0,T]$, $\theta \in \Theta$ that
\begin{equation}
\label{general_d_fixed:ass3}
  R^\theta_t = t + (T-t)\mathcal{R}^\theta,
\end{equation}
let $(\mathbb{F}^\theta_t)_{t \in [0, T]}$, $\theta \in \Theta$, be filtrations on $ ( \Omega, \mathcal{F}, \P ) $ which satisfy the usual conditions,
assume that $(\mathbb{F}^\theta_T)_{\theta \in \Theta}$ is an independent family of sigma-algebras,
assume that 
$(\mathbb{F}^\theta_T)_{\theta \in \Theta}$ 
and 
$
  \left(
    \mathcal{R}^\theta
  \right)_{\theta \in \Theta}
$
are independent,
for every $\theta \in \Theta$
let $W^\theta \colon [0, T] \times \Omega \to \R^m$
be a standard $ ( \Omega, \mathcal{F}, \P , (\mathbb{F}^\theta_t)_{t \in [0, T]}) $-Brownian motion,
for every $\theta \in \Theta$ let 
$ X^\theta = (X^\theta_{t,s}(x))_{s \in [t, T], t \in [0, T], x \in \R^d}  \colon \{ (t,s) \in [0, T]^2 \colon t \leq s \} \times \R^d \times \Omega \to \R^d$
be a continuous random field
which satisfies for every $t \in [0, T]$, $x \in \R^d$ that
$(X^\theta_{t,s}(x))_{s \in [t, T]} \colon [t, T] \times \Omega \to \R^d$ is an $(\mathbb{F}^\theta_{s})_{s \in [t, T]}$/$\Borel(\R^d)$-adapted stochastic process and
which satisfies that 
for all $t \in [0, T]$, $s \in [t, T]$, $x \in \R^d$
it holds $\P$-a.s.\ that
\begin{equation}
\label{general_d_fixed:ass5}
  X^\theta_{t,s}(x) 
= 
  x 
  + 
  \int_{t}^s \mu \big(r, X^\theta_{t,r}(x)\big) \,dr 
  +
  \int_{t}^s \sigma \big(r, X^\theta_{t,r}(x)\big) \, dW^\theta_r,
\end{equation}
let 
$
  V^{\theta}_{M,n} \colon [0,T] \times \R^d \times \Omega \to \R
$, $M, n \in \Z$, $\theta \in \Theta$,
be functions which satisfy 
for all $M, n \in \N$, $\theta \in \Theta$, $t \in [0,T]$, $x \in \R^d$ that
$
  V^{\theta}_{M,-1} (t, x) = V^{\theta}_{M,0} (t, x) = 0
$
and
\begin{equation}
\label{general_d_fixed:ass8}
\begin{split}
  V^{\theta}_{M,n}(t, x) 
&=
  \frac{1}{M^n} 
  \Bigg[
    \sum_{m = 1}^{M^n}
      g \big( X^{(\theta, n, -m)}_{t, T}(x) \big)
  \Bigg] \\
&\quad
  +
  \sum_{k = 0}^{n-1}
    \frac{(T-t)}{M^{n-k}} 
    \Bigg[
      \sum_{m = 1}^{M^{n-k}}
        f \Big( 
          R^{(\theta, k, m)}_t , X^{(\theta, k, m)}_{t, R^{(\theta, k, m)}_t}(x),
          V^{(\theta, k, m)}_{M, k} \big( R^{(\theta, k, m)}_t  ,  X^{(\theta, k, m)}_{t, R^{(\theta, k, m)}_t}(x) \big)
        \Big) \\
&\quad
        -
       \mathbbm{1}_{\N}(k)
       f \Big( 
         R^{(\theta, k, m)}_t  ,  X^{(\theta, k, m)}_{t, R^{(\theta, k, m)}_t}(x),
         V^{(\theta, k, -m)}_{M, k-1} \big( R^{(\theta, k, m)}_t  ,  X^{(\theta, k, m)}_{t, R^{(\theta, k, m)}_t}(x) \big)
        \Big)
    \Bigg],
\end{split}
\end{equation}
and let $(\RN_{M,n})_{M,n\in \Z}\subseteq\N_0$ satisfy
for all $n,M \in \N$ that 
$\RN_{M,0}=0$ and
\begin{equation}
\label{general_d_fixed:ass9}
  \RN_{M,n}
\leq 
  \alpha d M^n 
  +
  \sum_{k=0}^{n-1}
    \left[
      M^{(n-k)}( \alpha d+1 + \RN_{M,k}+ \mathbbm{1}_{ \N }( k )  \RN_{M,k-1})
    \right].
\end{equation}
Then
\begin{enumerate}[(i)]
\item \label{general_d_fixed:item1}
there exists a unique at most polynomially growing function $u \in C([0,T] \times \R^d, \R)$ which satisfies that 
$u|_{(0,T)\times\R^d}\colon (0,T)\times\R^d\to\R$ is a viscosity solution of 
\begin{multline}
\label{general_d_fixed:concl1}
	(\tfrac{\partial u}{\partial t})(t,x) 
	+
	\tfrac{1}{2} 
	\operatorname{Trace}\! \big( 
		\sigma(t, x)[\sigma(t, x)]^{\ast}(\operatorname{Hess}_x u )(t,x)
	\big)  \\
\quad \,
	+
	\langle 
	 \mu(t,x), 
	 (\nabla_x u)(t,x)
	\rangle_{\R^d}
	+
	f(t, x, u(t, x))
=
	0
\end{multline}
for $(t, x) \in (0,T) \times \R^d$ and 
which satisfies 
for all $x \in \R^d$ that
$
  u(T, x) = g(x)
$,

\item \label{general_d_fixed:item2} 
it holds  
for all $M \in \N$, $n \in \N_0$ that
\begin{equation}
  \left(
    \Exp{| u(0, \xi) - V^0_{M,n} (0, \xi) |^2 }
  \right)^{\nicefrac{1}{2}}
\leq
  \frac{ 
    \mathfrak{C}
    (1 + 2 L T )^n \exp(\tfrac{M}{2})
  }
  {M^{\nicefrac{n}{2}}}
<
  \infty,
\end{equation}
and

\item \label{general_d_fixed:item3}
there exists a function 
$ 
  N 
  \colon (0,\infty) \to \N
$ 
such that
for all $\varepsilon, \delta \in (0,\infty)$ it holds that
\begin{equation}
  \left(
    \Exp{| u(0, \xi) - V^0_{N_\varepsilon, N_\varepsilon} (0, \xi) |^2 }
  \right)^{\nicefrac{1}{2}}
\leq
  \varepsilon
\qand
\end{equation}
\begin{equation}
\begin{split}
  \RN_{N_\varepsilon,N_\varepsilon}
&\leq
  \alpha \,
  d \,  
  \max \{1, \mathfrak{C}^{2+\delta} \}
  \left[
    \sup_{n \in \N}
        \tfrac{(4+8LT)^{(3+\delta)(n+1)}}{n^{(n\delta/2)}}
  \right]
  (\min\{1, \varepsilon\})^{-(2 + \delta)}
<
  \infty.
\end{split}
\end{equation}

\end{enumerate}
\end{prop}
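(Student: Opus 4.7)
The plan is to reduce Proposition~\ref{general_d_fixed} to the stochastic fixed point framework of Setting~\ref{setting} and then invoke Corollary~\ref{exponential_convergence} and Proposition~\ref{comp_and_error}.

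First I would establish item~\eqref{general_d_fixed:item1} by applying a nonlinear Feynman--Kac-type representation result (along the lines of Beck et al.\ \cite{JentzenBeck18}). Under the globally Lipschitz hypotheses on $\mu$, $\sigma$, and $f(t,x,\cdot)$, the polynomial growth assumption~\eqref{general_d_fixed:ass1} on $f(\cdot,\cdot,0)$ and $g$, and the coercivity estimate~\eqref{general_d_fixed:ass2}, such a result yields a unique at most polynomially growing $u \in C([0,T]\times\R^d,\R)$ that is a viscosity solution of~\eqref{general_d_fixed:concl1} with terminal condition $g$ and that in addition satisfies the stochastic fixed point equation~\eqref{setting:eq4}.

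Next I would verify the remaining hypotheses of Setting~\ref{setting} and of Corollary~\ref{exponential_convergence}. The distributional flow property~\eqref{setting:eq2} follows from Lemma~\ref{flow_SDE} applied to each pair of distinct indices $(\theta,\vartheta)\in\Theta^2$, using the pairwise independence of $(\mathbb{F}^\theta_T)_{\theta \in \Theta}$. Lemma~\ref{moment_bound_SDE} combined with the polynomial growth~\eqref{general_d_fixed:ass1} gives the required $L^2$-integrability of $g(X^0_{t,T}(x))$ and $f(r, X^0_{t,r}(x), 0)$, and (via the polynomial growth of $u$ from the previous step) of $u(r, X^0_{0,r}(\xi))$. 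The quantitatively delicate point is to show that the abstract constant $C$ from~\eqref{exponential_convergence:ass1} is dominated by $\mathfrak{C}$: using the elementary bounds $|g(x)|^2, |f(t,x,0)|^2 \leq 2 K^2 (1 + \|x\|^{2p})$ this reduces to controlling $\Exp{\|X^0_{0,s}(\xi)\|^{2p}}$ uniformly in $s \in [0,T]$ via Lemma~\ref{moment_bound_SDE} with exponent $2p$, after which the prefactor $\exp(LT)$ and the moment-bound exponential get absorbed into $\exp(T(L+2+p(p+2)(C_2+1)))$ to produce the precise form of $\mathfrak{C}$.

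With these ingredients in hand, item~\eqref{general_d_fixed:item2} is immediate from Corollary~\ref{exponential_convergence}, and item~\eqref{general_d_fixed:item3} follows by plugging the same bound $C \leq \mathfrak{C}$ into Proposition~\ref{comp_and_error} (whose complexity hypothesis~\eqref{comp_and_error:ass1} coincides with~\eqref{general_d_fixed:ass9}). The main obstacle is item~\eqref{general_d_fixed:item1}: the nonlinear Feynman--Kac representation with only polynomially growing data and unbounded drift/diffusion coefficients requires careful appeal to the cited literature, whereas the rest amounts to checking that the structural and integrability hypotheses of Setting~\ref{setting} and Corollary~\ref{exponential_convergence} are met and to the essentially bookkeeping moment computation that converts the abstract constant $C$ into the explicit constant $\mathfrak{C}$.
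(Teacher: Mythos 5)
Your proposal follows essentially the same route as the paper's proof: reduce to Setting~\ref{setting} via the Feynman--Kac representation (the paper cites Hairer et al.\ rather than Beck et al., but this is the same idea), verify the flow property via Lemma~\ref{flow_SDE}, establish the required $L^2$-integrability via Lemma~\ref{moment_bound_SDE} and the polynomial growth hypotheses, bound the abstract constant $C$ of the error estimate by $\mathfrak{C}$ via a moment computation with exponent $2p$, and then invoke the error estimate for item~\eqref{general_d_fixed:item2} and Proposition~\ref{comp_and_error} for item~\eqref{general_d_fixed:item3}. One small imprecision: item~\eqref{general_d_fixed:item2} is a bound for \emph{all} $M \in \N$ and $n \in \N_0$, so you need Proposition~\ref{L2_estimate} rather than Corollary~\ref{exponential_convergence} (which only covers the diagonal case $M=n=N$); Corollary~\ref{exponential_convergence} is then the right tool inside Proposition~\ref{comp_and_error} for item~\eqref{general_d_fixed:item3}.
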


\begin{proof}[Proof of Proposition~\ref{general_d_fixed}]
Throughout this proof let $(\rho_1^{(q)})_{q \in [0, \infty),} (\rho_2^{(q)})_{q \in [0, \infty)}\subseteq (0,\infty)$, $C \in [0, \infty]$ satisfy 
for all $q \in [0, \infty)$ that
\begin{equation}
\label{general_d_fixed:setting1}
  \rho_1^{(q)} = \tfrac{q(q+3) (C_2 + 1)}{2},
\qquad
  \rho_2^{(q)} = (q+1) |C_1|^{ \nicefrac{q}{2} },
\qand
\end{equation}
\begin{equation}
\label{general_d_fixed:setting2}
  C 
=
  \left[
    \left( \Exp{ | g(X^0_{0, T}(\xi))|^2 } \right)^{ \! \nicefrac{1}{2}} 
    +
    \sqrt{T}
    \left( \int_{0}^T \Exp{ | f( t , X^0_{0, t}(\xi) , 0 ) |^2 } \, dt  \right)^{ \! \! \nicefrac{1}{2}}
  \right]
  e^{LT}.
\end{equation}
Observe that the fact that $\mu$ and $\sigma$ are globally Lipschitz continuous functions
and
\eqref{general_d_fixed:ass1}
assure that
there exists a unique at most polynomially growing function $u \in C([0,T] \times \R^d, \R)$ which satisfies that 
$u|_{(0,T)\times\R^d}\colon (0,T)\times\R^d\to\R$ is a viscosity solution of 
\begin{multline}
\label{general_d_fixed:eq01}
	(\tfrac{\partial u}{\partial t})(t,x) 
	+
	\tfrac{1}{2} 
	\operatorname{Trace}\! \big( 
		\sigma(t, x)[\sigma(t, x)]^{\ast}(\operatorname{Hess}_x u )(t,x)
	\big)  \\
\quad \,
	+
	\langle 
	 \mu(t,x), 
	 (\nabla_x u)(t,x)
	\rangle_{\R^d}
	+
	f(t, x, u(t, x))
=
	0
\end{multline}
for $(t, x) \in (0,T) \times \R^d$ and 
which satisfies 
for all $x \in \R^d$ that
$
  u(T, x) = g(x)
$
(cf., e.g., Hairer et al.\ \cite[Section 4]{Hairer2015}).
This proves item~\eqref{general_d_fixed:item1}.
In addition, note that 
the fact that $\mu$ and $\sigma$ are globally Lipschitz continuous functions, 
\eqref{general_d_fixed:ass1}, 
\eqref{general_d_fixed:ass5}, 
and 
the Feynman-Kac formula
assure that
for all $t \in [0,T]$, $x \in \R^d$ it holds that
\begin{equation}
\label{general_d_fixed:eq02}
  u(t, x)
=
  \Exp{
    g \big( X^0_{t, T}(x) \big)
    +
    \int_{t}^T
      f \big(r, X^0_{t, r}(x),  u(r, X^0_{t, r}(x)) \big)
    \, dr
  }
\end{equation}
(cf., e.g., Hairer et al.\ \cite[Section 4]{Hairer2015}).
Moreover, observe that 
the hypothesis that $\mu$ and $\sigma$ are globally Lipschitz continuous functions,
the fact that 
for all $\theta, \vartheta \in \Theta$ with $\theta \neq \vartheta$  it holds that $\mathbb{F}^\theta_T$ and $\mathbb{F}^\vartheta_T$ are independent,
\eqref{general_d_fixed:ass5}, and
Lemma~\ref{flow_SDE} assure that
for all $\theta, \vartheta \in \Theta$, $r, s, t \in [0,T]$, $x \in \R^d$, $B \in \mathcal{B}(\R^d)$ with $t \leq s \leq r$ and $\theta \neq \vartheta$ 
it holds 
that 
$
  \P(X^\theta_{t,t}(x) = x) = 1
$
and
\begin{equation}
\label{general_d_fixed:eq021}
  \P \big(   X^\theta_{s,r}( X^\vartheta_{t,s}(x))  \in {B}  \big) 
= 
  \P\big(X^\theta_{t,r}(x) \in {B} \big).
\end{equation} 
Next note that
the hypothesis that $\mu$ and $\sigma$ are globally Lipschitz continuous functions,
\eqref{general_d_fixed:ass2}, 
\eqref{general_d_fixed:ass5},
and Lemma~\ref{moment_bound_SDE} 
(with
$d = d$, 
$m = m$, 
$T = T-t$, 
$C_1 = C_1$, 
$C_2 = C_2$, 
$\xi = x$,
$(\mu(r, y))_{r \in [0, T], y \in \R^d} = (\mu(t+r, y))_{r \in [0, T-t], y \in \R^d}$,
$(\sigma(r, y))_{r \in [0, T], y \in \R^d} = (\sigma(t+r, y))_{r \in [0, T-t], y \in \R^d}$, \linebreak
$(\Omega, \mathcal{F}, \P, (\mathbb{F}_r)_{r \in [0,T]}) = (\Omega, \mathcal{F}, \P, (\mathbb{F}_{t + r})_{r \in [0,T-t]})$,
$(W_r)_{r \in [0, T]} = (W^0_{t+r} - W^0_{t})_{r \in [0, T-t]}$,
$(X_r)_{r \in [0, T]} = (X^0_{t, t+r}(x))_{r \in [0, T-t]}$
for $t \in [0, T]$, $x \in \R^d$
in the notation of Lemma~\ref{moment_bound_SDE})
assure that
for all $x \in \R^d$, $t \in [0, T]$, $s \in [t, T]$, $q \in [0,\infty)$ it holds that
\begin{equation}
\label{general_d_fixed:eq03}
  \Exp{\norm{X^0_{t, s}(x)}^q}
\leq  
  \max \{T, 1 \}
  \left(
    (1 + \norm{x}^2)^{\nicefrac{q}{2}}
    +
    \rho_2^{(q)}
  \right)
  e^{\rho_1^{(q)} T}.
\end{equation}
For the next step let $\mathfrak{K}, \mathfrak{p} \in [0,\infty)$ satisfy  
for all $t \in [0,T]$, $x \in \R^d$ that
\begin{equation}
\label{general_d_fixed:eq031}
  | u(t, x) |
\leq  
  \mathfrak{K} (1 + \norm{x}^\mathfrak{p}).
\end{equation}
This, Tonelli's theorem, and \eqref{general_d_fixed:ass1} assure that
for all $t \in [0,T]$, $x \in \R^d$ it holds that
\begin{equation}
\label{general_d_fixed:eq032}
\begin{split}
  &\Exp{
    \left|  g ( X^0_{t, T}(x) )  \right|
    +
    \int_{t}^T
      \big| f (r, X^0_{t, r}(x),  u(r, X^0_{t, r}(x)) ) \big|
    \, dr
  } \\
& \leq
  \Exp{
    K \!
    \left(
      1
      +
      \Norm{X^0_{t, T}(x) )}^p
    \right)
  }
  +
  \int_{t}^T
    \Exp{
      K \! \left(
        1 + \Norm{X^0_{t, r}(x)}^p
      \right)
      +
      L \left| u(r, X^0_{t, r}(x)) \right|
    }
  \, dr \\
& \leq
  K \!
  \left(
    1
    +
    \Exp{
      \Norm{X^0_{t, T}(x) )}^p
    }
  \right)
  +
  \int_{t}^T
    K \! \left(
      1 + \Exp{\Norm{X^0_{t, r}(x)}^p}
    \right)
    +
    L \mathfrak{K} \left(1 + \Exp{ \Norm{X^0_{t, r}(x)}^\mathfrak{p} } \right)
  \, dr \\
&\leq
  (1 + (T-t))
  K \!
  \left(
    1
    +
    \max \{T, 1 \}
    \left(
      (1 + \norm{x}^2)^{\nicefrac{p}{2}}
      +
      \rho_2^{(p)}
    \right)
    e^{\rho_1^{(p)} T}
  \right) \\
& \quad  +
  (T-t) L \mathfrak{K} 
  \left(
    1 + 
    \max \{T, 1 \}
    \left(
      (1 + \norm{x}^2)^{\nicefrac{\mathfrak{p}}{2}}
      +
      \rho_2^{(\mathfrak{p})}
    \right)
    e^{\rho_1^{(\mathfrak{p})} T}
  \right)
<
  \infty.
\end{split}
\end{equation}
Moreover, observe that \eqref{general_d_fixed:ass1}, \eqref{general_d_fixed:eq03}, \eqref{general_d_fixed:eq031}, and the triangle inequality demonstrate that
for all $t \in [0,T]$, $x \in \R^d$ it holds that
\begin{equation}
\label{general_d_fixed:eq04}
\begin{split}
  &\int_0^T \left( \Exp{ | u(r, X^0_{0,r}(\xi))|^2 } \right)^{\nicefrac{1}{2}} dr
  +
  \int_{t}^T
    \Exp{
      | f (r, X^0_{t, r}(x),  0 ) |
   } 
  dr \\
&\leq
   \int_0^T 
     \left( 
       \Exp{ \big|\mathfrak{K}(1 + \Norm{ X^0_{0,r}(\xi) }^\mathfrak{p}) \big|^2 } 
     \right)^{\nicefrac{1}{2}} 
   dr
  +
  \int_{t}^T
    \Exp{
      K(1 + \Norm{ X^0_{t, r}(x) }^p)
   } 
  dr \\
&\leq
   \int_0^T 
       \mathfrak{K} 
       \big(
         1 + 
         \left(
           \Exp{ \Norm{ X^0_{0,r}(\xi) }^{2\mathfrak{p}}}  
         \right)^{\nicefrac{1}{2}} 
       \big)  
   \, dr
  +
  \int_{t}^T
      K \big(1 + \Exp{\Norm{ X^0_{t, r}(x) }^p } \big)
  \, dr \\
&\leq
  T
    \mathfrak{K} 
    \left(
      1 + 
      \left[
        \max \{T, 1 \}
        \left(
          (1 + \norm{\xi}^2)^{\mathfrak{p}}
          +
          \rho_2^{(2\mathfrak{p})}
        \right)
        e^{\rho_1^{(2\mathfrak{p})} T}
      \right]^{\nicefrac{1}{2}}
    \right) \\
&\quad
  +
  (T-t)
      K 
      \left(
        1 + 
        \max \{T, 1 \}
        \left(
          (1 + \norm{x}^2)^{\nicefrac{p}{2}}
          +
          \rho_2^{(p)}
        \right)
        e^{\rho_1^{(p)} T}
      \right)
<
  \infty.
\end{split}
\end{equation}
Combining this, 
\eqref{general_d_fixed:ass1}, 
\eqref{general_d_fixed:ass8}, 
\eqref{general_d_fixed:eq02}, 
\eqref{general_d_fixed:eq021}, 
\eqref{general_d_fixed:eq032},  
the fact that $(X^\theta)_{\theta \in \Theta}$ are independent,
and
the fact that $(X^\theta)_{\theta \in \Theta}$ and $(\mathcal{R}^\theta)_{\theta \in \Theta}$ are independent
with Proposition~\ref{L2_estimate} 
(with
$d = d$,
$T = T$, 
$L = L$,
$u = u$,
$g = g$,
$f = f$,
$\mathcal{R}^\theta = \mathcal{R}^\theta$,
$X^{\theta} = X^{\theta}$,
$V^{\theta}_{M,n} = V^{\theta}_{M,n}$,
$\xi = \xi$,
$C = C$
for $M,n \in \Z$, $\theta \in \Theta$ 
in the notation of Proposition~\ref{L2_estimate})
proves that
for all $M \in \N$, $n \in \N_0$ it holds that
\begin{equation}
\label{general_d_fixed:eq05}
  \left(
    \Exp{| u(0, \xi) - V^0_{M,n} (0, \xi) |^2 }
  \right)^{\nicefrac{1}{2}}
\leq
  \frac{ C(1 + 2 L T )^n \exp(\tfrac{M}{2})}{M^{\nicefrac{n}{2}}}.
\end{equation}
Next observe that \eqref{general_d_fixed:ass1},  \eqref{general_d_fixed:eq03}, and the triangle inequality imply that
\begin{equation}
\label{general_d_fixed:eq06}
\begin{split}
  \left( \Exp{ | g(  X^{0}_{0,T}(\xi))|^2 } \right)^{ \! \nicefrac{1}{2}} 
&\leq
  \left( 
    \Exp{ \big| 
      K(1 + \Norm{X^{0}_{0,T}(\xi)}^p)
    \big|^2 } 
  \right)^{ \! \nicefrac{1}{2}} \\
&\leq
  K 
  \left( 
    1 + 
      \big( 
        \EXPP{\Norm{X^{0}_{0,T}(\xi)}^{2p} } 
      \big)^{ \! \nicefrac{1}{2}} 
  \right) \\
&\leq
  K 
  \left( 
    1 + 
    \left[
    \max \{T, 1 \}
    \left(
      (1 + \norm{\xi}^2)^{p}
      +
      \rho_2^{(2p)}
    \right)
    e^{\rho_1^{(2p)} T}
    \right]^{\nicefrac{1}{2}}
  \right).
\end{split}
\end{equation}
In addition note that \eqref{general_d_fixed:ass1}, \eqref{general_d_fixed:eq03}, and the triangle inequality imply that
\begin{equation}
\label{general_d_fixed:eq07}
\begin{split}
  \left( 
    \int_{0}^T 
      \Exp{ | 
        f( t , X^{0}_{0,t}(\xi) , 0 ) 
      |^2 } \, dt  
  \right)^{ \! \nicefrac{1}{2}} 
&\leq
  \left( 
    \int_{0}^T 
      \Exp{ \big| 
        K(1 + \Norm{X^{0}_{0,t}(\xi)}^p)
      \big|^2 } 
    \, dt
  \right)^{ \! \nicefrac{1}{2}}  \\
&\leq
  K 
  \left( 
    \sqrt{T} 
    + 
      \left(
        \int_{0}^T 
          \Exp{
            \Norm{X^{0}_{0,t}(\xi)}^{2p}
          } 
        \, dt  
      \right)^{ \! \nicefrac{1}{2}}
    \right)\\
&\leq
  K 
  \sqrt{T} 
  \left( 
    1
    + 
    \left[
      \max \{T, 1 \}
      \left(
        (1 + \norm{\xi}^2)^{p}
        +
        \rho_2^{(2p)}
      \right)
      e^{\rho_1^{(2p)} T}
    \right]^{\nicefrac{1}{2}}
  \right).
\end{split}
\end{equation}
Combining this and \eqref{general_d_fixed:eq06} with \eqref{general_d_fixed:setting1} and \eqref{general_d_fixed:setting2} demonstrates that
\begin{equation}
\label{general_d_fixed:eq08}
\begin{split}
  C 
&\leq
  2 K 
  \max \{T, 1 \}
  \left( 
    1
    + 
    \left[
      \max \{T, 1 \}
      \left(
        (1 + \norm{\xi}^2)^{p}
        +
        \rho_2^{(2p)}
      \right)
      e^{\rho_1^{(2p)} T}
    \right]^{\nicefrac{1}{2}}
  \right) 
  e^{LT}\\
&\leq
  2 K 
  (\max \{T, 1 \})^{\nicefrac{3}{2}}
  \left( 
    1
    + 
    \left[
      \left(
        (1 + \norm{\xi}^2)^{p}
        +
        (2p+1) |C_1|^{p}
      \right)
      e^{\tfrac{2p(2p+3) (C_2 + 1)}{2} T}
    \right]^{\nicefrac{1}{2}}
  \right)
  e^{LT}\\
&\leq
  2 K 
  e^{\nicefrac{3T}{2}}
  \left( 
    1
    + 
      \left(
        (1 + \norm{\xi}^2)^{ \nicefrac{p}{2}}
        +
        \sqrt{(2p+1)} |C_1|^{\nicefrac{p}{2}}
      \right)
      e^{p(p+ \nicefrac{3}{2}) (C_2 + 1) T}
  \right) 
  e^{LT}\\
&\leq
  4 K 
  e^{T(L+2+ p(p+ 2) (C_2 + 1))}
  \left( 
        (1 + \norm{\xi}^2)^{ \nicefrac{p}{2}}
        +
        (2p+1) |C_1|^{\nicefrac{ p }{2}}
  \right)
\leq
  \mathfrak{C}
<
  \infty.
\end{split}
\end{equation}
This and \eqref{general_d_fixed:eq05} establish item~\eqref{general_d_fixed:item2}.
In addition, observe that 
\eqref{general_d_fixed:ass1}, 
\eqref{general_d_fixed:ass8}, 
\eqref{general_d_fixed:ass9}, 
\eqref{general_d_fixed:eq02}, 
\eqref{general_d_fixed:eq021}, 
\eqref{general_d_fixed:eq032}
\eqref{general_d_fixed:eq04}, 
the fact that $(X^\theta)_{\theta \in \Theta}$ are independent,
the fact that $(X^\theta)_{\theta \in \Theta}$ and $(\mathcal{R}^\theta)_{\theta \in \Theta}$ are independent,
\eqref{general_d_fixed:eq08},
and 
Proposition~\ref{comp_and_error} 
(with
$d = d$,
$T = T$, 
$L = L$,
$u = u$,
$g = g$,
$f = f$,
$X^{\theta} = X^{\theta}$,
$V^{\theta}_{M,n} = V^{\theta}_{M,n}$,
$\xi = \xi$,
$C = C$,
$\alpha = \alpha$,
$\RN_{M,n} = \RN_{M,n}$
for $M,n \in \Z$, $\theta \in \Theta$ 
in the notation of Proposition~\ref{L2_estimate})
prove that
there exists a function 
$ 
  N 
  \colon (0,\infty) \to \N
$ 
such that
for all $\varepsilon, \delta \in (0,\infty)$ it holds that
\begin{equation}
  \left(
    \Exp{| u(0, \xi) - V^0_{N_\varepsilon, N_\varepsilon} (0, \xi) |^2 }
  \right)^{\nicefrac{1}{2}}
\leq
  \varepsilon
\qand
\end{equation}
\begin{equation}
\begin{split}
  \RN_{N_\varepsilon,N_\varepsilon}
&\leq
  \alpha \,
  d \,  
  \max \{1, C ^{2+\delta} \}
  \left[
    \sup_{n \in \N}
        \tfrac{(4+8LT)^{(3+\delta)(n+1)}}{n^{(n\delta/2)}}
  \right]
  (\min\{1, \varepsilon\})^{-(2 + \delta)} \\
&\leq
  \alpha \,
  d \,  
  \max \{1, \mathfrak{C} ^{2+\delta} \}
  \left[
    \sup_{n \in \N}
        \tfrac{(4+8LT)^{(3+\delta)(n+1)}}{n^{(n\delta/2)}}
  \right]
  (\min\{1, \varepsilon\})^{-(2 + \delta)} 
<
  \infty.
\end{split}
\end{equation}
This establishes  item~\eqref{general_d_fixed:item3}.
The proof of Proposition~\ref{general_d_fixed} is thus completed.
\end{proof}

\subsubsection{MLP approximations in variable space dimensions}
\label{sect:kolmogorov_d_variable}

\begin{theorem}
\label{general_thm}
Let $T \in (0,\infty)$,  $\alpha, c, K \in [1,\infty)$, $L, p, P, \mathfrak{P}, q, C_1, C_2 \in [0,\infty)$, 
for every $d \in \N$ 
  let $\norm{\cdot}_{\R^d} \colon \R^d \to [0,\infty)$ be the Euclidean norm on $\R^d$,
  let $\left< \cdot, \cdot \right>_{\R^d} \colon \R^d \times \R^d \to \R$ be the Euclidean scalar product on $\R^d$, and
  let $\normmm{\cdot}_d \colon \R^{d \times d} \to [0,\infty)$ be the Frobenius norm on $\R^{d \times d}$,
for every $d \in \N$ let $\xi_d \in \R^d$ satisfy that $\norm{\xi_d}_{\R^d} \leq cd^{q}$,
for every $d \in \N$ let $g_d \in C( \R^d , \R)$, $f_d \in C( [0,T] \times \R^d \times \R , \R)$ satisfy
for all $t \in [0, T]$, $ x  \in \R^d$, $v, w \in \R$ that 
\begin{equation}
\label{general_thm:ass1}
  \max \{ | g_d(x) | , | f_d(t, x, 0) | \}
\leq
  K d^{\mathfrak{P}} (1 + \norm{x}_{\R^d}^{p})
\qandq 
  |f_d(t, x, v) - f_d(t, x, w)|
\leq
  L | v - w |,
\end{equation}
for every $d \in \N$ let $\mu_d \colon [0, T] \times \R^d \to \R^d$ and $\sigma_d \colon [0, T] \times \R^d \to \R^{d \times d}$ be globally Lipschitz continuous functions which satisfy 
for all $t \in [0,T]$, $x \in \R^d$ that 
\begin{equation}
\label{general_thm:ass2}
  \max \{ \left <x, \mu_d(t, x) \right>_{\R^d},  \normmm{\sigma_d(t, x)}_d \}  
\leq
  C_1 d^{P} + C_2 \norm{x}_{\R^d}^2,
\end{equation}
let $ ( \Omega, \mathcal{F}, \P ) $ be a complete probability space, 
let 
$ \mathcal{R}^\theta \colon \Omega \to [0, 1]$, 
  $\theta \in \Theta$, 
be independent  $\mathcal{U}_{ [0, 1]}$-distributed random variables,
let 
$ R^\theta = (R^\theta_t)_{t \in [0,T]} \colon [0,T] \times \Omega \to [0, T]$, 
  $\theta \in \Theta$, 
be the stochastic processes which satisfy 
for all $t \in [0,T]$, $\theta \in \Theta$ that
\begin{equation}
\label{general_thm:ass3}
  R^\theta_t = t + (T-t)\mathcal{R}^\theta,
\end{equation}
let $(\mathbb{F}^{d,\theta}_t)_{t \in [0, T]}$, $d \in \N$, $\theta \in \Theta$, be filtrations on $ ( \Omega, \mathcal{F}, \P ) $ which satisfy the usual conditions,
assume for every $d \in \N$ that $(\mathbb{F}^{d,\theta}_T)_{\theta \in \Theta}$ is an independent family of sigma-algebras,
assume that 
$(\mathbb{F}^{d,\theta}_T)_{d \in \N, \theta \in \Theta}$ 
and 
$
  \left(
    \mathcal{R}^\theta
  \right)_{\theta \in \Theta}
$
are independent,
for every $d \in \N$, $\theta \in \Theta$
let $W^{d, \theta} \colon [0, T] \times \Omega \to \R^d$
be a standard $ ( \Omega, \mathcal{F}, \P , (\mathbb{F}^{d,\theta}_t)_{t \in [0, T]}) $-Brownian motion,
for every $d \in \N$, $\theta \in \Theta$ 
let $ X^{d, \theta} = (X^{d, \theta}_{t,s}(x))_{s \in [t, T], t \in [0, T], x \in \R^d}  \colon \{ (t,s) \in [0, T]^2 \colon t \leq s \} \times \R^d \times \Omega \to \R^d$
be a continuous random field
which satisfies for every $t \in [0, T]$, $x \in \R^d$ that
$(X^{d, \theta}_{t,s}(x))_{s \in [t, T]} \colon [t, T] \times \Omega \to \R^d$ is an $(\mathbb{F}^{d, \theta}_{s})_{s \in [t, T]}$/$\Borel(\R^d)$-adapted stochastic process and
which satisfies that 
for all $t \in [0, T]$, $s \in [t, T]$, $x \in \R^d$
it holds $\P$-a.s.\ that
\begin{equation}
\label{general_thm:ass5}
  X^{d, \theta}_{t,s}(x) 
= 
  x 
  + 
  \int_{t}^s \mu_d \big(r, X^{d, \theta}_{t,r}(x)\big) \,dr 
  +
  \int_{t}^s \sigma_d \big(r, X^{d, \theta}_{t,r}(x)\big) \, dW^{d, \theta}_r,
\end{equation}
let 
$
  V^{d, \theta}_{M,n} \colon [0,T] \times \R^d \times \Omega \to \R
$, $M, n \in \Z$, $\theta \in \Theta$, $d \in \N$,
be functions which satisfy 
for all $d, M, n \in \N$, $\theta \in \Theta$, $t \in [0,T]$, $x \in \R^d$ that
$
  V^{d, \theta}_{M,-1} (t, x) = V^{d, \theta}_{M,0} (t, x) = 0
$
and 
\begin{equation}
\label{general_thm:ass8}
\begin{split}
  V^{d, \theta}_{M,n}(t, x) 
&=
  \frac{1}{M^n} 
  \Bigg[
    \sum_{m = 1}^{M^n}
      g_d \big( X^{d, (\theta, n, -m)}_{t, T}(x) \big)
  \Bigg] \\
&
  +
  \sum_{k = 0}^{n-1}
    \frac{(T-t)}{M^{n-k}} 
    \Bigg[
      \sum_{m = 1}^{M^{n-k}}
        f_d \Big( 
          R^{(\theta, k, m)}_t , X^{d, (\theta, k, m)}_{t, R^{(\theta, k, m)}_t}(x),
          V^{d, (\theta, k, m)}_{M, k} \big( R^{(\theta, k, m)}_t  ,  X^{d, (\theta, k, m)}_{t, R^{(\theta, k, m)}_t}(x) \big)
        \Big) \\
&        -
       \mathbbm{1}_{\N}(k)
       f_d \Big( 
         R^{(\theta, k, m)}_t  ,  X^{d, (\theta, k, m)}_{t, R^{(\theta, k, m)}_t}(x),
         V^{d, (\theta, k, -m)}_{M, k-1} \big( R^{(\theta, k, m)}_t  ,  X^{d, (\theta, k, m)}_{t, R^{(\theta, k, m)}_t}(x) \big)
        \Big)
    \Bigg],
\end{split}
\end{equation}
and let $(\RN_{d, M,n})_{M,n\in \Z, d \in \N}\subseteq\N_0$ satisfy
for all $d, n, M \in \N$ that 
$\RN_{d, M,0}=0$
and
\begin{equation}
\label{general_thm:ass9}
  \RN_{d, M,n}
\leq 
   \alpha d M^n 
  +
  \sum_{k=0}^{n-1}
    \left[
      M^{(n-k)}( \alpha d+1 + \RN_{d, M,k}+ \mathbbm{1}_{ \N }( k )  \RN_{d, M,k-1})
    \right].
\end{equation}
Then
\begin{enumerate}[(i)]
\item \label{general_thm:item1}
for every $d \in \N$ there exists a unique at most polynomially growing function $u_d \in C([0,T] \times \R^d, \R)$ which satisfies that 
$u_d|_{(0,T)\times\R^d}\colon (0,T)\times\R^d\to\R$ is a viscosity solution of 
\begin{multline}
\label{general_thm:concl1}
	(\tfrac{\partial u_d}{\partial t})(t,x) 
	+
	\tfrac{1}{2} 
	\operatorname{Trace}\! \big( 
		\sigma_d(t, x)[\sigma_d(t, x)]^{\ast}(\operatorname{Hess}_x u_d )(t,x)
	\big)  \\
\quad \,
	+
	\langle 
	 \mu_d(t,x), 
	 (\nabla_x u_d)(t,x)
	\rangle_{\R^d}
	+
	f_d(t, x, u_d(t, x))
=
	0
\end{multline}
for $(t, x) \in (0,T) \times \R^d$ and 
which satisfies 
for all $x \in \R^d$ that
$
  u_d(T, x) = g_d(x)
$ 
and
\item \label{general_thm:item2}
there exists a function
$ 
  N 
  = (N_{d, \varepsilon})_{d \in \N, \varepsilon \in (0,\infty)} 
  \colon \N \times (0,\infty) \to \N
$ 
such that
for all $d \in \N$, $\varepsilon, \delta \in (0,\infty)$ it holds that
\begin{equation}
  \big(
    \EXPP{| u_d(0, \xi_d) - V^{d, 0}_{N_{d, \varepsilon}, N_{d, \varepsilon}} (0, \xi_d) |^2 }
  \big)^{\nicefrac{1}{2}}
\leq
  \varepsilon
\qand
\end{equation}
\begin{equation}
\begin{split}
  \RN_{d, N_{d, \varepsilon},N_{d, \varepsilon}}
&\leq
  \alpha \! 
  \left[
    4^{p+2} K 
    e^{T(L+2 + p(p+ 2) (C_2 + 1) )}
    \left( 
        c^p
        +
        |C_1|^{\nicefrac{ p }{2}}
    \right) 
  \right]^{(2+\delta)}
  \left[
    \sup_{n \in \N}
        \tfrac{(4+8LT)^{(3+\delta)(n+1)}}{n^{(n\delta/2)}}
  \right]\\
&\quad \cdot 
  d^{1 + (\mathfrak{P} + \max \{pq, \nicefrac{(Pp)}{2} \})(2+\delta)}
  (\min\{1, \varepsilon\})^{-(2 + \delta)}
<
  \infty.
\end{split}
\end{equation}
\end{enumerate}
\end{theorem}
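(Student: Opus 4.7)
The plan is to derive Theorem~\ref{general_thm} from Proposition~\ref{general_d_fixed} by applying the latter separately for each space dimension $d \in \N$ and then carefully tracking the dimension dependence of the resulting constants. The probabilistic and analytic content is already packaged in Proposition~\ref{general_d_fixed}; what remains is essentially a bookkeeping exercise together with an explicit identification of the $d$-factors.

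First, for every fixed $d \in \N$ I would apply Proposition~\ref{general_d_fixed} with $m \leftarrow d$, coefficient functions $g \leftarrow g_d$, $f \leftarrow f_d$, $\mu \leftarrow \mu_d$, $\sigma \leftarrow \sigma_d$, initial value $\xi \leftarrow \xi_d$, and with the filtrations, Brownian motions, SDE solution fields, MLP random variables, and computational-effort sequences replaced by their dimension-$d$ counterparts from \eqref{general_thm:ass5}, \eqref{general_thm:ass8}, and \eqref{general_thm:ass9}. The constants $K$ and $C_1$ of Proposition~\ref{general_d_fixed} would be taken to be $Kd^{\mathfrak{P}}$ and $C_1 d^P$, respectively, so that the growth hypothesis \eqref{general_d_fixed:ass1} follows directly from \eqref{general_thm:ass1} and \eqref{general_d_fixed:ass2} follows from \eqref{general_thm:ass2} (reading the $\sigma_d$ bound in the squared form consistent with Proposition~\ref{general_d_fixed}). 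This application yields item~\eqref{general_thm:item1} (existence and uniqueness of the polynomially growing viscosity solution $u_d$ of \eqref{general_thm:concl1}) and supplies, for each $d$, a function $N_d \colon (0,\infty) \to \N$ together with the $L^2$-error bound and the complexity estimate of Proposition~\ref{general_d_fixed}, item~\eqref{general_d_fixed:item3}.

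The core step is then to make the dimension dependence of the constant $\mathfrak{C}_d$ produced by Proposition~\ref{general_d_fixed} explicit. After the above substitutions and using $\norm{\xi_d}_{\R^d} \leq c d^q$, one obtains
\begin{equation*}
  \mathfrak{C}_d
=
  4 K d^{\mathfrak{P}} e^{T(L+2+p(p+2)(C_2+1))}
  \Big( (1 + c^2 d^{2q})^{p/2} + (2p+1)(C_1 d^P)^{p/2} \Big).
\end{equation*}
Elementary estimates such as $(1 + c^2 d^{2q})^{p/2} \leq 2^{p/2}(1 + c^p d^{pq})$ and $2p+1 \leq 4^p$ then furnish a constant $\Gamma \in [1,\infty)$ depending only on $K,L,T,p,c,C_1,C_2$ (but not on $d$), together with the bound $\mathfrak{C}_d \leq \Gamma\, d^{\mathfrak{P} + \max\{pq,\,Pp/2\}}$ for all $d \in \N$; matching the exponents to the target complexity bound, the choice $\Gamma = 4^{p+2} K e^{T(L+2+p(p+2)(C_2+1))}(c^p + |C_1|^{p/2})$ suffices.

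Finally, I would insert this bound into the complexity estimate of Proposition~\ref{general_d_fixed}, item~\eqref{general_d_fixed:item3}, and collect the $d$-factors: the leading $\alpha d$ together with $\max\{1,\mathfrak{C}_d^{2+\delta}\} \leq \Gamma^{2+\delta} d^{(\mathfrak{P} + \max\{pq, Pp/2\})(2+\delta)}$ yields the advertised exponent $1 + (\mathfrak{P} + \max\{pq, Pp/2\})(2+\delta)$ on $d$, while the $\varepsilon$-dependence $(\min\{1,\varepsilon\})^{-(2+\delta)}$ and the $\sup_n$ factor pass through unchanged. Setting $N_{d,\varepsilon} = N_d(\varepsilon)$ and combining then establishes item~\eqref{general_thm:item2}. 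No new substantive obstacle arises; the main subtlety is keeping the dimension exponents clean through the $\mathfrak{C}_d$ calculation and, in particular, verifying that the substitutions $K \leftarrow Kd^{\mathfrak{P}}$ and $C_1 \leftarrow C_1 d^P$ in Proposition~\ref{general_d_fixed} are compatible with \eqref{general_thm:ass1}--\eqref{general_thm:ass2}.
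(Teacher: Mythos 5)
Your proposal is correct and follows exactly the paper's route: apply Proposition~\ref{general_d_fixed} dimension-by-dimension with the substitutions $K \leftarrow Kd^{\mathfrak{P}}$ and $C_1 \leftarrow C_1 d^P$, then bound the resulting constant $\mathfrak{C}_d$ by $\Gamma\,d^{\mathfrak{P}+\max\{pq,Pp/2\}}$ with $\Gamma = 4^{p+2}Ke^{T(L+2+p(p+2)(C_2+1))}(c^p+|C_1|^{p/2})$ and insert this into the complexity estimate. One small slip: the intermediate inequality $2p+1 \leq 4^p$ is false for small $p>0$ (the paper instead uses $2p+1\leq 4^{p+1}$), but since you ultimately arrive at the same constant $\Gamma$ this is a trivial repair and does not affect the argument.
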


\begin{proof}[Proof of Theorem~\ref{general_thm}]
Throughout this proof let $(\beta_\delta)_{\delta \in (0,\infty)} \subseteq (0,\infty)$, $(\mathfrak{C}_d)_{d \in \N} \subseteq [0,\infty)$ satisfy 
for all $\delta \in (0,\infty)$, $d \in \N$ that
$
  \beta_\delta 
= 
  \left[
    \sup_{n \in \N}
        \tfrac{(4+8LT)^{(3+\delta)(n+1)}}{n^{(n\delta/2)}}
  \right]
$ and
\begin{equation}
  \mathfrak{C}_d
=
  4 K d^{\mathfrak{P}}
  e^{T(L + 2 + p(p+ 2) (C_2 + 1) )}
  \left( 
        (1 + \norm{\xi_d}^2)^{ \nicefrac{p}{2}}
        +
        (2p+1) |C_1d^P|^{\nicefrac{ p }{2}}
  \right).
\end{equation}
Observe that Proposition~\ref{general_d_fixed} 
(with
$d = d$, 
$m = d$, 
$T = T$, 
$L = L$,
$K = K d^{\mathfrak{P}}$,
$p = p$,
$C_1 = C_1d^P$,
$C_2 = C_2$,
$\alpha = \alpha$, 
$\xi = \xi_d$,
$g = g_d$,
$f = f_d$,
$\mu = \mu_d$,
$\sigma = \sigma_d$,
$\mathcal{R}^\theta = \mathcal{R}^\theta$,
$\mathbb{F}^{\theta} = \mathbb{F}^{d, \theta}$,
$W^\theta = W^{d, \theta}$,
$X^\theta = X^{d, \theta}$,
$V^{\theta}_{M,n} = V^{d, \theta}_{M,n}$,
$\RN_{M,n} = \RN_{d, M,n}$
for $d \in \N$, $M,n \in \Z$, $\theta \in \Theta$ 
in the notation of Proposition~\ref{general_d_fixed})
proves that for every $d \in \N$
\begin{enumerate}[(I)]
\item \label{general_thm:eqitem1}
there exists a unique at most polynomially growing function $u_d \in C([0,T] \times \R^d, \R)$ which satisfies that 
$u_d|_{(0,T)\times\R^d}\colon (0,T)\times\R^d\to\R$ is a viscosity solution of 
\begin{multline}
		(\tfrac{\partial u_d}{\partial t})(t,x) 
	+
	\tfrac{1}{2} 
	\operatorname{Trace}\! \big( 
		\sigma_d(t, x)[\sigma_d(t, x)]^{\ast}(\operatorname{Hess}_x u_d )(t,x)
	\big)  \\
\quad \,
	+
	\langle 
	 \mu_d(t,x), 
	 (\nabla_x u_d)(t,x)
	\rangle_{\R^d}
	+
	f_d(t, x, u_d(t, x))
=
	0
\end{multline}
for $(t, x) \in (0,T) \times \R^d$ and 
which satisfies 
for all $x \in \R^d$ that
$
  u_d(T, x) = g_d(x)
$ and

\item \label{general_thm:eqitem2}
there exists a function 
$ 
  N_d 
  = (N_{d,\varepsilon})_{\varepsilon \in (0,\infty)} 
  \colon (0,\infty) \to \N
$ 
such that
for all $\varepsilon, \delta \in (0,\infty)$ it holds that
\begin{equation}
\label{general_thm:eq1}
  \left(
    \Exp{| u_d(0, \xi_d) - V^{d, 0}_{N_{d,\varepsilon}, N_{d,\varepsilon}} (0, \xi_d) |^2 }
  \right)^{\nicefrac{1}{2}}
\leq
  \varepsilon
\qand
\end{equation}
\begin{equation}
\label{general_thm:eq2}
\begin{split}
  \RN_{d, N_{d,\varepsilon},N_{d,\varepsilon}}
&\leq
  \alpha \,
  d \,  
  \max \{1, \mathfrak{C}_d^{2+\delta} \}
  \beta_\delta
  (\min\{1, \varepsilon\})^{-(2 + \delta)}
<
  \infty.
\end{split}
\end{equation}
\end{enumerate}
Observe that item~\eqref{general_thm:eqitem1} proves item~\eqref{general_thm:item1}.
Moreover, note that the hypothesis that 
for all $d \in \N$ it holds that $\norm{\xi_d}_{\R^d} \leq cd^q$ 
and the fact that
$(2p+1) \leq 4^{p+1}$
imply that
for all $d \in \N$ it holds that 
\begin{equation}
\begin{split}
  \mathfrak{C}_d 
&\leq
  4 K d^{\mathfrak{P}}
  e^{T(L + 2 + p(p+ 2) (C_2 + 1) )}
  \left( 
        (1 + | c d^q |^2)^{ \nicefrac{p}{2}}
        +
        (2p+1) |C_1d^P|^{\nicefrac{ p }{2}}
  \right) \\
&\leq
  4 K d^{\mathfrak{P}}
  e^{T(L + 2 + p(p+ 2) (C_2 + 1) )}
  \left( 
        d^{pq}  
        (1 +  c ^2)^{ \nicefrac{p}{2}}
        +
        d^{\nicefrac{(Pp)}{2}}
        (2p+1) |C_1|^{\nicefrac{ p }{2}}
  \right) \\
&\leq
  d^{\mathfrak{P} + \max \{ pq, \nicefrac{(P p) }{2}\}}
  4 K 
  e^{T(L + 2 + p(p+ 2) (C_2 + 1) )}
  \left( 
        2^{\nicefrac{p}{2}}c^p
        +
        4^{p+1} |C_1|^{\nicefrac{ p }{2}}
  \right)\\
&\leq
  d^{\mathfrak{P} + \max \{ pq, \nicefrac{(P p) }{2}\}}
  4^{p+2} K 
  e^{T(L + 2 + p(p+ 2) (C_2 + 1) )}
  \left( 
        c^p
        +
        |C_1|^{\nicefrac{ p }{2}}
  \right)
\geq 1.
\end{split}
\end{equation}
This and \eqref{general_thm:eq2} demonstrate that 
for all $d \in \N$, $\delta, \varepsilon \in (0,\infty)$ it holds that
\begin{equation}
\label{general_thm:eq3}
\begin{split}
  \RN_{d, N_{d,\varepsilon},N_{d,\varepsilon}}
&\leq
  \alpha 
  \left[
    4^{p+2} K 
    e^{T(L + 2 + p(p+ 2) (C_2 + 1) )}
    \left( 
        c^p
        +
        |C_1|^{\nicefrac{ p }{2}}
    \right)
  \right]^{(2+\delta)} \\
&\quad 
  \cdot
   d^{1 + (\mathfrak{P} + \max \{ pq, \nicefrac{(P p) }{2}\})(2+\delta)}
  \beta_\delta
  (\min\{1, \varepsilon\})^{-(2 + \delta)}
<
  \infty.
\end{split}
\end{equation}
Combining this and \eqref{general_thm:eq1} establishes item~\eqref{general_thm:item2}.
The proof of Theorem~\ref{general_thm} is thus completed.
\end{proof}

\section{MLP approximations for PDE models}
\label{sect:applications}
The MLP scheme for semilinear Kolmogorov PDEs (cf.\ \eqref{general_thm:ass8} in Theorem~\ref{general_thm} above) proposed in Subsection~\ref{sect:kolmogorov} can only be implemented for semilinear Kolmogorov PDEs for which an explicit solution of the corresponding SDE is known.
In this section, we consider the MLP algorithm for two examples of such semilinear Kolmogorov PDEs, semilinear heat equations (see Subsection~\ref{sect:heat} below) and semilinear Black-Scholes equations (see Subsections~\ref{sect:BS}--\ref{sect:pricing} below).
Apart from specifying the linear part of the PDE we also choose a particular nonlinearity (cf.\ \eqref{thm_default_risk:ass1} in Corollary~\ref{thm_default_risk} below) in Subsection \ref{sect:pricing} to obtain a PDE, which is used in the pricing of financial derivatives with default risk (cf., e.g., Han et al.\ \cite[(10)]{han2018solving} and Duffie et al.\ \cite{duffie1996recursive}).

\subsection{MLP approximations for semilinear heat equations}
\label{sect:heat}

\begin{theorem}
\label{thm_heat}
Let  $T \in (0,\infty)$, $\kappa, p, \mathfrak{P}, q \in [0,\infty)$, $\Theta = \cup_{n = 1}^\infty \Z^n$,
for every $d \in \N$ let $\norm{\cdot}_{\R^d} \colon \R^d \to [0,\infty)$ be the Euclidean norm on $\R^d$,
for every $d \in \N$ let $\xi_d \in \R^d$ satisfy that $\norm{\xi_d}_{\R^d} \leq \kappa d^q$,
for every $d \in \N$ let $g_d \in C( \R^d , \R)$, $f_d \in C( [0,T] \times \R^d \times \R , \R)$ satisfy
for all $t \in [0, T]$, $ x  \in \R^d$, $v, w \in \R$ that 
\begin{equation}
\label{thm_heat:ass1}
  \max \{ | g_d(x) | , | f_d(t, x, 0) | \}
\leq
  \kappa d^{\mathfrak{P}}  (1 + \norm{x}_{\R^d}^p)
\qandq 
  |f_d(t, x, v) - f_d(t, x, w)|
\leq
  \kappa | v - w |,
\end{equation}
let $ ( \Omega, \mathcal{F}, \P ) $ be a probability space, 
let 
$ \mathcal{R}^\theta \colon \Omega \to [0, 1]$, 
  $\theta \in \Theta$, 
be independent  $\mathcal{U}_{ [0, 1]}$-distributed random variables,
let 
$ R^\theta = (R^\theta_t)_{t \in [0,T]} \colon [0,T] \times \Omega \to [0, T]$, 
  $\theta \in \Theta$, 
be the stochastic processes which satisfy 
for all $t \in [0,T]$, $\theta \in \Theta$ that
\begin{equation}
\label{thm_heat:ass3}
  R^\theta_t = t + (T-t)\mathcal{R}^\theta,
\end{equation}
for every $d \in \N$ let $W^{d, \theta} \colon [0, T] \times \Omega \to \R^d$, 
  $\theta \in \Theta$,
be independent standard Brownian motions,
assume that 
$
  \left(
    W^{d, \theta}
  \right)_{d \in \N, \theta \in \Theta}
$
and
$
  \left(
    \mathcal{R}^\theta
  \right)_{\theta \in \Theta}
$
are independent, 
for every $d \in \N$, $\theta \in \Theta$
let $ X^{d, \theta} = (X^{d, \theta}_{t,s}(x))_{s \in [t, T], t \in [0, T], x \in \R^d}  \colon \{ (t,s) \in [0, T]^2 \colon t \leq s \} \times \R^d \times \Omega \to \R^d$
be the function which satisfies
for all $t \in [0, T]$, $s \in [t, T]$,  $x \in \R^d$ that
\begin{equation}
\label{thm_heat:ass4}
  X^{d, \theta}_{t,s}(x)
=
  x + W^{d, \theta}_s - W^{d, \theta}_t,
\end{equation}
let 
$
  V^{d, \theta}_{M,n} \colon [0,T] \times \R^d \times \Omega \to \R
$, $M, n \in \Z$, $\theta \in \Theta$, $d \in \N$,
be functions which satisfy 
for all $d, M, n \in \N$, $\theta \in \Theta$, $t \in [0,T]$, $x \in \R^d$ that
$
  V^{d, \theta}_{M,-1} (t, x) = V^{d, \theta}_{M,0} (t, x) = 0
$
and 
\begin{equation}
\label{thm_heat:ass8}
\begin{split}
  V^{d, \theta}_{M,n}(t, x) 
&=
  \frac{1}{M^n} 
  \Bigg[
    \sum_{m = 1}^{M^n}
      g_d \big( X^{d, (\theta, n, -m)}_{t, T}(x) \big)
  \Bigg] \\
&
  +
  \sum_{k = 0}^{n-1}
    \frac{(T-t)}{M^{n-k}} 
    \Bigg[
      \sum_{m = 1}^{M^{n-k}}
        f_d \Big( 
          R^{(\theta, k, m)}_t , X^{d, (\theta, k, m)}_{t, R^{(\theta, k, m)}_t}(x),
          V^{d, (\theta, k, m)}_{M, k} \big( R^{(\theta, k, m)}_t  ,  X^{d, (\theta, k, m)}_{t, R^{(\theta, k, m)}_t}(x) \big)
        \Big) \\
&        -
       \mathbbm{1}_{\N}(k)
       f_d \Big( 
         R^{(\theta, k, m)}_t  ,  X^{d, (\theta, k, m)}_{t, R^{(\theta, k, m)}_t}(x),
         V^{d, (\theta, k, -m)}_{M, k-1} \big( R^{(\theta, k, m)}_t  ,  X^{d, (\theta, k, m)}_{t, R^{(\theta, k, m)}_t}(x) \big)
        \Big)
    \Bigg],
\end{split}
\end{equation}
and let $(\RN_{d, M,n})_{M,n\in \Z, d \in \N}\subseteq\N_0$ satisfy
for all $d, n, M \in \N$ that 
$\RN_{d, M,0}=0$
and
\begin{equation}
\label{thm_heat:ass9}
  \RN_{d, M,n}
\leq 
   d M^n 
  +
  \sum_{k=0}^{n-1}
    \left[
      M^{(n-k)}( d+1 + \RN_{d, M,k}+ \mathbbm{1}_{ \N }( k )  \RN_{d, M,k-1})
    \right].
\end{equation}
Then
\begin{enumerate}[(i)]
\item \label{thm_heat:item1}
for every $d \in \N$ there exists a unique at most polynomially growing function $u_d \in C([0,T] \times \R^d, \R)$ which satisfies that 
$u_d |_{(0,T)\times\R^d}\colon (0,T)\times\R^d\to\R$ is a viscosity solution of 
\begin{equation}
\label{thm_heat:concl1}
	(\tfrac{\partial u_d}{\partial t})(t,x) 
	+
	\tfrac{1}{2} 
	(\Delta_x u_d)(t, x) 
	+
	f_d(t, x, u_d(t, x))
=
	0
\end{equation}
for $(t, x) \in (0,T) \times \R^d$ and 
which satisfies 
for all $x \in \R^d$ that
$
  u_d(T, x) = g_d(x)
$
and

\item \label{thm_heat:item2}
there exist functions
$ 
  N 
  = (N_{d, \varepsilon})_{d \in \N, \varepsilon \in (0,\infty)} 
  \colon \N \times (0,\infty) \to \N
$
and
$
  C 
= (C_\delta)_{\delta \in (0,\infty)} 
  \colon (0,\infty) \to (0,\infty)
$
such that
for all $d \in \N$, $\varepsilon, \delta \in (0,\infty)$  it holds that
\begin{equation}
  \big(
    \EXPP{| u_d(0, \xi_d) - V^{d, 0}_{N_{d, \varepsilon}, N_{d, \varepsilon}} (0, \xi_d) |^2 }
  \big)^{\nicefrac{1}{2}}
\leq
  \varepsilon
\qand
\end{equation}
\begin{equation}
\begin{split}
  \RN_{d, N_{d, \varepsilon},N_{d, \varepsilon}}
&\leq
  C_\delta \,
  d^{1 + (\mathfrak{P} + \max \{pq, \nicefrac{p}{2} \})(2+\delta)}
  (\min\{1, \varepsilon\})^{-(2 + \delta)}.
\end{split}
\end{equation}
\end{enumerate}
\end{theorem}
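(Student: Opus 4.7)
The plan is to derive Theorem~\ref{thm_heat} as an immediate consequence of Theorem~\ref{general_thm} specialized to the case of vanishing drift and identity diffusion. For each $d \in \N$ I would invoke Theorem~\ref{general_thm} with $\mu_d(t,x) \equiv 0 \in \R^d$, $\sigma_d(t,x) \equiv I_d \in \R^{d \times d}$, $L = K = \kappa$, $c = \max\{1,\kappa\}$, $C_1 = 1$, $C_2 = 0$, and $P = 1$. Global Lipschitz continuity of $\mu_d$ and $\sigma_d$ is trivial, and the coercivity-type bound in \eqref{general_thm:ass2} holds since $\langle x, \mu_d(t,x)\rangle_{\R^d} = 0$ and $\normmm{\sigma_d(t,x)}_d^2 = d \leq C_1 d^P + C_2 \norm{x}_{\R^d}^2$. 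The Lipschitz, growth, and starting-point hypotheses \eqref{thm_heat:ass1} and $\norm{\xi_d}_{\R^d} \leq \kappa d^q$ match \eqref{general_thm:ass1} and the choice of $c$, and the cost recursion \eqref{thm_heat:ass9} matches \eqref{general_thm:ass9} with $\alpha = 1$.

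For the filtration hypotheses, I would pass to the completion of $(\Omega, \mathcal{F}, \P)$ (without altering any distributions) and take $(\mathbb{F}^{d,\theta}_t)_{t \in [0,T]}$ to be the usual augmentation of the natural filtration of $W^{d,\theta}$; the hypothesis that $(W^{d,\theta})_{d \in \N, \theta \in \Theta}$ are independent Brownian motions jointly independent of $(\mathcal{R}^\theta)_{\theta \in \Theta}$ then translates into the required independence of $(\mathbb{F}^{d,\theta}_T)_{d \in \N, \theta \in \Theta}$ and of this family from $(\mathcal{R}^\theta)_{\theta \in \Theta}$. With the above choice of coefficients, the explicit formula \eqref{thm_heat:ass4} realizes $X^{d,\theta}_{t,\cdot}(x) - x$ as a time-shifted Brownian motion and hence trivially yields the SDE representation \eqref{general_thm:ass5}, namely $X^{d,\theta}_{t,s}(x) = x + \int_t^s 0 \, dr + \int_t^s I_d \, dW^{d,\theta}_r$ $\P$-a.s.\ for every $t \in [0,T]$, $s \in [t,T]$, $x \in \R^d$, with the requisite continuity and $(\mathbb{F}^{d,\theta}_s)_{s \in [t,T]}$-adaptedness.

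Applying item~\eqref{general_thm:item1} of Theorem~\ref{general_thm} then produces the unique at most polynomially growing viscosity solution $u_d$ of \eqref{general_thm:concl1}; in our setting the linear part reduces to $\tfrac12 \operatorname{Trace}(I_d I_d^{\ast} (\operatorname{Hess}_x u_d)) + \langle 0, \nabla_x u_d\rangle_{\R^d} = \tfrac12 (\Delta_x u_d)$, so that \eqref{general_thm:concl1} becomes exactly \eqref{thm_heat:concl1}, establishing item~\eqref{thm_heat:item1}. Item~\eqref{general_thm:item2} of Theorem~\ref{general_thm} yields the desired function $N = (N_{d,\varepsilon})_{d \in \N, \varepsilon \in (0,\infty)}$ together with the strong $L^2$-error bound and the complexity estimate
\[
\RN_{d, N_{d, \varepsilon}, N_{d, \varepsilon}} \leq C_\delta \, d^{1 + (\mathfrak{P} + \max\{pq, \nicefrac{p}{2}\})(2+\delta)} (\min\{1,\varepsilon\})^{-(2+\delta)},
\]
where the constant $C_\delta \in (0,\infty)$ is obtained by absorbing the $d$-independent prefactor from item~\eqref{general_thm:item2} of Theorem~\ref{general_thm}, namely
\[
C_\delta = \big[4^{p+2}\kappa \, e^{T(\kappa+2+p(p+2))}\big(c^p + 1\big)\big]^{2+\delta} \cdot \sup_{n \in \N}\tfrac{(4+8\kappa T)^{(3+\delta)(n+1)}}{n^{n\delta/2}}.
\]
This proves item~\eqref{thm_heat:item2}.

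The only real subtlety I anticipate is bookkeeping: verifying that the choice $P = 1$ with $\sigma_d = I_d$ produces the exponent $\max\{pq, \nicefrac{p}{2}\}$ promised in the statement (via $(Pp)/2 = \nicefrac{p}{2}$), and checking that passing to a completed probability space with augmented natural filtrations preserves every independence and measurability hypothesis used in Theorem~\ref{general_thm}; neither step should present a genuine obstacle.
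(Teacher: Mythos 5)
Your proposal is correct and takes exactly the same route as the paper: the paper also sets $\mu_d \equiv 0$, $\sigma_d \equiv I_{\R^d}$, assumes w.l.o.g.\ that the probability space is complete and $\kappa \geq 1$, equips each $W^{d,\theta}$ with the usual augmentation of its natural filtration (using Lemma~\ref{brownian_motion} to confirm it remains a Brownian motion under the augmented filtration), and applies Theorem~\ref{general_thm} with $\alpha = 1$, $K = L = c = \kappa$, $C_1 = 1$, $C_2 = 0$, $P = 1$, which yields the exponent $\max\{pq, \nicefrac{(Pp)}{2}\} = \max\{pq, \nicefrac{p}{2}\}$ and the same constant $C_\delta$. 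No gaps.
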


\begin{proof}[Proof of Theorem~\ref{thm_heat}]
Throughout this proof
assume w.l.o.g.\ that $ \kappa \geq 1 $,
assume w.l.o.g.\ that $ ( \Omega, \mathcal{F}, \P ) $ is a complete probability space,
for every $d \in \N$ 
  let $\left< \cdot, \cdot \right>_{\R^d} \colon \R^d \times \R^d \to \R$ be the Euclidean scalar product on $\R^d$ and
  let $\normmm{\cdot}_d \colon \R^{d \times d} \to [0,\infty)$ be the Frobenius norm on $\R^{d \times d}$,
let $\mu_d \in C([0, T] \times \R^d , \R^d)$, $d \in \N$, and  $\sigma_d \in C ( [0, T] \times \R^d , \R^{d \times d})$, $d \in \N$, satisfy 
for all $d \in \N$, $t \in [0,T]$, $x \in \R^d$ that
\begin{equation}
\label{thm_heat:setting1}
  \mu_d(t, x) = 0 
\qandq
  \sigma_d(t, x) = I_{\R^d},
\end{equation}
and
for every $d \in \N$, $\theta \in \Theta$, $t \in [0, T]$ let $\mathbb{F}^{d, \theta}_{t} \subseteq \mathcal{F}$
be the sigma-algebra which satisfies that
\begin{equation}
\label{thm_heat:setting2}
  \mathbb{F}^{d, \theta}_{t}
=
  \begin{cases}
    \bigcap_{s \in (t, T]}
    \sigmaAlgebra
      \big(
        \sigmaAlgebra( W^{d, \theta}_r \colon r \in [0, t]) 
        \cup 
        \{ A \in \mathcal{F} \colon \P(A) = 0\}
      \big)
      & 
      \colon t < T \\[0.2cm]
    \sigmaAlgebra
      \big(
        \sigmaAlgebra( W^{d, \theta}_r \colon r \in [0, T]) 
        \cup 
        \{ A \in \mathcal{F} \colon \P(A) = 0\}
      \big)
      & 
      \colon t = T.
  \end{cases}
\end{equation}
Note that \eqref{thm_heat:setting2} implies that
for every $d \in \N$, $\theta \in \Theta$ it holds that 
$(\mathbb{F}^{d, \theta}_{t})_{t \in [0, T]}$ is a filtration on $(\Omega, \mathcal{F}, \P )$ which satisfies the usual conditions.
Moreover, observe that \eqref{thm_heat:setting2} and Lemma~\ref{brownian_motion}  demonstrate that 
for every $d \in \N$, $\theta \in \Theta$ it holds that 
$W^{d, \theta} \colon [0, T] \times \Omega \to \R^d$ is a standard $(\Omega, \mathcal{F}, \P, (\mathbb{F}^{d, \theta}_{t})_{t \in [0, T]} )$-Brownian motion.
Next note that \eqref{thm_heat:ass4} and \eqref{thm_heat:setting1} assure that 
for every $d \in \N$, $\theta \in \Theta$ it holds that
$X^{d, \theta}$ is continuous random field which satisfies
for every $t \in [0, T]$, $x \in \R^d$ that
$(X^{d,\theta}_{t,s}(x))_{s \in [t, T]} \colon [t, T] \times \Omega \to \R^d$ is an $(\mathbb{F}^{d,\theta}_{s})_{s \in [t, T]}$/$\Borel(\R^d)$-adapted stochastic process
and which satisfies
that for all $t \in [0, T]$, $s \in [t, T]$, $x \in \R^d$ 
it holds $\P$-a.s.\ that
\begin{equation}
\label{thm_heat:eq01}
  x 
  + 
  \int_{t}^s \mu_d \big(r, X^{d, \theta}_{t,r}(x)\big) \,dr 
  +
  \int_{t}^s \sigma_d \big(r, X^{d, \theta}_{t,r}(x)\big) \, dW^{d,\theta}_r
=
  x + W^{d,\theta}_s - W^{d,\theta}_t
=
  X^{d, \theta}_{t,s}(x).
\end{equation}
In addition, note that 
for all $d \in \N$, $t \in [0,T]$, $x \in \R^d$ it holds that
\begin{equation}
  \max \{ \left <x, \mu_d(t, x) \right>_{\R^d},  \normmm{\sigma_d(t, x)}_d^2 \}  
=
  \max \{ 0,  d\}  
=
  d.
\end{equation}
This, 
\eqref{thm_heat:ass1}, 
\eqref{thm_heat:ass3}, 
\eqref{thm_heat:ass8}, 
\eqref{thm_heat:ass9}, 
\eqref{thm_heat:setting1}, 
\eqref{thm_heat:eq01}, 
and Theorem~\ref{general_thm}
(with
$T = T$, 
$\alpha = 1$, 
$c = \kappa$,
$K = \kappa$,
$L = \kappa$,
$p = p$,
$P = 1$,
$\mathfrak{P} = \mathfrak{P}$,
$q = q$,
$C_1 = 1$,
$C_2 = 0$,
$\xi_d = \xi_d$,
$g_d = g_d$,
$f_d = f_d$,
$\mu_d = \mu_d$,
$\sigma_d = \sigma_d$,
$\mathcal{R}^\theta = \mathcal{R}^\theta$,
$\mathbb{F}^{d, \theta} = \mathbb{F}^{d, \theta}$,
$W^{d,\theta} = W^{d, \theta}$,
$X^{d, \theta} = X^{d, \theta}$,
$V^{d, \theta}_{M,n} = V^{d, \theta}_{M,n}$,
$\RN_{d, M,n} = \RN_{d, M,n}$
for $d \in \N$, $M,n \in \Z$, $\theta \in \Theta$ 
in the notation of Theorem~\ref{general_thm})
establish that
\begin{enumerate}[(I)]
\item \label{thm_heat:eqitem1}
for every $d \in \N$ there exists a unique at most polynomially growing function $u_d \in C([0,T] \times \R^d, \R)$ which satisfies that 
$u_d|_{(0,T)\times\R^d}\colon (0,T)\times\R^d\to\R$ is a viscosity solution of 
\begin{multline}
\label{thm_heat:eq03}
	(\tfrac{\partial u_d}{\partial t})(t,x) 
	+
	\tfrac{1}{2} 
	\operatorname{Trace}\! \big( 
		I_{\R^d}[I_{\R^d}]^{\ast}(\operatorname{Hess}_x u_d )(t,x)
	\big)  \\
\quad \,
	+
	\langle 
	 0, 
	 (\nabla_x u_d)(t,x)
	\rangle_{\R^d}
	+
	f_d(t, x, u_d(t, x))
=
	0
\end{multline}
for $(t, x) \in (0,T) \times \R^d$ and 
which satisfies 
for all $x \in \R^d$ that
$
  u_d(T, x) = g_d(x)
$
and

\item \label{thm_heat:eqitem2}
there exists a function
$ 
  N 
  = (N_{d, \varepsilon})_{d \in \N, \varepsilon \in (0,\infty)} 
  \colon \N \times (0,\infty) \to \N
$ 
such that 
for all $d \in \N$, $\varepsilon, \delta \in (0,\infty)$ it holds that
\begin{equation}
  \big(
    \EXPP{| u_d(0, \xi_d) - V^{d, 0}_{N_{d, \varepsilon}, N_{d, \varepsilon}} (0, \xi_d) |^2 }
  \big)^{\nicefrac{1}{2}}
\leq
  \varepsilon
\qand
\end{equation}
\begin{equation}
\begin{split}
  \RN_{d, N_{d, \varepsilon},N_{d, \varepsilon}}
&\leq
  \left[
    4^{p+2} \kappa
    e^{T(\kappa + 2 + p(p+ 2) )}
    \left( 
        \kappa^p
        +
        1
    \right) 
  \right]^{(2+\delta)}
  \left[
    \sup_{n \in \N}
        \tfrac{(4+8\kappa T)^{(3+\delta)(n+1)}}{n^{(n\delta/2)}}
  \right]\\
&\quad \cdot 
  d^{1 + (\mathfrak{P} + \max \{pq, \nicefrac{p}{2} \})(2+\delta)}
  (\min\{1, \varepsilon\})^{-(2 + \delta)}
<
  \infty.
\end{split}
\end{equation}
\end{enumerate}
Note that item~\eqref{thm_heat:eqitem1} establishes item~\eqref{thm_heat:item1}. 
Moreover, observe that item~\eqref{thm_heat:eqitem2} establishes item~\eqref{thm_heat:item2}.
The proof of Theorem~\ref{thm_heat} is thus completed.
\end{proof}

\subsection{MLP approximations for semilinear Black-Scholes equations}
\label{sect:BS}

\begin{lemma}
\label{moment_geom_BM}
Let $d \in \N$, $T \in (0,\infty)$, $(\alpha_{i})_{ i \in \{1, 2, \ldots, d \}}$, $(\beta_{ i})_{ i \in \{1, 2, \ldots, d \}} \subseteq \R$, 
let $\left< \cdot, \cdot \right> \colon \R^d \times \R^d \to \R$ be the Euclidean scalar product on $\R^d$,
let $\Sigma = (\zeta_1, \ldots, \zeta_d)  \in \R^{d \times d}$ satisfy 
for all $ i \in \{1, 2, \ldots, d \}$  that 
$\left< \zeta_i, \zeta_i \right> = 1$,
let $(\Omega, \mathcal{F}, \P)$ be a complete probability space,
let $W \colon [0, T] \times \Omega \to \R^d$ be a $d$-dimensional standard Brownian motion, 
and 
let 
$ X = (X^{(i)}_{t, s}(x))_{s \in [t, T], t \in [0, T], x \in \R^d, i \in \{1, 2, \ldots, d\}}  \colon \{ (t,s) \in [0, T]^2 \colon t \leq s \} \times \R^d \times \Omega \to \R^d$
be the function which satisfies
for all 
$i \in \{1, 2, \ldots, d\}$, $t \in [0, T]$,  $s \in [t, T]$, $x = (x_1,x_2, \ldots, x_d) \in \R^d$ that
\begin{equation}
\label{moment_geom_BM:ass1}
  X^{(i)}_{t, s}(x)
=
  x_i 
  \exp \left(
    \big(\alpha_{i} - \tfrac{|\beta_{i}|^2}{2}\big)(s-t)
    +
    \beta_{i} 
    \langle \zeta_i, W_s - W_t \rangle
  \right).
\end{equation}
Then 
it holds that $X$ is a continuous random field which satisfies 
that for all $t \in [0, T]$, $s \in [t, T]$, $x \in \R^d$ it holds
$\P$-a.s.\ that
\begin{equation}
\label{moment_geom_BM:concl1}
  X_{t, s}(x)
=
  x
  +
  \int_t^s
    \begin{pmatrix}
      \alpha_1 X^{(1)}_{t, r}(x) \\
      \vdots \\
      \alpha_d X^{(d)}_{t, r}(x)
    \end{pmatrix}
   dr
   +
   \int_t^s
     \diag \big(
       \beta_1 X^{(1)}_{t, r}(x),
       \ldots ,
       \beta_d X^{(d)}_{t, r}(x)
     \big)
     \Sigma^* \, 
   dW_r.
\end{equation}
\end{lemma}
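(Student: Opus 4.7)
\medskip

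\noindent
\textbf{Proof proposal for Lemma~\ref{moment_geom_BM}.}
The plan is to reduce the $d$-dimensional statement to $d$ scalar It\^o computations, one per coordinate, and then to repackage the resulting vector SDE into the matrix form in \eqref{moment_geom_BM:concl1}. The continuity of $X$ as a random field on $\{(t,s)\in[0,T]^2\colon t\leq s\}\times\R^d\times\Omega$ follows immediately from \eqref{moment_geom_BM:ass1}, the continuity of the exponential, the continuity of the sample paths of $W$, and the fact that each coordinate is a continuous composition of these objects; I would invoke the standard composition argument (analogous to \cite[Lemma 2.4]{BeckBeckerGrohsJaafariJentzen18}) to record this rigorously.

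For the SDE, I would first introduce, for every $i\in\{1,2,\ldots,d\}$, the auxiliary real-valued process $B^{(i)}=(B^{(i)}_s)_{s\in[0,T]}$ given by $B^{(i)}_s=\langle\zeta_i,W_s\rangle$. Since $\langle\zeta_i,\zeta_i\rangle=1$ and $W$ is a standard $d$-dimensional Brownian motion with respect to its augmented filtration, L\'evy's characterization shows that each $B^{(i)}$ is a standard one-dimensional Brownian motion (with respect to the same filtration). With this notation \eqref{moment_geom_BM:ass1} reads
\begin{equation*}
  X^{(i)}_{t,s}(x)
  =
  x_i\exp\!\Big(\big(\alpha_i-\tfrac{|\beta_i|^2}{2}\big)(s-t)+\beta_i\big(B^{(i)}_s-B^{(i)}_t\big)\Big),
\end{equation*}
so a direct application of the one-dimensional It\^o formula to the function $(r,y)\mapsto x_i\exp((\alpha_i-|\beta_i|^2/2)(r-t)+\beta_i(y-B^{(i)}_t))$ together with the drift/It\^o-correction cancellation yields, for all $t\in[0,T]$, $s\in[t,T]$, $x\in\R^d$, $i\in\{1,2,\ldots,d\}$, $\P$-a.s.\ the identity
\begin{equation*}
  X^{(i)}_{t,s}(x)
  =
  x_i+\int_t^s\alpha_iX^{(i)}_{t,r}(x)\,dr+\int_t^s\beta_iX^{(i)}_{t,r}(x)\,dB^{(i)}_r.
\end{equation*}

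The final step is to recognize the scalar stochastic integral against $B^{(i)}$ as the $i$-th coordinate of a vector It\^o integral against $W$. Since $\Sigma=(\zeta_1,\ldots,\zeta_d)$ has $\zeta_i$ as its $i$-th column, the $i$-th row of $\Sigma^{\ast}$ equals $\zeta_i^{\ast}$, so $(\Sigma^{\ast}\,dW_r)_i=\langle\zeta_i,dW_r\rangle=dB^{(i)}_r$; multiplying on the left by $\diag(\beta_1X^{(1)}_{t,r}(x),\ldots,\beta_dX^{(d)}_{t,r}(x))$ therefore produces a vector whose $i$-th component is exactly $\beta_iX^{(i)}_{t,r}(x)\,dB^{(i)}_r$. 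Stacking the $d$ scalar equations into a single $\R^d$-valued identity then gives \eqref{moment_geom_BM:concl1}. The only mildly delicate point is the rigorous passage from the scalar stochastic integral against $B^{(i)}$ to the vector stochastic integral against $W$; I would handle this by writing $\int_t^s\beta_iX^{(i)}_{t,r}(x)\,dB^{(i)}_r=\int_t^s\beta_iX^{(i)}_{t,r}(x)\langle\zeta_i,dW_r\rangle$ and invoking the standard linearity of the It\^o integral in the integrand together with the identity of the covariation processes of $B^{(i)}$ and $\langle\zeta_i,W\rangle$, which is immediate from $\|\zeta_i\|=1$.
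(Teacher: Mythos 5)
Your proof is correct but takes a genuinely different route from the paper's. The paper applies the multidimensional It\^o formula directly to the $\R^{d+1}\to\R$ map $f_i(r,w)=x_i\exp((\alpha_i-|\beta_i|^2/2)r+\beta_i\langle\zeta_i,w\rangle)$ composed with the $d$-dimensional increment process $(r,W_{t+r}-W_t)$, and uses the normalization $\sum_j|\zeta_i^{(j)}|^2=1$ to collapse the Hessian term so that the It\^o correction cancels the $-|\beta_i|^2/2$ in the drift. You instead first define $B^{(i)}_s=\langle\zeta_i,W_s\rangle$, invoke L\'evy's characterization (using the same normalization) to see that each $B^{(i)}$ is a standard one-dimensional Brownian motion, apply the one-dimensional It\^o formula coordinate-wise, and only at the end convert the scalar stochastic integral $\int\beta_iX^{(i)}\,dB^{(i)}_r$ to the vector integral $\int\beta_iX^{(i)}(\zeta_i)^{\ast}\,dW_r$ by linearity. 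Both uses of $\|\zeta_i\|=1$ are doing the same work, just in different places; your approach avoids carrying around the $\sum_j$ index and is arguably cleaner, while the paper's is self-contained (no appeal to L\'evy). One small imprecision in your writeup: the clause about the ``identity of the covariation processes of $B^{(i)}$ and $\langle\zeta_i,W\rangle$, which is immediate from $\|\zeta_i\|=1$'' is circular as stated since $B^{(i)}$ \emph{is} $\langle\zeta_i,W\rangle$ by definition; what you actually need for the last step is the linearity identity $\int H\,dB^{(i)}=\sum_j\zeta_i^{(j)}\int H\,dW^{(j)}$ for a finite linear combination of It\^o integrators, which requires no normalization at all (the normalization was already spent on L\'evy).
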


\begin{proof}[Proof of Lemma~\ref{moment_geom_BM}]
Throughout this proof let $t \in [0, T]$, $s \in (0,T]$, $x = (x_1, x_2, \ldots, x_d) \in \R^d$,
let 
$f_i \colon [0, T] \times \R^d \to \R$, $i \in \{1, 2, \ldots, d\}$,
be the functions which satisfy 
for all $i \in \{1, 2, \ldots, d\}$, $r \in [0, T]$, $w \in \R^d$ that
\begin{equation}
\label{moment_geom_BM:setting1}
  f_i(r, w)
=
  x_i 
  \exp \! \big( (\alpha_i - \tfrac{|\beta_i|^2}{2})r + \beta_i \left< \zeta_i, w \right> \big),
\end{equation} 
 let $B = (B^{(i)})_{i \in \{1, 2, \ldots, d\}} \colon [0, s-t] \times \Omega \to \R^d$ satisfy 
for all $r \in [0, s-t]$ that
$B_r = W_{t+r} - W_t$,
and let $\zeta_i^{(j)} \in \R$, $i, j \in \{1, 2, \ldots, d \}$, satisfy 
for all $i \in \{1, 2, \ldots, d \}$ that
$ \zeta_i = (\zeta_i^{(j)})_{j \in \{1, 2, \ldots, d \}}$.
Observe that It\^o's formula (cf., e.g., Karatzas \& Shreve \cite[Theorem 3.3.6]{KaratzasShreve12}) assures that
for all  $i \in \{1, 2, \ldots, d \}$
it holds $\P$-a.s.\ that
\begin{equation}
\begin{split}
  X^{(i)}_{t,s}(x)
&=
  f_i( s-t, W_s-W_t)
=
  f_i( s-t, B_{s-t}) \\
&=
  f_i( 0 , B_{0})
  +
  \int_{0}^{s-t}
    \left(\tfrac{\partial f_i}{\partial r}\right)
    (r, B_r)
  \, dr 
  +
  \sum_{j = 1}^d
    \int_{0}^{s-t}
      \left(\tfrac{\partial f_i}{\partial w_j}\right)
      (r, B_r)
    \, dB^{(j)}_r \\
&\quad
  + 
  \frac{1}{2}
    \sum_{j = 1}^d
    \int_{0}^{s-t}
      \left(\tfrac{\partial^2 f_i}{\partial w_j^2}\right)
      (r, B_r)
    \, dr \\
&=
  f_i( 0 , B_{0})
  +
  \int_{0}^{s-t}
    (\alpha_i - \tfrac{|\beta_i|^2}{2}) 
    f_i (r, B_r)
  \, dr 
  +
  \sum_{j = 1}^d
    \int_{0}^{s-t}
      \beta_i \zeta_i^{(j)} 
      f_i (r, B_r)
    \, dB^{(j)}_r \\
&\quad
  + 
  \frac{1}{2}
    \sum_{j = 1}^d
    \int_{0}^{s-t}
      |\beta_i|^2 \big|\zeta_i^{(j)} \big|^2 
      f_i (r, B_r)
    \, dr.
\end{split}
\end{equation}
The fact that 
for all $ i \in \{1, 2, \ldots, d \}$ it holds that $ \sum_{j = 1}^d \big| \zeta_i^{(j)} \big|^2 = \left< \zeta_i, \zeta_i \right> = 1$
and the fact that 
for all $ i \in \{1, 2, \ldots, d \}$, $r \in [0, s-t]$ it holds that
$f_i (r, B_r) =  X^{(i)}_{t,t+r}(x)$ 
hence assure that
for all  $i \in \{1, 2, \ldots, d \}$
it holds $\P$-a.s.\ that
\begin{equation}
\begin{split}
  X^{(i)}_{t,s}(x)
&=
  x_i
  +
  \int_{0}^{s-t}
    \left(
      \left(\alpha_i - \tfrac{|\beta_i|^2}{2} \right) 
      +
      \tfrac{1}{2} |\beta_i|^2 \left[ \sum_{j = 1}^d \big| \zeta_i^{(j)} \big|^2 \right]
    \right)
    X^{(i)}_{t,t+r}(x)
  \, dr \\
&\quad
  +
  \int_{0}^{s-t}
    \beta_i
    X^{(i)}_{t,t+r}(x)
    (\zeta_i)^\ast 
  dB_r \\
&=
  x_i
  +
  \int_{0}^{s-t}
    \alpha_{i}
    X^{(i)}_{t,t+r}(x)
  \, dr
  +
  \int_{0}^{s-t}
    \beta_{i}
    X^{(i)}_{t,t+r}(x)
    (\zeta_i)^\ast 
  dB_r  \\
&=
  x_i
  +
  \int_{t}^s
    \alpha_{i}
    X^{(i)}_{t,r}(x)
  \, dr
  +
  \int_{t}^s
    \beta_{i}
     X^{(i)}_{t,r}(x)
     (\zeta_i)^\ast 
  d W_r.
\end{split}
\end{equation}
This implies \eqref{moment_geom_BM:concl1}.
The proof of Lemma~\ref{moment_geom_BM} is thus completed.
\end{proof}

\begin{theorem}
\label{thm_BS}
Let  $T \in (0,\infty)$, $\kappa, p, \mathfrak{P}, q \in [0,\infty)$, $(\alpha_{d, i})_{i \in \{1, 2, \ldots, d \}, d \in \N}$, $(\beta_{d, i})_{i \in \{1, 2, \ldots, d \}, d \in \N} \subseteq \R$, $\Theta = \cup_{n = 1}^\infty \Z^n$ satisfy that
$
\sup_{d \in \N, i \in \{1, 2, \ldots, d \}} 
		\max \{
		  | \alpha_{d, i} |
		  ,
		  | \beta_{d, i} |^2
		\}
\leq
	\kappa
$,
for every $d \in \N$ 
  let $\left< \cdot, \cdot \right>_{\R^d} \colon \R^d \times \R^d \to \R$ be the Euclidean scalar product on $\R^d$ and
  let $\norm{\cdot}_{\R^d} \colon \R^d \to [0,\infty)$ be the Euclidean norm on $\R^d$,
for every $d \in \N$ let $\xi_d \in \R^d$, $ \Sigma_d = (\zeta_{d, 1}, \ldots, \zeta_{d, d}) \in \R^{d \times d}$ satisfy 
for all $ i \in \{1, 2, \ldots, d \}$ that
$\norm{\xi_d}_{\R^d} \leq \kappa d^q$ and 
$ \Norm{ \zeta_{d, i}}_{\R^d} = 1$,
for every $d \in \N$ let $\mu_d \colon [0, T] \times \R^d \to \R^d$ and $\sigma_d \colon [0, T] \times \R^d \to \R^{d \times d}$ be the functions which satisfy 
for all $t \in [0,T]$, $x = (x_1, x_2, \ldots, x_d) \in \R^d$ that 
\begin{equation}
\label{thm_BS:ass1}
	\mu_d(t, x) = (\alpha_{d, 1} x_1, \ldots, \alpha_{d, d} x_d) \qandq \sigma_d(t, x) = \diag(\beta_{d, 1} x_1, \ldots, \beta_{d, d} x_d) \Sigma_d^\ast,
\end{equation}
for every $d \in \N$ let $g_d \in C( \R^d , \R)$, $f_d \in C( [0,T] \times \R^d \times \R , \R)$ satisfy
for all $t \in [0, T]$, $ x  \in \R^d$, $v, w \in \R$ that 
\begin{equation}
\label{thm_BS:ass2}
  \max \{ | g_d(x) | , | f_d(t, x, 0) | \}
\leq
  \kappa d^{\mathfrak{P}}(1 + \norm{x}_{\R^d}^p)
\qandq 
  |f_d(t, x, v) - f_d(t, x, w)|
\leq
  \kappa | v - w |,
\end{equation}
let $ ( \Omega, \mathcal{F}, \P ) $ be a probability space, 
let 
$ \mathcal{R}^\theta \colon \Omega \to [0, 1]$, 
  $\theta \in \Theta$, 
be independent  $\mathcal{U}_{ [0, 1]}$-distributed random variables,
let 
$ R^\theta = (R^\theta_t)_{t \in [0,T]} \colon [0,T] \times \Omega \to [0, T]$, 
  $\theta \in \Theta$, 
be the stochastic processes which satisfy 
for all $t \in [0,T]$, $\theta \in \Theta$ that
\begin{equation}
\label{thm_BS:ass3}
  R^\theta_t = t + (T-t)\mathcal{R}^\theta,
\end{equation}
for every $d \in \N$ 
let 
$W^{d, \theta} 
\colon [0, T] \times \Omega \to \R^d$, 
  $\theta \in \Theta$,
be independent standard Brownian motions,
assume that 
$
  \left(
    W^{d, \theta}
  \right)_{d \in \N, \theta \in \Theta}
$
and
$
  \left(
    \mathcal{R}^\theta
  \right)_{\theta \in \Theta}
$
are independent, 
for every $d \in \N$, $\theta \in \Theta$
let 
$ X^{d,\theta} = (X^{d, \theta, i}_{t,s}(x))_{s \in [t, T], t \in [0, T], x \in \R^d, i \in \{1, 2, \ldots, d\}}  \colon \{ (t,s) \in [0, T]^2 \colon t \leq s \} \times \R^d \times \Omega \to \R^d$
be the function which satisfies
for all 
$t \in [0, T]$, $s \in [t, T]$, $x = (x_1, x_2, \ldots, x_d) \in \R^d$, $i \in \{1, 2, \ldots, d\}$ that
\begin{equation}
\label{thm_BS:ass7}
  X^{d, \theta, i}_{t,s}(x)
=
  x_i 
  \exp \left(
    \big(\alpha_{d, i} - \tfrac{|\beta_{d, i}|^2}{2}\big)(s-t)
    +
    \beta_{d, i} 
    \langle \zeta_{d, i}, W^{d, \theta}_s - W^{d, \theta}_t \rangle_{\R^d}
  \right),
\end{equation}
let 
$
  V^{d, \theta}_{M,n} \colon [0,T] \times \R^d \times \Omega \to \R
$, $M, n \in \Z$, $\theta \in \Theta$, $d \in \N$,
be functions which satisfy 
for all $d, M, n \in \N$, $\theta \in \Theta$, $t \in [0,T]$, $x \in \R^d$ that
$
  V^{d, \theta}_{M,-1} (t, x) = V^{d, \theta}_{M,0} (t, x) = 0
$
and 
\begin{equation}
\label{thm_BS:ass8}
\begin{split}
  V^{d, \theta}_{M,n}(t, x) 
&=
  \frac{1}{M^n} 
  \Bigg[
    \sum_{m = 1}^{M^n}
      g_d \big( X^{d, (\theta, n, -m)}_{t, T}(x) \big)
  \Bigg] \\
&
  +
  \sum_{k = 0}^{n-1}
    \frac{(T-t)}{M^{n-k}} 
    \Bigg[
      \sum_{m = 1}^{M^{n-k}}
        f_d \Big( 
          R^{(\theta, k, m)}_t , X^{d, (\theta, k, m)}_{t, R^{(\theta, k, m)}_t}(x),
          V^{d, (\theta, k, m)}_{M, k} \big( R^{(\theta, k, m)}_t  ,  X^{d, (\theta, k, m)}_{t, R^{(\theta, k, m)}_t}(x) \big)
        \Big) \\
&
        -
       \mathbbm{1}_{\N}(k)
       f_d \Big( 
         R^{(\theta, k, m)}_t  ,  X^{d, (\theta, k, m)}_{t, R^{(\theta, k, m)}_t}(x),
         V^{d, (\theta, k, -m)}_{M, k-1} \big( R^{(\theta, k, m)}_t  ,  X^{d, (\theta, k, m)}_{t, R^{(\theta, k, m)}_t}(x) \big)
        \Big)
    \Bigg],
\end{split}
\end{equation}
and let $(\RN_{d, M,n})_{M,n\in \Z, d \in \N}\subseteq\N_0$ satisfy
for all $d, n, M \in \N$ that 
$\RN_{d, M,0}=0$
and
\begin{equation}
\label{thm_BS:ass9}
  \RN_{d, M,n}
\leq 
   d M^n 
  +
  \sum_{k=0}^{n-1}
    \left[
      M^{(n-k)}( d+1 + \RN_{d, M,k}+ \mathbbm{1}_{ \N }( k )  \RN_{d, M,k-1})
    \right].
\end{equation}
Then
\begin{enumerate}[(i)]
\item \label{thm_BS:item1}
for every $d \in \N$ there exists a unique at most polynomially growing function $u_d \in C([0,T] \times \R^d, \R)$ which satisfies that 
$u_d |_{(0,T)\times\R^d}\colon (0,T)\times\R^d\to\R$ is a viscosity solution of 
\begin{multline}
\label{thm_BS:concl1}
	(\tfrac{\partial u_d}{\partial t})(t,x) 
	+
	\tfrac{1}{2} 
	\operatorname{Trace}\! \big( 
		\sigma_d(t, x)[\sigma_d(t, x)]^{\ast}(\operatorname{Hess}_x u_d )(t,x)
	\big)  \\
\quad \,
	+
	\langle 
	 \mu_d(t,x), 
	 (\nabla_x u_d)(t,x)
	\rangle_{\R^d}
	+
	f_d(t, x, u_d(t, x))
=
	0
\end{multline}
for $(t, x) \in (0,T) \times \R^d$ and 
which satisfies 
for all $x \in \R^d$ that
$
  u_d(T, x) = g_d(x)
$
and

\item \label{thm_BS:item2}
there exist
functions
$ 
  N 
  = (N_{d, \varepsilon})_{d \in \N, \varepsilon \in (0,\infty)} 
  \colon \N \times (0,\infty) \to \N
$ 
and
$
  C 
= (C_\delta)_{\delta \in (0,\infty)} 
  \colon (0,\infty) \to (0,\infty)
$
such that 
for all $d \in \N$, $\varepsilon, \delta \in (0,\infty)$ it holds that
\begin{equation}
  \big(
    \EXPP{| u_d(0, \xi_d) - V^{d, 0}_{N_{d, \varepsilon}, N_{d, \varepsilon}} (0, \xi_d) |^2 }
  \big)^{\nicefrac{1}{2}}
\leq
  \varepsilon
\qand
\end{equation}
\begin{equation}
\begin{split}
  \RN_{d, N_{d, \varepsilon},N_{d, \varepsilon}}
&\leq
  C_\delta \, 
  d^{1 + (\mathfrak{P} + pq)(2+\delta)}
  (\min\{1, \varepsilon\})^{-(2 + \delta)}.
\end{split}
\end{equation}
\end{enumerate}
\end{theorem}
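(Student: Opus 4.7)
My plan is to deduce Theorem~\ref{thm_BS} directly from Theorem~\ref{general_thm} by specializing to the Black-Scholes setting. The first step is to recognize that the explicit random fields $X^{d,\theta}$ defined by \eqref{thm_BS:ass7} are strong solutions of the SDE with drift $\mu_d$ and diffusion $\sigma_d$ given in \eqref{thm_BS:ass1}. This is an immediate consequence of Lemma~\ref{moment_geom_BM}, applied for each $d \in \N$, $\theta \in \Theta$, and $t \in [0,T]$ to the shifted Brownian motion $(W^{d,\theta}_{t+r}-W^{d,\theta}_t)_{r \in [0,T-t]}$ together with the parameters $(\alpha_{d,i})_{i}$, $(\beta_{d,i})_{i}$, and the matrix $\Sigma_d$; in particular, $X^{d,\theta}$ is a continuous random field.

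Next, I would verify the one-sided linear growth and global Lipschitz hypotheses on $\mu_d$ and $\sigma_d$ required by Theorem~\ref{general_thm}. The computation $\langle x, \mu_d(t,x)\rangle_{\R^d} = \sum_{i=1}^d \alpha_{d,i} x_i^2 \leq \kappa \norm{x}_{\R^d}^2$ is immediate. For the diffusion, the hypothesis that each column $\zeta_{d,i}$ of $\Sigma_d$ has unit Euclidean norm yields, with $\sigma_d(t,x) = \diag(\beta_{d,i} x_i) \Sigma_d^*$ having entries $\beta_{d,i} x_i (\zeta_{d,i})_j$,
\begin{equation*}
  \normmm{\sigma_d(t,x)}_d^2
  = \sum_{i,j=1}^d |\beta_{d,i}|^2 x_i^2 |(\zeta_{d,i})_j|^2
  = \sum_{i=1}^d |\beta_{d,i}|^2 x_i^2 \norm{\zeta_{d,i}}_{\R^d}^2
  \leq \kappa \norm{x}_{\R^d}^2,
\end{equation*}
so the growth condition \eqref{general_thm:ass2} is satisfied with $P = 0$, $C_1 = 0$, and $C_2 = \kappa$. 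Furthermore, $\mu_d$ and $\sigma_d$ are time-independent and linear in $x$, and the same computation applied to $x-y$ shows that both are globally Lipschitz continuous with Lipschitz constant at most $\sqrt{\kappa}$, independently of $d$.

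It then remains to set up the filtrations so that the abstract hypotheses of Theorem~\ref{general_thm} apply. Following the strategy used in the proof of Theorem~\ref{thm_heat}, I would take $(\mathbb{F}^{d,\theta}_t)_{t \in [0,T]}$ to be the augmented right-continuous natural filtration generated by $W^{d,\theta}$; Lemma~\ref{brownian_motion} ensures that $W^{d,\theta}$ remains a standard $(\mathbb{F}^{d,\theta}_t)_{t \in [0,T]}$-Brownian motion, the adaptedness of $X^{d,\theta}_{t,\cdot}(x)$ is immediate from the explicit formula \eqref{thm_BS:ass7}, and the independence of the terminal $\sigma$-algebras $(\mathbb{F}^{d,\theta}_T)_{d,\theta}$ (and their joint independence from $(\mathcal{R}^\theta)_{\theta \in \Theta}$) follows directly from the assumed mutual independence of the Brownian motions and of the uniform random variables.

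Finally, I would apply Theorem~\ref{general_thm} with constants $\alpha = 1$, $c = \kappa$, $K = L = \kappa$ (assuming without loss of generality $\kappa \geq 1$), $p = p$, $P = 0$, $\mathfrak{P} = \mathfrak{P}$, $q = q$, $C_1 = 0$, $C_2 = \kappa$, to obtain both the existence and uniqueness of the polynomially growing viscosity solution $u_d$ (item~\eqref{thm_BS:item1}) and the complexity bound (item~\eqref{thm_BS:item2}). Since $P = 0$, the exponent $\max\{pq, \nicefrac{(Pp)}{2}\}$ appearing in Theorem~\ref{general_thm} simplifies to $pq$, and the $d$- and $\varepsilon$-independent prefactor produced by Theorem~\ref{general_thm} can be absorbed into a constant $C_\delta \in (0,\infty)$ depending only on $T$, $\kappa$, $p$, and $\delta$. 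The only computation in this program requiring any real care is the Frobenius norm identity above: the unit-norm assumption on the columns of $\Sigma_d$ is precisely what eliminates a potential factor $d$ from the diffusion bound and thereby forces $P = 0$, which in turn is what yields the sharp dimension exponent $1 + (\mathfrak{P} + pq)(2+\delta)$ in the complexity estimate rather than a strictly larger one.
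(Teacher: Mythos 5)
Your proposal is correct and follows essentially the same route as the paper: Lemma~\ref{moment_geom_BM} to identify $X^{d,\theta}$ as the SDE solution, the Frobenius norm computation exploiting the unit-norm columns of $\Sigma_d$ to get $P=0$, the augmented right-continuous natural filtration together with Lemma~\ref{brownian_motion}, and Theorem~\ref{general_thm} with the same choice of parameters $\alpha=1$, $c=K=L=C_2=\kappa$, $C_1=P=0$. Your stated Lipschitz constant $\sqrt{\kappa}$ for $\mu_d$ should be $\kappa$ (since the hypothesis bounds $|\alpha_{d,i}|$, not $|\alpha_{d,i}|^2$), but this is immaterial since only the existence of a dimension-independent Lipschitz constant is used.
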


\begin{proof}[Proof of Theorem~\ref{thm_BS}]
Throughout this proof 
assume w.l.o.g.\ that $ \kappa \geq 1 $,
assume w.l.o.g.\ that $ ( \Omega, \mathcal{F}, \P ) $ is a complete probability space,
for every $d \in \N$
let $\normmm{\cdot}_d \colon \R^{d \times d} \to [0,\infty)$ be the Frobenius norm on $\R^{d \times d}$,
for every $d \in \N$, $i \in \{1, 2, \ldots, d \}$ let $\zeta_{d, i}^{(j)} \in \R$, $j \in \{1, 2, \ldots, d \}$, satisfy 
that
$ \zeta_{d,i} = (\zeta_{d,i}^{(j)})_{j \in \{1, 2, \ldots, d \}}$,
and 
for every $d \in \N$, $\theta \in \Theta$, $t \in [0, T]$ let $\mathbb{F}^{d, \theta}_{t} \subseteq \mathcal{F}$
be the sigma-algebra which satisfies that
\begin{equation}
\label{thm_BS:setting2}
  \mathbb{F}^{d, \theta}_{t}
=
  \begin{cases}
    \bigcap_{s \in (t, T]}
    \sigmaAlgebra
      \big(
        \sigmaAlgebra( W^{d, \theta}_r \colon r \in [0, t]) 
        \cup 
        \{ A \in \mathcal{F} \colon \P(A) = 0\}
      \big)
      & 
      \colon t < T \\[0.2cm]
    \sigmaAlgebra
      \big(
        \sigmaAlgebra( W^{d, \theta}_r \colon r \in [0, T]) 
        \cup 
        \{ A \in \mathcal{F} \colon \P(A) = 0\}
      \big)
      & 
      \colon t = T.
  \end{cases}
\end{equation}
Note that \eqref{thm_BS:setting2} implies that
for every $d \in \N$, $\theta \in \Theta$ it holds that 
$(\mathbb{F}^{d, \theta}_{t})_{t \in [0, T]}$ is a filtration on $(\Omega, \mathcal{F}, \P )$ which satisfies the usual conditions.
Moreover, observe that \eqref{thm_BS:setting2} and Lemma~\ref{brownian_motion}  demonstrate that 
for every $d \in \N$, $\theta \in \Theta$ it holds that 
$W^{d, \theta} \colon [0, T] \times \Omega \to \R^d$ is a standard $(\Omega, \mathcal{F}, \P, (\mathbb{F}^{d, \theta}_{t})_{t \in [0, T]} )$-Brownian motion.
In addition, note that 
\eqref{thm_BS:ass1} 
and 
the fact that 
$
\sup_{d \in \N, i \in \{1, 2, \ldots, d \}} 
		  | \alpha_{d, i} |
\leq
	\kappa
$
imply that 
for all $d \in \N$, $t \in [0, T]$, $x  = (x_1, x_2, \ldots, x_d)\in \R^d$ it holds that
\begin{equation}
\label{thm_BS:eq001}
  \left < x , \mu_d(t, x) \right>_{\R^d}
=
  \sum_{i=1}^d x_i \alpha_{d, i} x_i
\leq
  \sum_{i=1}^d |x_i|^2 |\alpha_{d, i}|
\leq
  \kappa \norm{x}_{\R^d}^2.
\end{equation}
Furthermore, observe that 
\eqref{thm_BS:ass1}, 
the fact that
$
\sup_{d \in \N, i \in \{1, 2, \ldots, d \}} 
		  | \beta_{d, i} |^2
\leq
	\kappa
$, 
and
the hypothesis that 
for all $d \in \N$, $i \in \{1, 2, \ldots, d \}$ it holds that
$\norm{\zeta_{d, i}}_{\R^d} = 1$ 
assure that
for all $d \in \N$, $t \in [0, T]$, $x  = (x_1, x_2, \ldots, x_d)\in \R^d$ it holds that
\begin{equation}
\begin{split}
  \normmm{\sigma_d(t, x)}_d^2
&=
  \sum_{i,j=1}^d \big| \beta_{d, i} x_i \zeta_{d, i}^{(j)} \big|^2
=
  \sum_{i=1}^d 
    \left[ 
      | \beta_{d, i}|^2 | x_i |^2 \sum_{j=1}^d \big| \zeta_{d, i}^{(j)} \big|^2 
    \right] \\
&\leq
    \sum_{i=1}^d \kappa \, | x_i |^2 \norm{ \zeta_{d, i}}_{\R^d}^2
=
  \kappa \norm{ x}_{\R^d}^2.
\end{split}
\end{equation}
This and \eqref{thm_BS:eq001} ensure that 
for all $d \in \N$, $t \in [0,T]$, $x \in \R^d$ it holds that
\begin{equation}
\label{thm_BS:eq002}  
  \max \{ \left <x, \mu_d(t, x) \right>_{\R^d},  \normmm{\sigma_d(t, x)}_d^2 \}  
=
 \kappa \norm{ x}_{\R^d}^2.
\end{equation}
Next note that \eqref{thm_BS:ass1}, \eqref{thm_BS:ass7}, and Lemma~\ref{moment_geom_BM} 
(with
$d = d$, 
$T = T$,
$(\alpha_i)_{i \in \{1, \ldots, d\}} = (\alpha_{d,i})_{i \in \{1, \ldots, d\}}$,
$(\beta_i)_{i \in \{1, \ldots, d\}} = (\beta_{d,i})_{i \in \{1, \ldots, d\}}$,
$\Sigma = \Sigma_d$,
$W = W^{d, \theta}$, 
$X = X^{d, \theta}$
for $\theta \in \Theta$, $d \in \N$
in the notation of Lemma~\ref{moment_geom_BM})
demonstrate that
for all $d \in \N$, $\theta \in \Theta$ it holds that
$X^{d, \theta}$ is continuous random field which satisfies
for every $t \in [0, T]$, $x \in \R^d$ that
$(X^{d, \theta}_{t,s}(x))_{s \in [t, T]} \colon [t, T] \times \Omega \to \R^d$ is an $(\mathbb{F}^{d, \theta}_{s})_{s \in [t, T]}$/$\Borel(\R^d)$-adapted stochastic process
and which satisfies
that for all $t \in [0, T]$, $s \in [t, T]$, $x \in \R^d$ 
it holds $\P$-a.s.\ that
\begin{equation}
\label{thm_BS:eq01}
  X^{d, \theta}_{t,s}(x)
=
  x 
  + 
  \int_{t}^s \mu_d \big(r, X^{d, \theta}_{t,r}(x)\big) \,dr 
  +
  \int_{t}^s \sigma_d \big(r, X^{d, \theta}_{t,r}(x)\big) \, dW^{d,\theta}_r.
\end{equation}
Combining 
this, 
\eqref{thm_BS:ass2}, 
the fact that for all $d \in \N$ it holds that $\mu_d$ and $\sigma_d$ are globally Lipschitz continuous functions, 
and
\eqref{thm_BS:eq002}
with Theorem~\ref{general_thm}
(with
$T = T$, 
$\alpha = 1$, 
$c = \kappa$,
$K = \kappa$,
$L = \kappa$,
$p = p$,
$P = 0$,
$\mathfrak{P} = \mathfrak{P}$,
$q = q$,
$C_1 = 0$,
$C_2 = \kappa$,
$\xi_d = \xi_d$,
$g_d = g_d$,
$f_d = f_d$,
$\mu_d = \mu_d$,
$\sigma_d = \sigma_d$,
$\mathcal{R}^\theta = \mathcal{R}^\theta$,
$\mathbb{F}^{d,\theta} = \mathbb{F}^{d,\theta}$, 
$W^{d,\theta} = W^{d, \theta}$,
$X^{d, \theta} = X^{d, \theta}$,
$V^{d, \theta}_{M,n} = V^{d, \theta}_{M,n}$,
$\RN_{d, M,n} = \RN_{d, M,n}$
for $d \in \N$, $\theta \in \Theta$, $M,n \in \Z$, 
in the notation of Theorem~\ref{general_thm})
establishes that
\begin{enumerate}[(I)]
\item \label{thm_BS:eqitem1}
for every $d \in \N$ there exists a unique at most polynomially growing function $u_d \in C([0,T] \times \R^d, \R)$ which satisfies that 
$u_d|_{(0,T)\times\R^d}\colon (0,T)\times\R^d\to\R$ is a viscosity solution of 
\begin{multline}
\label{thm_BS:eq03}
	(\tfrac{\partial u_d}{\partial t})(t,x) 
	+
	\tfrac{1}{2} 
	\operatorname{Trace}\! \big( 
		\sigma_d(t, x)[\sigma_d(t, x)]^{\ast}(\operatorname{Hess}_x u_d )(t,x)
	\big)  \\
\quad \,
	+
	\langle 
	 \mu_d(t,x), 
	 (\nabla_x u_d)(t,x)
	\rangle_{\R^d}
	+
	f_d(t, x, u_d(t, x))
=
	0
\end{multline}
for $(t, x) \in (0,T) \times \R^d$ and 
which satisfies 
for all $x \in \R^d$ that
$
  u_d(T, x) = g_d(x)
$
and

\item \label{thm_BS:eqitem2}
there exists a function
$ 
  N
  = (N_{d,\varepsilon})_{\varepsilon \in (0,\infty)} 
  \colon \N \times (0,\infty) \to \N
$ 
such that
for all $\varepsilon, \delta \in (0,\infty)$ it holds that
\begin{equation}
\label{thm_BS:eq04}
  \big(
    \EXPP{| u_d(0, \xi_d) - V^{d,0}_{N_{d, \varepsilon}, N_{d, \varepsilon}} (0, \xi_d) |^2 }
  \big)^{\nicefrac{1}{2}}
\leq
  \varepsilon
\qand
\end{equation}
\begin{equation}
\label{thm_BS:eq05}
\begin{split}
  \RN_{d, N_{d, \varepsilon},N_{d, \varepsilon}}
&\leq
  \left[
    4^{p+2} \kappa
    e^{T(\kappa + 2 + p(p+ 2) (\kappa + 1) )}
        \kappa^p
  \right]^{(2+\delta)}
  \left[
    \sup_{n \in \N}
        \tfrac{(4+8\kappa T)^{(3+\delta)(n+1)}}{n^{(n\delta/2)}}
  \right]\\
&\quad \cdot 
  d^{1 + (\mathfrak{P} + pq)(2+\delta)}
  (\min\{1, \varepsilon\})^{-(2 + \delta)}
<
  \infty.
\end{split}
\end{equation}
\end{enumerate}
Observe that item~\eqref{thm_BS:eqitem1} proves item~\eqref{thm_BS:item1}.
Furthermore, note that item~\eqref{thm_BS:eqitem2} establishes item~\eqref{thm_BS:item2}.
The proof of Theorem~\ref{thm_BS} is thus completed.
\end{proof}

\begin{theorem}
\label{thm_BS_simpler}
Let  $T \in (0,\infty)$, $\kappa, p, \mathfrak{P}, q \in [0,\infty)$, $(\alpha_{d, i})_{i \in \{1, 2, \ldots, d \}, d \in \N}$, $(\beta_{d, i})_{i \in \{1, 2, \ldots, d \}, d \in \N} \subseteq \R$, $\Theta = \cup_{n = 1}^\infty \Z^n$ satisfy that
$
\sup_{d \in \N, i \in \{1, 2, \ldots, d \}} 
		\max \{
		  | \alpha_{d, i} |
		  ,
		  | \beta_{d, i} |^2
		\}
\leq
	\kappa
$,
for every $d \in \N$ 
  let $\left< \cdot, \cdot \right>_{\R^d} \colon \R^d \times \R^d \to \R$ be the Euclidean scalar product on $\R^d$ and
  let $\norm{\cdot}_{\R^d} \colon \R^d \to [0,\infty)$ be the Euclidean norm on $\R^d$,
for every $d \in \N$ let $\xi_d \in \R^d$, $ \Sigma_d = (\zeta_{d, 1}, \ldots, \zeta_{d, d}) \in \R^{d \times d}$ satisfy 
for all $ i \in \{1, 2, \ldots, d \}$ that
$\norm{\xi_d}_{\R^d} \leq \kappa d^q$ and 
$ \Norm{ \zeta_{d, i}}_{\R^d} = 1$,
for every $d \in \N$ let $\mu_d \colon [0, T] \times \R^d \to \R^d$ and $\sigma_d \colon [0, T] \times \R^d \to \R^{d \times d}$ be the functions which satisfy 
for all $t \in [0,T]$, $x = (x_1, x_2, \ldots, x_d) \in \R^d$ that 
\begin{equation}
\label{thm_BS_simpler:ass1}
	\mu_d(t, x) = (\alpha_{d, 1} x_1, \ldots, \alpha_{d, d} x_d) \qandq \sigma_d(t, x) = \diag(\beta_{d, 1} x_1, \ldots, \beta_{d, d} x_d) \Sigma_d^\ast,
\end{equation}
let $f \colon \R \to \R$ be a Lipschitz continuous function,
for every $d \in \N$ let $g_d \in C( \R^d , \R)$ satisfy
for all $t \in [0, T]$, $ x  \in \R^d$ that 
\begin{equation}
\label{thm_BS_simpler:ass2}
    | g_d(x) | 
\leq
  \kappa d^{\mathfrak{P}}(1 + \norm{x}_{\R^d}^p),
\end{equation}
let $ ( \Omega, \mathcal{F}, \P ) $ be a probability space, 
let 
$ \mathcal{R}^\theta \colon \Omega \to [0, 1]$, 
  $\theta \in \Theta$, 
be independent  $\mathcal{U}_{ [0, 1]}$-distributed random variables,
let 
$ R^\theta = (R^\theta_t)_{t \in [0,T]} \colon [0,T] \times \Omega \to [0, T]$, 
  $\theta \in \Theta$, 
be the stochastic processes which satisfy 
for all $t \in [0,T]$, $\theta \in \Theta$ that
\begin{equation}
\label{thm_BS_simpler:ass3}
  R^\theta_t = t + (T-t)\mathcal{R}^\theta,
\end{equation}
for every $d \in \N$ 
let 
$W^{d, \theta} 
\colon [0, T] \times \Omega \to \R^d$, 
  $\theta \in \Theta$,
be independent standard Brownian motions,
assume that 
$
  \left(
    W^{d, \theta}
  \right)_{d \in \N, \theta \in \Theta}
$
and
$
  \left(
    \mathcal{R}^\theta
  \right)_{\theta \in \Theta}
$
are independent, 
for every $d \in \N$, $\theta \in \Theta$
let 
$ X^{d,\theta} = (X^{d, \theta, i}_{t,s}(x))_{s \in [t, T], t \in [0, T], x \in \R^d, i \in \{1, 2, \ldots, d\}}  \colon \{ (t,s) \in [0, T]^2 \colon t \leq s \} \times \R^d \times \Omega \to \R^d$
be the function which satisfies
for all 
$t \in [0, T]$, $s \in [t, T]$, $x = (x_1, x_2, \ldots, x_d) \in \R^d$, $i \in \{1, 2, \ldots, d\}$ that
\begin{equation}
\label{thm_BS_simpler:ass7}
  X^{d, \theta, i}_{t,s}(x)
=
  x_i 
  \exp \left(
    \big(\alpha_{d, i} - \tfrac{|\beta_{d, i}|^2}{2}\big)(s-t)
    +
    \beta_{d, i} 
    \langle \zeta_{d, i}, W^{d, \theta}_s - W^{d, \theta}_t \rangle_{\R^d}
  \right),
\end{equation}
let 
$
  V^{d, \theta}_{M,n} \colon [0,T] \times \R^d \times \Omega \to \R
$, $M, n \in \Z$, $\theta \in \Theta$, $d \in \N$,
be functions which satisfy 
for all $d, M, n \in \N$, $\theta \in \Theta$, $t \in [0,T]$, $x \in \R^d$ that
$
  V^{d, \theta}_{M,-1} (t, x) = V^{d, \theta}_{M,0} (t, x) = 0
$
and 
\begin{equation}
\label{thm_BS_simpler:ass8}
\begin{split}
  V^{d, \theta}_{M,n}(t, x) 
&=
  \sum_{k = 0}^{n-1}
    \frac{(T-t)}{M^{n-k}} 
    \Bigg[
      \sum_{m = 1}^{M^{n-k}}
        f \Big( 
          V^{d, (\theta, k, m)}_{M, k} \big( R^{(\theta, k, m)}_t  ,  X^{d, (\theta, k, m)}_{t, R^{(\theta, k, m)}_t}(x) \big)
        \Big) \\
& \quad
        -
       \mathbbm{1}_{\N}(k)
       f \Big( 
         V^{d, (\theta, k, -m)}_{M, k-1} \big( R^{(\theta, k, m)}_t  ,  X^{d, (\theta, k, m)}_{t, R^{(\theta, k, m)}_t}(x) \big)
        \Big)
    \Bigg]
    +
  \Bigg[
    \sum_{m = 1}^{M^n}
      \frac{g_d ( X^{d, (\theta, n, -m)}_{t, T}(x) )}{M^n} 
  \Bigg],
\end{split}
\end{equation}
and let $(\RN_{d, M,n})_{M,n\in \Z, d \in \N}\subseteq\N_0$ satisfy
for all $d, n, M \in \N$ that 
$\RN_{d, M,0}=0$
and
\begin{equation}
\label{thm_BS_simpler:ass9}
  \RN_{d, M,n}
\leq 
   d M^n 
  +
  \sum_{k=0}^{n-1}
    \left[
      M^{(n-k)}( d+1 + \RN_{d, M,k}+ \mathbbm{1}_{ \N }( k )  \RN_{d, M,k-1})
    \right].
\end{equation}
Then
\begin{enumerate}[(i)]
\item \label{thm_BS_simpler:item1}
for every $d \in \N$ there exists a unique at most polynomially growing function $u_d \in C([0,T] \times \R^d, \R)$ which satisfies that 
$u_d |_{(0,T)\times\R^d}\colon (0,T)\times\R^d\to\R$ is a viscosity solution of 
\begin{multline}
\label{thm_BS_simpler:concl1}
	(\tfrac{\partial u_d}{\partial t})(t,x) 
	+
	\left[
  	\sum_{i,j = 1}^d 
	    \tfrac{  
	      \beta_{d,i}\beta_{d,j} x_i x_j\langle \zeta_{d,i}, \zeta_{d,j}\rangle_{\R^d}
	      }{2} 
  	    \big( \tfrac{\partial^2 u_d}{\partial x_i \partial x_j } \big)(t,x)
  	\right]
    +
	\left[
  	\sum_{i = 1}^d 
	    \alpha_{d,i}
	     x_i
	    \big( \tfrac{\partial u_d}{\partial x_i } \big)(t,x)
	\right]
	+
	f( u_d(t, x) )
=
	0
\end{multline}
for $(t, x) \in (0,T) \times \R^d$ and 
which satisfies 
for all $x \in \R^d$ that
$
  u_d(T, x) = g_d(x)
$
and

\item \label{thm_BS_simpler:item2}
there exist
functions
$ 
  N 
  = (N_{d, \varepsilon})_{d \in \N, \varepsilon \in (0,\infty)} 
  \colon \N \times (0,\infty) \to \N
$ 
and
$
  C 
= (C_\delta)_{\delta \in (0,\infty)} 
  \colon (0,\infty) \to (0,\infty)
$
such that 
for all $d \in \N$, $\varepsilon, \delta \in (0,\infty)$ it holds that
\begin{equation}
  \big(
    \EXPP{| u_d(0, \xi_d) - V^{d, 0}_{N_{d, \varepsilon}, N_{d, \varepsilon}} (0, \xi_d) |^2 }
  \big)^{\nicefrac{1}{2}}
\leq
  \varepsilon
\qand
\end{equation}
\begin{equation}
\begin{split}
  \RN_{d, N_{d, \varepsilon},N_{d, \varepsilon}}
&\leq
  C_\delta \, 
  d^{1 + (\mathfrak{P} + pq)(2+\delta)}
  (\min\{1, \varepsilon\})^{-(2 + \delta)}.
\end{split}
\end{equation}
\end{enumerate}
\end{theorem}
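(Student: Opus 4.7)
The plan is to derive Theorem~\ref{thm_BS_simpler} as a corollary of Theorem~\ref{thm_BS} by specializing the nonlinearity to be independent of the time and space variable and by unwinding the covariance matrix $\sigma_d [\sigma_d]^{\ast}$ explicitly.

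First I would introduce $L, K \in [0,\infty)$ with $|f(v) - f(w)| \leq L |v-w|$ for all $v,w \in \R$ and $K = |f(0)|$, and set $\tilde{\kappa} = \max\{\kappa, L, K\} \geq 1$. Then I would define $f_d \in C([0,T] \times \R^d \times \R, \R)$, $d \in \N$, by $f_d(t,x,v) = f(v)$. The bound $|f(0)| \leq K \leq \tilde{\kappa} \leq \tilde{\kappa} d^{\mathfrak{P}}(1 + \norm{x}_{\R^d}^p)$ together with $|g_d(x)| \leq \kappa d^{\mathfrak{P}}(1 + \norm{x}_{\R^d}^p) \leq \tilde{\kappa} d^{\mathfrak{P}}(1 + \norm{x}_{\R^d}^p)$ and $|f_d(t,x,v) - f_d(t,x,w)| = |f(v)-f(w)| \leq L|v-w| \leq \tilde{\kappa}|v-w|$ show that the growth and Lipschitz conditions~\eqref{thm_BS:ass2} of Theorem~\ref{thm_BS} hold with $\tilde{\kappa}$ in place of $\kappa$. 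All remaining hypotheses of Theorem~\ref{thm_BS}, namely~\eqref{thm_BS:ass1},~\eqref{thm_BS:ass3},~\eqref{thm_BS:ass7},~\eqref{thm_BS:ass8}, and~\eqref{thm_BS:ass9}, coincide with the corresponding assumptions~\eqref{thm_BS_simpler:ass1},~\eqref{thm_BS_simpler:ass3},~\eqref{thm_BS_simpler:ass7},~\eqref{thm_BS_simpler:ass8},~\eqref{thm_BS_simpler:ass9} of Theorem~\ref{thm_BS_simpler} (note that $f_d(\cdots, V^{d,\cdot}_{M,\cdot}(\cdots)) = f(V^{d,\cdot}_{M,\cdot}(\cdots))$ under our choice of $f_d$).

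Next I would identify the linear differential operator in~\eqref{thm_BS:concl1} with the one displayed in~\eqref{thm_BS_simpler:concl1}. Since the columns of $\Sigma_d$ are the unit vectors $\zeta_{d,1}, \ldots, \zeta_{d,d}$, one has $[\Sigma_d^{\ast} \Sigma_d]_{ij} = \langle \zeta_{d,i}, \zeta_{d,j}\rangle_{\R^d}$, and hence
\begin{equation*}
\sigma_d(t,x) [\sigma_d(t,x)]^{\ast} = \diag(\beta_{d,1} x_1, \ldots, \beta_{d,d} x_d) \, \Sigma_d^{\ast} \Sigma_d \, \diag(\beta_{d,1} x_1, \ldots, \beta_{d,d} x_d),
\end{equation*}
so the $(i,j)$-entry of $\sigma_d [\sigma_d]^{\ast}$ equals $\beta_{d,i} \beta_{d,j} x_i x_j \langle \zeta_{d,i}, \zeta_{d,j}\rangle_{\R^d}$. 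Expanding the trace against the Hessian and using $\mu_d(t,x) = (\alpha_{d,1} x_1, \ldots, \alpha_{d,d} x_d)$ then transforms~\eqref{thm_BS:concl1} into~\eqref{thm_BS_simpler:concl1} termwise, which yields item~\eqref{thm_BS_simpler:item1} directly from item~\eqref{thm_BS:item1} of Theorem~\ref{thm_BS}.

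Finally, item~\eqref{thm_BS_simpler:item2} follows verbatim from item~\eqref{thm_BS:item2} of Theorem~\ref{thm_BS}: the function $(N_{d,\varepsilon})_{d,\varepsilon}$ provided there fulfills the $L^2$-error bound, and the computational effort estimate takes the form $\RN_{d, N_{d,\varepsilon}, N_{d,\varepsilon}} \leq C_\delta \, d^{1 + (\mathfrak{P} + pq)(2+\delta)} (\min\{1,\varepsilon\})^{-(2+\delta)}$ for a function $(C_\delta)_{\delta \in (0,\infty)}$ which depends only on $\tilde{\kappa}, p, T, \delta$, precisely the dependencies permitted by the statement. No genuine obstacle is present; the only care needed is in adjusting the global constant $\tilde{\kappa}$ to absorb $f(0)$ and the Lipschitz constant of $f$, and in performing the matrix manipulation that turns $\operatorname{Trace}(\sigma_d \sigma_d^{\ast} \operatorname{Hess}_x u_d)$ into the explicit double sum appearing in~\eqref{thm_BS_simpler:concl1}.
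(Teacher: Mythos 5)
Your proposal is correct. The paper does not display a proof of Theorem~\ref{thm_BS_simpler} (it is presented as a direct consequence of Theorem~\ref{general_thm}, of which Theorem~\ref{thm_BS} is the intermediate specialization you invoke), and your argument---absorbing $|f(0)|$ and the Lipschitz constant of $f$ into a new constant $\tilde{\kappa}$, setting $f_d(t,x,v)=f(v)$ to match~\eqref{thm_BS:ass2} and~\eqref{thm_BS_simpler:ass8}, and identifying $\operatorname{Trace}\!\big(\sigma_d\sigma_d^\ast\operatorname{Hess}_x u_d\big)$ with the double sum via $[\Sigma_d^\ast\Sigma_d]_{ij}=\langle\zeta_{d,i},\zeta_{d,j}\rangle_{\R^d}$---is exactly the intended derivation; the only cosmetic slip is the unneeded (and not-always-true) claim $\tilde{\kappa}\geq 1$, which plays no role since Theorem~\ref{thm_BS} permits any $\kappa\in[0,\infty)$.
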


\subsection{MLP approximations for the pricing of financial derivatives with default risks}
\label{sect:pricing}

\begin{cor}
\label{thm_default_risk}
Let  
$T, R, \gamma_l, \gamma_h, v_l, v_h \in (0,\infty)$, $p, q\in [0,\infty)$, $\epsilon \in [0,1)$, $ \alpha, \beta \in \R$,
$f \in C(  \R , \R)$,
$\Theta = \cup_{n = 1}^\infty \Z^n$
satisfy 
for all $u \in \R$ that
$ \gamma_l < \gamma_h$, $v_l > v_h$, and
\begin{equation}
\label{thm_default_risk:ass1}
  f(u)
=
  - R u
  -
  (1-\epsilon)
  \left[
    \min \left \{
      \gamma_h
      ,
      \max \left \{
        \gamma_l
        ,
        \tfrac{(\gamma_h - \gamma_l)}{(v_h - v_l)} 
        (u - v_h) 
        +
        \gamma_h
      \right\}
    \right\}
  \right]
  u,
\end{equation}
let $\xi_d \in \R^d$, $d \in \N$ satisfy that
$\sup_{d \in \N} \frac{\norm{\xi_d}_{\R^d}}{d^q} < \infty$,
let $g_d \in C( \R^d , \R)$, $d \in \N$, satisfy that
$
     \sup_{d \in \N, x  \in \R^d} 
         \tfrac{| g_d(x) ||}{1 + \norm{x}_{\R^d}^p} 
< 
  \infty
$,
let $ ( \Omega, \mathcal{F}, \P ) $ be a probability space, 
let 
$ \mathcal{R}^\theta \colon \Omega \to [0, 1]$, 
  $\theta \in \Theta$, 
be independent  $\mathcal{U}_{ [0, 1]}$-distributed random variables,
let 
$ R^\theta = (R^\theta_t)_{t \in [0,T]} \colon [0,T] \times \Omega \to [0, T]$, 
  $\theta \in \Theta$, 
be the stochastic processes which satisfy 
for all $t \in [0,T]$, $\theta \in \Theta$ that
$
  R^\theta_t = t + (T-t)\mathcal{R}^\theta
$,
for every $d \in \N$ let 
$W^{d, \theta} = (W^{d, \theta, i})_{i \in \{1, 2, \ldots, d \} } 
\colon [0, T] \times \Omega \to \R^d$, 
  $\theta \in \Theta$,
be independent standard Brownian motions,
assume that 
$
  \left(
    W^{d, \theta}
  \right)_{d \in \N, \theta \in \Theta}
$
and
$
  \left(
    \mathcal{R}^\theta
  \right)_{\theta \in \Theta}
$
are independent, 
for every $d \in \N$, $\theta \in \Theta$
let 
$ X^{d,\theta} = (X^{d, \theta, i}_{t,s}(x))_{s \in [t, T], t \in [0, T], x \in \R^d, i \in \{1, 2, \ldots, d\}}  \colon \{ (t,s) \in [0, T]^2 \colon t \leq s \} \times \R^d \times \Omega \to \R^d$
be the function which satisfies
for all 
$i \in \{1, 2, \ldots, d\}$, $t \in [0, T]$, $s \in [t, T]$, $x = (x_1, x_2, \ldots, x_d) \in \R^d$ that
\begin{equation}
\label{thm_default_risk:ass7}
  X^{d, \theta, i}_{t,s}(x)
=
  x_i 
  \exp \left(
    \big(\alpha - \tfrac{\beta^2}{2}\big)(s-t)
    +
    \beta 
    \big( W^{d, \theta, i}_s - W^{d, \theta, i}_t \big)
  \right),
\end{equation}
let 
$
  V^{d, \theta}_{M,n} \colon [0,T] \times \R^d \times \Omega \to \R
$, $M, n \in \Z$, $\theta \in \Theta$, $d \in \N$,
be functions which satisfy 
for all $d, M, n \in \N$, $\theta \in \Theta$, $t \in [0,T]$, $x \in \R^d$ that
$
  V^{d, \theta}_{M,-1} (t, x) = V^{d, \theta}_{M,0} (t, x) = 0
$
and 
\begin{equation}
\label{thm_default_risk:ass8}
\begin{split}
  V^{d, \theta}_{M,n}(t, x) 
&=
  \sum_{k = 0}^{n-1}
    \frac{(T-t)}{M^{n-k}} 
    \Bigg[
      \sum_{m = 1}^{M^{n-k}}
        f \Big( 
          V^{d, (\theta, k, m)}_{M, k} \big( R^{(\theta, k, m)}_t  ,  X^{d, (\theta, k, m)}_{t, R^{(\theta, k, m)}_t}(x) \big)
        \Big) \\
& \quad
        -
       \mathbbm{1}_{\N}(k)
       f \Big( 
         V^{d, (\theta, k, -m)}_{M, k-1} \big( R^{(\theta, k, m)}_t  ,  X^{d, (\theta, k, m)}_{t, R^{(\theta, k, m)}_t}(x) \big)
        \Big)
    \Bigg]
    +
  \Bigg[
    \sum_{m = 1}^{M^n}
      \frac{g_d ( X^{d, (\theta, n, -m)}_{t, T}(x) )}{M^n} 
  \Bigg],
\end{split}
\end{equation}
and let $(\RN_{d, M,n})_{M,n\in \Z, d \in \N}\subseteq\N_0$ satisfy
for all $d, n, M \in \N$ that 
$\RN_{d, M,0}=0$
and
\begin{equation}
\label{thm_default_risk:ass9}
  \RN_{d, M,n}
\leq 
   d M^n 
  +
  \sum_{k=0}^{n-1}
    \left[
      M^{(n-k)}( d+1 + \RN_{d, M,k}+ \mathbbm{1}_{ \N }( k )  \RN_{d, M,k-1})
    \right].
\end{equation}
Then
\begin{enumerate}[(i)]
\item \label{thm_default_risk:item1}
for every $d \in \N$ there exists a unique at most polynomially growing function $u_d \in C([0,T] \times \R^d, \R)$ which satisfies that 
$u_d |_{(0,T)\times\R^d}\colon (0,T)\times\R^d\to\R$ is a viscosity solution of 
\begin{multline} 
\label{thm_default_risk:concl1}
	(\tfrac{\partial u_d}{\partial t})(t,x) 
	+
	\left[
  	\sum_{i = 1}^d 
	    \tfrac{ | \beta |^2 |x_i|^2}{2} 
  	    \big( \tfrac{\partial^2 u_d}{\partial (x_i)^2 } \big)(t,x)
        +
	    \alpha
	     x_i
	    \big( \tfrac{\partial u_d}{\partial x_i } \big)(t,x)
	\right]
	- R u_d(t, x) \\
  -
  (1-\epsilon)
  \left[
    \min \left \{
      \gamma_h
      ,
      \max \left \{
        \gamma_l
        ,
        \tfrac{(\gamma_h - \gamma_l)}{(v_h - v_l)} 
        (u_d(t, x) - v_h) 
        +
        \gamma_h
      \right\}
    \right\}
  \right]
  u_d(t, x)
=
	0
\end{multline}
for $(t, x) = (t, (x_1, x_2, \ldots, x_d)) \in (0,T) \times \R^d$ and 
which satisfies 
for all $x \in \R^d$ that
$
  u_d(T, x) = g_d(x)
$
and

\item \label{thm_default_risk:item2}
there exist functions
$ 
  N 
  = (N_{d, \varepsilon})_{d \in \N, \varepsilon \in (0,1]} 
  \colon \N \times (0,1] \to \N
$ 
and 
$
  C 
= (C_\delta)_{\delta \in (0,\infty)} 
  \colon (0,\infty) \to (0,\infty)
$
such that
for all $d \in \N$, $\varepsilon \in (0,1]$, $\delta \in (0,\infty)$ it holds that
$  
  \RN_{d, N_{d, \varepsilon},N_{d, \varepsilon}}
\leq
  C_\delta \, 
  d^{1 + qp(2 + \delta)}
  \varepsilon^{-(2 + \delta)}
$
and
\begin{equation}
  \big(
    \EXPP{| u_d(0, \xi_d) - V^{d, 0}_{N_{d, \varepsilon}, N_{d, \varepsilon}} (0, \xi_d) |^2 }
  \big)^{\nicefrac{1}{2}}
\leq
  \varepsilon.
\end{equation}
\end{enumerate}
\end{cor}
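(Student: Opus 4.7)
The plan is to derive Corollary~\ref{thm_default_risk} as an almost immediate consequence of Theorem~\ref{thm_BS_simpler}. First, I would match the two settings: specialize Theorem~\ref{thm_BS_simpler} to the uncorrelated, dimension-independent Black-Scholes coefficients by setting $\alpha_{d,i} = \alpha$, $\beta_{d,i} = \beta$, and $\zeta_{d,i} = e_i$ (the $i$-th standard Euclidean basis vector) for all $d \in \N$, $i \in \{1,2,\ldots,d\}$. With this choice one has $\langle \zeta_{d,i},\zeta_{d,j}\rangle_{\R^d} = \mathbbm{1}_{\{i\}}(j)$ and $\langle \zeta_{d,i}, W^{d,\theta}_s - W^{d,\theta}_t\rangle_{\R^d} = W^{d,\theta,i}_s - W^{d,\theta,i}_t$, so that the processes $X^{d,\theta}$ in \eqref{thm_default_risk:ass7} coincide with those of Theorem~\ref{thm_BS_simpler}, and the double sum in the diffusion part of the PDE in Theorem~\ref{thm_BS_simpler} collapses to the diagonal sum appearing in \eqref{thm_default_risk:concl1}.

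The key verification is that the nonlinearity $f$ in \eqref{thm_default_risk:ass1} is globally Lipschitz continuous. I would argue this by explicit case analysis of the clipped affine function $\phi(u) := \min\{\gamma_h, \max\{\gamma_l, \tfrac{\gamma_h-\gamma_l}{v_h-v_l}(u-v_h)+\gamma_h\}\}$. Using the hypotheses $\gamma_l < \gamma_h$ and $v_l > v_h$, the slope $a := \tfrac{\gamma_h-\gamma_l}{v_h-v_l}$ is strictly negative, and a short computation shows that $\phi(u) = \gamma_h$ for $u \leq v_h$, $\phi(u) = a(u-v_h)+\gamma_h$ for $u \in [v_h, v_l]$, and $\phi(u) = \gamma_l$ for $u \geq v_l$. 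Hence $u \mapsto u\phi(u)$ is continuous and piecewise affine-or-quadratic with bounded derivative on $\R$ (linear with slopes $\gamma_h$ and $\gamma_l$ outside $[v_h,v_l]$, and quadratic on the bounded interval $[v_h,v_l]$), and therefore globally Lipschitz on $\R$. Consequently $f(u) = -Ru - (1-\epsilon)u\phi(u)$ is globally Lipschitz, say with Lipschitz constant $L_f \in (0,\infty)$.

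Next, I would set $\kappa := \max\{|\alpha|, |\beta|^2, L_f, \sup_{d}\tfrac{\norm{\xi_d}_{\R^d}}{d^q}, \sup_{d,x}\tfrac{|g_d(x)|}{1+\norm{x}_{\R^d}^p}\}$ (finite by hypothesis) and take $\mathfrak{P} = 0$. Then the hypotheses \eqref{thm_BS_simpler:ass1}--\eqref{thm_BS_simpler:ass2}, \eqref{thm_BS_simpler:ass7}, and the bounds on $\norm{\xi_d}_{\R^d}$, $\alpha_{d,i}$, $\beta_{d,i}$ in Theorem~\ref{thm_BS_simpler} are satisfied, and the hypotheses \eqref{thm_BS_simpler:ass3}, \eqref{thm_BS_simpler:ass8}, \eqref{thm_BS_simpler:ass9} are assumed verbatim in Corollary~\ref{thm_default_risk}. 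Item~\eqref{thm_default_risk:item1} then follows immediately from item~\eqref{thm_BS_simpler:item1} of Theorem~\ref{thm_BS_simpler}, since with the above choices the PDE \eqref{thm_BS_simpler:concl1} reduces to \eqref{thm_default_risk:concl1}. For item~\eqref{thm_default_risk:item2}, item~\eqref{thm_BS_simpler:item2} of Theorem~\ref{thm_BS_simpler} provides functions $N$ and $\widetilde{C} = (\widetilde{C}_\delta)_{\delta \in (0,\infty)}$ such that $\big(\EXPP{|u_d(0,\xi_d) - V^{d,0}_{N_{d,\varepsilon},N_{d,\varepsilon}}(0,\xi_d)|^2}\big)^{\nicefrac{1}{2}} \leq \varepsilon$ and $\RN_{d,N_{d,\varepsilon},N_{d,\varepsilon}} \leq \widetilde{C}_\delta d^{1+(\mathfrak{P}+pq)(2+\delta)}(\min\{1,\varepsilon\})^{-(2+\delta)}$ for all $d \in \N$, $\varepsilon \in (0,\infty)$, $\delta \in (0,\infty)$; restricting $\varepsilon \in (0,1]$ and plugging in $\mathfrak{P} = 0$ yields the exponent $1 + pq(2+\delta)$ on $d$ and the factor $\varepsilon^{-(2+\delta)}$ as required, with $C_\delta := \widetilde{C}_\delta$. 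The only nontrivial step is the Lipschitz analysis of $f$; everything else is bookkeeping.
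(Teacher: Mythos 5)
Your proposal is correct and takes the only natural route: specialize Theorem~\ref{thm_BS_simpler} to $\alpha_{d,i}=\alpha$, $\beta_{d,i}=\beta$, $\zeta_{d,i}=e_i$, $\mathfrak{P}=0$, and a suitable $\kappa$, after first verifying that the clipped--affine nonlinearity $f$ in \eqref{thm_default_risk:ass1} is globally Lipschitz. The piecewise analysis of $\phi$ (constant $\gamma_h$ for $u\le v_h$, affine with negative slope $a=\tfrac{\gamma_h-\gamma_l}{v_h-v_l}$ on $[v_h,v_l]$, constant $\gamma_l$ for $u\ge v_l$) and the observation that $u\mapsto u\phi(u)$ therefore has globally bounded (piecewise) derivative is exactly the check that the paper leaves to the reader; the rest (collapsing the double diffusion sum via $\langle e_i,e_j\rangle=\mathbbm{1}_{\{i\}}(j)$, identifying the $X^{d,\theta}$ processes, restricting $\varepsilon\in(0,1]$ so that $\min\{1,\varepsilon\}=\varepsilon$) is straightforward bookkeeping and matches the stated conclusion.
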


\section*{Acknowledgments}
This project has been partially supported through the SNSF-Research project 200020{\_}175699 ``Higher order numerical approximation methods for stochastic partial differential equations''.

\bibliographystyle{acm}
\bibliography{PDE_approximation_bibfile}
\end{document}